\author{Baptiste Chantraine}
\author{Vincent Colin}
\author{Georgios Dimitroglou Rizell}
\address{Universit\'e de Nantes, France.}
\email{baptiste.chantraine@univ-nantes.fr}
\email{vincent.colin@univ-nantes.fr}
\address{Uppsala University, Sweden.}
\email{georgios.dimitroglou@math.uu.se}
\thanks{The first author is partially supported by the ANR project COSPIN (ANR-13-JS01-0008-01). The first and second authors are supported by the ERC grant Geodycon. The third author was supported by the grants KAW 2013.0321 and KAW 2016.0446 from the Knut and Alice Wallenberg Foundation.}
\theoremstyle{plain}
\newtheorem{Thm}{Theorem}[section]
\newtheorem{Rem}[Thm]{Remark}
\newtheorem{Prop}[Thm]{Proposition}
\newtheorem{Lem}[Thm]{Lemma}
\newtheorem{Cor}[Thm]{Corollary}
\theoremstyle{remark}
\newtheorem{defn}[Thm]{Definition}
\newtheorem{Ex}[Thm]{Example}
\DeclareMathAlphabet{\mathdj}{U}{msb}{m}{n}
\newcommand{\R}{\ensuremath{\mathdj{R}}}
\newcommand{\T}{\ensuremath{\mathdj{T}}}
\newcommand{\Z}{\ensuremath{\mathdj{Z}}}
\newcommand{\Cth}{\operatorname{Cth}}
\newcommand{\id}{\operatorname{Id}}
\newcommand{\im}{\operatorname{im}}
\newcommand{\OP}{\operatorname}
\begin{document}
\title{Positive Legendrian isotopies and Floer theory}
\thispagestyle{empty}
\maketitle

\begin{abstract}
Positive loops of Legendrian embeddings are examined from the point of view of Floer homology of Lagrangian cobordisms. This leads to new obstructions to the existence of a positive loop containing a given Legendrian, expressed in terms of the Legendrian contact homology of the Legendrian submanifold. As applications, old and new examples of orderable contact manifolds are obtained and discussed. We also show that contact manifolds filled by a Liouville domain with non-zero symplectic homology are \emph{strongly} orderable in the sense of Liu.
\end{abstract}

\section{Introduction}
\label{sec:introduction}
Since the groundbreaking work \cite{ElPo_Order} by Eliashberg--Polterovich, the notion of orderability has played an important role in the study of contact geometry. Recall that a contact manifold is \emph{orderable} if and only if it admits no positive loop of contactomorphisms which is contractible (amongst loops of arbitrary contactomorphisms). In \cite{ElKiPo} it was shown that a large class of subcritically fillable contact manifolds are \emph{non-orderable}, including the standard odd-dimensional contact spheres, while it follows from Givental's non-linear Maslov index in \cite{Givental} that the standard contact structures on the odd-dimensional real projective spaces are orderable. In some cases orderability is known to imply the existence of an unbounded bi-invariant metric on the space of contactomorphisms (see \cite{fraporo}), having a number of important consequences; see~\cite{Sandon1}, \cite{Colin_Sandon_Osci}, and \cite{Sandon2} for more details.

There are conditions in terms of Floer homology that imply that a contact manifold is orderable. Notably, in \cite{AlMe} Albers--Merry showed that if the Rabinowitz Floer homology of a Liouville domain admits a non-trivial spectrally finite class, then its contact boundary must be orderable. Here we strengthen this result by showing that the non-vanishing of Rabinowitz Floer homology is sufficient  (see Theorem \ref{thm: strong} combined with \cite[Theorem 13.3]{Ritter_TQFT} by which Rabinowitz Floer homology vanishes if and only if symplectic homology does).

In this article we consider this notion from the relative point of view, i.e.~from the perspective of Legendrian submanifolds. Let $(M,\xi=\ker \alpha)$ be a co-oriented contact manifold. Through the paper we always assume that the contact form $\alpha$ induces the given co-orientation. A Legendrian isotopy $\{\Lambda^s\}$, $s \in [0,1]$, (where $\Lambda^s:\Lambda\hookrightarrow M$ is a smooth family of Legendrian embeddings) is \textit{positive} if for every $q\in \Lambda$ and $s \in [0,1]$ we have
\begin{equation}
  \label{eq:3}
  H(s,q):=\alpha\left(\dot{\Lambda}^{s}(q)\right)>0.
\end{equation}
This definition only depends on the co-orientation of $\xi$ and not on the parametrisation of $\Lambda^s$, nor on the choice of contact form $\alpha$ (as long as it induces the positive co-orientation). The most basic example of a positive isotopy is the displacement of a Legendrian submanifold by the Reeb flow associated to a choice of contact form. A positive isotopy for which $\Lambda^0=\Lambda^1$ will be called a \textit{positive loop of Legendrians containing $\Lambda^0$}. Since Legendrian isotopies are realised by ambient contact isotopies, if $\Lambda$ is in a positive loop of Legendrians and $\Lambda'$ is Legendrian isotopic to $\Lambda$, then $\Lambda'$ is also sits in a positive loop. (The reason is that $\phi(\Lambda_t)$ is positive when $\Lambda_t$ is a positive loop and $\phi$ is a contactomorphism preserving the coorientation of the contact structure.)

Our goal is finding new obstructions for the existence of positive loops as well as contractible positive loops containing a given Legendrian submanifold. By a \emph{contractible positive loop of Legendrians} we mean a positive loop which is contractible as a loop of Legendrian embeddings. The obstructions that we find are in terms of Legendrian contact homology (LCH for short) as well as wrapped Floer cohomology. These are two related symplectic invariants that algebraically encode counts of different types of pseudoholomorphic curves. Obviously, in the case when the contact manifold is \emph{not} orderable, each of its Legendrian submanifolds lives in a contractible positive loop of Legendrians. Thus, when our obstructions apply, they can be used as a condition that ensures orderability of the ambient contact manifold.

\subsection{Previous results}

Concerning positive loops of Legendrian submanifolds, the second author together with Ferrand and Pushkar used generating family techniques to show the following.
\begin{Thm}[Colin--Ferrand--Pushkar \cite{CoFePu}]\label{thm:CFP}
Let $Q$ denote a smooth, not necessarily closed, manifold.
\begin{enumerate}
\item There exists no positive loop of Legendrians containing the zero-section in $(\mathcal{J}^1Q,\xi_{\OP{std}})$.
\item If the universal cover of $Q$ is $\mathbb{R}^n$ then there exists no positive loop of Legendrians containing a Legendrian fibre of the canonical projection $\pi:S(T^*Q)\rightarrow Q$ in $(S(T^*Q),\xi_{\OP{std}})$.
\end{enumerate}
\end{Thm}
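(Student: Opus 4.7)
My plan is to use generating families quadratic at infinity (gfqi) for part (1), following the circle of ideas of Chaperon, Sikorav, Chekanov, Viterbo and Th\'eret, and to combine universal covering with a direct support function computation for part (2).

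For part (1), any Legendrian in $\mathcal{J}^1 Q$ isotopic to the zero section admits a gfqi $F\co Q \times \R^N \to \R$, and the Viterbo--Th\'eret uniqueness theorem allows me to lift the positive loop $\{\Lambda^s\}$ to a smooth family $\{F_s\}$ of such gfqi's (after a single stabilisation). I normalise so that $F_0(q,\xi)$ is a nondegenerate quadratic form in $\xi$ alone, which generates $0_Q$. Since the value $F_s(q,\xi^*_s)$ at a fibrewise critical point $(q,\xi^*_s)$ is exactly the $z$-coordinate of the corresponding point of $\Lambda^s$ above $q$, the envelope theorem turns the positivity condition $H(s,q)>0$ into $\partial_s F_s(q,\xi^*_s) > 0$ at every fibrewise critical point. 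For each nonzero class $u \in H^*(Q;\Z/2)$ the standard minimax theory then produces a critical value $c(u;F_s) \in \R$ which, by this positivity, is strictly increasing in $s$. But $\Lambda^0 = \Lambda^1 = 0_Q$ forces all fibrewise critical values of $F_0$ and $F_1$ to equal the $z$-coordinate $0$ of the zero section, hence $c(u;F_0) = c(u;F_1) = 0$, and we reach a contradiction.

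For part (2), I would pass to the universal cover. The hypothesis $\tilde Q = \R^n$ gives a contact covering $S(T^*\R^n) \to S(T^*Q)$. Using connectedness of the fibre $S^{n-1}$ (so assuming $n\ge 2$), the positive loop containing the fibre $\Lambda_{q_0}$ lifts to a positive Legendrian path $\tilde\Lambda^s$ in $S(T^*\R^n)$ from the fibre $\Lambda_{\tilde q_0}$ to another fibre $\Lambda_{\tilde q_1}$, where $\tilde q_0,\tilde q_1$ are lifts of $q_0$. I would then use the support function $h_s\co S^{n-1}\to\R$ defined by $h_s(u) = \max\{q\cdot u : (q,u)\in\tilde\Lambda^s\}$. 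A direct envelope-theorem computation, using that the Legendrian condition $u\cdot dq|_\Lambda = 0$ kills the reparametrisation term when tracking the maximiser with $u$ held fixed, yields $\frac{d}{ds}h_s(u) > 0$ for every $u\in S^{n-1}$. On the other hand, since the endpoints are fibres, $h_0(u) = \tilde q_0\cdot u$ and $h_1(u) = \tilde q_1\cdot u$ are linear in $u$, and the strict monotonicity forces $(\tilde q_1 - \tilde q_0)\cdot u > 0$ for every $u\in S^{n-1}$, which is impossible.

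I expect the main obstacle to be keeping the envelope-theorem arguments rigorous in the presence of non-transverse or multivalued maxima (and of birth/death of critical points in part (1)), and also handling the non-compactness of $Q$ in part (1), where the infimum of $H$ over $\Lambda$ need not be positive and one must work with properly supported perturbations or an exhaustion. The hypothesis $\tilde Q = \R^n$ in part (2) is essential both to lift the loop to a Legendrian path between two fibres and to have the Euclidean structure needed to define a globally linear support function whose strict increase is obstructed by an elementary sphere argument.
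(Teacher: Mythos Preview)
This theorem is quoted in the paper as a prior result of Colin--Ferrand--Pushkar; the paper gives no proof of it. The paper explicitly says that the original argument in \cite{CoFePu} uses generating family techniques, and your proposal for part~(1) is precisely a reconstruction of that argument: the identity $H(s,q)=\partial_sF_s(q,\xi^*_s)$ at fibrewise critical points (which follows from $\dot z=\partial_sF_s+p\cdot\dot q$ on the fibre critical set) is exactly what turns positivity into strict monotonicity of the spectral numbers $c(u;F_s)$, and $c(u;F_0)=c(u;F_1)=0$ gives the contradiction. Your sketch for part~(2) is also in the spirit of \cite{CoFePu}; the envelope computation is correct because the Legendrian condition $u\cdot dq|_{\Lambda}=0$ at the maximiser (where $v=u$) annihilates the reparametrisation term, exactly as you say. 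One ingredient you omit: for $h_s(u)$ to be defined you need $\{q:(q,u)\in\tilde\Lambda^s\}\neq\emptyset$ for every $u$, which follows from degree theory since the projection $\tilde\Lambda^s\to S^{n-1}$ is the identity at $s=0$ and hence surjective for all $s$.

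The present paper does \emph{not} reprove Theorem~\ref{thm:CFP} by these methods. Instead it develops Floer theory for Lagrangian cobordisms (the Cthulhu complex) and recovers special cases of the theorem as corollaries of much more general results. Part~(1) for closed $Q$ follows from the corollary to Theorem~\ref{thm:liouville}: the zero-section is the Legendrian lift of the exact Lagrangian $0_Q\subset T^*Q$, and the obstruction is the non-acyclicity of the bilinearised complex $LCC^*_{\varepsilon,\varepsilon'}(\Lambda,\Lambda')$ for a small negative Reeb push-off $\Lambda'$, established via the spectral-invariant inequality of Theorem~\ref{thm:spectral}. A version of part~(2) under the stronger hypothesis of non-positive sectional curvature follows from Theorem~\ref{thm:hypertight2}, using Conley--Zehnder index gaps for geodesic chords. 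So the trade-off is clear: your generating-family approach is elementary, handles non-compact $Q$ directly, and hits the stated hypotheses on the nose; the paper's Floer-theoretic machinery is vastly heavier but applies to Legendrians with augmentations (or hypertight ones) in contact manifolds far beyond jet spaces and unit cotangent bundles.
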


The analogue of the above theorem cannot be expected to hold for general Legendrian submanifolds, as is shown by the following example.
\begin{Ex}
The global contact isotopy $(q,p,z)\mapsto (q+t,p,z)$ of $\mathcal{J}^1S^1$, generated by the contact Hamiltonian $H(q,p,z)=p$ for the standard contact form, is a loop of contactomorphisms starting and ending at the identity. Moreover, it is positive when restricted to either of the two Legendrian knots with the front diagrams shown in Figure \ref{fig:posp}: the positive stabilisation $S^+(0_{S^1})$ of the zero-section $0_{S^1} \subset \mathcal{J}^1S^1$ on the left, as well as the representative of the standard Legendrian unknot shown on the right.
\end{Ex}

\begin{figure}[ht!]
  \centering
  \includegraphics[height=5cm]{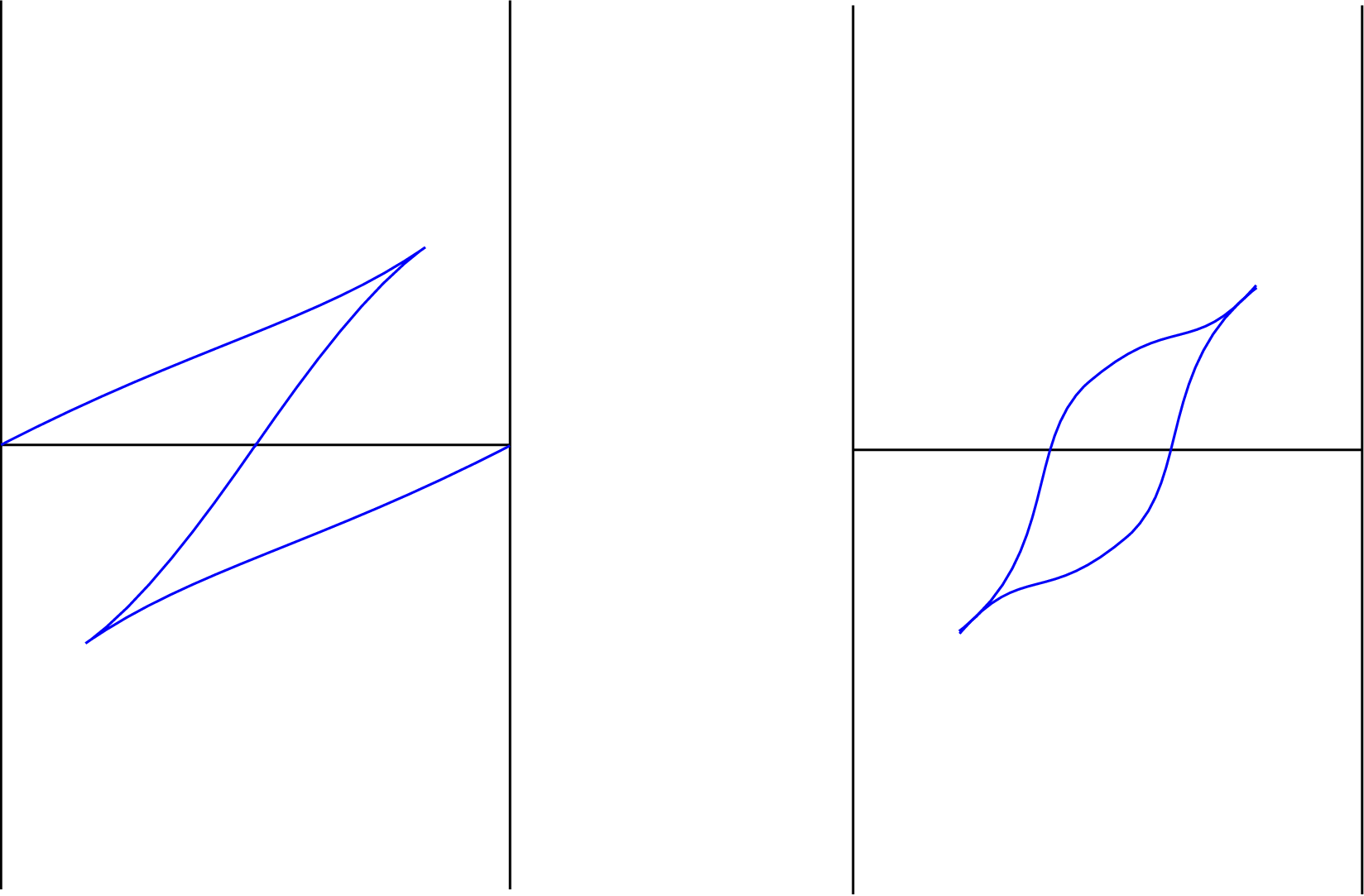}
  \caption{Fronts of Legendrian submanifolds in the subspace $\{p>0\} \subset \mathcal{J}^1S^1$. On the left: the positive stabilisation $S^+(0_{S^1})$ of the zero-section. On the right: a representative of the standard Legendrian unknot.}
  \label{fig:posp}
\end{figure}

\begin{Ex}
\label{remWeinsteinposisot} Since there is a contact embedding of a neighbourhood of the zero-section of $\mathcal{J}^1S^1$ into any three-dimensional contact manifold, any Legendrian unknot can be seen to sit inside a positive loop of Legendrians. Such a positive Legendrian loop inside a Darboux ball, which moreover is contractible, is shown in Figure \ref{fig:trivposisoto}. The isotopy is indeed positive if, in the parts I and III of the isotopy, the translation is in a direction whose slope is less than the maximal slope of the front projection of the unknot. If the $z$-coordinate is decreased sufficiently during step I and III of the isotopy, the rotation of the front taking place in steps II and IV can be made positive. Finally, observe that this loop is contractible amongst loops of Legendrian submanifolds
\end{Ex}

\begin{figure}[ht!]
  \centering
\labellist
\pinlabel{I} at 225 280
\pinlabel{III} at 460 280
\pinlabel{II} at 630 250
\pinlabel{IV} at 40 250
\endlabellist
  \includegraphics[height=5cm]{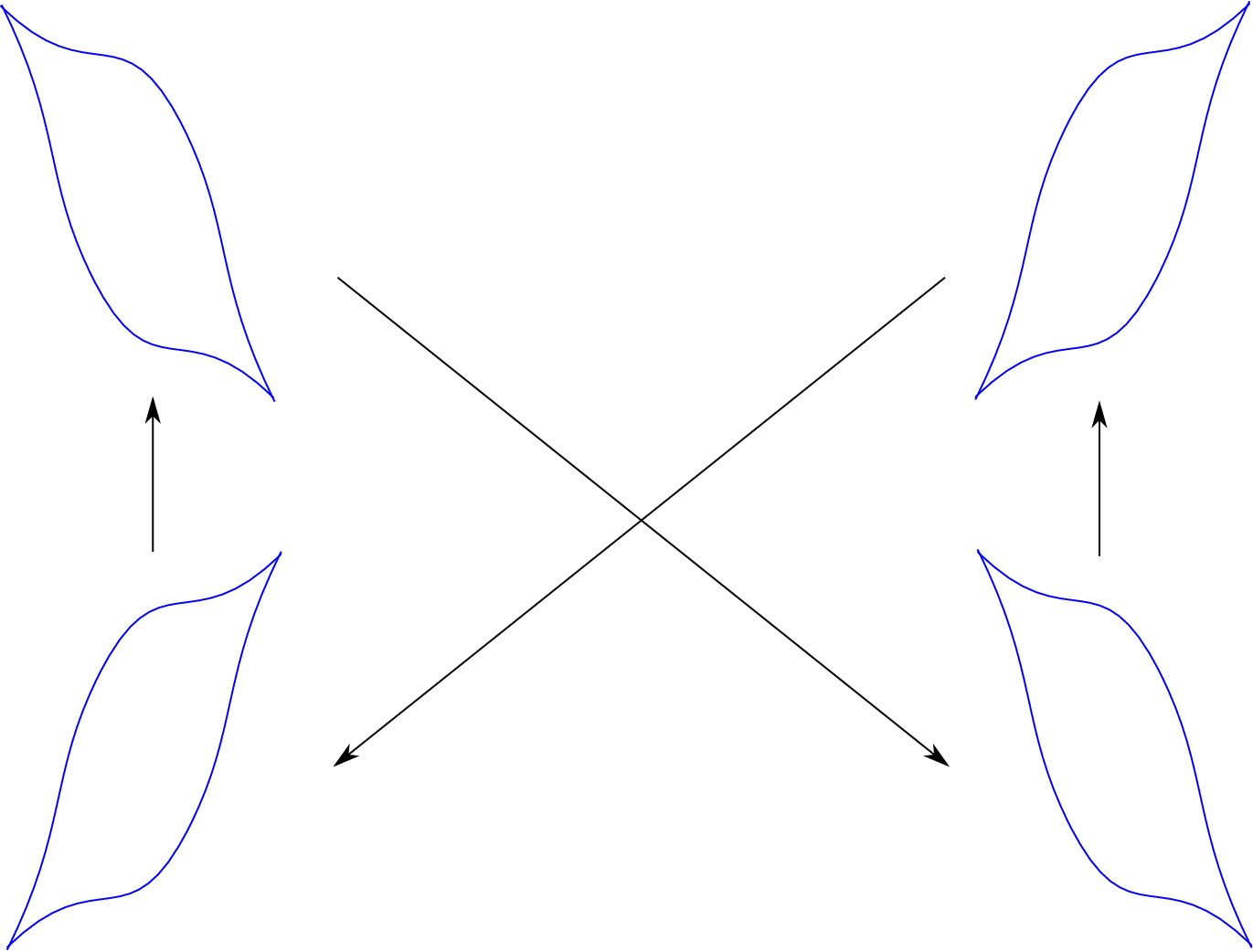}
  \caption{A positive isotopy of the standard Legendrian unknot inside $\mathcal{J}^1\R$ that, moreover, is contractible.}
  \label{fig:trivposisoto}
\end{figure}
The above constructions of positive loops of Legendrians are generalised in Liu's work \cite{Liu,Liu_paper}:
\begin{Thm}[Liu \cite{Liu_paper}]\label{thm:liu}
 Every loose Legendrian submanifold is contained in a contractible positive loop of Legendrians.
\end{Thm}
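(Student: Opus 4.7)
The plan is to exploit the flexibility of loose Legendrians, combining an explicit model construction inside a loose chart (in the spirit of Example 1.3) with an h-principle argument for contractibility.

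First I would fix a loose chart $U \subset M$ for $\Lambda$, identified with its standard model containing a zig-zag stabilization. The crucial feature of looseness is that $U$ contains much more room around the stabilization than the Legendrian strictly needs, and this slack will be the essential ingredient. In particular, $U$ contains a small Darboux ball in which a standard Legendrian unknot can be realized, and Example 1.3 gives such an unknot a contractible positive loop as a model to imitate.

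The main construction would promote this local positive motion to a positive loop of all of $\Lambda$. The key point is that a positive Legendrian loop must move every point of $\Lambda$ strictly in the Reeb direction at every time, so a modification supported only inside $U$ does not suffice on its own, since points outside $U$ would stand still. The recipe I would try is to combine a small global Reeb push of $\Lambda$ (strictly positive everywhere but not a loop) with a positive cancellation supported near $U$ that exploits the zig-zag: widen the stabilization by positively raising its cusps in the $z$-direction, then execute a rotation--translation cycle inside $U$ in the style of Example 1.3 that absorbs the global Reeb displacement. The cancellation must extend outside $U$ in such a way that every point of $\Lambda$ continues to move strictly positively throughout the entire cycle, and this balancing of local flexibility against global translation is where I expect the main technical difficulty; it is precisely the slack afforded by looseness that should make such a compatible extension possible.

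For contractibility in the space of Legendrian embeddings, I would appeal to a parametric h-principle for loose Legendrians: loops in the space of genuine Legendrian embeddings in the formal isotopy class of $\Lambda$ correspond to loops in the space of formal Legendrian embeddings in the same formal class. In the formal category the constructed loop is evidently contractible, since formal positivity is an open condition on the formal derivative and no topological obstruction arises at the level of formal data (a formal null-homotopy can be written down directly). Pulling back this contraction gives a null-homotopy of the positive loop as a loop of genuine Legendrian embeddings, completing the proof.
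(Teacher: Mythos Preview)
The paper does not contain its own proof of this theorem. Theorem~\ref{thm:liu} is stated as a result of Liu, with a citation to \cite{Liu_paper} (and \cite{Liu}), and the paper simply quotes it as background in the introduction alongside the other previously known results. There is therefore no proof in the paper to compare your proposal against.

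As for the proposal itself: the overall strategy you sketch --- use the flexibility inside a loose chart to build a positive loop locally, and then invoke a parametric h-principle to get contractibility --- is indeed the shape of Liu's argument. But your write-up is not a proof; it is a plan with the hard step explicitly flagged as unresolved. The sentence ``this balancing of local flexibility against global translation is where I expect the main technical difficulty'' is exactly right, and that difficulty is the entire content of the theorem. A global Reeb push is positive but not a loop; any attempt to close it up must undo the $z$-displacement, and doing so positively at \emph{every} point of $\Lambda$ simultaneously is precisely what needs to be constructed. Your description of ``a rotation--translation cycle inside $U$ in the style of Example~1.3 that absorbs the global Reeb displacement'' does not explain how motion concentrated near $U$ can make the far-away points of $\Lambda$ return to their starting positions while remaining strictly positive throughout. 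Liu's actual construction is substantially more involved than the unknot example and requires careful control of the Legendrian isotopy across the whole manifold, not just near the loose chart. If you want to turn this into a proof you would need to read \cite{Liu_paper} and fill in that construction; the h-principle part for contractibility is comparatively routine once the loop exists.
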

We also refer to the more recent work \cite{Presas} by Pancholi--P\'{e}rez--Presas.

Part (2) of Theorem \ref{thm:CFP} was generalised by Chernov--Nemirovski to more general spaces of contact elements. Notably, they showed the following:
\begin{Thm}[Chernov--Nemirovsky \cite{Cher_Nem}, \cite{Cher_Nem_2}] \label{thm:CN} 
When $Q$ is a connected open manifold, there are no positive loops of Legendrians containing a Legendrian fibre of $S(T^*Q)$. For general $Q$ there is no positive loop containing the Legendrian fibre in $S(T^*Q)$ which is contractible amongst loops of Legendrians.
\end{Thm}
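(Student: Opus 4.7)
The plan is to lift the hypothetical positive loop to the universal cover of $Q$ and then derive a contradiction from the Reeb dynamics of the geodesic flow, possibly combined with the causality structure of the associated Lorentzian spacetime, essentially following the approach of Chernov--Nemirovski.

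First I would realise the positive loop $\{\Lambda^s\}$ as the trace of an ambient positive contact isotopy $\phi_s$ of $S(T^*Q)$ with compactly supported generating Hamiltonian $H$; after adding a cut-off function vanishing along the $\Lambda^s$ I may arrange $H>0$ everywhere, so that $\phi_s$ is strictly positive. Next I would pass to the universal cover $\pi\co \widetilde{Q}\to Q$ and the induced contact covering $\overline{\pi}\co S(T^*\widetilde{Q})\to S(T^*Q)$, lifting $\phi_s$ to a positive isotopy $\widetilde{\phi}_s$ of $S(T^*\widetilde{Q})$. Fixing a lift $\widetilde{q}_0$ of the basepoint, write $\widetilde{\Lambda}$ for the fibre over $\widetilde{q}_0$. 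In the contractible case, the fact that $\{\Lambda^s\}$ is null-homotopic in the space of Legendrian embeddings forces $\widetilde{\phi}_1(\widetilde{\Lambda})=\widetilde{\Lambda}$, producing an honest positive loop of the fibre inside $S(T^*\widetilde{Q})$. In the open case, $\widetilde{Q}$ inherits non-compactness from $Q$, and I would fix a complete Riemannian metric on $\widetilde{Q}$ admitting a geodesic ray through $\widetilde{q}_0$ that escapes to infinity, identifying $S(T^*\widetilde{Q})$ with the unit cotangent bundle so that the Reeb vector field generates the geodesic flow.

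The key geometric point is that a strictly positive contact isotopy advances every point of $\widetilde{\Lambda}$ strictly in the direction of the Reeb vector field; equivalently, the trace of $\widetilde{\Lambda}$ in the symplectisation forms a Lagrangian cobordism along which the Liouville primitive strictly increases in the $s$-direction. From this the open case follows immediately, since an escaping geodesic ray cannot return to the compact fibre over $\widetilde{q}_0$ in finite time. For the contractible case, I would invoke Low's reformulation, under which $S(T^*\widetilde{Q})$ is identified with the space of null geodesics of the Lorentzian manifold $(\widetilde{Q}\times\R,\,g-dt^2)$ and a positive loop of a fibre corresponds to a causality violation at a single event; the global hyperbolicity of this spacetime then forbids such a violation.

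The main obstacle will be the contractible case when the universal cover $\widetilde{Q}$ is compact (say $\widetilde{Q}=S^n$): the geodesic flow is periodic, so the dynamical escape argument is unavailable, and a more delicate topological analysis of the space of null geodesics is required (this is the content of the Legendrian Low conjecture, settled by Nemirovski in the relevant dimensions). An alternative route, closer in spirit to the present paper, would be to exploit the nontriviality of the wrapped Floer cohomology of a cotangent fibre and to extract from it a Floer-theoretic obstruction to positive loops, thereby bypassing the Lorentzian argument entirely.
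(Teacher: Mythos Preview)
This theorem is not proved in the paper: it is quoted as a previous result of Chernov--Nemirovski, with no argument given beyond the citations. So there is no ``paper's own proof'' to compare against. What the paper does contribute is an \emph{alternative} route, via Floer theory, to pieces of the statement: the contractible case follows from Theorem~\ref{thm:filling} applied to the cotangent fibre $T^*_qQ \subset D^*Q$ (whose wrapped Floer cohomology is $H_*(\Omega_qQ) \neq 0$), and the non-positive curvature subcase of the open statement is recovered as the Corollary to Theorem~\ref{thm:hypertight2}. Your final paragraph already anticipates this Floer-theoretic alternative.

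Your sketch itself follows the Lorentzian/causality strategy of the original papers, which is reasonable but has a genuine gap in the open case. You write that the lifted isotopy gives a positive loop of the fibre $\widetilde{\Lambda}$ only under the contractibility hypothesis; in the open (non-contractible) case the lift satisfies $\widetilde{\phi}_1(\widetilde{\Lambda}_{\widetilde{q}_0}) = \widetilde{\Lambda}_{\gamma\cdot\widetilde{q}_0}$ for some deck transformation $\gamma$, and you never explain why the existence of an escaping geodesic ray through $\widetilde{q}_0$ obstructs a positive path between these two \emph{distinct} fibres. The sentence ``an escaping geodesic ray cannot return to the compact fibre over $\widetilde{q}_0$'' is not the right mechanism: nothing forces the ray to return there. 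Chernov--Nemirovski's actual argument for open $Q$ goes through a different reduction (essentially that an open $Q$ supports a metric with no conjugate points along rays from $q$, or equivalently that the associated spacetime can be made globally hyperbolic), and you would need to supply that step. Also, the claim that a positive contact isotopy ``advances every point strictly in the direction of the Reeb vector field'' is imprecise: positivity of $\alpha(\dot{\Lambda}^s)$ does not mean the motion is along $R_\alpha$, only that it has a positive $R_\alpha$-component.
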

Also, see the work \cite{Lucas} by Dahinden for obstructions in certain cases when the universal cover is not open.

In the second case of Theorem \ref{thm:CN} the contractibility condition is essential. Indeed, if $S^n$ denotes the round sphere in $\R^{n+1}$, the Reeb flow on the unit cotangent bundle $S(T^*S^n)$ for the contact form $p dq$ given by the restriction of the Liouville form corresponds to the geodesic flow. In this way we see that any Legendrian submanifold sits inside a positive loop. However, for the Legendrian fibre, such a loop fails to be contractible in the space of Legendrian embeddings by the above results.

\begin{Rem}
In \cite{GKS} Guillermou--Kashiwara--Shapira reprove Theorem \ref{thm:CN} using the method of microsupports of constructible sheaves. In the present paper we consider a class of contact manifolds that is strictly larger than jet-spaces and spaces of contact elements, i.e.~contact manifolds where the methods based upon the microsupport of sheaves are not yet applicable.
\end{Rem}

\subsection{Results}

In this paper we obtain generalisations of the above results. We say that a contact manifold $(M,\xi)$ is \emph{hypertight} if it admits a (possibly degenerate, see Section \ref{sec:ht}) contact form having no contractible periodic Reeb orbits; the latter contact form will be called \emph{hypertight} as well. Likewise, a Legendrian submanifold $\Lambda \subset (M,\xi)$ is called \emph{hypertight} if there is a hypertight contact form for which $\Lambda$, moreover, has no contractible Reeb chords (i.e.~Reeb chords in the homotopy class $0 \in \pi_1(M,\Lambda)$).

The contactisation of a Liouville manifold $(P,d\theta)$ is the (non-compact) hypertight contact manifold $(P \times \R,\alpha_{\OP{std}})$, $\alpha_{\OP{std}}=\theta+dz$, with $z$ denoting the coordinate on the $\R$-factor. In such manifolds, lifts of exact Lagrangians in $P$ are particular cases of hypertight Legendrians. 
\begin{Ex}
The archetypal example of a contact manifold of the above form is the jet-space $\mathcal{J}^1Q$ of a smooth manifold, which is the contactisation of $(T^*Q,-d\lambda_Q)$ for the Liouville form $\lambda_Q$ (note the sign in our convention!). The zero-section $0_Q \subset T^*Q$ is an exact Lagrangian submanifold which lifts to the zero-section of $\mathcal{J}^1Q$; this is an embedded hypertight Legendrian submanifold.
\end{Ex}

In the present paper, when talking about hypertight contact manifolds, we also assume that outside a compact set they are equivalent to the contactisation of a Liouville manifold. The hypertightness assumption is mainly a technical one, and we refer to Section \ref{rem_pardon} below for an explanation of what we expect to hold more generally.

The first result is an obstruction to the existence of a positive Legendrian loop expressed in terms of Legendrian contact cohomology. This is a Legendrian isotopy invariant originally defined in \cite{Chekanov_DGA_Legendrian} by Chekanov, and also sketched in \cite{Eliashberg_&_SFT} by Eliashberg--Givental--Hofer. The theory has been rigorously defined in a wide range of contact manifolds; see \cite{Chekanov_DGA_Legendrian} for one-dimensional Legendrians and \cite{LCHgeneral} for the case of a general contactisation.

Let $\Lambda,\Lambda' \subset (P \times \R,\alpha_{\OP{std}})$ be Legendrian submanifolds of the contactisation of a Liouville domain, each of which having a Chekanov--Eliashberg algebra admitting augmentations $\varepsilon$ and $\varepsilon'$, respectively. As described in Section \ref{sec:lch} below, there is an associated linearised Legendrian contact cohomology complex $LCC_{\varepsilon,\varepsilon'}^*(\Lambda,\Lambda')$ generated by Reeb chords from $\Lambda'$ to $\Lambda$ (observe the order!). The homotopy type of the complex $LCC_{\varepsilon,\varepsilon'}^*(\Lambda,\Lambda')$ can be seen to only depend on the Legendrian isotopy class of the link $\Lambda \cup \Lambda'$ and the augmentations chosen; see e.g.~\cite{Duality_EkholmetAl}.
\begin{Thm}
\label{thm:liouville}
Assume that $LCC_{\varepsilon,\varepsilon'}^*(\Lambda,\Lambda')$ is not acyclic for $\Lambda,\Lambda' \subset (P\times\R,\theta+dz)$. Then $\Lambda$ is not part of a positive loop of Legendrian submanifolds of the complement $M \setminus \Lambda'$. Under the additional assumption that
$$\min_\Lambda z > \max_{\Lambda'} z$$
is satisfied, then $\Lambda$ is not contained inside \emph{any} positive loop of Legendrians.
\end{Thm}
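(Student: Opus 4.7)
The plan is to upgrade the positive loop to an exact Lagrangian concordance in the symplectisation, use the functoriality of linearised Legendrian contact cohomology under pairs of exact Lagrangian cobordisms to produce an endomorphism of $LCC^*_{\varepsilon,\varepsilon'}(\Lambda,\Lambda')$ with a strict action-filtration shift, and then iterate to reach a contradiction with the non-acyclicity hypothesis via a finite-action-range argument.

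First I would build, from a positive loop $\{\Lambda^s\}_{s\in[0,1]}$ of Legendrians in $M\setminus\Lambda'$ with $\Lambda^0=\Lambda^1=\Lambda$, an exact Lagrangian cobordism $V\subset\R_t\times(P\times\R_z)$ from $\Lambda$ at $t=-\infty$ to the Reeb-translate $\Lambda_c:=\Lambda+c\,\partial_z$ at $t=+\infty$, for some $c>0$ determined by the contact Hamiltonian generating the loop. This is the standard Lagrangian suspension of a positive contact isotopy: positivity of the Hamiltonian is precisely what turns the suspension into an honest exact cobordism, and the shift $c$ records the net positive $z$-motion over one period. Since the loop avoids $\Lambda'$, the cobordism $V$ is disjoint from the trivial Lagrangian cylinder $\R\times\Lambda'$.

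Applying the CDGG-style functoriality of linearised LCH for pairs of exact Lagrangian cobordisms (as developed in \cite{Duality_EkholmetAl}) to the pair $(V,\R\times\Lambda')$ with the augmentations $(\varepsilon,\varepsilon')$, one obtains a chain map
\[
\Phi_V\co LCC^*_{\varepsilon,\varepsilon'}(\Lambda,\Lambda')\longrightarrow LCC^*_{\varepsilon_c,\varepsilon'}(\Lambda_c,\Lambda'),
\]
which is a quasi-isomorphism because $V$ is a Lagrangian concordance. Composing with the canonical Reeb-translation identification $\iota\co LCC^*(\Lambda,\Lambda')\to LCC^*(\Lambda_c,\Lambda')$ (sending a chord of $z$-length $a$ to the shifted chord of $z$-length $a+c$) yields a quasi-isomorphic endomorphism $\Psi:=\iota^{-1}\circ\Phi_V$ of $LCC^*_{\varepsilon,\varepsilon'}(\Lambda,\Lambda')$. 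The key ingredient, derived from the SFT action balance for pseudoholomorphic strips with boundary on $V\cup(\R\times\Lambda')$, is that $\Psi$ differs from the identity by a term strictly shifting the action filtration upward by at least $c$; iterating the loop $N$ times yields analogous quasi-isomorphic endomorphisms $\Psi_N$ whose deviation $\Psi_N-\id$ strictly shifts action by at least $Nc$. Since $\Lambda$ and $\Lambda'$ are compact and disjoint, the Reeb-chord generators of $LCC^*_{\varepsilon,\varepsilon'}(\Lambda,\Lambda')$ have actions in a bounded positive interval $[\ell_{\min},\ell_{\max}]$; choosing $N$ large enough that the action shift exceeds the length of this interval, the telescoping produced by the concordance quasi-isomorphism forces every nontrivial cohomology class to be representable by cocycles of arbitrarily large action, which is incompatible with the finiteness of the action range, contradicting non-acyclicity.

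For the second part, the condition $\min_\Lambda z>\max_{\Lambda'} z$ lets one deform any positive loop of $\Lambda$ into one contained in $M\setminus\Lambda'$: after iterating the loop $N$ times, which multiplies the contact Hamiltonian by $N$, one superimposes a Reeb translation $f(s)\partial_z$ with $f(0)=f(1)=0$ and $f(s)$ large enough in the interior to keep $\Lambda^s+f(s)\partial_z$ above $\Lambda'$, while $|f'|$ remains small relative to $NH$ and preserves positivity; this is achievable for $N$ sufficiently large, since the required $|f'|$ depends only on the fixed geometry of $\Lambda$ and $\Lambda'$. Applying the first case finishes the argument. The main obstacle will be establishing the strict action-shift property of $\Psi-\id$ in the moduli-space formalism of \cite{Duality_EkholmetAl} and extracting from it the non-acyclicity contradiction via the iterated loops.
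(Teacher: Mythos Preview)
Your overall strategy---turn the positive loop into a Lagrangian concordance, extract a cobordism map on $LCC^*$, and exploit an action shift to contradict non-acyclicity---is the same as the paper's. But there are two genuine gaps.

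First, the concordance $V$ built from a positive \emph{loop} $\{\Lambda^s\}$ has both ends equal to $\Lambda$, not $\Lambda$ and $\Lambda_c=\Lambda+c\,\partial_z$. The positivity of the loop is recorded by the primitive $f_V$ of $e^t\alpha|_V$ being strictly positive on the positive end (this is Proposition~\ref{prp:pos_cyl}), not by a Reeb shift of the Legendrian end. So there is no natural identification $\iota$ as you describe, and the chain map you obtain is
\[
d_{+-}\colon LCC^*_{\varepsilon^-_0,\varepsilon^-_1}(\Lambda,\Lambda')\longrightarrow LCC^*_{\varepsilon^+_0,\varepsilon^+_1}(\Lambda,\Lambda'),
\]
where $\varepsilon^+_i=\varepsilon^-_i\circ\Phi_{\Sigma_i}$ are the \emph{pulled-back} augmentations. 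These need not coincide with $\varepsilon^-_i$, so you do not get an endomorphism of a single complex that you can iterate. The paper resolves this by introducing spectral invariants $c_{\varepsilon_0,\varepsilon_1}(\Lambda,\Lambda')$, proving (Proposition~\ref{prp:spectral}) that any augmentation pair arises as a pull-back with the same spectral value, then choosing a pair minimising the spectral invariant; the strict increase of the spectral invariant under the concordance (Theorem~\ref{thm:spectral}) then contradicts minimality. Your iteration scheme bypasses this augmentation bookkeeping without justification.

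Second, even granting an endomorphism $\Psi$, the claim that ``$\Psi-\id$ strictly shifts action by at least $c$'' does not follow from the naive energy estimate: the action of a generator at the positive end is $e^T\ell(\gamma^+)-f^+_V$ rather than $\ell(\gamma^+)-c$, and the rescaling by $e^T$ spoils any additive shift on chord lengths. The paper's Theorem~\ref{thm:spectral} extracts the correct monotonicity (strict increase of the spectral invariant) via a neck-stretching argument and a careful comparison with an auxiliary wrapped cylinder $\phi^\ell_{\beta\partial_z}(\R\times\Lambda')$; this is where the real work lies, and your proposal does not supply a substitute.

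For the second assertion, your idea of modifying the loop by a time-dependent Reeb translation can be made to work, but the paper's argument is simpler: under $\min_\Lambda z>\max_{\Lambda'}z$ one just translates $\Lambda'$ far in the negative $z$-direction (which does not affect $LCC^*$ by the $z$-invariance in Lemma~\ref{lem:alpha0}) until $\R\times\Lambda'$ is disjoint from the fixed concordance $\Sigma_{\{\Lambda^s\}}$, and then the first case applies.
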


\begin{Rem} Under the stronger assumption $\min_\Lambda z > \max_{\Lambda'} z$, the homotopy type of the complex $LCC_{\varepsilon,\varepsilon'}^*(\Lambda,\Lambda')$ can be interpreted as also being invariant under Legendrian isotopy of each $\Lambda$ and $\Lambda'$ separately with the following caveat: we must first make the two Legendrian isotopies disjoint by translating the second family $\Lambda_t'$ very far in the negative $z$-direction (i.e.~by applying the negative Reeb flow, which is a contact form preserving isotopy).
\end{Rem}
In the case when $\min_\Lambda z > \max_{\Lambda'} z$ is satisfied, the complex $LCC_{\varepsilon,\varepsilon'}^*(\Lambda,\Lambda')$ can be interpreted a version of the Floer homology complex
$$CF(\Pi_{\OP{Lag}}(\Lambda),\Pi_{\OP{Lag}}(\Lambda')) \subset (P,d\theta)$$
for a pair of exact Lagrangian immersions. We refer to \cite{AkahoJoyce} for general treatment of Lagrangian Floer homology in the immersed case.

When neither of $\Lambda$ nor $\Lambda'$ have any Reeb chords their Lagrangian projections are exact Lagrangian \emph{embeddings}. In this case, both Legendrian submanifolds have a unique canonical augmentation and, given that $\min_\Lambda z > \max_{\Lambda'}z$, there is a canonical identification with the classical Lagrangian Floer homology complex $CF^*(\Pi_{\OP{Lag}}(\Lambda),\Pi_{\OP{Lag}}(\Lambda'))$ defined by Floer \cite{FloerHFlag}. In the case when $\Lambda'$ moreover is obtained from $\Lambda$ by an application of the negative Reeb flow followed by a sufficiently $C^1$-small Legendrian perturbation, Floer's original computation in \cite{FloerHFlag} shows that there is an isomorphism
$$LCC_{\varepsilon,\varepsilon'}^*(\Lambda,\Lambda')=CF^*(\Pi(\Lambda),\Pi(\Lambda'))=C_*(\Lambda)$$
of the Floer homology complex and Morse homology complex. From this we now conclude that
\begin{Cor}
The Legendrian lift $\Lambda_L \subset (P \times \R,\alpha_{\OP{std}})$ of an exact Lagrangian embedding $L \subset (P,d\theta)$ is not contained in a positive loop of Legendrians.
\end{Cor}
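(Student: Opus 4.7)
My plan is to derive the corollary as a direct application of Theorem~\ref{thm:liouville}, in its stronger form where the extra hypothesis $\min_\Lambda z > \max_{\Lambda'} z$ forbids \emph{any} positive loop of Legendrians containing $\Lambda$. So the only task is to produce an auxiliary Legendrian $\Lambda'$ satisfying the hypotheses of that theorem relative to $\Lambda = \Lambda_L$.

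The natural choice is to take $\Lambda'$ to be a deep downward Reeb translate of $\Lambda_L$, namely $\Lambda' := \varphi_R^{-T}(\Lambda_L)$ for sufficiently large $T > 0$, followed by a generic $C^1$-small Legendrian perturbation. Taking $T$ large forces $\min_\Lambda z > \max_{\Lambda'} z$. Since $L$ is an exact Lagrangian \emph{embedding}, $\Lambda_L$ has no Reeb chords, and the same remains true after a $C^1$-small perturbation of a Reeb translate. The Chekanov--Eliashberg algebras of both $\Lambda$ and $\Lambda'$ therefore have empty generating sets and admit a tautological augmentation $\varepsilon = \varepsilon'$.

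It then remains to check non-acyclicity of $LCC^*_{\varepsilon,\varepsilon'}(\Lambda,\Lambda')$. This is precisely the content of the identification recalled immediately before the corollary: by Floer's original computation in \cite{FloerHFlag}, when $\Lambda'$ is a sufficiently $C^1$-small Legendrian perturbation of a negative Reeb push-off of $\Lambda$ (both projecting to embedded exact Lagrangians), one has
\[
LCC^*_{\varepsilon,\varepsilon'}(\Lambda,\Lambda') \;\cong\; CF^*(\Pi_{\OP{Lag}}(\Lambda),\Pi_{\OP{Lag}}(\Lambda')) \;\cong\; C_*(\Lambda).
\]
Since $\Lambda$ is non-empty, $H_*(\Lambda) \neq 0$ (at minimum $H_0 \neq 0$), so the complex is not acyclic. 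Theorem~\ref{thm:liouville} then yields the corollary.

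The only delicate point to verify, rather than a true obstacle, is the validity of the Floer-to-Morse identification in the present, possibly non-compact, Liouville setting. If $L$ is compact this is standard Floer theory; in the non-compact case one arranges the Hamiltonian perturbation to be compactly supported near $\Lambda$ and invokes a maximum principle in the $z$-coordinate of the contactisation to confine the relevant holomorphic strips to the region where the small-perturbation computation applies.
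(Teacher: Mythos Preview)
Your proposal is correct and follows essentially the same approach as the paper: the corollary is deduced directly from Theorem~\ref{thm:liouville} by taking $\Lambda'$ to be a negative Reeb push-off of $\Lambda_L$ followed by a $C^1$-small perturbation, using that the absence of Reeb chords gives the trivial augmentation, and invoking Floer's computation $LCC^*_{\varepsilon,\varepsilon'}(\Lambda,\Lambda')\cong C_*(\Lambda)$ to verify non-acyclicity. Your closing remark about non-compactness is a slight over-worry: in this setting the Legendrian (and hence $L$) is closed, and the non-compactness of the ambient Liouville manifold $P$ is handled by the standard maximum principle confining the relevant strips to a compact region.
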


Theorem \ref{thm:liouville} is proven using Lagrangian Floer homology for Lagrangian cobordisms (which also goes under the name `Cthulhu homology'), which was developed in \cite{cthulhu} by the first and third authors together with Ghiggini and Golovko. In particular, we use this theory to produce the crucial long exact sequence in Theorem \ref{thm:les} below. By a variation of these techniques we are also able to establish the following related result.

\begin{Thm} \label{thm:hypertight}
A hypertight Legendrian submanifold $\Lambda \subset (M,\alpha)$ of a closed hypertight contact manifold is not contained inside a contractible positive loop of Legendrians.
\end{Thm}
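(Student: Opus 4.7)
The plan is to argue by contradiction, adapting the positive-loop-to-cobordism strategy behind Theorem \ref{thm:liouville} to the closed hypertight setting. Suppose $\Lambda$ is contained in a contractible positive loop $\{\Lambda^s\}_{s\in [0,1]}$, and extend this Legendrian isotopy to an ambient contact isotopy $\phi^s$ of $(M,\alpha)$ generated by a contact Hamiltonian positive along $\Lambda$. The suspension of $\phi^s$, after the standard reparametrisation in the $\R$-direction of the symplectisation $(\R\times M,d(e^t\alpha))$, yields an exact Lagrangian cobordism $L$ with both cylindrical ends modelled on $\Lambda$. Iterating the loop $N$ times and composing with the positive Reeb flow by time $T_N$ (which by positivity can be arranged to grow linearly in $N$), I obtain exact Lagrangian cobordisms $L_N$ from $\Lambda$ to the Reeb translate $\psi^{T_N}(\Lambda)$. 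Contractibility of the loop ensures that $L_N$ lies in the trivial relative homotopy class, and is Hamiltonian isotopic, fixing the cylindrical ends, to the concatenation of the trivial cylinder on $\Lambda$ with the trace cobordism of the Reeb flow.

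I would then set up a Cthulhu-style Floer invariant for pairs of exact Lagrangian cobordisms in $\R\times M$ with ends on $\Lambda$, mimicking \cite{cthulhu}, but restricted to generators whose associated homotopy class in $\pi_1(M,\Lambda)$ is trivial. Hypertightness of both $(M,\alpha)$ and $\Lambda$ delivers SFT compactness in this restricted class by ruling out bubbling on contractible Reeb orbits and contractible Reeb chords --- this replaces the role played by the Liouville filling in the proof of Theorem \ref{thm:liouville}. For two copies of the trivial cylinder $\R\times\Lambda$ related by a small Reeb shift together with a Morse perturbation, a standard Morse--Bott argument identifies the corresponding complex with the Morse cochain complex of $\Lambda$, whose cohomology $H^*(\Lambda)$ is non-zero.

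The contradiction is then extracted through a long exact sequence analogous to Theorem \ref{thm:les}, applied to the pair $(L_N,\R\times\Lambda)$. On the one hand, Hamiltonian invariance of the Floer theory combined with the contractibility of the loop identifies the complex associated to $(L_N,\R\times\Lambda)$ with that of two trivial cylinders, with cohomology $H^*(\Lambda)\neq 0$. On the other hand, the positive end $\psi^{T_N}(\Lambda)$ can be pushed arbitrarily far in the Reeb direction, and hypertightness forces every Reeb chord from $\psi^{T_N}(\Lambda)$ back to $\Lambda$ of action below $T_N$ to lie in a non-trivial class of $\pi_1(M,\Lambda)$, so the trivial-class piece of the complex becomes empty once $N$ is large enough. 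Taken together these yield $H^*(\Lambda)=0$, a contradiction.

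The main technical obstacle will be transporting the Cthulhu framework of \cite{cthulhu} from the fillable setting to the closed hypertight one: in particular, establishing invariance of the complex under Hamiltonian isotopies fixing the cylindrical ends, constructing continuation maps for concatenations, and verifying the long exact sequence, while tracking relative homotopy classes at every stage so as to preserve SFT compactness. The geometric input --- that iterating a positive loop gives unbounded Reeb translation --- is elementary; the subtle part is converting contractibility of the loop into genuine Hamiltonian equivalence of the resulting cobordism with the trivial cylinder, which is what makes the action shift forced by positivity incompatible with the Floer-theoretic identity.
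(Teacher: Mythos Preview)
Your setup is largely on track: form the trace cobordism of the positive loop, pair it with $\R\times\Lambda_1$ for a small Reeb-shifted Morse push-off $\Lambda_1$ of $\Lambda$, restrict the Cthulhu complex to the trivial class in $\pi_1(M,\Lambda)$ (hypertightness supplying compactness), and identify the end pieces $C^*_{\pm\infty}$ with the Morse complex of $\Lambda$. This matches the paper. Where your proposal goes wrong is the contradiction mechanism. First, the Cthulhu complex $\Cth^0_*(\Sigma_0,\Sigma_1)$ is always \emph{acyclic}; it never has homology $H^*(\Lambda)$. The content lies in the component $d_{+-}\colon C^*_{-\infty}\to C^*_{+\infty}$ of the differential, via the long exact sequence of Theorem~\ref{thm:les}. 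Second, your iteration and subsequent composition with a Reeb shift $\psi^{T_N}$ do not do what you claim: after shifting the positive end to $\psi^{T_N}(\Lambda)$, the trivial-class Reeb chords from $\Lambda_1$ to $\psi^{T_N}(\Lambda)$---which are the actual generators of $C^*_{+\infty}$---are still precisely the Morse critical points (now of length roughly $T_N$) and never disappear. Your observation about chords ``from $\psi^{T_N}(\Lambda)$ back to $\Lambda$'' lying in nontrivial classes is correct but concerns the wrong direction of chord; it does not empty any piece of the complex.

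The paper's contradiction is a direct action argument, with no iteration. Positivity of the loop forces the primitive $f^+_{\Sigma_{\{\Lambda^s\}}}>0$ at the positive end (Proposition~\ref{prp:pos_cyl}). Taking the push-off parameter $\epsilon>0$ small enough, the Morse-type chords at the positive end then have action $e^T\ell(\gamma^+)-f^+_{\Sigma_{\{\Lambda^s\}}}<0$, while those at the negative end have action $\ell(\gamma^-)>0$ (Lemma~\ref{lem:d+-}). Since $d_{+-}$ strictly increases action (Lemma~\ref{lem:action}), it must vanish. On the other hand, contractibility of the loop makes $\Sigma_{\{\Lambda^s\}}$ compactly Hamiltonian isotopic to the trivial cylinder (Proposition~\ref{prp:contractible}), and this forces $d_{+-}$ to be a quasi-isomorphism (Theorem~\ref{thm:delta}, Case~(2)). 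Since both $C^*_{\pm\infty}$ have homology $H^*(\Lambda)\neq 0$ (Lemma~\ref{lem:morse}), this is the contradiction. The ``emptying'' you attempt is neither needed nor achievable; the single action inequality coming from positivity is what kills $d_{+-}$.
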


\begin{Ex}
There are indeed examples of hypertight Legendrian submanifolds contained in a positive non-contractible loop. Consider e.g.~the conormal lift of the simple closed curve $S^1 \times \{ \theta\} \subset \T^2$ to a Legendrian knot inside $S(T^*\T^2)$. The Reeb flow for the flat metric on $\T^2$ restricted to this conormal lift induces a positive loop which is not contractible by mere topological reasons.
\end{Ex}
The above example should be contrasted with the following result, whose proof is similar to that of Theorem \ref{thm:hypertight}.
\begin{Thm}
\label{thm:hypertight2}
Let $\Lambda \subset (M,\alpha)$ be a hypertight Legendrian submanifold of a closed contact manifold having vanishing Maslov number. Further assume that, for every pair $\gamma_1,\gamma_2$ of Reeb chords on $\Lambda$ in the same homotopy class, the Conley--Zehnder indices satisfy either $\OP{CZ}(\gamma_1)-\OP{CZ}(\gamma_2)=0$ or $|\OP{CZ}(\gamma_1)-\OP{CZ}(\gamma_2)|>\dim \Lambda$. Then $\Lambda$ is not contained in a positive loop of Legendrians.
\end{Thm}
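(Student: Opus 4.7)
The plan is to adapt the argument used for Theorem~\ref{thm:hypertight}, with the Conley--Zehnder gap hypothesis doing double duty: it replaces the role of the contractibility assumption in that proof, and it provides the extra strength that upgrades the conclusion from ruling out contractible positive loops to ruling out all positive loops. Proceeding by contradiction, assume that $\Lambda$ is contained in some positive loop $\{\Lambda^s\}_{s\in[0,1]}$. Suspending the loop (and its iterates) in the symplectization $\R\times M$ after attaching long trivial cylindrical ends yields a sequence of exact Lagrangian self-cobordisms $V_N\co\Lambda\to\Lambda$ whose top Legendrian ends can be positioned arbitrarily high in the Reeb direction. The vanishing of the Maslov number guarantees a $\Z$-grading on the Cthulhu complex $CF(V_N,V_0)$, with $V_0$ a small negative Reeb pushoff of the trivial cylinder $\R\times\Lambda$, while hypertightness of both $\alpha$ and $\Lambda$ prevents bubbling onto closed Reeb orbits or contractible chords.

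Next, I would invoke the Cthulhu long exact sequence of Theorem~\ref{thm:les}, which relates $CF(V_N,V_0)$ to the bilinearised Legendrian contact cohomology $LCC^*(\Lambda,\Lambda)$ read off from each end of the cobordism pair. The crucial new point is that the Conley--Zehnder gap hypothesis forces the differential on $LCC^*(\Lambda,\Lambda)$ to vanish within every Reeb chord homotopy class. Indeed, a rigid holomorphic strip between chords $\gamma_\pm$ in the same homotopy class $\beta\in\pi_1(M,\Lambda)$ has Fredholm index $\OP{CZ}(\gamma_+)-\OP{CZ}(\gamma_-)-1$, so it exists only when the CZ difference equals $1$; this value is excluded by hypothesis, since $1\leq\dim\Lambda$. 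Consequently the complex splits as a direct sum over homotopy classes of chords, each summand carries trivial differential, and every Reeb chord of $\Lambda$ represents a nonzero cohomology class.

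Finally, positivity of the loop yields the contradiction. The Reeb chords generated on the top end of $V_N$ by the $N$-fold positive loop have action growing linearly in $N$, while intrinsic chords of $\Lambda$ at each end are confined to a bounded action window. Filtering the long exact sequence by action and letting $N\to\infty$, the continuation maps between the terms must eventually vanish on this window, forcing $LCC^*(\Lambda,\Lambda)$ to be acyclic, which contradicts the nontriviality established above. The main obstacle I anticipate is in carrying out this action-filtration argument uniformly across noncontractible homotopy classes: whereas Theorem~\ref{thm:hypertight} homotopes the positive loop to a constant loop using contractibility, here the comparison has to be performed class-by-class, with the Conley--Zehnder gap ruling out the would-be cancellations at the level of Fredholm indices.
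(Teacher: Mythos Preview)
Your proposal diverges from the paper's argument in a way that introduces a genuine gap rather than an alternative route.

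The paper's proof is short and direct: take $\Lambda_1$ to be a small negative Reeb push-off of $\Lambda_0=\Lambda$ perturbed by a Morse function $f$, and form the Cthulhu complex $\Cth^0_*(\Sigma_{\{\Lambda^s\}},\R\times\Lambda_1)$ for a \emph{single} (non-iterated) positive loop. In the hypertight case Theorem~\ref{thm:les}(2) already gives $d_{-0}=0$, so exactness forces $d_{+-}$ to be \emph{injective} on homology. The source $H(C^*_{-\infty})$ is always the Morse homology of $\Lambda$, hence carries two nonzero classes (the maximum and minimum of $f$) in degrees differing by exactly $n=\dim\Lambda$. The target $H(C^*_{+\infty})=LCH^{\beta,*}(\Lambda_0,\Lambda_1)$ lies in a homotopy class $\beta$ determined by the loop; if $\beta\neq 0$ its generators are Reeb chords on $\Lambda$ in class $\beta$, and the CZ-gap hypothesis then forbids any two generators in degrees differing by exactly $n$. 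Therefore $d_{+-}$ cannot carry both the maximum and the minimum to nonzero classes, contradicting injectivity. (For $\beta=0$ the argument of Theorem~\ref{thm:hypertight} applies verbatim via Lemma~\ref{lem:d+-}.) No iteration, no limiting action argument, is needed.

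Your argument misses this mechanism. You apply the CZ-gap to conclude the differential on $LCC^*$ vanishes within each class (true, but not what is used), and then attempt in Step~5 an action-filtration/iteration argument to force $LCC^*(\Lambda,\Lambda)$ to be acyclic. That last step is where the gap lies: the long exact sequence does not say the target is acyclic, only that $d_{+-}$ is injective; iterating the loop changes $T$, $f^+_{\Sigma}$, and potentially the homotopy class $\beta$ simultaneously, and your claim that ``continuation maps must eventually vanish on this window'' is neither stated precisely nor derived from the available structure. The contradiction you are looking for is a \emph{degree} obstruction (Morse range $[0,n]$ versus CZ-gap $>n$) to the injectivity of $d_{+-}$, not an action-growth obstruction.
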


Examples of Legendrian manifolds satisfying the hypothesis of the previous theorem are given by cotangent fibres in the space of contact elements of manifolds with non-positive curvature.
\begin{Cor}[Chernov--Nemirovsky \cite{Cher_Nem}]
The unit cotangent fibre of a manifold with non-positive sectional curvature is not contained in a positive loop of Legendrians.
\end{Cor}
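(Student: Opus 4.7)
The plan is to verify the hypotheses of Theorem \ref{thm:hypertight2} for $\Lambda = S^*_q Q \subset S(T^*Q)$, equipped with the standard contact form obtained by restricting the canonical Liouville form on $T^*Q$. Under the metric identification $T^*Q \cong TQ$, the Reeb vector field on $S(T^*Q)$ generates the co-geodesic flow, so Reeb chords on $\Lambda$ are in bijection with non-constant geodesic loops on $(Q,g)$ based at $q$, and the homotopy class of a chord (in $\pi_1(S(T^*Q),\Lambda)\cong\pi_1(Q,q)$) corresponds to that of the underlying based loop.

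Next, I would verify the hypertightness conditions using the Cartan--Hadamard theorem, which provides a diffeomorphism $\exp_{\widetilde{q}}\co \R^n \to \widetilde{Q}$ and implies that any two points of the universal cover $\widetilde{Q}$ are joined by a unique geodesic. As a consequence, every contractible closed geodesic on $Q$ would lift to a closed geodesic in $\widetilde{Q}$, which is impossible; hence the contact form has no contractible periodic Reeb orbits. The same lifting argument, applied to a based loop, shows that every non-constant geodesic loop at $q$ represents a non-trivial class in $\pi_1(Q,q)$, so $\Lambda$ has no contractible Reeb chords.

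For the Conley--Zehnder hypothesis, I would again invoke Cartan--Hadamard: uniqueness of geodesic segments between fixed endpoints in $\widetilde{Q}$ implies that each class in $\pi_1(Q,q)$ contains at most one geodesic loop based at $q$. Therefore each homotopy class of Reeb chords on $\Lambda$ contains at most one chord, and the hypothesis on pairs $\gamma_1,\gamma_2$ in the same class is vacuously satisfied (any pair forces $\gamma_1=\gamma_2$, whence $\OP{CZ}(\gamma_1)-\OP{CZ}(\gamma_2)=0$). The vanishing of the Maslov number of $\Lambda$ is a standard fact: when $\dim Q \geq 3$ the fibre $S^{n-1}$ is simply connected so $H_1(\Lambda)=0$; when $\dim Q = 2$, it follows from the fact that the Lagrangian cotangent fibre $T^*_q Q$ provides a Lagrangian trivialisation of $T(T^*Q)|_{T^*_qQ}$ with respect to which the Maslov class of the Legendrian boundary $S^*_q Q$ vanishes.

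With all hypotheses verified, Theorem \ref{thm:hypertight2} gives the stated conclusion. I do not anticipate any substantial obstacle: the entire proof amounts to translating the hypotheses of Theorem \ref{thm:hypertight2} into Riemannian terms via the Reeb = co-geodesic flow correspondence and invoking the classical Cartan--Hadamard theorem. The genuine Floer-theoretic content has already been absorbed into Theorem \ref{thm:hypertight2} itself.
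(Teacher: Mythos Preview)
Your proposal is correct and follows precisely the approach the paper intends: the paper does not give a detailed proof of this corollary but simply asserts that cotangent fibres in spaces of contact elements of non-positively curved manifolds satisfy the hypotheses of Theorem~\ref{thm:hypertight2}, and you have carried out this verification in full. The key inputs---the Reeb flow equals the co-geodesic flow, the identification $\pi_1(S(T^*Q),\Lambda)\cong\pi_1(Q,q)$, and the Cartan--Hadamard theorem giving both hypertightness and uniqueness of geodesic loops in each based homotopy class---are exactly what is needed, and your treatment of the Maslov number is adequate.
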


The ideas used for proving the above results can also be applied in the case when the Legendrian submanifold admits an exact Lagrangian filling. In this case, instead of the Floer theory from \cite{cthulhu}, ordinary wrapped Floer cohomology can be used.
\begin{Thm}\label{thm:filling}
If a Legendrian $\Lambda \subset (M,\xi)$ admits an exact Lagrangian filling $L \subset (X,\omega)$ inside a Liouville domain with contact boundary $(\partial X=M,\xi)$, such that the wrapped Floer cohomology of $L$ is nonvanishing (for some choice of coefficients), then $\Lambda$ is not contained inside a contractible positive loop of Legendrians.
\end{Thm}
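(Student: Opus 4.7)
The plan parallels Theorems~\ref{thm:liouville} and~\ref{thm:hypertight}, with wrapped Floer cohomology $HW^*(L)$ replacing linearised Legendrian contact cohomology as the non-vanishing invariant used to force a contradiction. The idea is to play positivity and contractibility of the loop against each other: positivity provides a Lagrangian cobordism with arbitrarily large Reeb shift, while contractibility forces this cobordism to be compactly supported Hamiltonian isotopic to the trivial cylinder.

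Assume for contradiction a contractible positive loop $\{\Lambda^s\}_{s\in[0,1]}$ with $\Lambda^0=\Lambda^1=\Lambda$. From the loop I would first construct an exact Lagrangian cylinder $V \subset (\R\times M, d(e^t\alpha))$ in the symplectization as the trace $\{(t(s), \Lambda^s(q))\}$, with the height function $t(s)$ chosen so that $V$ is exact Lagrangian; positivity of the loop ensures $t(s)$ is monotone, so both ends of $V$ are cylindrical over $\Lambda$ but separated by some $T_0 > 0$ in the symplectization direction. Iterating the loop $N$ times gives cobordisms $V^N$ of Reeb shift $NT_0$, and concatenating $V^N$ above the filling $L$ yields an exact Lagrangian filling $L_N := L \odot V^N$ of $\Lambda$ in the completion $\widehat{X}$.

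Next, the contractibility of the loop, applied parametrically to the trace construction via the null-homotopy $\Lambda^{s,r}$, produces a compactly supported Hamiltonian isotopy of $\widehat{X}$ deforming $L_N$ into a translate of $L$. By invariance of wrapped Floer cohomology under compactly supported Hamiltonian isotopy, $HW^*(L_N) \cong HW^*(L)$ for every $N$. On the other hand, since $L_N$ contains a genuinely Reeb-cylindrical piece of length at least $NT_0$ between $L$ and its top end, an action-filtered long exact sequence in the spirit of Theorem~\ref{thm:les} and the Cthulhu formalism of \cite{cthulhu} should show that any class in $HW^*(L_N)$ is representable by Floer chain elements of action at least $NT_0$. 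As $N\to\infty$ this is incompatible with $HW^*(L) \neq 0$, yielding the contradiction.

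The main obstacle, as in the preceding theorems, will be making the action-bound step rigorous: it requires a chain-level model of $HW^*$ compatible with the Lagrangian concordance $V^N$, together with a filtered long exact sequence relating $HW^*(L_N)$ to a subcomplex of Floer generators whose action exceeds $NT_0$. Once this is in place, the positivity supplying the linearly growing Reeb shift furnishes the contradiction; contractibility enters only through the Hamiltonian-isotopy step, exactly as in the proofs of Theorems~\ref{thm:hypertight} and~\ref{thm:hypertight2}.
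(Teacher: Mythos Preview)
Your outline has the right ingredients---trace of the loop as a concordance, contractibility giving a compactly supported Hamiltonian isotopy, action considerations---but the crucial Step~4 has a genuine gap, and it is conceptual rather than merely technical.

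The difficulty is that for a \emph{single} exact Lagrangian filling the potential $f_{L_N}$ is only well-defined up to an additive constant, so the shift in potential contributed by $V^N$ is invisible in the self-wrapped complex $CF^*(L_N;H_\lambda)$: the action of a Hamiltonian chord depends only on the chord and the Hamiltonian, not on how $L_N$ was assembled as a concatenation. Concatenating $L$ with $V^N$ produces a filling which, as an exact Lagrangian in $\widehat{X}$, differs from a translate of $L$ only by a compactly supported Hamiltonian isotopy; its self-Floer complex carries no memory of $N$. So the assertion that ``any class in $HW^*(L_N)$ is representable by chains of action at least $NT_0$'' has no intrinsic content, and the iteration buys nothing.

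The paper avoids this by working throughout with a \emph{pair} $(L_0,L_1)$, where $L_1=L$ stays fixed and $L_0$ is a small Reeb push-off; the concordance $\Sigma_{\{\Lambda^s\}}$ is grafted onto the cylindrical end of $L_0$ alone. Now the potential \emph{difference} $f_{\widetilde{L}_0}-f_{L_1}$ genuinely records the positivity of the loop, and this is what forces the short Reeb chord $\gamma_M^+$ representing the unit of $HW^*(L,L)$ to acquire \emph{negative} action. The contradiction is then extracted from a single configuration, not an $N\to\infty$ limit: a sequence of confinement lemmas for Floer and continuation strips (controlling which generators they can connect across several dividing hypersurfaces) shows that $\partial'(\gamma_M^+)$ must still hit a specific generator $\widecheck{\gamma}_M$ of \emph{positive} action, violating the action-decreasing property of the differential. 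If you wanted to rescue an iteration argument, you would have to set up a spectral invariant for the unit class in a two-Lagrangian model and prove a strict-increase statement under concatenation on one side---essentially a wrapped analogue of Theorem~\ref{thm:spectral}---which is at least as much work as the direct route.
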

We remind the reader that, in the case when we can exclude a Legendrian submanifold from being contained in a positive loop, it also follows that the ambient contact manifolds involved are orderable in the sense of \cite{ElPo_Order}: non-orderability would imply that any Legendrian is in a contractible positive loop of Legendrians.
\begin{Rem}
Applying the previous theorem to the Lagrangian diagonal in a symplectic product allows us to show (strong) orderabilty of $(M,\xi)$ in the case when it admits a filling with non-vanishing symplectic homology (see Theorem \ref{thm: strong}), thus strengthening \cite[Corollary 1.3]{AlMe}. In the hypertight case, orderability of the contact manifold is proved in \cite{AlFuMe}.
\end{Rem}
We notice also that we can state the following consequence, relying on Bourgeois--Ekholm--Eliashberg \cite{EffectLegendrian}:

\begin{Cor} Let $(M_+,\xi_+)$ obtained by performing a contact surgery along a Legendrian link $\Lambda \subset (M_-,\xi_-)$ of spheres where
\begin{itemize}
\item $(M_-,\xi_-)$ is the boundary of a subcritical Weinstein domain,
\item the Legendrian contact homology DGA of each component of $\Lambda$ is not acyclic.
\end{itemize}
Then there is no contractible positive Legendrian loop containing a Legendrian co-core sphere  inside $(M_+,\xi_+)$ created by the surgery.
\end{Cor}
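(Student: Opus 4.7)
The plan is to invoke Theorem \ref{thm:filling}. Let $X_-$ denote the subcritical Weinstein domain filling $(M_-,\xi_-)$, and let $X_+$ be the Weinstein domain obtained from $X_-$ by attaching critical Weinstein handles along the components of $\Lambda$; in particular $\partial X_+=(M_+,\xi_+)$. For each component $\Lambda_i\subset\Lambda$, the corresponding Legendrian co-core sphere $\Lambda_i^{\OP{co}}\subset(M_+,\xi_+)$ is the boundary of its Lagrangian co-core disk $D_i\subset X_+$, which is an exact Lagrangian filling. Therefore, by Theorem \ref{thm:filling}, the corollary reduces to establishing that the wrapped Floer cohomology $HW^*(D_i)$ is non-zero for an appropriate choice of coefficients.

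For this non-vanishing we appeal to the surgery formula of Bourgeois--Ekholm--Eliashberg \cite{EffectLegendrian}. Under the subcriticality hypothesis on $X_-$, this formula computes the wrapped Floer cohomology of the co-core disks from the Chekanov--Eliashberg DGA of the attaching link $\Lambda\subset(M_-,\xi_-)$: more precisely, the endomorphism $A_\infty$-algebra of the collection $\{D_i\}$ in the wrapped Fukaya category of $X_+$ is quasi-isomorphic to the Chekanov--Eliashberg DGA of $\Lambda$. Subcriticality is essential here since it ensures that no extra Reeb orbit generators from $X_-$ enter the formula, which would otherwise obscure the identification.

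The main obstacle is to deduce from this the non-vanishing of $HW^*(D_i)$ using only the hypothesis that the DGA of the individual component $\Lambda_i$ is non-acyclic, rather than that of the full link DGA. The key observation is that, under the surgery correspondence, the self-endomorphism algebra $CW^*(D_i,D_i)$ inherits a piece coming from the pure self-Reeb-chord dynamics of $\Lambda_i$ alone; in the quiver-like structure of the link DGA, setting mixed chords between distinct components to zero yields the DGA of $\Lambda_i$ viewed as a single component. Combining this with the $A_\infty$-quasi-isomorphism produced by \cite{EffectLegendrian}, the non-acyclicity hypothesis on $\Lambda_i$ propagates to a non-vanishing class in $HW^*(D_i)$, and Theorem \ref{thm:filling} delivers the corollary.
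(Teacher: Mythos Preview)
Your proposal is correct and follows exactly the paper's approach: invoke the Bourgeois--Ekholm--Eliashberg surgery formula \cite{EffectLegendrian} to identify the wrapped Floer cohomology of the Lagrangian co-core disk with Legendrian contact homology, and then apply Theorem~\ref{thm:filling}. The paper's own argument is a single sentence and does not distinguish the component DGA from the link DGA; your final paragraph addressing this via the unital DGA quotient that kills mixed chords is therefore a useful elaboration rather than a different route.
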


Indeed as predicted in \cite{EffectLegendrian}, the wrapped Floer cohomology of the co-core of a attaching handle is isomorphic to the Legendrian contact homology of $\Lambda$. Hence, Theorem \ref{thm:filling} shows that the co-core spheres are not contained in a positive loop of Legendrians when $LCH(\Lambda)\not =0$.

Our methods also apply to prove \emph{strong} orderability of some contact manifolds by passing to contact products (see below for the definition). Let $(M,\xi =\ker \alpha )$ be a contact manifold. The {\it contact product} is the contact manifold $(M\times M\times \R ,\alpha_1 -e^t \alpha_2 )$, where $\alpha_i$ is the pullback of $\alpha$ by the projection $\pi_i : M\times M \times \R \to M$ on the $i$th factor, $i=1,2$. If $\phi$ is a contactomorphism of $(M,\xi)$ with $\phi^* \alpha =e^{g(t)} \alpha$, then the {\it graph} $\Delta_\phi$ of $\phi$ is the Legendrian submanifold $\{ (x,\phi (x), g(x)), \: x\in M\}$ of the contact product.
To a contact isotopy $(\phi_t)_{t\in [0,1]}$ of $(M,\xi)$ we can thus associate a Legendrian isotopy $(\Delta_{\phi_t} )_{t\in [0,1]}$ starting from the {\it diagonal} $\Delta_{\id}=\{ (x,x,0)\}$. When $(\phi_t)_{t\in [0,1]}$ is positive, then $(\Delta_{\phi_t} )_{t\in [0,1]}$ is negative. Following Liu, we can say that $(M,\xi)$ is {\it strongly orderable} whenever there is no contractible positive loop of Legendrians based at the diagonal in the contact product. In that case, we can endow the universal cover of the identity component of the group of contactomorphisms of $(M,\xi)$ with a partial order, by saying that $[(\phi_t)_{t\in [0,1]}] \leq[(\psi_t)_{t\in [0,1]}]$ if there exists a positive path of Legendrians from $\Delta_{\phi_1}$ to $\Delta_{\psi_1}$ which is homotopic to the concatenation of the opposite of $(\Delta_{\phi_t})_{t\in [0,1]}$ together with $(\Delta_{\psi_t})_{t\in [0,1]}$. This is possibly a different notion from Eliashberg--Polterovich's order since the latter also requires paths of Legendrians to stay amongst \emph{graphs of contactomorphisms}.

In \cite{Liu_paper}, Liu proved that if $(M,\xi )$ is overtwisted (in the sense of Borman--Eliashberg--Murphy \cite{ExistenceOvertwisted}), then the contact product $(M\times M\times \R ,\alpha_1 -e^t \alpha_2 )$ is also overtwisted and its diagonal is a loose Legendrian. Thus the diagonal is the base point of a contractible positive loop by Theorem \ref{thm:liu} and $(M,\xi )$ is not strongly orderable.

Here we prove the following.

\begin{Thm} \label{thm: strong} If $(M,\xi=\ker\alpha)$ is the contact boundary of a Liouville domain $(W,\omega=d\alpha)$ whose symplectic cohomology does not vanish for some choice of coefficients, i.e.~$SH^*(W,\omega)\neq 0$, then $(M,\xi)$ is strongly orderable.
\end{Thm}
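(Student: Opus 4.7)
The plan is to apply (a suitable adaptation of) Theorem \ref{thm:filling} to the diagonal Legendrian $\Delta_{\id}\subset(M\times M\times \R,\alpha_1-e^t\alpha_2)$, using the diagonal of $W$ as an exact Lagrangian filling. Strong orderability of $(M,\xi)$ is, by definition, the absence of a contractible positive loop of Legendrians based at $\Delta_{\id}$, so this reduction suffices.

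First I would construct the Liouville filling. Consider $X := W \times \overline W$, endowed with the symplectic form $\omega_1 - \omega_2$ and Liouville primitive $\lambda_1 - \lambda_2$, where the $\lambda_i$ denote the pullbacks by the two projections of a Liouville primitive $\lambda$ on $W$ with $\lambda|_{\del W}=\alpha$. The boundary of $W\times \overline W$ has corners along $M \times M$; after a standard smoothing of these corners, one obtains a genuine Liouville domain whose contact boundary is contactomorphic to (a truncation of) the contact product $(M\times M\times \R, \alpha_1 - e^t\alpha_2)$, with the level $t=0$ corresponding to the smoothed corner itself. The diagonal
$$\Delta_W := \{(w,w) : w\in W\} \subset X$$
is an exact Lagrangian submanifold, since $(\lambda_1-\lambda_2)|_{\Delta_W}=0$, and its Legendrian boundary in the smoothed contact boundary is exactly $\Delta_{\id}$.

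Next I would identify the self wrapped Floer cohomology of $\Delta_W$ with the symplectic cohomology of the ambient Liouville domain,
$$HW^*(\Delta_W,\Delta_W) \cong SH^*(W,\omega),$$
which is non-zero by hypothesis. This identification is standard: a direct computation shows that the Reeb vector field of $\alpha_1-e^t\alpha_2$ on the contact product equals the Reeb field of the first factor, so that Reeb chords of $\Delta_{\id}$ correspond bijectively to parametrised closed Reeb orbits of $(M,\alpha)$. For a split almost complex structure the Floer strips with boundary on $\Delta_W$ then project to pseudoholomorphic cylinders in $\widehat W$ computing $SH^*(W,\omega)$, yielding the isomorphism above.

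Applying Theorem \ref{thm:filling} to the pair $(\Delta_W,\Delta_{\id})$ then yields that $\Delta_{\id}$ is not contained in any contractible positive loop of Legendrians of the contact product, i.e.~$(M,\xi)$ is strongly orderable. The principal obstacle is analytic rather than conceptual: both $X$ and its contact boundary are non-compact (the non-compactness lying in the symplectization direction created by smoothing the corner of $W\times\overline W$), so Theorem \ref{thm:filling} must first be extended to the present setting of a Liouville filling which is non-compact along the contact boundary. This should follow from a maximum principle in the non-compact direction (valid for cylindrical-at-infinity almost complex structures of split type, using that $\Delta_W$ itself is compact), combined with the SFT-type compactness already employed at the positive-loop end in the proof of Theorem \ref{thm:filling}. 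Once these analytic foundations are in place, the algebraic argument of Theorem \ref{thm:filling} transfers without modification.
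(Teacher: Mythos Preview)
Your strategy coincides with the paper's: fill the diagonal Legendrian $\Delta_{\id}$ by the Lagrangian diagonal $\Delta_W$ in (a version of) $W\times\overline W$, identify $HW^*(\Delta_W,\Delta_W)\cong SH^*(W)$ via the split Hamiltonian/almost complex structure and the standard strips-to-cylinders correspondence, and then invoke Theorem~\ref{thm:filling}. The paper packages the middle step as a separate statement (Theorem~\ref{thm: equivalence}) and cites Oancea/Zena\"{\i}di for the required compactness, but the content is what you wrote.

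One point of framing deserves correction. The contact boundary of the smoothed $W\times\overline W$ is \emph{not} a truncation of the contact product $M\times M\times\R$: it also contains the pieces $M\times\operatorname{int}(W)$ and $\operatorname{int}(W)\times M$ and is in fact a \emph{compact} contact manifold $(V,\zeta)$. The correct relationship, which the paper uses, is the reverse inclusion: the contact product (or any compact piece $M\times M\times[-N,N]$ of it) embeds as an open contact submanifold of $(V,\zeta)$, namely as a standard neighbourhood of the smoothed corner, and this embedding carries $\Delta_{\id}$ to the Legendrian boundary $L$ of $\Delta_W$. Framed this way, one applies Theorem~\ref{thm:filling} directly to the compact pair $(V,\Delta_W)$ to rule out contractible positive loops of $L$ in $(V,\zeta)$; a contractible positive loop of $\Delta_{\id}$ in the contact product then transports to such a loop in $(V,\zeta)$ via the embedding, giving the contradiction. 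This avoids having to extend Theorem~\ref{thm:filling} to a non-compact contact boundary, so the ``principal obstacle'' you flag largely dissolves (what remains is the computation $HW^*(\Delta_W)\cong SH^*(W)$, where the non-compactness of $\widehat W\times\widehat W$ is handled by the maximum principle/Oancea--Zena\"{\i}di argument you indicate).
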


Theorem \ref{thm:hypertight} also relates to strong orderability of closed hypertight contact manifolds. Namely, by using Zena\"{i}di's compactness result \cite[Theorem 5.3.9]{Zenaidi}, one can extend Theorem \ref{thm:hypertight} to the Legendrian diagonal $\Delta_{\id}$ in $M\times M\times \mathbb{R}$. Indeed, when $(M,\alpha)$ is hypertight, the contact product constructed above is hypertight as well as the Legendrian diagonal $\Delta_{\id}$. Thus one has

\begin{Thm}\label{thm: hype}
 If $(M,\xi)$ is hypertight then $(M,\xi)$ is strongly orderable. 
 \end{Thm}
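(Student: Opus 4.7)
The plan is to reduce Theorem \ref{thm: hype} to Theorem \ref{thm:hypertight} applied to the Legendrian diagonal $\Delta_{\id}$ inside the contact product $(M \times M \times \R, \alpha_1 - e^t \alpha_2)$. The first step is to verify two hypertightness statements by a direct computation: the Reeb vector field of $\alpha_1 - e^t \alpha_2$ is the lift $(R,0,0)$ of the Reeb vector field $R$ of $\alpha$ to the first factor (the conditions $(\alpha_1 - e^t\alpha_2)(R,0,0)=1$ and $\iota_{(R,0,0)} d(\alpha_1 - e^t\alpha_2) = 0$ are both immediate). Consequently every closed Reeb orbit of the contact product is of the form $\gamma \times \{y\} \times \{t_0\}$, and every Reeb chord of $\Delta_{\id}$ is of the form $\gamma \times \{x\} \times \{0\}$, where in each case $\gamma$ is a closed Reeb orbit of $(M,\alpha)$. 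Since in both cases contractibility in the total space is equivalent to contractibility of the projection $\gamma \subset M$, the hypertightness of $(M,\alpha)$ transfers both to the contact product and to the Legendrian $\Delta_{\id}$.

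The main obstacle is that Theorem \ref{thm:hypertight} is stated for \emph{closed} hypertight contact manifolds, whereas the contact product is non-compact in the $t$-direction and is not of the form required by the blanket convention on hypertight manifolds made earlier in the paper. To overcome this I would invoke Zena\"{i}di's SFT-type compactness result \cite[Theorem 5.3.9]{Zenaidi}, which is tailored to contact products and rules out escape of pseudoholomorphic curves in the $t$-direction. With this analytic input in hand, the Cthulhu-theoretic machinery from \cite{cthulhu}, and in particular the long exact sequence of Theorem \ref{thm:les} used in the proof of Theorem \ref{thm:hypertight}, extends from closed hypertight contact manifolds to the contact product.

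Granted these two ingredients, the proof of Theorem \ref{thm:hypertight} applies verbatim with $\Lambda := \Delta_{\id}$, ruling out any contractible positive loop of Legendrians through the diagonal. By definition, this is the desired strong orderability of $(M,\xi)$. The step I expect to be most delicate is the bookkeeping needed to confirm that Zena\"{i}di's compactness is strong enough to control every moduli space (and its accompanying energy estimate) entering the construction of the Cthulhu complex, of its continuation maps under positive isotopies, and of the long exact sequence of Theorem \ref{thm:les}; once this verification is carried out, the rest of the argument is essentially formal and follows the pattern already established in the closed hypertight setting.
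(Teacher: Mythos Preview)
Your proposal is essentially the paper's argument: reduce to Theorem~\ref{thm:hypertight} for the Legendrian diagonal in the contact product, verify hypertightness of both the ambient manifold and the diagonal, and invoke Zena\"{i}di's compactness \cite[Theorem~5.3.9]{Zenaidi} to overcome the non-compactness in the $t$-direction. One technical point worth flagging: the paper (see the discussion at the end of Section~\ref{sec:ht}) does not work directly with $\alpha_1 - e^t\alpha_2$ but rather with a modified contact form $\widehat{\alpha} = f_1\alpha_1 + f_2\alpha_2$ whose coefficient functions are \emph{linear} in $t$ outside a compact interval; this growth condition is what allows the maximum-principle argument underlying Zena\"{i}di's result to confine holomorphic curves to a compact slab $M \times M \times [-\varepsilon,\varepsilon]$, and you should expect to make the same adjustment when carrying out the bookkeeping you correctly identify as the delicate step.
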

 
This is equivalent to say that  any weakly non orderable (i.e. not strongly orderable) contact manifold has a contractible periodic Reeb orbit and thus satisfies the Weinstein conjecture. Note that the orderability of hypertight manifolds is already proved by Albers-Fuchs-Merry in \cite{AlMe} and Sandon in \cite{Marg_TH}.

Theorem \ref{thm:hypertight} was also recently announced by Sandon using a relative version of her Floer homology for translated points defined in \cite{Marg_TH}.

\subsection{A note about the hypotheses}
\label{rem_pardon}
The results here are not proven in the full generality that we would wish. The reason is that, for technical reasons, the construction of Cthulhu homology \cite{cthulhu} is currently restricted to \emph{symplectisation of contactisations} (or, more generally, symplectisations of hypertight contact manifolds). However, using recent work of Bao--Honda in \cite{Honda_Bao} or Pardon \cite{Pardon_CH}, it should be possible to define Legendrian contact homology as well as Cthulhu homology in a less restrictive setup. The authors believe that the correct restrictions are as follows:
\begin{enumerate}
\item The non-existence of a positive loop of one of the components of a link $\Lambda\sqcup \Lambda'$, such that the loop is contained in the complement of the other component, should also hold in a general contact manifold in the case when $LCC^*(\Lambda,\Lambda') \neq 0$ is nonzero. In other words, Theorem \ref{thm:liouville} and its corollaries should hold more generally. Observe that having a non-zero Legendrian contact homology in particular implies that the full contact homology of the ambient contact manifold also is nontrivial.
\item The non-existence of a contractible positive loop in Theorem \ref{thm:hypertight} is also expected to hold in a general contact manifold, under the assumption that the Legendrian submanifold $\Lambda$ satisfies the following property: Consider a $C^1$-small push-off $\Lambda'$ obtained as the one-jet $j^1f \subset \mathcal{J}^1\Lambda$ of a negative Morse function $f \colon \Lambda \to (-\epsilon,0)$ where the jet-space is identified with a standard contact neighbourhood of $\Lambda$. Then, we either require that the Reeb chord corresponding to the minimum of $f$ is a cycle inside $LCH_{\varepsilon,\varepsilon}^{n-1}(\Lambda,\Lambda')$ (which then defines a non-zero class, since it a priori is not a boundary) or, equivalently, that it is not a boundary inside $LCH^{\varepsilon,\varepsilon}_{n-1}(\Lambda,\Lambda')$ (which then again defines a non-zero class). This would provide a generalisation of Theorem \ref{thm:filling} to the non-fillable case; and
\item The long exact sequence produced by Theorem \ref{thm:les} should be possible to construct in the general case of a contact manifold whose full contact homology algebra is not acyclic. Again we must here make the assumption that the Legendrian submanifolds admit augmentations. The existence of this long exact sequence, along with its properties, is at the heart of the arguments for the results that we prove here.
\end{enumerate}

\subsection{Acknowledgements}
A part of this work was conducted during the symplectic program at the Mittag--Leffler institute in Stockholm, Sweden, the autumn of 2015. The authors would like to thank the institute for its hospitality and great research environment. This project was finalised while the third author was professeur invit\'e at Universit\'e de Nantes, and he would like to express his gratitude for a very pleasant stay.

\section{Floer theory for Lagrangian cobordisms}
\label{sec:cthulhu-compl-hypert}

In this section we recall the necessary background needed regarding the Floer homology complex constructed in \cite{cthulhu} by the first and third authors together with Ghiggini--Golovko. This is a version of Lagrangian intersection Floer homology defined for a pair consisting of two exact Lagrangian cobordisms in the symplectisation. In order to circumvent technical difficulties, we here restrict attention to the cases when either
\begin{itemize}
\item $(M,\alpha)$ is a contactisation $(P \times \R,\alpha_{\OP{std}})$ of a Liouville manifold endowed with its standard contact form, or
\item $(M,\alpha)$ as well as all Legendrian submanifolds considered are hypertight.
\end{itemize}
We refer to Section \ref{rem_pardon} for a discussion about these requirements, along with descriptions of less restrictive settings in which we believe our results hold.

\subsection{Generalities concerning Lagrangian cobordisms}
An \emph{exact Lagrangian cobordism $\Sigma \subset (\R \times M,d(e^t\alpha))$ from $\Lambda^- \subset (M,\xi)$ to $\Lambda^+ \subset (M,\xi)$} is a submanifold satisfying the following properties:
\begin{itemize}
\item $\Sigma\subset \R \times M$ is properly embedded and half-dimensional, and $e^t\alpha$ is exact when pulled back to $\Sigma$. (I.e.~$\Sigma$ is an exact Lagrangian submanifold.)
\item Outside of a subset of the form $(T_-,T_+) \times M$ for some numbers $T_- \le T_+$, the submanifold $\Sigma$ coincides with the cylinders $(-\infty,T_-] \times \Lambda_-$ (the so-called \emph{negative end}) and $[T_+,+\infty) \times \Lambda_+$ (the so-called \emph{positive end}), respectively. (The Lagrangian condition implies that $\Lambda^\pm \subset (M,\xi)$ are Legendrian submanifolds.)
\item There is a primitive of the pull-back of $e^t\alpha$ which is globally constant when restricted to either of the two cylindrical ends above. (When $\Lambda^\pm$ both are connected, this automatically holds.)
\end{itemize}
In the case when $\Lambda^-=\emptyset$ we say that $\Sigma$ is a \emph{filling}. A Lagrangian cobordism for which $L \cap [T_-,T_+] \times M$ is diffeomorphic to a cylinder is called a \emph{Lagrangian concordance}. Note that the exactness of the pull-back of $e^t\alpha$ is automatic in this case.

The exactness allows us to associate a potential $f_\Sigma \colon \Sigma \to \R$ defined uniquely by the requirements that it is the primitive of the pull-back of $e^t\alpha$ that vanishes on the negative end of $\Sigma$.

Given exact Lagrangian cobordisms $\Sigma^-$ from $\Lambda^-$ to $\Lambda$, and $\Sigma^+$ from $\Lambda$ to $\Lambda^+$, their \emph{concatenation} is the following exact Lagrangian cobordism. After a translation of the $\R$-coordinate, we may assume that $\Sigma^- \cap \{ t \ge -1\}$ and $\Sigma^+ \cap \{ t \le 1 \}$ both are trivial cylinders over $\Lambda$. The concatenation is then defined to be
$$ \Sigma^- \odot \Sigma^+ = (\Sigma^- \cap \{ t \le 0\}) \: \cup \: (\Sigma^+ \cap \{ t \ge 0 \}) \subset \R \times M,$$
which can be seen to be an exact Lagrangian cobordism from $\Lambda^-$ to $\Lambda^+$.

\subsection{Linearised Legendrian contact cohomology}
\label{sec:lch}

We start with a very brief recollection of the Chekanov--Eliashberg algebra, which is a differential graded algebra (DGA for short) $(\mathcal{A}(\Lambda),\partial)$ associated to a Legendrian submanifold $\Lambda \subset (M,\alpha)$ together with an auxiliary choice of cylindrical almost complex structure on the symplectisation $(\R \times M,d(e^t\alpha))$. The algebra is unital, fully non commutative, and freely generated by the set of Reeb chords $\mathcal{R}(\Lambda)$ on $\Lambda$ (which are assumed to be generic). Here we restrict attention to the case when the algebra is defined over the ground field $\Z_2$ of two elements. There is a grading defined by the Conley--Zehnder index, which we omit from the description. The differential is defined by counts of pseudoholomorphic discs in the symplectisation $(\R \times M,d(e^t\alpha))$ that are rigid up to translation. In the cases under consideration, the details can be found in \cite{LCHgeneral}. Also see \cite{LiftingPseudoholomorphic} for the relations between the version defined by counting pseudoholomorphic discs on the Lagrangian projection (which makes sense when $M=P\times\R$ is a contactisation) and the version defined by counting pseudoholomorphic discs in the symplectisation.

We will be working on the level of the so-called \emph{linearised} Legendrian contact cohomology complexes; these are complexes obtained from the Chekanov--Eliashberg DGA by Chekanov's linearisation procedure in \cite{Chekanov_DGA_Legendrian}. The latter complex has an underlying graded vector space with basis given by the Reeb chords, and the differential is associated to a so-called \emph{augmentation} of the DGA, which is a unital DGA morphism
$$ \varepsilon \colon (\mathcal{A}(\Lambda),\partial) \to \Z_2.$$
Observe that augmentations need not exist in general. Even if augmentations are purely algebraic objects, they are in many cases geometrically induced. For instance, an exact Lagrangian filling of $\Lambda$ gives rise to an augmentation; see \cite{RationalSFT} by Ekholm as well as \cite{Ekhoka} by Ekholm--Honda--K\'alm\'an.

Given augmentations $\varepsilon_i \colon \mathcal{A}(\Lambda_i)\to \Z_2$, $i=0,1$, the fact that the differential counts connected discs implies that there is an induced augmentation $\varepsilon$ of the Chekanov--Eliashberg algebra $\mathcal{A}(\Lambda_0 \cup \Lambda_1)$ of the disconnected Legendrian submanifold uniquely determined as follows: it restricts to $\varepsilon_i$ on the respective components while it vanishes on the chords between the two components. Using this augmentation, Chekanov's linearisation procedure can be used to produce a complex
\[(LCC_*^{\varepsilon_0,\varepsilon_1}(\Lambda_0,\Lambda_1),\partial^{\varepsilon_0,\varepsilon_1})\]
with underlying vector space having basis given by the Reeb chords $\mathcal{R}(\Lambda_1,\Lambda_0)$ from $\Lambda_1$ to $\Lambda_0$ (note the order!). We will instead mostly be working with the associated dual complex
\[(LCC^*_{\varepsilon_0,\varepsilon_1}(\Lambda_0,\Lambda_1),d_{\varepsilon_0,\varepsilon_1}),\]
called the \emph{linearised Legendrian contact cohomology complex}, with induced cohomology group $LCH^*_{\varepsilon_0,\varepsilon_1}(\Lambda_0,\Lambda_1)$. 

In the hypertight case, we restrict our attention to Reeb chords living in a fixed homotopy class $\alpha \in \pi_1(M,\Lambda)$,  which generate a subcomplex that we denote by
\[LCC^{\alpha,*}_{\varepsilon_0,\varepsilon_1}(\Lambda_0,\Lambda_1) \subset (LCC^*_{\varepsilon_0,\varepsilon_1}(\Lambda_0,\Lambda_1),d_{\varepsilon_0,\varepsilon_1}).\]
Recall that there is a canonical augmentation in this case which sends every generator to zero. From now on we will only use the trivial augmentation in the hypertight case, and therefore the aforementioned complex will simply be denoted by $LCC^{\alpha,*}(\Lambda_0,\Lambda_1)$.
\begin{Rem} Closed Legendrian submanifolds of a contactisation generically have a finite number of Reeb chords. However, in the hypertight case we cannot exclude the possibility of the existence of infinitely many Reeb chords in a given homotopy class. The latter complex thus has an underlying vector space which is a \emph{direct product}.
\end{Rem}
We proceed by giving some more details concerning the definition of the differential of the Legendrian contact cohomology complex for a pair of Legendrian submanifolds.

Use $\gamma^\pm \in \mathcal{R}(\Lambda_1,\Lambda_0)$ to denote Reeb chords from $\Lambda_1$ to $\Lambda_0$, and $\boldsymbol{\delta}=\delta_1\cdots \delta_{i-1}$, $\boldsymbol{\zeta}=\zeta_{i+1}\cdots\zeta_d$ to denote words of Reeb chords in $\mathcal{R}(\Lambda_0)$ and $\mathcal{R}(\Lambda_1)$, respectively. The differential is defined by the count
\begin{equation}
d_{\varepsilon_0,\varepsilon_0}(\gamma^-)=\sum\limits_{\boldsymbol{\delta},\boldsymbol{\zeta},\gamma^+}\#_2\mathcal{M}(\gamma^+;\boldsymbol{\delta},\gamma^-,\boldsymbol{\zeta})\varepsilon_0(\boldsymbol{\delta})\varepsilon_1(\boldsymbol{\zeta})\gamma^+.\label{eq:boundingcochain}
\end{equation}
of pseudoholomorphic discs in $\R \times M$ having boundary on $\R \times (\Lambda_0 \cup \Lambda_1)$ and strip-like ends, and which are rigid up to translation of the $\R$-factor. More precisely, the solutions inside $\mathcal{M}(\gamma^+;\boldsymbol{\delta},\gamma^-,\boldsymbol{\zeta})$ are required to have a positive puncture asymptotic to $\gamma^+$ at $t=+\infty$, and negative punctures asymptotic to $\gamma^-$, $\delta_j$, and $\zeta_j$, at $t=-\infty$. We refer to \cite{cthulhu} for more details on the definition of these moduli spaces. 

The \emph{length} of a Reeb chord is defined by the formula
$$\ell(\gamma):=\int_\gamma dz >0,\:\: \gamma \in \mathcal{R}(\Lambda).$$
The positivity of the so-called $d\alpha$-energy of the above pseudoholomorphic discs implies that the differential respects the filtration induced by the length of the Reeb chords in the following way: the coefficient of $\gamma^+$ above vanishes whenever $\ell(\gamma^+) \le \ell(\gamma^-)$. Also, see Lemma \ref{lem:action} below.

One can interpret augmentations as being bounding cochains (in the sense of \cite{fooo}) for Legendrians. Using this terminology, the differential defined by Equation \eqref{eq:boundingcochain} is induced by the choices  of bounding cochains $\varepsilon_i$ for $\Lambda_i$, $i=0,1$.

\subsection{The Cthulhu complex.}
\label{sec:cthulhu-complex}
We proceed to describe the construction of the Cthulhu complex from \cite{cthulhu} defined for a pair $(\Sigma_0,\Sigma_1)$ of exact Lagrangian cobordisms inside the symplectisation $(\R \times M,d(e^t\alpha))$. The starting point for this theory is the version of wrapped Floer cohomology defined \cite{RationalSFT2} by Ekholm using the analytic setup of symplectic field theory. Wrapped Floer homology is a version of Lagrangian intersection Floer homology for exact Lagrangian fillings. The theory in \cite{cthulhu} is a generalisation to the case when the negative end of the cobordism is not necessarily empty.

In order to deal with certain bubbling phenomena involving negative ends, one must require that the Legendrians at the negative ends admit augmentations. Again, augmentations will be used as bounding cochains. In the following we thus assume that we are given a pair $\Sigma_i$, $i=0,1$, of exact Lagrangian cobordisms from $\Lambda_i^-$ to $\Lambda_i^+$, together with choices of augmentations $\varepsilon_i$ of $\Lambda_i^-$. There are augmentations $\varepsilon_i^+=\varepsilon_i\circ\Phi_{\Sigma_i}$ of $\Lambda_i^+$ obtained as the pull-backs of the augmentations $\varepsilon_i^-$ under the unital DGA morphism induced by the cobordism $\Sigma_i$; see \cite{RationalSFT} by Ekholm as well as \cite{Ekhoka} for more details. Let $CF^*(\Sigma_0,\Sigma_1)$ be the graded $\Z_2$-vector space with basis given by the intersection points $\Sigma_0 \cap \Sigma_1$ (which all are assumed to be transverse). Again, we omit gradings from the discussion.

We are now ready to define the {\it Cthulhu complex}, which is the graded vector space
\begin{eqnarray*}
& & \Cth_*(\Sigma_0,\Sigma_1):=C^*_{+\infty} \oplus CF^*(\Sigma_0,\Sigma_1)\oplus C^{*-1}_{-\infty}, \\
& & C^*_{-\infty}(\Sigma_0,\Sigma_1):=LCC_{\varepsilon_0,\varepsilon_1}^*(\Lambda^-_0,\Lambda_1^-),\\
& & C^*_{+\infty}(\Sigma_0,\Sigma_1):=LCC_{\varepsilon_0^+,\varepsilon_1^+}^*(\Lambda^+_0,\Lambda_1^+).
\end{eqnarray*}
with differential of the form
$$\mathfrak{d}_{\varepsilon_0,\varepsilon_1}=\begin{pmatrix}
d_{++} & d_{+0} & d_{+-}\\
0 & d_{00} & d_{0-}\\
0 & d_{-0} & d_{--}
\end{pmatrix}.$$
The entries $d_{++}=d_{\varepsilon_0^+,\varepsilon_1^+}$ and $d_{--}=d_{\varepsilon_0,\varepsilon_1}$ are the linearised Legendrian contact cohomology differentials described in Section \ref{sec:lch}, while the rest of the entries are defined by augmented counts of pseudoholomorphic strips with boundary on $\Sigma_0$ and $\Sigma_1$ having appropriate asymptotics. See Figure \ref{fig:cthdiff} for a schematic picture of the strips involved. Recall that, typically, the strips also have additional negative asymptotics to Reeb chords on $\Lambda_i^-$, $i=0,1$, and that all counts are `weighted' by the values of the chosen augmentations on these chords (similarly to as in Formula \eqref{eq:boundingcochain}).

\begin{figure}[ht!]
\vspace{5mm}
\labellist
\pinlabel{$\mathbb{R}\times \Lambda^+_0$} [tr] at 0 515
\pinlabel{$\color{red}\mathbb{R}\times\Lambda^+_1$} [tl] at 75 515
\pinlabel{$d_{++}$} [tl] at 4 515
\pinlabel{$\Sigma_0$} [tr] at 295 545
\pinlabel{$\color{red}\Sigma_1$} [tl] at 380 545
\pinlabel{$d_{+0}$} [tl] at 304 545
\pinlabel{$\Sigma_0$} [tr] at 655 515
\pinlabel{$\color{red}\Sigma_1$} [tl] at 730 515
\pinlabel{$d_{+-}$} [tl] at 660 515
\pinlabel{$\Sigma_0$} [tr] at 298 325
\pinlabel{$\color{red}\Sigma_1$} [tl] at 375 325
\pinlabel{$d_{00}$} [tl] at 310 325
\pinlabel{$\Sigma_0$} [tr] at 670 325
\pinlabel{$\color{red}\Sigma_1$} [tl] at 725 325
\pinlabel{$d_{0-}$} [tl] at 670 305
\pinlabel{$\Sigma_0$} [tl] at 370 150
\pinlabel{$\color{red}\Sigma_1$} [tr] at 305 150
\pinlabel{$d_{-0}$} at 200 146
\pinlabel{$\mathbb{R}\times ({\color{red}\Lambda^-_1}\sqcup \Lambda_0^-)$} [tr] at 170 60
\pinlabel{$\Sigma_0$} [tr] at 660 105
\pinlabel{$\color{red}\Sigma_1$} [tl] at 730 105
\pinlabel{$d_{--}$} [tl] at 662 105
\pinlabel{$\text{\small{out}}$} at 40 585
\pinlabel{$\text{\small{in}}$} at 40 403
\pinlabel{$\text{\small{out}}$} at 336 585
\pinlabel{$\text{\small{in}}$} at 336 450
\pinlabel{$\text{\small{out}}$} at 336 400
\pinlabel{$\text{\small{in}}$} at 336 222
\pinlabel{$\text{\small{in}}$} at 336 180
\pinlabel{$\text{\small{out}}$} at  695 585
\pinlabel{$\text{\small{in}}$} at 695 403
\pinlabel{$\text{\small{out}}$} at  695 365
\pinlabel{$\text{\small{in}}$} at 695 223
\pinlabel{$\text{\small{out}}$} at  695 175
\pinlabel{$\text{\small{in}}$} at 695 -2
\pinlabel{$\text{\small{out}}$} at 200 100
\endlabellist
\centering
\includegraphics[width=8cm]{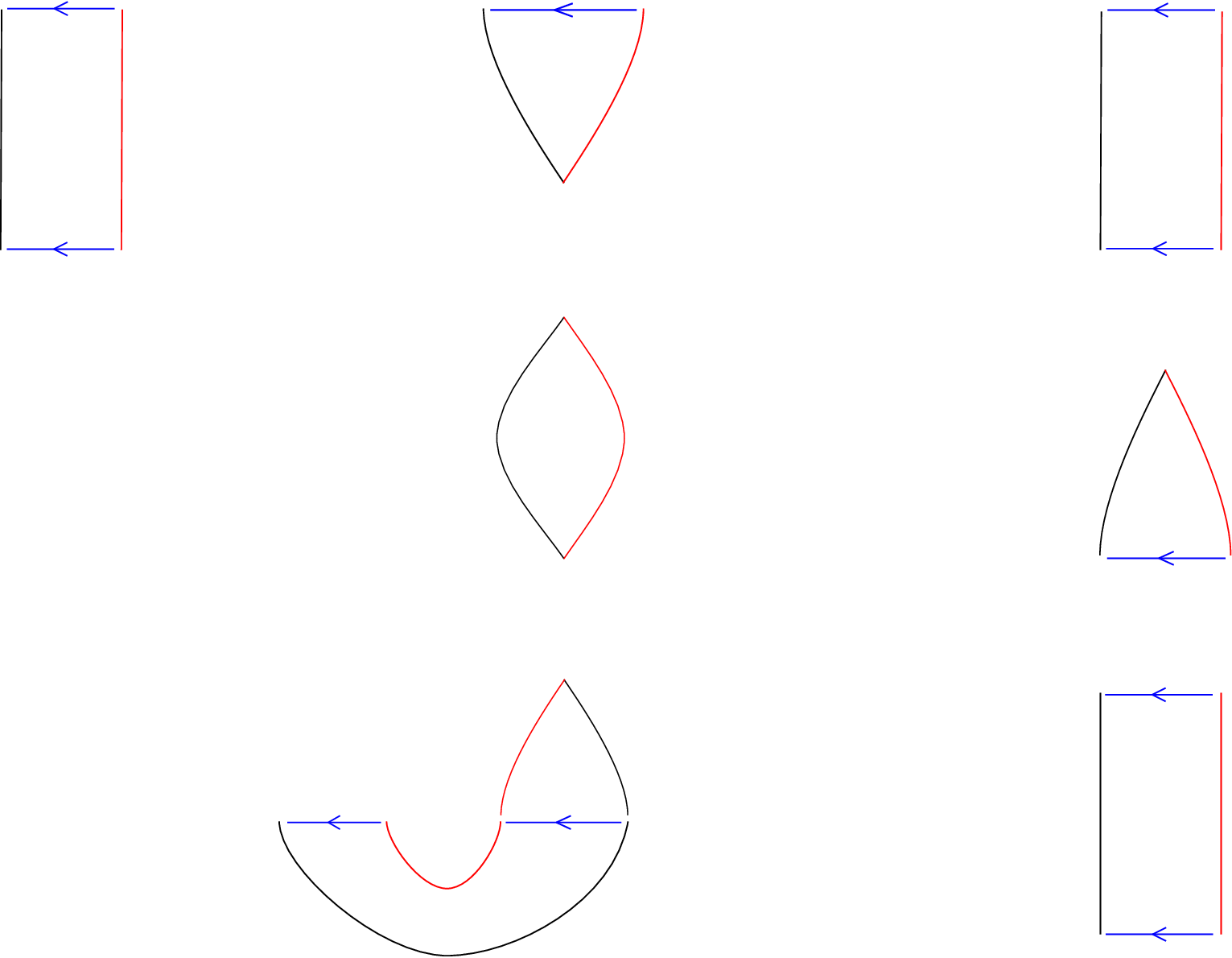}
\caption{Curves contributing to the Cthulhu differential; \emph{in} and \emph{out} denote the input and output of the respective component of the differential.}
\label{fig:cthdiff}
\end{figure}

For us it will be important to consider the behaviour of $\mathfrak{d}_{\varepsilon_0,\varepsilon_1}$ with respect to a particular action filtration. To that end, for an intersection point $p \in \Sigma_0 \cap \Sigma_1$ we associate the action
\[ \mathfrak{a}(p):=f_{\Sigma_1}(p)-f_{\Sigma_0}(p) \in \R.\]
Assuming that $\Sigma_i$ are both cylindrical outside of $(T_-,T_+) \times M$, to a Reeb chord generator $\gamma^\pm \in \mathcal{R}(\Lambda_1^\pm,\Lambda_0^\pm)$ we can then associate the action
\[ \mathfrak{a}(\gamma^\pm):=e^{T_\pm}\ell(\gamma^\pm)+f_{\Sigma_1}^\pm-f_{\Sigma_0}^\pm \in \R,\]
where $f_{\Sigma_i}^\pm \in \R$ are defined to be the value of $f_{\Sigma_i}$ on the positive and negative cylindrical ends, respectively (thus $f_{\Sigma_i}^-=0$ by our conventions).
\begin{Lem}
\label{lem:action}
If the coefficient of $y$ is non-vanishing in the expression $\mathfrak{d}_{\varepsilon_0,\varepsilon_1}(x)$, then it follows that $\mathfrak{a}(y) > \mathfrak{a}(x)$.
\end{Lem}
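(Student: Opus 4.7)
The plan is to deduce the inequality from a Stokes-type energy computation applied to the pseudoholomorphic curves that contribute to the relevant matrix entry of $\mathfrak{d}_{\varepsilon_0,\varepsilon_1}$. Let $u$ be such a curve. Depending on which of the nine entries we are in, $x$ and $y$ are each either an intersection point of $\Sigma_0\cap\Sigma_1$ or a Reeb chord generator in $\mathcal{R}(\Lambda_1^\pm,\Lambda_0^\pm)$, and $u$ may carry extra negative asymptotics at pure Reeb chords $\delta_j\in\mathcal{R}(\Lambda_0^-)$ and $\zeta_j\in\mathcal{R}(\Lambda_1^-)$ weighted by $\varepsilon_0(\boldsymbol{\delta})\varepsilon_1(\boldsymbol{\zeta})$.

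The first step is to observe that the symplectic area
\[E_\omega(u)=\int u^*\,d(e^t\alpha)\]
is non-negative, since $u$ is $J$-holomorphic for an $\omega$-compatible $J$, and is strictly positive because $u$ cannot be constant (either $x\ne y$, or $u$ carries a Reeb chord puncture). The second step is to evaluate $E_\omega(u)$ via Stokes' theorem after removing small neighbourhoods of the punctures. On each boundary arc mapped into $\Sigma_i$ the pullback of $e^t\alpha$ equals $df_{\Sigma_i}$, so those integrals telescope to differences of the potentials $f_{\Sigma_i}$ at the arc endpoints. At a Reeb chord asymptote over a level where $\Sigma_i$ is cylindrical at height $T_\pm$, the standard asymptotic computation produces a contribution $\pm e^{T_\pm}\ell(\gamma)$ with sign according to whether the puncture is positive or negative.

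The third step is to reassemble these contributions and recognise them in terms of the action $\mathfrak{a}$. Using $f_{\Sigma_i}^-=0$ together with the definitions of $\mathfrak{a}$ on intersection points and on Reeb chord generators, one finds
\[E_\omega(u)=\mathfrak{a}(y)-\mathfrak{a}(x)-\sum_j\mathfrak{a}(\delta_j)-\sum_j\mathfrak{a}(\zeta_j).\]
Since $\delta_j$ and $\zeta_j$ are Reeb chords on $\Lambda_0^-$ or $\Lambda_1^-$, one has $\mathfrak{a}(\delta_j)=e^{T_-}\ell(\delta_j)>0$ and similarly for $\zeta_j$, so all the extra terms are non-negative. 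Combining with $E_\omega(u)>0$ yields the desired strict inequality $\mathfrak{a}(y)>\mathfrak{a}(x)$.

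The main obstacle is purely bookkeeping: one has to verify uniformly across the three types of generators and the nine matrix entries $d_{++},d_{+0},d_{+-},d_{00},d_{0-},d_{-0},d_{--}$ that the signs in Stokes' theorem combine as claimed, and in the $d_{\pm\pm}$ cases one must note that the curves live in the cylindrical ends where $f_{\Sigma_i}$ is the constant $f_{\Sigma_i}^\pm$, so the potential differences along the boundary arcs vanish and only the Reeb chord contributions survive, which is precisely what the definition of $\mathfrak{a}$ on chord generators is designed to encode. No new analytic input beyond the asymptotic behaviour already established in \cite{cthulhu} is needed.
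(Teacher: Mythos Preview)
The paper states this lemma without proof, treating it as a routine consequence of the energy estimates established in \cite{cthulhu}; your Stokes-type argument is exactly the standard way to justify it, and the overall strategy is correct.

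There is one technical imprecision worth fixing. The quantity $E_\omega(u)=\int u^*d(e^t\alpha)$ is infinite whenever $u$ carries a positive Reeb chord puncture (i.e.\ for the entries $d_{++}$, $d_{+0}$, $d_{+-}$), since near such a puncture the $t$-coordinate tends to $+\infty$; ``removing small neighbourhoods of the punctures'' therefore does not produce the contribution $e^{T_+}\ell(\gamma)$ but rather $e^R\ell(\gamma)$ for the cutoff level $R\to+\infty$. The standard remedy is to integrate instead against a capped form $d(\phi(t)\alpha)$ where $\phi$ is non-decreasing, equals $e^t$ on $[T_-,T_+]$, and is constant equal to $e^{T_\pm}$ outside. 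For a cylindrical $J$ one still has $d(\phi\alpha)(v,Jv)\ge 0$, and now Stokes' theorem yields precisely your displayed identity with finite terms; strict positivity for the curves that are actually counted follows because either the curve meets $(T_-,T_+)\times M$, or it lies in an end where positivity of the $d\alpha$-energy applies. With this adjustment (and noting that for $d_{++}$ the pure augmentation chords lie on $\Lambda_i^+$ rather than $\Lambda_i^-$), your argument goes through as written.
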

We now state the well-definedness and invariance properties of the Cthulhu complex. For the precise requirements concerning the almost complex structure we refer to \cite{cthulhu}.

Assume that $(M,\alpha)$ is the contactisation of a Liouville manifold, and that $\Sigma_i \subset (\R \times M,d(e^t\alpha))$, $i=0,1,$ are exact Lagrangian cobordisms whose negative ends $\Lambda^-_i$ admit augmentations $\varepsilon_i$.
\begin{Thm}[Theorems 4.1, 6.6 in \cite{cthulhu}]
\label{thm:invariance0}
For an appropriate choice of an almost complex structure on the symplectisation, $\mathfrak{d}_{\varepsilon_0,\varepsilon_1}^2=0$ and the induced complex $\Cth_*(\Sigma_0,\Sigma_1)$ is acyclic.
\end{Thm}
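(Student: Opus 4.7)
The statement packages together two distinct claims: the algebraic identity $\mathfrak{d}_{\varepsilon_0,\varepsilon_1}^2=0$, and the acyclicity of the resulting complex. For the first I plan to analyse the boundary of the one-dimensional moduli spaces of pseudoholomorphic strips and discs contributing to each matrix entry of $\mathfrak{d}$, combining SFT compactness with the augmentation identities $\varepsilon_i\circ\partial=0$. For the second I plan to establish invariance of $\Cth_*(\Sigma_0,\Sigma_1)$ under an appropriate class of deformations, and then specialise to a configuration in which the entire complex is manifestly zero.

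\textbf{The identity $\mathfrak{d}^2=0$.} The idea is to enumerate the codimension-one degenerations of each rigid one-parameter family of pseudoholomorphic curves counted by a matrix entry of $\mathfrak{d}^2$. The generic two-level breakings reproduce the ordinary compositions of the matrix entries, and yield the desired ``diagonal'' part of $\mathfrak{d}^2$. The delicate degenerations are those in which a pseudoholomorphic disc with boundary on $\R\times\Lambda_i^-$ (respectively $\R\times\Lambda_i^+$) bubbles off at a negative (respectively positive) puncture of a cobordism strip. For each such configuration the associated boundary contribution is weighted by a term of the form $\varepsilon_i(\partial\boldsymbol{\delta})$ and hence vanishes because $\varepsilon_i$ (respectively $\varepsilon_i^+=\varepsilon_i\circ\Phi_{\Sigma_i}$) is an augmentation. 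The assumption that we are in a contactisation or hypertight setting rules out bubbling of contractible closed orbits or contractible chords, so these are the only degenerations that need to be considered.

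\textbf{Acyclicity.} I would produce a family $\{\Sigma_1^s\}_{s\in[0,\infty)}$ of exact Lagrangian cobordisms obtained from $\Sigma_1$ by applying the Reeb flow on $(M,\alpha)$ (which in the contactisation case is simply translation in the $z$-coordinate), and construct continuation-type maps showing that the Cthulhu homology is independent of $s$. The action filtration from Lemma \ref{lem:action} together with the strict positivity of the Reeb deformation force the parametric Floer equation to have compact moduli spaces, so the continuation map is well defined and is a chain-homotopy equivalence. For $s$ sufficiently large, $\Lambda_1^{s,\pm}$ lies entirely above $\Lambda_0^\pm$ in the $z$-coordinate, so every Reeb chord in $\MCR(\Lambda_1^{s,\pm},\Lambda_0^\pm)$ has disappeared, and $\Sigma_1^s$ is disjoint from $\Sigma_0$. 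All three summands $C_{+\infty}^*$, $CF^*(\Sigma_0,\Sigma_1^s)$, and $C_{-\infty}^{*-1}$ are then zero, and the complex is trivially acyclic.

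\textbf{Main obstacle.} The hardest technical step is the SFT compactness needed both in the boundary analysis for $\mathfrak{d}^2=0$ and in the construction of the continuation maps for the acyclicity argument. In the non-compact setting one has to rule out that pseudoholomorphic curves escape to infinity along the cylindrical ends, and pair each bubbling configuration with the correct augmentation weight so that the cancellations really occur. The restriction to contactisations or hypertight targets is essential at precisely this point: it excludes contractible Reeb orbits and chords, and therefore the most problematic bubbling phenomena. Outside this setting one would need the more elaborate virtual machinery of Pardon or Bao--Honda, as indicated in Section \ref{rem_pardon}.
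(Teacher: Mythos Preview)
The paper gives no proof of this statement; it is cited verbatim from Theorems~6.1 and~8.6 of \cite{cthulhu}, so there is no in-paper argument to compare against. I will therefore assess your sketch on its own merits and against what can be inferred about the approach in \cite{cthulhu} from the surrounding text.

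Your outline for $\mathfrak{d}_{\varepsilon_0,\varepsilon_1}^2=0$ is essentially correct: this is the standard SFT boundary analysis of one-dimensional moduli spaces, and the role of the augmentation identity $\varepsilon_i\circ\partial=0$ in cancelling the contributions from disc-bubbling at the pure negative punctures is as you describe.

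Your acyclicity argument, however, has a real gap. Translating $\Sigma_1$ by the Reeb flow is \emph{not} a compactly supported Hamiltonian isotopy of the symplectisation: the generating Hamiltonian is $e^t$, which is unbounded, and the Legendrian ends $\Lambda_1^\pm$ themselves move. As $s$ increases, Reeb chord generators of $C^*_{\pm\infty}$ disappear one by one (each time a chord length reaches zero and the chord reverses direction), so the rank of the complex drops along the way. Your assertion that the continuation map ``is a chain-homotopy equivalence'' is precisely where the difficulty sits: for a non-compactly-supported deformation with moving asymptotic ends one does not get a two-sided homotopy equivalence from the usual parametric argument, and you have not explained why homology is preserved across each such death. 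The route taken in \cite{cthulhu}, as can be read off the proof of Theorem~\ref{thm:invariance1} in the present paper, inserts an intermediate step: a \emph{compactly supported} wrapping isotopy $\phi^s_{(\beta-1)\partial_z}$ (Proposition~8.2 of \cite{cthulhu}, cf.~Section~\ref{sec:wrapping}) first converts the Reeb chord generators of $C^*_{\pm\infty}$ into intersection points, reducing the Cthulhu complex to an ordinary Lagrangian intersection Floer complex for exact Lagrangians, after which one displaces by a further compactly supported Hamiltonian isotopy and invokes standard Floer invariance. Your geometric endpoint---displacement in the $z$-direction of a contactisation---is the correct underlying reason for acyclicity, but the reduction to the compactly supported setting is the missing ingredient.
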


\begin{Thm}[Proposition 6.4 in \cite{cthulhu}]
\label{thm:invariance1}
If the cobordisms $\Sigma_i$, $i=0,1$, are compactly supported Hamiltonian isotopic to $\Sigma_i'$, then there is an induced quasi-isomorphism
\begin{gather*} \phi \colon \Cth_*(\Sigma_0,\Sigma_1) \to \Cth_*(\Sigma_0',\Sigma_1'),\\
\phi=\begin{pmatrix}
\phi_+ & * & * \\
0 & * & * \\
0 & \phi_{-0} & \id_{C_{-\infty}(\Lambda_0^-,\Lambda_1^-)}
\end{pmatrix},
\end{gather*}
where $\phi_{-0}$ vanishes in the case when there are no Reeb chords from $\Lambda_0$ to $\Lambda_1$, and:
\begin{enumerate}
\item The component
$$\phi_+ \colon LCC_{\varepsilon_0 \circ \Phi_{\Sigma_0},\varepsilon_1 \circ \Phi_{\Sigma_1}}^*(\Lambda_0^+,\Lambda_1^+) \to LCC_{\varepsilon_0 \circ \Phi_{\Sigma_0'},\varepsilon_1 \circ \Phi_{\Sigma_1'}}^*(\Lambda_0^+,\Lambda_1^+) $$
is an isomorphism of complexes which for generators $a,b$ satisfies\\ $\langle \phi_+(a),a \rangle =1$, while $\langle \phi_+(a),b \rangle=0$ holds whenever $\ell(a)>\ell(b)$.
\item Consider the subspaces $\Cth_*^{[\mathfrak{a}_0,+\infty)}(\Sigma_0^s,\Sigma_1^s) \subset \Cth_*(\Sigma_0^s,\Sigma_1^s)$ consisting of the generators of action at least $\mathfrak{a}_0 \in \R$ where $(\Sigma_0^s,\Sigma_1^s)$ is the Hamiltonian isotopy of the pair of cobordisms. Under the additional geometric assumption that a neighbourhood of these generators, as well as their actions, are fixed during the entire isotopy, it follows that
$$\phi(\Cth_*^{[\mathfrak{a}_0,+\infty)}(\Sigma_0,\Sigma_1)) \subset \Cth_*^{[\mathfrak{a}_0,+\infty)}(\Sigma_0',\Sigma_1')$$
holds as well.
\end{enumerate}
\end{Thm}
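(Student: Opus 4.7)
The plan is to construct $\phi$ as a continuation chain map using a family of Lagrangian cobordisms interpolating from $(\Sigma_0,\Sigma_1)$ to $(\Sigma_0',\Sigma_1')$. Concretely, choose a smooth monotone cutoff $\chi\co\R\to[0,1]$ with $\chi(s)=0$ for $s\ll 0$ and $\chi(s)=1$ for $s\gg 0$, and count rigid $J$-holomorphic strips in $\R\times M$ with time-dependent boundary conditions on $\Sigma_0^{\chi(s)}$ and $\Sigma_1^{\chi(s)}$, with prescribed asymptotes among generators of $\Cth_*(\Sigma_0,\Sigma_1)$ and $\Cth_*(\Sigma_0',\Sigma_1')$ at the two ends, together with extra negative punctures on Reeb chords of $\Lambda_i^-$ whose contributions are weighted by the augmentations $\varepsilon_i$, exactly as in the definition of the Cthulhu differential. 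Standard SFT compactness and gluing, performed as in \cite{cthulhu}, show that the resulting map $\phi$ is a chain map.

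The upper-triangular form comes from the fact that the Hamiltonian isotopy is compactly supported: near the negative end $\Sigma_i^{\chi(s)}$ coincides with the fixed cylinder $(-\infty,T_-]\times\Lambda_i^-$ for all $s$. Consequently, a continuation strip asymptotic to a generator of $C_{-\infty}$ at its negative puncture can be analysed exactly as in the cylindrical case and, after a translation argument combined with SFT compactness, degenerates to a trivial translation-invariant strip in the symplectisation of $\Lambda^-$. This forces the $C_{-\infty}\to C_{-\infty}$ block to be the identity and the entries below the diagonal in the first column to vanish. The component $\phi_+$ is the induced map between the linearised LCH complexes of the unchanged positive Legendrians $\Lambda_0^+\cup\Lambda_1^+$: the only effect of the isotopy on the positive end is the change of augmentation from $\varepsilon_i\circ\Phi_{\Sigma_i}$ to $\varepsilon_i\circ\Phi_{\Sigma_i'}$, and $\phi_+$ is precisely the standard quasi-isomorphism implementing this change. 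The triangularity of $\phi_+$ with respect to the length filtration, together with $\langle\phi_+(a),a\rangle=1$, is then a consequence of the action identity of Lemma \ref{lem:action}, which forces any non-trivial contribution from $a$ to $b$ to satisfy $\ell(b)\ge\ell(a)$, with equality realised only by a constant trivial strip counted with multiplicity one.

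To upgrade $\phi$ to a quasi-isomorphism I would build a chain map $\psi$ in the opposite direction by reversing the Hamiltonian isotopy, and then show that $\psi\circ\phi$ and $\phi\circ\psi$ are chain homotopic to the identity. The chain homotopies are produced by the familiar two-parameter trick: interpolating between the concatenated isotopy and the constant isotopy yields a one-parameter family of continuation data whose parametrised moduli spaces of rigid strips, once their boundary components are enumerated by SFT compactness and matched against the broken configurations appearing in $\mathfrak{d}\phi+\phi\mathfrak{d}$, provide the desired homotopy operator. Since the constant isotopy obviously induces the identity chain map, this finishes the quasi-isomorphism claim.

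Finally, statement (2) will follow from a careful action analysis of the continuation moduli. The energy of a continuation strip differs from that of a genuine holomorphic strip by an error term concentrated on the region where $\partial_s\chi\neq 0$ and pointwise controlled by the motion $\partial_s\Sigma_i^{s}$ of the boundary condition; since by assumption the generators of action $\ge\mathfrak{a}_0$ lie in a neighbourhood fixed throughout the isotopy and have constant action value, this error vanishes along trajectories connecting two such generators. The argument of Lemma \ref{lem:action} then adapts verbatim to show that $\phi$ preserves the subspace $\Cth_*^{[\mathfrak{a}_0,+\infty)}$. The main technical obstacle is verifying compactness and transversality for the continuation moduli in the presence of augmentation-weighted negative punctures and, in the hypertight case, potentially infinitely many Reeb chords in a given homotopy class; however, this is controlled by the same SFT compactness package used to set up the Cthulhu differential itself in \cite{cthulhu}, so no genuinely new analytic ingredient should be required.
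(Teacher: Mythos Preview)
Your approach via continuation maps with moving Lagrangian boundary conditions is a genuinely different route from the paper's. The paper does \emph{not} build $\phi$ as a single parametrised continuation map; instead it invokes the bifurcation analysis carried out in \cite[Proposition 8.4]{cthulhu}. A generic compactly supported Hamiltonian isotopy is decomposed into finitely many handle-slides and birth/deaths, each handle-slide contributing a chain isomorphism $x\mapsto x+K(x)$ with $K$ defined by a count of index $-1$ strips appearing at isolated parameter values. Crucially, before doing this the paper first applies \cite[Proposition 8.2]{cthulhu} to \emph{wrap the ends}, replacing the Reeb chord generators of $C^*_{\pm\infty}$ by honest intersection points; this is what makes the bifurcation analysis tractable and gives a clean reason why the $C_{-\infty}$-block is the identity and why $\phi_+$ is action-triangular (positivity of energy of each handle-slide strip). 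Your ``translation argument combined with SFT compactness'' for the identity on $C_{-\infty}$ is not obviously correct as stated: a continuation strip with both asymptotics at the negative end can still pass through the compact region where the boundary conditions move, and you give no mechanism to exclude this.

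There is also a concrete omission: you do not address the claim that $\phi_{-0}=0$ when there are no Reeb chords from $\Lambda_0$ to $\Lambda_1$. In the paper this is obtained by neck-stretching: a strip contributing to $\phi_{-0}$ (input an intersection point, output a Reeb chord from $\Lambda_1^-$ to $\Lambda_0^-$) must, after stretching, break through a configuration containing a Reeb chord from $\Lambda_0$ to $\Lambda_1$, exactly as for the component $d_{-0}$ in Figure~\ref{fig:cthdiff}. Nothing in the continuation picture you sketch produces this conclusion. Finally, while continuation maps with $s$-dependent Lagrangian boundaries are standard in the compact setting, in the SFT framework with Reeb chord asymptotics and augmentation-weighted punctures they are not part of the package of \cite{cthulhu}; invoking ``standard SFT compactness and gluing'' here hides real work, whereas the bifurcation route stays within what has already been established.
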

\begin{proof}
The claims not contained in the formulations of the referred result in \cite{cthulhu} are the following:
\begin{enumerate}[label=(\roman*)]
\item The condition for the vanishing of $\phi_{-0}$;
\item The statement in Part (1) which claims that the chain isomorphism $\phi_+$ is `upper triangular' with respect to the action filtration; and
\item The action preserving properties stated in Part (2).
\end{enumerate}

The statements follow from studying the proof of the invariance result \cite[Proposition 6.4]{cthulhu} which is shown using bifurcation analysis. Here there is one caveat: it is necessary first to apply Proposition 8.2 in the same article. This is done in order to interchange the Reeb chord generators of $C^*_{\pm\infty}(\Sigma_0,\Sigma_1)$ with intersection points by geometrically `wrapping' the ends, e.g.~by an application of the Hamiltonian isotopy $\phi^s_{(\beta-1)\partial_z}$ to the component $\Sigma_0$ (see Section \ref{sec:wrapping}). According to the aforementioned result, we may assume that the isomorphism class of the complex is left unchanged under such a modification. Alternatively, one may also argue as the invariance proof \cite[Section 4.2.1]{RationalSFT2}, which is based upon abstract perturbations (and which stays in the more symmetric setup of SFT).

The bifurcation analysis roughly works as follows. A generic Hamiltonian isotopy produces a finite number of `handle-slides' and `birth/deaths' on the geometric side. On the algebraic side handle-slides and birth/deaths then correspond to chain isomorphisms of the form $x \mapsto x + K(x)$ (defined on each generator) and stabilisations by an acyclic complex (up to a chain isomorphism), respectively.

In this case we are only concerned with generators which cannot undergo any birth/death moves, and we can therefore ignore them. What suffices is thus to check the action properties for each chain isomorphism induced by a handle-slide. Recall that the term $K(x)$ in such a chain isomorphism is defined by a count of pseudoholomorphic strips of expected dimension -1;  such strips generically exist inside a one-parameter family, where they appear as rigid solutions.

Claim (i): This follows by a neck-stretching argument, since a pseudoholomorphic disc contributing to a nonzero term $K(x)$ in the definition of $\phi_{-0}$ would break into a configuration involving a Reeb chord from $\Lambda_0$ to $\Lambda_1$. This configuration is similar to the one which in the definition of the term $d_{-0}$ of the differential shown in Figure \ref{fig:cthdiff}.

Claims (ii) and (iii): The claims follow by the same reason as to why the differential is action increasing, i.e.~since non-constant pseudoholomorphic discs are of positive energy.

\end{proof}
It will also be useful to formulate the following refined invariance properties, which applies under certain additional assumptions on the cobordisms.
\begin{Cor}[\cite{cthulhu}]
\label{cor:invariancerefined}
In the above setting, we make the additional assumption that there are no Reeb chords starting on $\Lambda_0^-$ and ending on $\Lambda_1^-$, and that each cobordism $\Sigma_i$ is compactly supported Hamiltonian isotopic to cobordisms $\Sigma_i'$ satisfying $\Sigma_0' \cap \Sigma_1'=\emptyset$. Then $d_{-0}=0$ holds for both complexes, and the quasi-isomorphism $\phi$ in Theorem \ref{thm:invariance1} can be assumed to map all intersection points into the subcomplex $C_{+\infty}^*(\Sigma_0',\Sigma_1') \subset \Cth_*(\Sigma_0',\Sigma_1')$ (i.e. $\phi_{-0}=0$).
\end{Cor}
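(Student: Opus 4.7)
My plan is to derive both statements from the asymptotic structure of the pseudoholomorphic strips underlying the Cthulhu differential and the bifurcation continuation map in the proof of Theorem \ref{thm:invariance1}, with the extra hypotheses used to rule out certain Reeb chord asymptotics at the negative end.

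For the vanishing $d_{-0}=0$, I would inspect the strips contributing to this matrix entry as depicted in Figure \ref{fig:cthdiff}: they carry a positive puncture at an intersection point of $\Sigma_0 \cap \Sigma_1$ and a distinguished negative puncture asymptotic to a mixed Reeb chord between $\Lambda_0^-$ and $\Lambda_1^-$. Under the standard orientation conventions this mixed chord is of precisely the type excluded by hypothesis, so the relevant moduli spaces are empty and $d_{-0}=0$ on $\Cth_*(\Sigma_0,\Sigma_1)$. For the pair $(\Sigma_0', \Sigma_1')$ the vanishing is tautological, since $\Sigma_0' \cap \Sigma_1' = \emptyset$ forces the middle component $CF^*(\Sigma_0',\Sigma_1')$ of the Cthulhu complex to be trivial.

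For the vanishing $\phi_{-0}=0$, I would revisit the bifurcation analysis used in the proof of Theorem \ref{thm:invariance1}. After the preliminary wrapping step (Proposition 8.2 of \cite{cthulhu}) that recasts end Reeb chord generators as intersection points without altering the quasi-isomorphism type, a generic compactly supported Hamiltonian isotopy between $(\Sigma_0, \Sigma_1)$ and $(\Sigma_0', \Sigma_1')$ splits into finitely many handle-slides and birth/deaths. The component $\phi_{-0}$ is assembled from the handle-slide terms $K(x)$ whose input is an intersection point and whose output is a generator of $C^{*-1}_{-\infty}$; each such term counts rigid pseudoholomorphic strips appearing in one-parameter families with exactly the corresponding asymptotic data. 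By the same neck-stretching argument used to justify claim (i) in the proof of Theorem \ref{thm:invariance1}, any limit of such a strip as the neck length along $\{t=T\} \times M$ with $T \ll 0$ tends to infinity must contain a cylindrical piece carrying a mixed Reeb chord asymptotic of the forbidden type; for a sufficiently stretched almost complex structure the relevant moduli spaces are therefore empty.

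The main obstacle is verifying that the neck-stretching argument in the second step genuinely forces the appearance of a chord in the excluded direction at the negative end, which amounts to careful bookkeeping of the asymptotic orientations of the limiting SFT building. However, the analysis is exactly parallel to that of claim (i) in the proof of Theorem \ref{thm:invariance1} and introduces no new ingredient beyond the hypothesis on mixed Reeb chords; once this is in place, the combination of the empty middle subspace in $\Cth_*(\Sigma_0',\Sigma_1')$ with the stretching argument yields $\phi_{-0}=0$.
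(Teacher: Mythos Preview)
Your proposal is essentially correct and follows the same route as the paper, which leaves this corollary without explicit proof since both claims are immediate from Theorem~\ref{thm:invariance1}: the vanishing $\phi_{-0}=0$ is literally part of the statement of that theorem under the no-chord hypothesis, and the argument for $d_{-0}=0$ is the one alluded to in the proof of Claim~(i) there (``This configuration is similar to the one in the definition of the term $d_{-0}$'').

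One point of imprecision in your write-up deserves care. You describe the $d_{-0}$ strips as carrying ``a distinguished negative puncture asymptotic to a mixed Reeb chord\ldots of precisely the type excluded by hypothesis.'' Be explicit that the configuration in Figure~\ref{fig:cthdiff} is a \emph{two-level} building: the upper component (with boundary on $\Sigma_0\cup\Sigma_1$) has its input at the intersection point and a negative puncture at a chord from $\Lambda_0^-$ to $\Lambda_1^-$, while the lower ``banana'' component in $\R\times(\Lambda_0^-\cup\Lambda_1^-)$ carries the \emph{output} generator as a \emph{positive} puncture at a chord from $\Lambda_1^-$ to $\Lambda_0^-$. It is the intermediate chord on the upper level---not the output generator of $C^*_{-\infty}$---that is of the excluded type. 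Your phrasing is compatible with this reading, but as written it could be mistaken for a claim about the output chord, which goes in the opposite (non-excluded) direction. Once this is spelled out, your argument is complete and matches the paper.
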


\subsection{The Cthulhu complex in the hypertight case}
\label{sec:ht}

We are also interested in the (very) special situation when the negative ends $\Lambda^-_i \subset (M,\alpha),$ $i=0,1,$ of both cobordisms are hypertight Legendrian submanifolds of a closed hypertight contact manifold.

At a first glance, this is a slightly more general setting than that considered in \cite{cthulhu}. However having hypertight Legendrian ends implies no additional difficulties regarding transversality results for holomorphic curves. Indeed, since we consider the canonical augmentation only, every holomorphic curve involved in the definition of the Cthulhu complex will be an honest strip, and thus in particular have at least one mixed Reeb chord among its asymptotics. For that reason, we can apply \cite[Proposition 3.2]{cthulhu} to achieve transversality (in particular this proposition makes no use of the special type of almost complex structure used on the symplectisation of a contactisation). As already mentioned in \cite[Section 1.4.1]{cthulhu} the other reason contactisations were used in \cite{cthulhu} was to get acyclicity of the complex. This property we will here deduce in the special case considered, using the assumption that the cobordisms are either trivial cylinders or traces of contractible loops.

Our definition of hypertight Legendrian leaves us with  the following caveat (recall that our notion of hypertightness does not require the Reeb chords, or orbits, to be non-degenerate):

In the case when the Legendrian is hypertight, we do not know if it is possible to make a small perturbation $\Lambda_1$ of $\Lambda_0$ that simultaneously satisfies the properties that:
\begin{enumerate}
\item each of $\Lambda_i$ is hypertight and has non-degenerate Reeb chords,
\item the contact manifold is hypertight with non-degenerate Reeb orbits, and
\item the Reeb chords between $\Lambda_0$ and $\Lambda_1$ are non-degenerate.
\end{enumerate}
However, given any choice of $L > 0$, it is possible to ensure that the above properties are satisfied for all the chords of length \emph{at most} $L$.

For that reason, we must use the following modified versions of the complexes. We tacitly restrict attention to the generators below some fixed, but sufficiently large, action $L \gg 0$ when considering the different complexes. That this indeed makes sense follows from the action properties satisfied by the differential and the maps appearing in the invariance statements.

Obviously, working below a fixed action is in general not possible to combine with a full invariance. However, when interested in the invariance under a \emph{fixed} deformation, we can always adjust the parameter $L>0$ in order for this invariance to hold. Here it is important to note that we only consider invariance properties under \emph{compactly supported} Hamiltonian isotopies of a pair of exact Lagrangian cobordisms. For a fixed such deformation, all generators concerned that can undergo a deformation thus satisfy an a priori action bound (depending only on the involved cobordisms together with the fixed Hamiltonian isotopy).

In this case the Chekanov--Eliashberg algebra generated by the \emph{contractible} chords has a canonical augmentation that sends every generator to zero.  Restricting attention to only those chords being of action smaller than $L>0$,  the induced differential $\mathfrak{d}_L$ is defined by counting honest pseudoholomorphic strips, i.e.~strips without additional boundary punctures asymptotic to Reeb chords of $\Lambda^-$. This situation is very similar to that in the paper \cite{ElHoSa}, in which both Lagrangians also are non-compact.

\begin{Thm}[\cite{cthulhu}]
For any two exact Lagrangian cobordisms $\Sigma_i \subset (\R \times M,d(e^t\alpha))$ having hypertight negative ends in a hypertight contact manifold, we have $\mathfrak{d}_L^2=0$.
\end{Thm}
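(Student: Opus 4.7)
The strategy is the standard Floer-theoretic one: we identify $\mathfrak{d}_L^2$ with the modulo-two count of boundary points of the compactified one-dimensional moduli space of honest pseudoholomorphic strips with all asymptotics of action at most $L$. Fix two generators $x,y$ of $\Cth_*(\Sigma_0,\Sigma_1)$ of action at most $L$, and consider the moduli space $\mathcal{M}(x,y)$ of index-one such strips between them. Transversality is granted by \cite[Proposition 4.9]{cthulhu}, whose hypotheses are satisfied because every strip carries at least one mixed asymptotic (an intersection point between the $\Sigma_i$ or a mixed Reeb chord).

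By SFT compactness, the boundary of $\overline{\mathcal{M}}(x,y)$ decomposes into three types of broken configurations: (i) strip-splittings into two honest strips through an intermediate generator $z$; (ii) an honest strip carrying additional negative Reeb chord asymptotics on $\Lambda_i^-$, glued to a pseudoholomorphic disc on $\R\times\Lambda_i^-$ with positive punctures at those chords; and (iii) bubbles with asymptotics at closed Reeb orbits. Configurations of type (iii) are ruled out by hypertightness: sphere bubbles are excluded by exactness, while any bubble with closed Reeb-orbit asymptotics would force the existence of a contractible periodic Reeb orbit of $\alpha$, contradicting the hypothesis. Configurations of type (ii) carry a weighting factor $\prod_j \varepsilon_i(\gamma_j)=0$ coming from the canonical zero augmentation, and therefore contribute trivially to $\mathfrak{d}_L^2$. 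Here, the fact that the zero map is indeed a valid augmentation uses the hypertightness of $\Lambda_i^-$, which forbids any constant term in the Chekanov--Eliashberg differential (i.e.~any capping disc for a Reeb chord).

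By the action monotonicity of Lemma \ref{lem:action}, the intermediate generator $z$ in a configuration of type (i) satisfies $\mathfrak{a}(x)<\mathfrak{a}(z)<\mathfrak{a}(y)\leq L$, so it lies in the truncated complex. The mod-two count of type-(i) boundary points is thus exactly the coefficient of $y$ in $\mathfrak{d}_L^2(x)$; since a compact one-manifold has an even number of boundary points, this coefficient vanishes, yielding $\mathfrak{d}_L^2=0$.

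The main obstacle is verifying that SFT compactness applies despite the Reeb chords being non-degenerate only up to action $L$. This is handled by the action argument above: every generator occurring as an asymptotic of any curve in the compactification of $\mathcal{M}(x,y)$ automatically lies in the action window $(-\infty, L]$ where non-degeneracy is guaranteed, so the usual analytic framework for compactness and gluing remains valid.
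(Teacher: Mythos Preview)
Your argument is essentially correct and aligns with the paper's approach: the paper does not supply a self-contained proof here but defers to \cite{cthulhu}, and the surrounding discussion singles out exactly the ingredients you use (transversality via \cite[Proposition~4.9]{cthulhu} since every strip is honest and hence carries a mixed asymptotic, the reduction to honest strips via the canonical zero augmentation, and the role of hypertightness).

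One point of phrasing is worth tightening. In your treatment of type~(ii) you say these configurations ``carry a weighting factor $\prod_j\varepsilon_i(\gamma_j)=0$''. Since you are compactifying the moduli of \emph{honest} strips (no extra pure negative punctures), boundary points of type~(ii) are not weighted at all---they are geometric boundary components, and the issue is whether they occur. The clean statement is that they do not: the external asymptotics of the limit building must be exactly $\{x,y\}$, so any pure disc bubbling off at the negative end would have a single positive puncture and \emph{no} negative punctures, i.e.\ it would cap a Reeb chord on $\Lambda_i^-$, contradicting hypertightness. Your second sentence already contains this observation; it is the exclusion mechanism, not a weighting. (The ``weight~0'' language belongs to the general augmented framework where one compactifies moduli of strips with arbitrary pure negatives; both viewpoints are equivalent here, but mixing them obscures the logic.)

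A minor terminological slip: ``index-one strips'' should read ``one-dimensional moduli of strips'' (equivalently Fredholm index two); the index-one strips are the rigid ones defining $\mathfrak{d}_L$ itself.
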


In the hypertight case, we only formulate the invariance theorem for the special case that is needed in our proofs. Assume that the two Legendrian submanifolds $\Lambda_i \subset (M,\alpha)$, $i=0,1$, are hypertight, where $\Lambda_0$ moreover is obtained from $\Lambda_1$ by the time-$Z$ Reeb flow, $Z > 0$, followed by a generic $C^1$-small Legendrian perturbation.

We begin with the case when the two exact Lagrangian cobordisms $\Sigma_i \subset (\R \times M,d(e^t\alpha))$ have been obtained from $\R \times \Lambda_i$ by a compactly supported Hamiltonian isotopy. We consider the Cthulhu complex generated by only those Reeb chords and intersection points living in the component
$$0 \in \pi_0(\Pi(\R \times M;\Sigma_0,\Sigma_1))$$
of paths from $\Sigma_1$ to $\Sigma_0$ in $\R \times M$ containing the (perturbations of the) Reeb chords from $\{T_-\} \times \Lambda_1$ to $\{T_-\} \times \Lambda_0$ of length precisely equal to $Z>0$. Since the differential counts honest strips (i.e.~with no additional boundary punctures), it is clear that this defines a subcomplex that will be denoted by $\Cth^0_*(\Sigma_0,\Sigma_1)$. 
\begin{Thm}
\label{thm:invarianceht}
Under the above assumptions, the complex $\Cth^0_*(\Sigma_0,\Sigma_1)$ is acyclic and satisfies $d_{-0}=0$. Moreover, there is a homotopy equivalence
$$ \phi \colon \Cth^0_*(\Sigma_0,\Sigma_1) \to \Cth^0_*(\R \times \Lambda_0,\R \times \Lambda_1)$$
of acyclic complexes such that:
\begin{enumerate}
\item Its restriction to the subcomplex $C_{+\infty}^*(\Sigma_0,\Sigma_1)$ is the identity map
$$ C_{+\infty}^*(\Sigma_0,\Sigma_1) \xrightarrow{=} C_{+\infty}^*(\R \times \Lambda_0,\R \times \Lambda_1)$$
of complexes having the same set of generators; and
\item All intersection points are mapped into the subcomplex $$C_{+\infty}^*(\R \times \Lambda_0,\R \times \Lambda_1) \subset \Cth^0_*(\R \times \Lambda_0,\R \times \Lambda_1).$$
\end{enumerate}
\end{Thm}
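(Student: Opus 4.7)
The plan is to reduce to the cylindrical case $\Sigma_i = \R \times \Lambda_i$ by means of a hypertight analogue of the Hamiltonian invariance in Theorem \ref{thm:invariance1} and Corollary \ref{cor:invariancerefined}, and to compute that base case directly.

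First I would handle the cylindrical case. Since $\Lambda_0$ is the image of $\Lambda_1$ under Reeb time $Z > 0$ followed by a $C^1$-small perturbation, the cylinders are disjoint, so $CF^*(\R \times \Lambda_0, \R \times \Lambda_1) = 0$ and only $d_{++}$, $d_{--}$, $d_{+-}$ can be non-zero. The Reeb chords from $\Lambda_1$ to $\Lambda_0$ in the path component $0$ are the short chords of length close to $Z$, which under the perturbation are in canonical bijection with the critical points of the perturbing Morse function on $\Lambda_1$. The same identification occurs at both ends, canonically identifying the generators of $C^*_{+\infty}$ and $C^*_{-\infty}$. A standard short-strip / neck-stretching analysis then shows that, modulo strictly action-increasing terms, $d_{+-}$ induces the identity map under this bijection and is therefore a chain isomorphism; the complex $\Cth^0_*(\R \times \Lambda_0, \R \times \Lambda_1)$ is its mapping cone, and hence acyclic.

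Next I would construct $\phi$ and verify its properties. The bifurcation analysis used in the proof of Theorem \ref{thm:invariance1} applies verbatim in the hypertight setting, after restricting to $\Cth^0_*$ and to action $\le L$, since with the canonical zero augmentation every relevant curve is an honest pseudoholomorphic strip with at least one mixed asymptotic. Because the Hamiltonian isotopy from $\R \times \Lambda_i$ to $\Sigma_i$ is compactly supported, the positive-end chord generators and the differential $d_{++}$ on them are literally preserved; combined with the upper triangular form provided by Theorem \ref{thm:invariance1}(1) this forces $\phi_+ = \id$ on the nose. The vanishing $\phi_{-0} = 0$ is proved as in Claim (i) of the proof of Theorem \ref{thm:invariance1}: a non-zero contribution would degenerate into a configuration containing a Reeb chord from $\Lambda_0^-$ to $\Lambda_1^-$ in the trivial path class, but no such chord exists, since going from the positive Reeb translate $\Lambda_0^-$ back to $\Lambda_1^-$ would require a contractible closed Reeb orbit, excluded by hypertightness. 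Finally, $d_{-0} = 0$ follows from the chain-map identity for $\phi$, using $\phi_{-0} = 0$, the trivial vanishing of $d_{-0}$ on the target (where $CF^* = 0$), and the invertibility of $\phi_{--}$.

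The main obstacle I anticipate is making the bifurcation analysis compatible with both the action cut-off at $L$ and the restriction to the path component $0$. One must ensure that the short chords generating $C^*_{\pm\infty}$ in class $0$ undergo no birth/death moves during the compactly supported isotopy (which should follow by choosing $L$ adapted to the given Hamiltonian and using hypertightness to exclude extraneous contractible chord families), and that every handle-slide curve arising in the deformation stays in the correct path component. The hypertightness hypothesis plays a dual role throughout: it provides the canonical zero augmentation that makes the relevant moduli spaces honest strips, and it excludes the reverse-direction contractible chords that would otherwise obstruct the vanishing of $\phi_{-0}$ and $d_{-0}$.
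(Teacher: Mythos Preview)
Your approach matches the paper's closely: both reduce to the cylindrical case via the Hamiltonian invariance of Theorem~\ref{thm:invariance1}/Corollary~\ref{cor:invariancerefined}, and both rely on hypertightness to exclude the reverse-direction Reeb chords that would otherwise feed $d_{-0}$ and $\phi_{-0}$. Your explicit computation of acyclicity in the cylindrical case (as the mapping cone of the chain isomorphism $d_{+-}$) is a useful addition that the paper leaves implicit, and your derivation of $d_{-0}=0$ for $(\Sigma_0,\Sigma_1)$ from the chain-map identity, while more roundabout than the paper's direct neck-stretching argument, is valid.

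There is, however, one genuine gap. Your justification that $\phi_+ = \id$ --- that the positive-end generators and $d_{++}$ are literally preserved, ``combined with the upper triangular form'' --- does not force the identity: any unipotent upper-triangular map commuting with $d_{++}$ is a chain automorphism of the same complex, and there is no reason such a map must be $\id$. The paper's reasoning is different. The key input is that the \emph{trivial} augmentation pulls back to itself under every DGA morphism $\Phi_{\Sigma_i^s}$ arising along the isotopy, so the two copies of $C^*_{+\infty}$ really are identical as complexes and, more importantly, the handle-slide strips contributing to off-diagonal entries of $\phi_+$ are honest strips with no auxiliary negative punctures. One can then carry out the invariance directly in the SFT setup (as in \cite[Section~4.2.1]{RationalSFT2}), without first wrapping the positive-end chords into intersection points; since in that setup the positive-end Reeb chord generators are genuinely fixed throughout the compactly supported isotopy, no off-diagonal handle-slide contribution arises and $\phi_+ = \id$ on the nose. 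This is the ``topological reasons together with the fact that we are using the trivial augmentation'' to which the paper alludes.

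One minor correction: the obstruction to a chord from $\Lambda_0$ to $\Lambda_1$ landing in class~$0$ is not precisely a contractible closed Reeb orbit but rather a contractible Reeb chord on $\Lambda_1$ (concatenate the hypothetical reversed chord with a short forward chord of length~$\approx Z$). Either way the hypertightness of the Legendrian excludes it, so your conclusion stands.
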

\begin{proof}
The proof is similar to that of Corollary \ref{cor:invariancerefined}. Observe that, since the cylindrical ends are fixed during the isotopy, it makes sense to restrict attention to the specified homotopy class of intersection points whilst performing the bifurcation analysis.

We have $d_{-0}=0$ when restricted to the generators in the homotopy class under consideration. Here the crucial point is that, by hypertightness together with the choice of push-off, there are no Reeb chords from $\Lambda_0$ to $\Lambda_1$ that end up in the homotopy class $0 \in \pi_0(\Pi(\R \times M;\Sigma_0,\Sigma_1))$ when parametrised by \emph{backwards time}, i.e.~using the negative Reeb flow. The rest follows as in the aforementioned proof. 

Note that the restriction of the homotopy equivalence to $C_{+\infty}^*(\Sigma_0,\Sigma_1)$ is the identity morphism, as opposed to the more general isomorphism of complexes in Part (1) of Theorem \ref{thm:invariance1}. This follows by topological reasons together with the fact that we are using the trivial augmentation; the latter obviously pulls back to the trivial augmentation under the DGA morphisms induced by the cobordisms $\Sigma_i$ under consideration (and their deformations by a compactly supported Hamiltonian isotopy).
\end{proof}

The following invariance holds in the more general situation when $\Sigma_i$ both are \emph{invertible} Lagrangian cobordisms; by this, we mean that there are cobordisms $U_i,V_i$ for which the concatenations $\Sigma_i \odot U_i$ as well as $U_i \odot V_i$ both can be performed, and such that the resulting exact Lagrangian cobordisms all are compactly Hamiltonian isotopic to trivial cylinders. See \cite[Section 5.3]{LiftingPseudoholomorphic} for the basic properties of invertible Lagrangian cobordisms.
\begin{Rem}
\label{rem:htpyclass}
If $\Sigma_i$ are Lagrangian cylinders from $\Lambda_i$ to itself that are not compactly supported Hamiltonian isotopic to trivial cylinders, a path inside the slice $\{ T_- \} \times M$ living in the component $0 \in \pi_0(\Pi(\R \times M;\Sigma_0,\Sigma_1))$ specified above (e.g.~a Reeb chord from $\Lambda_1$ to $\Lambda_0$) may end up in a \emph{different} homotopy class when placed inside the slice $\{ T_+ \} \times M$.
\end{Rem}
\begin{Thm}[Theorem 5.7 in \cite{LiftingPseudoholomorphic}]
\label{thm:invariance2}
If $\Sigma_i$ are invertible exact Lagrangian cobordisms between hypertight Legendrians, then $\Cth_*^0(\Sigma_0,\Sigma_1)$ is an acyclic complex.
\end{Thm}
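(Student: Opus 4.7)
The plan is to reduce the claim to the acyclicity for trivial cylinders established in Theorem \ref{thm:invarianceht}, by exploiting the invertibility hypothesis together with the functoriality of the Cthulhu complex under concatenation of cobordisms.

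First I would recall the concatenation formalism developed in \cite[Section 5]{LiftingPseudoholomorphic}. For exact Lagrangian cobordisms $\Xi_i$ and $\Xi_i'$ that are concatenatable, a neck-stretching/SFT-compactness argument produces a chain map
\[ \Phi_{\Xi'} \colon \Cth_*^0(\Xi_0,\Xi_1) \to \Cth_*^0(\Xi_0 \odot \Xi_0', \Xi_1 \odot \Xi_1') \]
by counting rigid configurations of pseudoholomorphic strips split between the two pieces after stretching the neck along $\R \times \Lambda_i'$. In our hypertight setting we only use the trivial augmentations on the negative ends, so every such configuration is a concatenation of honest strips each carrying at least one mixed asymptotic, and transversality follows from \cite[Proposition 4.9]{cthulhu} as indicated in Section \ref{sec:ht}. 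The maps $\Phi$ satisfy $\Phi_{\text{triv}} \simeq \id$, $\Phi_{\Xi'' \odot \Xi'} \simeq \Phi_{\Xi''} \circ \Phi_{\Xi'}$, and are compatible with compactly supported Hamiltonian isotopies in the sense that they induce chain homotopy equivalences.

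By the invertibility hypothesis there exist cobordisms $U_i, V_i$ such that both $\Sigma_i \odot U_i$ and $U_i \odot V_i$ are compactly supported Hamiltonian isotopic to trivial cylinders. Composing concatenation maps yields
\[ \Cth^0_*(\Sigma_0, \Sigma_1) \xrightarrow{\Phi_U} \Cth^0_*(\Sigma_0 \odot U_0, \Sigma_1 \odot U_1) \xrightarrow{\Phi_V} \Cth^0_*\bigl(\Sigma_0 \odot U_0 \odot V_0,\, \Sigma_1 \odot U_1 \odot V_1\bigr), \]
and $\Phi_V \circ \Phi_U \simeq \Phi_{U \odot V}$ by functoriality. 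Since $U_i \odot V_i$ is isotopic to a trivial cylinder, $\Phi_{U \odot V}$ is a chain homotopy equivalence, so $\Phi_U$ admits a left inverse up to homotopy. On the other hand, Theorem \ref{thm:invarianceht} applied to the Hamiltonian isotopy contracting $\Sigma_i \odot U_i$ onto a trivial cylinder shows that the middle complex is acyclic. A complex that is a homotopy retract of an acyclic complex is itself acyclic, and we conclude that $\Cth^0_*(\Sigma_0, \Sigma_1)$ is acyclic.

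The main obstacle is the careful construction of $\Phi_{\Xi'}$ and the verification of its composition law; this is where the hypertight assumption is essential, both to ensure SFT-compactness in the neck-stretching limit and to guarantee that the trivial augmentation on a negative end pulls back to the trivial augmentation along the cobordism DGA maps, so that no auxiliary bounding cochains interfere with the concatenation formalism. The restriction to the distinguished homotopy class $0 \in \pi_0(\Pi(\R \times M;\Sigma_0,\Sigma_1))$ is preserved throughout, in view of Remark \ref{rem:htpyclass}, because concatenation with cobordisms Hamiltonian isotopic to trivial cylinders induces a canonical bijection of path components between the negative and positive slices.
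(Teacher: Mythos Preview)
Your proposal is correct and follows essentially the same route as the paper. The paper's own proof is a one-line pointer to \cite[Theorem 5.7]{LiftingPseudoholomorphic} and \cite[Section 4.2]{RationalSFT2}, noting only that the argument for fillings carries over because the negative ends are fixed; you have simply unpacked that argument, namely the standard reduction via concatenation functoriality and invertibility to the case of cobordisms Hamiltonian isotopic to trivial cylinders, where Theorem~\ref{thm:invarianceht} applies.

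One small point worth tightening: in your final paragraph you assert that the distinguished homotopy class is preserved ``because concatenation with cobordisms Hamiltonian isotopic to trivial cylinders induces a canonical bijection of path components,'' but the individual pieces $U_i$ and $V_i$ are not themselves assumed Hamiltonian isotopic to trivial cylinders---only the concatenations $\Sigma_i \odot U_i$ and $U_i \odot V_i$ are. The homotopy class $0 \in \pi_0(\Pi(\R \times M;\Sigma_0,\Sigma_1))$ is defined intrinsically to the pair of cobordisms (via the short Reeb chords at the negative end), and what you actually need is that the concatenation map $\Phi_U$ respects this class; this follows because the negative ends of $\Sigma_i$ and of $\Sigma_i \odot U_i$ coincide, so the class $0$ is specified identically in both complexes. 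This is a minor rewording rather than a gap.
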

\begin{proof}
Since the negative ends are kept fixed during the Hamiltonian isotopy considered, the statement follows from the same proof as in the case of a pair of fillings (i.e.~in the case when the negative ends are empty). See e.g.~the proof of \cite[Theorem 5.7]{LiftingPseudoholomorphic}, which is based upon \cite[Section 4.2]{RationalSFT2}.
\end{proof}

In the situation where the Legendrian is the diagonal in the contact product of an hypertight contact manifold, we apply the theory from Zena\"{i}di's work \cite{Zenaidi} in the following way. One can consider a contact form of the type $\widehat{\alpha} =f_1 \alpha_1 +f_2 \alpha_2$ on the contact product, where
 $f_1,f_2 :\R \to \R$ are linear away from a compact set $[-N,N]$ and $\delta(t)=f_1'(t)f_2(t)-f_2'(t)f_1(t) \neq 0$ for all $t\in \R$ (this is the contact condition).
 The associated Reeb vector field is $R_{\widehat{\alpha}}=(-\frac{f_2'}{\delta} R,\frac{f_1'}{\delta} R,0)$. Note that periodic orbits of $R_{\widehat{\alpha}}$ are given as curves on the product of orbits of $R$, and thus the form is still hypertight. A chord of the Legendrian diagonal is given by a path $(\gamma_1(t),\gamma_2(t),0)$ such that $\gamma_1\star\gamma_2^{-1}$ is a periodic orbit of $R$, thus $\Delta$ is also relatively hypertight. Since  $R_{\widehat{\alpha}}$ has vanishing $\partial_t$-component, all chords are confined in $M\times M\times \{0\}$. By an argument based upon the maximum principle (see Theorem 5.3.9 in \cite{Zenaidi}) it follows that all holomorphic curves asymptotic to periodic orbits stay in the symplectisation of the compact region $M\times M\times [-\varepsilon,\varepsilon]$. For this reason, the Floer theory for cobordisms considered here can also be extended to the non-compact settings of the contact product.




\subsection{Wrapping}
\label{sec:wrapping}

In the Hamiltonian formulation of wrapped Floer cohomology the Reeb chord generators are exchanged for Hamiltonian chords arising when `wrapping' the ends of one of the Lagrangians. It will sometimes be necessary for us to perform such a wrapping as well, and for that reason we need to introduce the following Hamiltonian vector fields.

We start by observing the general fact that the isotopy $\phi^s_{g(t)R}$ generated by a vector field of the form $g(t)R \in T(\R \times M)$ generates a Hamiltonian isotopy, where $R$ denotes the Reeb vector field and $g \colon \R \to \R$ is an arbitrary smooth function.

Now consider the function $\beta \colon \R \to \R_{ \ge 0}$ shown in Figure \ref{fig:bulge}, which satisfies the following properties:
\begin{itemize}
\item $\beta(t)\equiv 0$ for all $t \notin [-\delta,T+\delta]$,
\item $\beta(t)\equiv 1$ for all $t \in [0,T]$, and
\item $\beta'(t)>0$ and $\beta'(t)<0$ holds for $t \in (-\delta,0)$ and $t \in (T,T+\delta)$, respectively.
\end{itemize}
The induced Hamiltonian vector field $\phi^s_{(\beta-1)R}$ can now be seen to wrap the ends of $\R \times M$ by the negative Reeb flow.

\begin{figure}[htp]
\vspace{1em}
\centering
\labellist
\pinlabel $\beta(t)$ at 155 76
\pinlabel $t$ at 315 11
\pinlabel $-\delta$ at 14 0
\pinlabel $1$ at 55 75
\pinlabel $T$ at 260 0
\pinlabel $T+\delta$ at 293 0
\endlabellist
\includegraphics[scale=0.8]{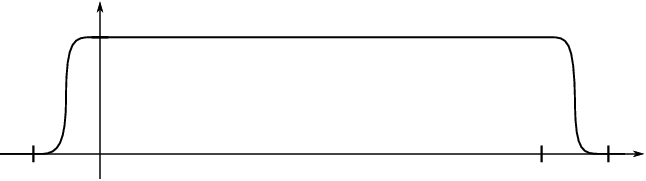}
\caption{The compactly supported bump-function $\beta(t)$.}
\label{fig:bulge}
\end{figure}

In addition, denote by $\beta^+(t)$ the function which is constantly equal to $\beta^+(t) \equiv 1$ when $t \le T$ and which coincides with $\beta(t)$ on $[T,+\infty)$; and $\beta^-(t)$ to denote the function which is constantly equal to $\beta^-(t) \equiv 1$ when $t \ge 0$ and which coincides with $\beta(t)$ on $(-\infty,0)$.

\subsection{Stretching the neck}
\label{sec:stretching}

Consider a (possibly disconnected) hypersurface
$$\{t_1,\hdots,t_n\} \times M \subset (\R \times M,d(e^t\alpha))$$
of contact type near which both Lagrangian cobordisms $\Sigma_i$, $i=0,1$, are cylindrical. As described in \cite[Section 3.4]{Bourgeois_&_Compactness}, we can stretch the neck along this hypersurface by considering an appropriate sequence $J_\tau$, $\tau \ge 0$, of compatible almost complex structures and then applying the SFT compactness theorem \cite{Bourgeois_&_Compactness}. More precisely, in neighbourhoods $[t_i-\epsilon,t_i+\epsilon] \times M$ the almost complex structure $J_\tau$ is induced by pulling back a fixed choice of cylindrical almost complex structure on $([t_i-\epsilon-\tau,t_i+\epsilon+\tau] \times M,d(e^t\alpha))$ by a diffeomorphism induced by an identification $[t_i-\epsilon,t_i+\epsilon] \cong [t_i-\epsilon-\tau,t_i+\epsilon+\tau]$. Also, see \cite[Section 5.1]{cthulhu} for a description in the setting considered here.

When stretching the neck by taking $\tau \to +\infty$, a sequence of $J_\tau$-holomorphic curves in $\R \times M$ with boundary on $\Sigma_0 \cup \Sigma_1$ have subsequences converging to so-called pseudoholomorphic buildings; we refer to \cite{Bourgeois_&_Compactness} and \cite{Abbas_book} for its definition. Roughly speaking, a pseudoholomorphic building consists of many levels containing pseudoholomorphic curves with boundary on the (completed) Lagrangian cobordisms contained between $(t_i,t_{i+1}) \times M$ for $i=0,\hdots,t_n$, and where $t_0=-\infty$, $t_{n+1}=+\infty$. See Figure \ref{fig:stretched} below for an example in the case when the building has three levels.

In order to obtain a bijection between rigid configurations before the limit and rigid pseudoholomorphic buildings one must assume that all involved components are transversely cut out, and then perform pseudoholomorphic gluing.

\section{Positive isotopies and Lagrangian concordance}
\label{sec:posit-isot-lagr}
The starting point of our analysis is the construction of a Lagrangian concordance from a Legendrian isotopy which should be though of as the `trace' of the isotopy. We choose to follow the construction of Eliashberg and Gromov \cite{Eliashberg_&_Lagrangian_Intersection_Finite}. Given a Legendrian isotopy
\begin{gather*}
\phi \colon [0,1] \times \Lambda \hookrightarrow (M,\xi),\\
\Lambda^s := \phi(s,\Lambda),
\end{gather*}
from $\Lambda^0$ to $\Lambda^1$, they produce an associated Lagrangian concordance
$$\Sigma_{\{\Lambda^s\}} \subset (\R \times M,d(e^t\alpha))$$
from $\Lambda^0$ to $\Lambda^1$ inside the symplectisation.

\begin{Rem}
Lagrangian concordances can be constructed out of a Legendrian isotopy in several different ways; see e.g.~\cite{chantraine_conc} or \cite{RationalSFT} for alternatives to the construction presented here. Note that, the primitive of $e^t\alpha$ for the construction from \cite{chantraine_conc} has the same value on the negative and positive ends. For the construction in \cite{RationalSFT}, the symplectic and Liouville structure are changed, making action considerations more delicate. The advantage of the techniques in \cite{Eliashberg_&_Lagrangian_Intersection_Finite} that we here choose to adapt is that, in the case of a positive (resp. negative) isotopy, the obtained cobordism has a primitive with value at the positive end being \emph{strictly greater} (resp. \emph{strictly smaller}) than the negative end. This property will turn out to be crucial.
\end{Rem}

Using a standard construction (for instance described in \cite{Colin_Sandon_Osci}), deforming the above Legendrian isotopy while fixing the endpoints, we can assume that the Legendrian isotopy $\{\Lambda^s\}$ has an associated \emph{contact Hamiltonian}
\begin{gather*}
H \colon [0,1] \times \Lambda \to \R,\\
H(s,q)=\alpha\left(\left.\frac{d}{dt}\right|_{t=s}\Lambda^t(q)\right),
\end{gather*}
that satisfies the following properties. There exists a decomposition $0=s_0<s_1<\cdots<s_k=1$ of the interval, and a number $\delta>0$, such that for all $i=0,\dots, k-1$:
\begin{itemize}
\item $H(s,q)|_{(s_i-\delta,s_i+\delta)}=\rho_i(s)$ (i.e. $H$ does not depend on $q$ near $s_i$);
\item $H(s,q)|_{(s_i,s_{i+1})}\not= 0$ (i.e. the isotopy is either positive or negative); and
\item $H(s_i,q) =0$.
\end{itemize}
The third condition enables us to extend the isotopy to be constant in the time intervals $s<0$ and $s>1$, while the contact Hamiltonian $H$ smoothly extends to zero for these times. Such an isotopy will be called a {\it zig-zag} isotopy.

To construct the sought concordance it suffices to perform the construction for the isotopy $\Lambda^s$ restricted to each interval $[s_i,s_{i+1}]$. The resulting concordances can then be stacked together by repeated concatenations in order to produce the sought Lagrangian concordance from $\Lambda^0$ to $\Lambda^1$.

\subsection{The concordance in the definite case}
In view of the above, we now restrict our attention to an isotopy $\{\Lambda^s\}_{s\in[0,1]}$ for which the contact Hamiltonian $H(s,q)$ satisfies $H(s,q)=\rho_0(s)$ for $s<\delta$, $H(s,q)=\rho_1(s)$ for $s>1-\delta$, $\rho_0(0)=\rho_1(1)=0$, and $|H(s,q)|\not=0$ for all $s\in (0,1)$. For all $\epsilon<\frac{1}{4}$ we choose a function $\chi_{\varepsilon}:\mathbb{R}\rightarrow\mathbb{R}$ such that
\begin{enumerate}
\item $\chi_{\epsilon}(s)=0$ for $s<0$ and $s>1$;
\item $\chi_\epsilon(s)=1$ for $s\in [2\epsilon,1-2\epsilon]$;
\item $\chi_\epsilon(s)=|\rho_0(s)|$ for $s<\min\{\delta,\epsilon\}$; and
\item $\chi_\epsilon(s)=|\rho_1(s)|$ for $s>\max\{1-\epsilon,1-\delta\}$.
\end{enumerate}

Given numbers $T>0$ and $\epsilon>0$ we are now ready to define the smooth map
\begin{gather}
C^{T,\varepsilon}\colon \mathbb{R}\times\Lambda \rightarrow \mathbb{R}\times M \nonumber\\
(s,q) \mapsto \begin{cases}
(Ts+\ln \frac{\chi_\epsilon(s)}{|H(s,q)|},\Lambda_s(q)), & \text{ for } s\in [0,1],\\
(Ts,\Lambda_0(q)), & \text{ for }s<0,\\
(Ts,\Lambda_1(q)), & \text{ for }s>1.
\end{cases}
\label{eq:7}
\end{gather}
Note that, from the definition of $\chi$, it follows that $C^{T,\varepsilon}$ is a smooth map. Namely, the function $\frac{\chi(s)_\epsilon}{H(s,q)}$ approaches $1$ as $s$ approaches either $0$ or $1$.

The following theorem is from \cite[Lemma 4.2.5]{Eliashberg_&_Lagrangian_Intersection_Finite}.
\begin{Prop}\label{prp:pos_cyl}
For all $T,\varepsilon > 0$ the map $C^{T,\varepsilon}$ is an immersed Lagrangian concordance. For $T$ sufficiently large, $C^{T,\varepsilon}$ is an embedded Lagrangian concordance from $\Lambda^0=\Lambda$ to $\Lambda^1$. Moreover, the pull back $(C^{T,\varepsilon})^*(e^t\alpha)$ is exact with primitive given by
$$f_{C^{T,\varepsilon}}(s,q)=\OP{sign}(H)\int_{-1}^s\chi_\varepsilon(t)e^{Tt}dt,$$
vanishing at $-\infty$. In particular, for a positive (resp. negative) isotopy, the primitive is positive (resp. negative) when restricted to the positive cylindrical end.
\end{Prop}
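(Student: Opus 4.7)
\medskip
\noindent\textbf{Proof proposal.} The plan is to verify the three claims --- Lagrangian, primitive, and (eventual) embeddedness --- in order, with the heart of the argument being a direct pullback computation that uses the Legendrian condition to kill all $q$-derivatives.

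First I would verify that $C^{T,\varepsilon}$ is smooth and patches correctly to the cylindrical regions. On the definite interval $s\in(0,1)$ we have $|H(s,q)|>0$, so the logarithm is well-defined; for $s$ close to $0$ (resp.~$1$) the hypotheses $H(s,q)=\rho_0(s)$ (resp.~$\rho_1(s)$) combined with properties (3)--(4) of $\chi_\varepsilon$ give $\chi_\varepsilon(s)/|H(s,q)|\equiv 1$, so the $\ln$-term vanishes identically near the endpoints and the map glues smoothly to the trivial cylinders $(Ts,\Lambda^0(q))$ and $(Ts,\Lambda^1(q))$. This also confirms the asymptotic behaviour required of a concordance.

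Second I compute $(C^{T,\varepsilon})^{*}(e^{t}\alpha)$. For $s\in(0,1)$, since $\Lambda^s$ is Legendrian the $q$-directional derivatives annihilate $\alpha$, while the $s$-derivative contributes $\alpha(\dot\Lambda^s(q))\,ds=H(s,q)\,ds$. Using $e^{t(s,q)}=e^{Ts}\chi_\varepsilon(s)/|H(s,q)|$ and that $H$ has constant sign on $(0,1)$, this simplifies to
\[
(C^{T,\varepsilon})^{*}(e^{t}\alpha)=\OP{sign}(H)\,\chi_\varepsilon(s)\,e^{Ts}\,ds,
\]
which depends only on $s$. In particular it is closed, so the pullback of $d(e^{t}\alpha)$ vanishes and $C^{T,\varepsilon}$ is Lagrangian; on the cylindrical ends outside $[0,1]$ the same pullback is zero, giving global agreement. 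The explicit antiderivative that vanishes on the negative end is exactly $f_{C^{T,\varepsilon}}(s,q)=\OP{sign}(H)\int_{-1}^{s}\chi_\varepsilon(u)e^{Tu}\,du$, and it has the sign claimed on the positive end since $\chi_\varepsilon\geq 0$ is positive on an open set.

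Third I verify immersion and then embeddedness. For immersion, note that at any interior $s$ the $q$-derivatives of $C^{T,\varepsilon}$ land in the tangent space to the Legendrian $\Lambda^s$, hence lie in $\ker\alpha$; the $s$-derivative, however, has Reeb component $\alpha(\dot\Lambda^s)=H(s,q)\neq 0$ and is therefore transverse to the $q$-plane. On the cylindrical regions immersion is immediate since $T>0$ and each $\Lambda^i$ is embedded. For the embedding claim --- the only place where $T$ large is needed --- suppose $C^{T,\varepsilon}(s_1,q_1)=C^{T,\varepsilon}(s_2,q_2)$ with $s_1\neq s_2$. Equating $t$-coordinates yields
\[
T(s_1-s_2)=\ln\!\frac{\chi_\varepsilon(s_2)\,|H(s_1,q_1)|}{\chi_\varepsilon(s_1)\,|H(s_2,q_2)|}.
\]
The expected main obstacle is controlling the right-hand side: the quotient $\chi_\varepsilon/|H|$ extends continuously to all of $[0,1]\times\Lambda$ by the very design of $\chi_\varepsilon$ (equal to $|\rho_i|$ near the endpoints where $|H|$ also degenerates), and is positive and bounded on a compact set, so the logarithm is uniformly bounded on $[0,1]^2\times\Lambda^2$. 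Choosing $T$ larger than this uniform bound forces $s_1=s_2$, after which $\Lambda^{s_1}(q_1)=\Lambda^{s_2}(q_2)$ and injectivity of $\Lambda^{s_1}$ give $q_1=q_2$. Properness of the map (clear from the cylindrical ends and the growth of $Ts$) then upgrades the injective immersion to an embedding.
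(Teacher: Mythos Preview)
Your computation of the pullback and the resulting primitive is correct and matches the paper's. The immersion argument is also sound (though note that the $q$-derivatives of $C^{T,\varepsilon}$ carry a $\partial_t$-component coming from $\partial_q\ln|H|$; what matters for your argument is that their $TM$-projections lie in $\ker\alpha$, which is true).

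There is, however, a genuine gap in your embeddedness argument. From the bound $|T(s_1-s_2)|\le K$ (your observation that $\chi_\varepsilon/|H|$ extends to a positive continuous function on the compact set $[0,1]\times\Lambda$ is a clean way to obtain this) you conclude that ``choosing $T$ larger than this uniform bound forces $s_1=s_2$''. This does not follow: it only gives $|s_1-s_2|\le K/T$, which can be made arbitrarily small but never zero. What is missing is a uniform \emph{lower} bound $|s_1-s_2|\ge\eta>0$ for double points, independent of $T$; combining the two and then taking $T>K/\eta$ yields the contradiction. The paper supplies precisely this missing ingredient: since the isotopy is transverse to $\xi$ on $(0,1)$, the projection of $C^{T,\varepsilon}|_{(s-\delta,s+\delta)\times\Lambda}$ to $M$ is already an embedding for some uniform $\delta>0$, so two preimages of the same point of $\R\times M$ (which in particular share the same $M$-projection) must satisfy $|s_1-s_2|\ge 2\delta$. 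Your route can be repaired along the same lines, or alternatively by invoking uniform local injectivity of the immersion on the compact piece together with the observation that $\Lambda^{s_1}(q_1)=\Lambda^{s_2}(q_2)$ with $|s_1-s_2|$ small forces $q_1$ close to $q_2$; either way an additional step is required.
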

\begin{proof}
A simple computation shows that
$$(C^{T,\varepsilon})^*(e^t\alpha)=\frac{H}{|H|}\chi_\varepsilon(s)e^{Ts} ds$$
which implies the first and last statements. 

For the second statement, we note first that, since the isotopy is transverse to $\xi$ for all $s\in [0,1]$, there always exists a small $\delta>0$ such that the projection of $C^{T,\varepsilon}|_{(s-\delta,s+\delta)\times\Lambda}$ to $M$ is an embedding. This imply that every pair of points $(s_1,q_1)$ and $(s_2,q_2)$ (with $s_2>s_1$) such that $C(s_1,q_1)=C(s_2,q_2)$ satisfies
\begin{align}
\label{eq:10}
s_2-s_1&>2\delta,\\
T(s_2-s_1)&=\ln \frac{H(s_2)}{H(s_1)}-\ln\frac{\chi_\epsilon(s_2)}{\chi_\epsilon(s_1)}.
\end{align}
After choosing
$$T>\frac{1}{2\delta}\ln{\frac{\max{H}}{\min{H}}},$$
it follows that no such double point can exist.
\end{proof}

\begin{Rem}\label{rem:triv0T}
After increasing $T \gg 0$ even further, it is the case that
\begin{gather*}
C^{T,\epsilon}(\mathbb{R}\times \Lambda)\cap ((-\infty,0)\times M)=(-\infty,0)\times \Lambda^0, \\
C^{T,\epsilon}(\mathbb{R}\times \Lambda)\cap ((T,\infty)\times M)=(T,\infty)\times \Lambda^1,
\end{gather*}
are satisfied. We assume that such choices are made when using these concordances.
\end{Rem}

\subsection{The general case}
Given a general Legendrian isotopy $\{\Lambda^s\}$, we homotope it to a zig-zag isotopy. Concatenating the pieces of concordances produced Proposition \ref{prp:pos_cyl} above, we thus obtain the sought Lagrangian concordance
$$\Sigma_{\{\Lambda^s\}} \subset (\R \times M,d(e^t\alpha))$$
from $\Lambda^0$ to $\Lambda^1$.
\begin{Prop}
\label{prp:contractible}
For a contractible Legendrian loop $\{\Lambda^s\}$ with $\Lambda=\Lambda^0$, the Lagrangian concordance constructed above is compactly supported Hamiltonian isotopic to $\R \times \Lambda$.
\end{Prop}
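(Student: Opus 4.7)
The plan is to reduce the proposition to a general exactness principle and then produce the required family of concordances via the contraction.

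First, I would invoke the following standard principle. Any smooth one-parameter family $\{\Sigma_\tau\}_{\tau\in[0,1]}$ of embedded exact Lagrangian submanifolds of $(\R \times M, d(e^t\alpha))$ that coincide outside a fixed compact subset is generated by a compactly supported Hamiltonian isotopy. Indeed, the infinitesimal variation vector field $X_\tau$ along $\Sigma_\tau$ is compactly supported; exactness of the primitives $f_{\Sigma_\tau}$ forces $\iota_{X_\tau}(e^t\alpha)|_{\Sigma_\tau}$ to be exact, so via the Lagrangian neighbourhood theorem $X_\tau$ extends to a compactly supported Hamiltonian vector field on $\R \times M$. It therefore suffices to connect $\Sigma_{\{\Lambda^s\}}$ to $\R \times \Lambda$ through such a family.

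To build the family, I would use the contractibility hypothesis to fix a smooth two-parameter family $\{\Lambda^{s,\tau}\}_{(s,\tau)\in[0,1]^2}$ of Legendrian embeddings with $\Lambda^{s,0}=\Lambda^s$, $\Lambda^{s,1}\equiv \Lambda$, and $\Lambda^{0,\tau}=\Lambda^{1,\tau}=\Lambda$. After a preliminary smooth deformation I would arrange that, for $\tau$ varying in $[0,1-\eta]$ (with $\eta>0$ small), each slice $\{\Lambda^{s,\tau}\}_s$ is a zig-zag isotopy, with breakpoints depending smoothly on $\tau$ and with uniform positive bounds on the ratio $\sup|H_\tau|/\inf|H_\tau|$ on each definite sub-interval (here $H_\tau$ denotes the contact Hamiltonian of the $\tau$-slice). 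Applying Proposition \ref{prp:pos_cyl} piece by piece with a \emph{uniform} choice of the dilation parameter $T \gg 0$ and cut-off $\chi_\varepsilon$ then produces a smooth family $\{\Sigma_\tau\}_{\tau\in[0,1-\eta]}$ of embedded exact Lagrangian concordances from $\Lambda$ to itself. By Remark \ref{rem:triv0T} all of them agree with $\R \times \Lambda$ outside a common compact subset, so the general principle furnishes a compactly supported Hamiltonian isotopy from $\Sigma_0=\Sigma_{\{\Lambda^s\}}$ to $\Sigma_{1-\eta}$.

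The main obstacle, which requires careful handling, is that the Eliashberg--Gromov formula involving $\ln(\chi_\varepsilon/|H|)$ degenerates at the constant loop $\tau=1$, so the endpoint $\R\times\Lambda$ cannot be included directly in the family above. To bridge $\Sigma_{1-\eta}$ to the trivial cylinder I would choose $\eta>0$ so small that the entire loop $\{\Lambda^{s,1-\eta}\}_s$ is contained in an arbitrarily $C^1$-small Weinstein neighbourhood of $\Lambda$; the corresponding concordance $\Sigma_{1-\eta}$ then lies in a Weinstein neighbourhood of $\R\times\Lambda \subset (\R \times M, d(e^t\alpha))$ and, being exact and cylindrical outside a compact set, is identified with the graph of $dg$ for a compactly supported function $g$ on $\R\times\Lambda$. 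Linearly contracting $g$ to $0$ yields a family of exact graphical Lagrangian concordances with common cylindrical ends, and a final application of the Moser principle from the first paragraph produces a compactly supported Hamiltonian isotopy from $\Sigma_{1-\eta}$ to $\R\times\Lambda$. Concatenating the two Hamiltonian isotopies concludes the proof.
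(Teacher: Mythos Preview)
Your proof is correct and follows essentially the same strategy as the paper: both argue that the Eliashberg--Gromov construction is parametric in the contraction parameter, producing an exact Lagrangian isotopy with fixed cylindrical ends that is therefore generated by a compactly supported Hamiltonian, and both isolate the degeneration at the constant loop as the only delicate point. The one minor difference is in how that endpoint is handled: the paper observes that the zig-zag deformation of the constant loop consists of small Reeb-flow wiggles $\{\phi_{\varepsilon\chi(t)}(\Lambda^0)\}$ whose associated concordances visibly converge to the trivial cylinder as $\varepsilon\to 0$, whereas you instead note that the near-constant concordance sits inside $\R\times U \cong T^*(\R\times\Lambda)$ as the graph of an exact $1$-form $dg$ and linearly contract $g$---a cleaner and more robust argument that avoids tracking the explicit Reeb-flow pieces.
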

\begin{proof}
The construction of these cylinders and of the homotopy making the Legendrian isotopy a zig-zag isotopy as in Lemma 2.1 of \cite{Colin_Sandon_Osci} is parametric. Thus, a homotopy of Legendrian isotopies will lead to an isotopy of Lagrangian concordances fixed outside of a compact subset. Since concordances are exact Lagrangians,  we can use the standard fact that an exact Lagrangian isotopy is generated by a Hamiltonian in order to obtain our sought isotopy. (The noncompactness causes no issue since the exact Lagrangian isotopy is compactly supported and since, at least near the Lagrangian, the Hamiltonian is locally constant outside of the support by its construction.)

More precisely, if $\{\Lambda^s\}$ is a contractible loop, the parametric construction gives a homotopy of zig-zag Legendrian isotopies from a small deformation of $\{\Lambda^s\}$ to a small deformation of the constant isotopy $\{\Lambda^0\}$. The latter deformation is a concatenation of isotopies of the form  $\{\phi_{\varepsilon\chi(t)}(\Lambda^0)\}$ where $\phi$ is the Reeb flow and $\chi$ is a bump function similar to the one considered above.

By construction the resulting Lagrangian concordance associated to $\{\Lambda^s\}$ is isotopic through exact Lagrangian embeddings, and by a standard result hence Hamiltonian isotopic, to the cylinder associated to to the cylinder $C'(t,q)$ that is the concatenation of the `graphs' $\{ (t,q); \: q \in \phi_{\varepsilon\chi(t)}(\Lambda^0)\}$. The latter cylinder can finally be explicitly seen to be Hamiltonian isotopic to the trivial cylinder $\mathbb{R}\times \Lambda^0$ by taking $\varepsilon \to 0$ in each piece simultaneously.
\end{proof}

\section{The Floer homology of the trace of a positive loop}\label{sec:floer-homology-trace}
Consider the Lagrangian concordance $\Sigma_0:=\Sigma_{\{\Lambda^s\}}$ obtained from the trace of a positive loop as constructed in Section \ref{sec:posit-isot-lagr}. We assume that the starting point of the loop is the Legendrian $\Lambda^0=\Lambda_0$, and we let $\Sigma_1:=\R \times \Lambda_1$ be the trivial cylinder over the Legendrian $\Lambda_1$. In particular, each of $\Sigma_i$, $i=0,1$, is an exact Lagrangian concordance from $\Lambda_i$ to itself.

In this section we compute the Floer homology complex $\Cth_*(\Sigma_0,\Sigma_1)$ for this pair of cobordisms, as defined in Section \ref{sec:cthulhu-compl-hypert}. We start by proving some results of a more technical nature concerning these complexes, which later will be used when showing the nonexistence of positive loops. For all results the assumption that $\Lambda_i$ has an augmentation is crucial. Denote by $\varepsilon_i$ this augmentation, and by $\varepsilon^+_i:=\varepsilon_i\circ\Phi_{\Sigma_i}$ its pull-back under the DGA morphism associated to $\Sigma_i$. The differential of the complex $\mathfrak{d}_{\varepsilon_0,\varepsilon_1}$ will be taken to be induced by the augmentations $\varepsilon_i$, and we denote its components by $d_{+-}$, $d_{+0}$, $\ldots$, etc.

Our first goal is establishing the following long exact sequence (or, in the ungraded case, exact triangle) which exists in the above setting. Its existence depends heavily on the fact that $\Sigma_{\{\Lambda^s\}}$ is the trace of a \emph{positive} Legendrian loop.
\begin{Thm}\label{thm:les}
\begin{enumerate}
\item In the case when $(M,\alpha)$ is a contactisation $(P \times \R,\alpha_{\OP{std}})$ endowed with its standard symplectic form, then there exists a long exact sequence
\begin{equation}
\label{eq:les}
\xymatrixrowsep{0.15in}
\xymatrixcolsep{0.15in}
\xymatrix{
       \cdots\ar[r]& LCH^{k-1+\mu}_{\varepsilon_0^+,\varepsilon_1^+}(\Lambda_0,\Lambda_1) \ar[d]^{\delta} & & & \\
& HF_{\varepsilon_0,\varepsilon_1}^{k}(\Sigma_{\{\Lambda^s\}},\R \times \Lambda_1) \ar[r]^{d_{-0}} & LCH^{k}_{\varepsilon_0,\varepsilon_1}(\Lambda_0,\Lambda_1) \ar[r]^{d_{+-}} & LCH^{k+\mu}_{\varepsilon_0^+,\varepsilon_1^+}(\Lambda_0,\Lambda_1)\ar[r] &\cdots,}
\end{equation}
for some fixed $\mu \in \Z,$ in which
$$ \delta := [\pi] \circ \left[\begin{pmatrix} d_{+0} & d_{+-}
\end{pmatrix}\right]^{-1},$$
with $\pi \colon CF^*(\Sigma_1,\Sigma_0) \oplus C^*_{-\infty} \to CF^*(\Sigma_0,\Sigma_1)$ being the canonical projection. 
\item In the case when the Legendrian submanifolds $\Lambda_0,\Lambda_1 \subset (M,\alpha)$ are hypertight, there again exists an analogous long exact sequence satisfying the same properties, but where the middle and rightmost terms are replaced by $LCH^{\alpha,k}(\Lambda_0,\Lambda_1)$ and $LCH^{\beta,k+\mu}(\Lambda_0,\Lambda_1),$ respectively, for suitable homotopy classes $\alpha,\beta \in \pi_0(\Pi(\R \times M;\R \times \Lambda_0,\R \times \Lambda_1))$. In this case we, moreover, have $d_{-0}=0$.
\end{enumerate}
\end{Thm}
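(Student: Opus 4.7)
The long exact sequence is extracted from the acyclicity of the Cthulhu complex (Theorem \ref{thm:invariance0}, respectively Theorem \ref{thm:invariance2} in the hypertight case) by a careful analysis of the block structure of the differential $\mathfrak{d}_{\varepsilon_0,\varepsilon_1}$. The decisive input is the \emph{positivity} of the loop, which, via the action filtration, forces the $d_{0-}$ component of the Cthulhu differential to vanish. Indeed, by Proposition \ref{prp:pos_cyl} the trace concordance $\Sigma_0=\Sigma_{\{\Lambda^s\}}$ has primitive satisfying $f_{\Sigma_0}^+>0$ and $f_{\Sigma_0}^-=0$; since $\Sigma_1=\R\times\Lambda_1$ is a trivial cylinder with $f_{\Sigma_1}\equiv 0$, the intersection points $p\in\Sigma_0\cap\Sigma_1$ satisfy $\mathfrak{a}(p)=-f_{\Sigma_0}(p)\le 0$, whereas every negative Reeb chord generator has $\mathfrak{a}(\gamma^-)=e^{T_-}\ell(\gamma^-)>0$. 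Because the differential is strictly action-increasing by Lemma \ref{lem:action}, the component $d_{0-}\colon C_{-\infty}^{*}\to CF^{*}$ must vanish, since any nonzero contribution would strictly decrease action.

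\textbf{Short exact sequence and mapping-cone identification.} With $d_{0-}=0$, the subspace $C_{+\infty}^{*}\oplus C_{-\infty}^{*-1}\subset\Cth_*$ is preserved by $\mathfrak{d}$, yielding the short exact sequence
\[0\to C_{+\infty}^{*}\oplus C_{-\infty}^{*-1}\to\Cth_*(\Sigma_0,\Sigma_1)\to CF^{*}(\Sigma_0,\Sigma_1)\to 0\]
in which the subcomplex carries the induced differential $\bigl(\begin{smallmatrix} d_{++} & d_{+-} \\ 0 & d_{--} \end{smallmatrix}\bigr)$ and the quotient carries $d_{00}$. The $(1,3)$-block of $\mathfrak{d}^2=0$ reads $d_{++}d_{+-}+d_{+0}d_{0-}+d_{+-}d_{--}=0$; combined with $d_{0-}=0$, this is exactly the statement that $d_{+-}\colon C_{-\infty}^{*}\to C_{+\infty}^{*+\mu}$ is a chain map of the appropriate degree shift $\mu$. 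Hence the subcomplex is identified with the mapping cone $\mathrm{Cone}(d_{+-})$.

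\textbf{Splicing the two long exact sequences.} Acyclicity of $\Cth_*$ applied to the above SES produces an isomorphism $HF^{k-1}(\Sigma_0,\Sigma_1)\xrightarrow{\cong} H^{k}(\mathrm{Cone}(d_{+-}))$, which, substituted into the canonical long exact sequence of the mapping cone, produces the desired sequence \eqref{eq:les}. The middle map is identified with $d_{-0}$ via an explicit chain-level chase: for a cocycle $cf\in CF^k$, the lift $(0,cf,0)\in\Cth_*$ satisfies $\mathfrak{d}(0,cf,0)=(d_{+0}\,cf,\,0,\,d_{-0}\,cf)$, and the projection onto the $C_{-\infty}$ factor of the mapping cone returns $d_{-0}\,cf$ on the nose. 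The formula for $\delta$ emerges from the complementary short exact sequence $0\to C_{+\infty}\to\Cth_*\to CF\oplus C_{-\infty}^{*-1}\to 0$, whose connecting morphism is computed at the chain level as $(d_{+0},d_{+-})$; acyclicity of $\Cth_*$ forces this to be an isomorphism on cohomology, so that inverting it and postcomposing with the projection $[\pi]$ onto $HF$ recovers $\delta=[\pi]\circ[(d_{+0},d_{+-})]^{-1}$, in agreement with the mapping-cone LES.

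\textbf{Hypertight case and main obstacle.} Part (2) proceeds by repeating the above in the hypertight Cthulhu complex restricted to a homotopy class $\alpha$ at the negative end (and the corresponding class $\beta$ at the positive end, compare Remark \ref{rem:htpyclass}); acyclicity is then provided by Theorem \ref{thm:invariance2}, whose invertibility hypothesis is satisfied because the trace concordance $\Sigma_{\{\Lambda^s\}}$ is invertible with inverse the trace of the reverse loop. The extra identity $d_{-0}=0$ follows directly from the hypertightness argument in the proof of Theorem \ref{thm:invarianceht}: no Reeb chord in the required homotopy class exists in the $\Lambda_0\to\Lambda_1$ direction. The principal technical obstacle is verifying rigorously that the middle connecting map produced by splicing the mapping-cone LES with the isomorphism $HF^{*-1}\cong H^{*}(\mathrm{Cone}(d_{+-}))$ coincides literally with the Cthulhu component $d_{-0}$ (rather than being merely chain-homotopic to it), while simultaneously keeping track of the grading shift $\mu$ and, in the hypertight case, the homotopy-class shift $\alpha\rightsquigarrow\beta$ induced by the trace cobordism.
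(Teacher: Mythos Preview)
Your proposal is correct and follows essentially the same strategy as the paper: both arguments hinge on the vanishing $d_{0-}=0$ obtained from the action computation for the positive trace (the paper's Proposition \ref{prp:action}), the acyclicity of the Cthulhu complex, and the resulting iterated mapping-cone structure, with the hypertight case handled by invoking invertibility of the trace concordance and the absence of Reeb chords in the wrong direction. The only organisational difference is that the paper reads the complex as $\mathrm{Cone}\bigl((d_{+0},d_{+-}):\mathrm{Cone}(d_{-0})\to C_{+\infty}\bigr)$ and splices the LES of $\mathrm{Cone}(d_{-0})$ against the quasi-isomorphism $(d_{+0},d_{+-})$, whereas you first take the complementary decomposition with subcomplex $\mathrm{Cone}(d_{+-})=C_{+\infty}\oplus C_{-\infty}$ and quotient $CF$; these are dual ways of unpacking the same double-cone and yield the identical long exact sequence.
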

\begin{Rem}
\label{rem:les}
Here we give some explanations concerning the formulation of the above theorem.
\begin{itemize}
\item In either of the Cases (1) and (2) above, there may be a relative difference of Maslov potentials of the Legendrians at the positive and negative ends induced by the cobordism $\Sigma_{\{\Lambda^s\}}$. Such a difference would induce a nonzero shift of grading by $\mu \in \Z$ in the rightmost homology group.
\item In Case (2), i.e.~the hypertight case, we only consider the canonical augmentation for each of $\Lambda_i$, $i=0,1,$ which obviously is preserved under the pull-back by a DGA morphism induced by any Lagrangian concordance. Unless the isotopy is homotopically trivial, the homotopy classes can indeed be different, i.e.~$\alpha \neq \beta.$ In this case, fixing one of them also determines the other one. Also, see Remark \ref{rem:htpyclass}.
\end{itemize}
\end{Rem}

\begin{proof}
The Cthulhu differential takes the form 
$$\mathfrak{d}_{\varepsilon_0,\varepsilon_1}=\begin{pmatrix}
  d_{++} & d_{+0} & d_{+-}\\
0 & d_{00} & 0 \\
0 & d_{-0} & d_{--}
\end{pmatrix},$$
since $d_{0-}=0$ by the action computation in Proposition \ref{prp:action} which we have postponed to Section \ref{sec:chtulhu-compl-assoc} below. In other words, the complex $\OP{Cth}_*(\Sigma_{\{\Lambda^s\}},\R \times \Lambda_1)$ is the mapping cone of the chain map
$$ \begin{pmatrix}d_{+0} & d_{+-} \end{pmatrix} \colon CF(\Sigma_0,\Sigma_1) \oplus C_{-\infty}(\Sigma_0,\Sigma_1) \to C_{+\infty}(\Sigma_0,\Sigma_1),$$
whose domain, in turn, is the mapping cone of
$$ d_{-0} \colon CF(\Sigma_0,\Sigma_1) \to C_{-\infty}(\Sigma_0,\Sigma_1).$$
By Theorem \ref{thm:invariance0} (in Case (1)) or Theorem \ref{thm:invariance2} (in Case (2)) the total complex is moreover acyclic. For Case (2) we must here use the fact that the trace of a Legendrian isotopy as constructed in Section \ref{sec:posit-isot-lagr} always is an invertible Lagrangian cobordism, as follows from Proposition \ref{prp:contractible}.

The existence of the long exact sequence is now standard consequence of this double cone structure, together with the acyclicity of the total complex.
\end{proof}
In certain particular situations the above long exact sequence degenerates into the statement that $d_{+-}$ is an isomorphism.
\begin{Thm}
\label{thm:delta}
If assumptions (1) and (2) of Theorem \ref{thm:les} are strengthened to
\begin{enumerate}
\item the Legendrian submanifolds $\Lambda_0,\Lambda_1 \subset (P \times \R,\alpha_{\OP{std}})$ satisfy the properties that no Reeb chord starts on $\Lambda_0$ and ends on $\Lambda_1$,
\item in the hypertight case, the positive loop of Legendrians is contractible amongst Legendrian loops,
\end{enumerate}
respectively, then it moreover follows that $d_{+-}$ is an isomorphism or, equivalently, that $d_{-0}=\delta=0$.
\end{Thm}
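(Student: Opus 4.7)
The plan is, in each case, to reduce to a situation where the pair of Lagrangian cobordisms $(\Sigma_0,\Sigma_1)$ is \emph{disjoint}. For then $CF^*(\Sigma_0,\Sigma_1)=0$, both $d_{-0}$ and $\delta$ vanish for trivial reasons (their source or target being zero), and the Cthulhu complex collapses to the mapping cone
\[
\Cth_*(\Sigma_0,\Sigma_1) = \left(C^*_{+\infty}\oplus C^{*-1}_{-\infty},\ \begin{pmatrix} d_{++} & d_{+-}\\ 0 & d_{--}\end{pmatrix}\right).
\]
Acyclicity of this total complex, guaranteed by Theorem \ref{thm:invariance0} in Case (1) or Theorem \ref{thm:invariance2} in Case (2), then forces $d_{+-}$ to be a quasi-isomorphism, which is the statement of the theorem.

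For Case (1), I would translate $\Sigma_1=\R\times\Lambda_1$ downward along the negative Reeb flow $\phi^{-N}_R$ by a large amount $N\gg 0$, producing $\Sigma_1^N:=\R\times\phi^{-N}_R(\Lambda_1)$. The assumption that no Reeb chord starts on $\Lambda_0$ and ends on $\Lambda_1$ yields $\Lambda_0\cap\phi^{-N}_R(\Lambda_1)=\emptyset$ for every $N>0$; combined with boundedness of the non-cylindrical part of $\Sigma_0$ in $M$, this forces $\Sigma_0\cap\Sigma_1^N=\emptyset$ once $N$ is sufficiently large. The Reeb flow provides a canonical length-shifting bijection between Reeb chords from $\Lambda_1$ to $\Lambda_0$ and those from $\phi^{-N}_R(\Lambda_1)$ to $\Lambda_0$ that is compatible with the pulled-back augmentations, and translation invariance of $\alpha_{\OP{std}}=\theta+dz$ on the contactisation identifies the Cthulhu complexes of the two pairs (this is the content of the remark following Theorem \ref{thm:liouville}). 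The reduction above, applied to $(\Sigma_0,\Sigma_1^N)$, then yields that $d_{+-}$ is a quasi-isomorphism.

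For Case (2), the contractibility hypothesis together with Proposition \ref{prp:contractible} provides a compactly supported Hamiltonian isotopy from $\Sigma_0=\Sigma_{\{\Lambda^s\}}$ to the trivial cylinder $\R\times\Lambda_0$. Theorem \ref{thm:invariance2}, applied to this invertible Lagrangian concordance between hypertight Legendrians, yields a homotopy equivalence
\[
\Cth^0_*(\Sigma_0,\R\times\Lambda_1)\simeq \Cth^0_*(\R\times\Lambda_0,\R\times\Lambda_1),
\]
matching the positive ends (so that the classes $\alpha=\beta$ of Theorem \ref{thm:les} coincide, in view of Remark \ref{rem:htpyclass}). A generic $C^1$-small compactly supported Legendrian perturbation of $\Lambda_1$, which preserves hypertightness in the action window under consideration, renders $\Lambda_0$ and $\Lambda_1$ disjoint in $M$; the trivial cylinders then become disjoint in $\R\times M$ and the reduction applies.

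The principal difficulty I anticipate lies in Case (1), where the $z$-translation used to separate $\Sigma_0$ and $\Sigma_1^N$ is not compactly supported and therefore falls outside the scope of Theorem \ref{thm:invariance1}. Justifying rigorously that this translation preserves the Cthulhu complex up to quasi-isomorphism---and, in particular, that the $d_{+-}$ chain map for $(\Sigma_0,\Sigma_1^N)$ is identified with that for $(\Sigma_0,\Sigma_1)$ under the canonical length-shifting bijection on Reeb chords---requires the extension of the invariance theory alluded to in the remark following Theorem \ref{thm:liouville}, realised through an action-preserving analysis of the moduli spaces of pseudoholomorphic strips entering the definition of $\Cth$.
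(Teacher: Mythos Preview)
Your overall strategy --- deform to a pair of disjoint cobordisms, where $CF^*=0$ and the claim becomes trivial --- is the same as the paper's. But there are two genuine gaps, one in each case.

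\textbf{Case (1).} The difficulty you flag is real, and the paper avoids it with a device you have overlooked. Rather than translate the entire cylinder $\Sigma_1=\R\times\Lambda_1$ by the global Reeb flow (which is not compactly supported and so falls outside Theorem~\ref{thm:invariance1}), the paper applies the \emph{compactly supported} Hamiltonian isotopy $\phi^s_{-\beta\partial_z}$ generated by the bump-function wrapping of Section~\ref{sec:wrapping}. This pushes only the middle portion of $\Sigma_1$ far in the negative $z$-direction, leaving both cylindrical ends (and hence the Reeb chord generators of $C^*_{\pm\infty}$) unchanged. For $s\gg 0$ all intersection points disappear, since $\Sigma_0$ is bounded in the $z$-direction on any compact $t$-interval. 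This keeps everything within the scope of the compactly supported invariance theorem, and the ``principal difficulty'' evaporates.

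\textbf{Both cases.} Even once you have a disjoint deformed pair $(\Sigma_0',\Sigma_1')$, you only conclude that the \emph{deformed} map $d'_{+-}$ is a quasi-isomorphism. The theorem, however, asserts this for the \emph{original} $d_{+-}$, which is a component of the Cthulhu differential for $(\Sigma_{\{\Lambda^s\}},\R\times\Lambda_1)$ and counts strips passing through that specific cobordism. You have not argued that $[d_{+-}]=[d'_{+-}]$. The paper bridges this gap with the \emph{refined} invariance statements: Corollary~\ref{cor:invariancerefined} (Case~(1)) and Theorem~\ref{thm:invarianceht} (Case~(2)) --- not Theorem~\ref{thm:invariance2}, which gives only acyclicity. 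These results supply $d_{-0}=0$ directly for the original complex, and moreover guarantee that the homotopy equivalence $\phi$ maps intersection points into $C^*_{+\infty}$ and restricts to an isomorphism on $C^*_{+\infty}$. The paper then uses this specific form of $\phi$ to show, for any $d_{00}$-cycle $a\in CF^*(\Sigma_0,\Sigma_1)$, that $\mathfrak{d}_{\varepsilon_0,\varepsilon_1}(a)=d_{+0}(a)$ is a $d_{++}$-boundary; this is exactly the vanishing of $\delta$ in homology for the original complex.
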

\begin{proof}First we note that in Case (1) we can `wrap' the inner part of the cobordism $\Sigma_1$ by an application of the negative Reeb flow, thereby removing all intersection points with $\Sigma_0$. More precisely, we can use e.g.~the Hamiltonian flow $\phi^s_{-\beta\partial_z}$ generated by the Hamiltonian vector field $-\beta(t)\partial_z$ for the compactly supported bump-function $\beta \colon \R \to \R_{\ge 0}$ shown in Figure \ref{fig:bulge}. See Section \ref{sec:wrapping} for more details. We thus take $\Sigma_0':=\Sigma_0$ and $\Sigma_1':=\phi^s_{-\beta\partial_z}(\Sigma_1)$, and then note that $\Sigma_0' \cap \Sigma_1'=\emptyset$ whenever $s \gg 0$ is taken to be sufficiently large.

In Case (2) a compactly supported Hamiltonian of which removes all intersection points between the cobordisms exists by the contractibility of the positive loop; combined with Proposition \ref{prp:contractible} we deduce that $\Sigma_0$ is compactly supported Hamiltonian isotopic to the trivial cylinder $\Sigma_0':=\R \times \Lambda_0$. In this case, we write $\Sigma_1':=\Sigma_1$, and observe that again $\Sigma_0' \cap \Sigma_1' = \emptyset$.

In either of the Cases (1) and (2), the result is now a consequence of the refined invariance results Corollary \ref{cor:invariancerefined} (in the case of a contactisation) and Theorem \ref{thm:invarianceht} (in the hypertight case); the previously established Hamiltonian isotopies imply that the assumptions of these theorems indeed are satisfied. We proceed with the details.

First, the refined invariance results give us $d_{-0}=0$. To show $\delta=0$, we consider the quasi-isomorphism
\[\begin{pmatrix} d_{+0} & d_{+-} \end{pmatrix} \colon CF^*(\Sigma_0,\Sigma_1) \oplus C_{-\infty}^*(\Sigma_0,\Sigma_1) \to  C_{+\infty}^*(\Sigma_0,\Sigma_1),\]
where the latter is a subcomplex of $\Cth_*(\Sigma_0,\Sigma_1)$, and where the former is the corresponding quotient complex. (The property of being a quasi-isomorphism is equivalent to the acyclicity of $\Cth_*(\Sigma_0,\Sigma_1)$.) The vanishing $\delta=0$ will be established by showing that the restriction
$$\begin{pmatrix} d_{+0} & d_{+-} \end{pmatrix}|_{CF^*(\Sigma_0,\Sigma_1)} \colon (CF^*(\Sigma_0,\Sigma_1),d_{00}) \to (C_{+\infty}^*(\Sigma_0,\Sigma_1),d_{++})$$
vanishes in homology; c.f.~the definition of $\delta$ in the formulation of Theorem \ref{thm:les}. 

For any cycle $a \in (CF^*(\Sigma_0,\Sigma_1),d_{00})$, the vanishing $d_{-0}=0$ implies that
\[ \begin{pmatrix} d_{+0} & d_{+-} \end{pmatrix}
\begin{pmatrix}
  a \\ 0
\end{pmatrix}
=\mathfrak{d}_{\varepsilon_0,\varepsilon_1}(a) \in C_{+\infty}^*(\Sigma_0,\Sigma_1),\]
i.e.~the quasi-isomorphism restricted to the intersection points coincides with the differential of the Cthulhu complex.

The aforementioned invariance results Corollary \ref{cor:invariancerefined} and Theorem \ref{thm:invarianceht}, moreover, produce a homotopy equivalence $\phi$ from $\Cth_*(\Sigma_0,\Sigma_1)$ to the complex $\Cth_*(\Sigma_0',\Sigma_1')$, where the latter has no generators corresponding to intersection points. We then use the chain map property together with the fact that $\phi(a) \in C_{+\infty}^*(\Sigma_0',\Sigma_1')$ (c.f.~the refined invariance results) in order to deduce that
$$ \phi(\mathfrak{d}_{\varepsilon_0,\varepsilon_1}(a))=d_{\varepsilon_0',\varepsilon_1'} \circ \phi(a).$$
In other words, the image
$$\phi \left(\begin{pmatrix} d_{+0} & d_{+-} \end{pmatrix}
\begin{pmatrix}
  a \\ 0
\end{pmatrix}\right) \in C_{+\infty}^*(\Sigma_0',\Sigma_1')$$
vanishes in homology. Since, moreover, $\phi_+ \colon C_{+\infty}^*(\Sigma_0,\Sigma_1) \to C_{+\infty}^*(\Sigma_0',\Sigma_1')$ is a chain isomorphism (again, c.f.~the refined invariance), we thus conclude that
$$\begin{pmatrix} d_{+0} & d_{+-} \end{pmatrix}\begin{pmatrix}
  a \\ 0
\end{pmatrix} \in C_{+\infty}^*(\Sigma_0,\Sigma_1)$$
itself vanishes in homology as sought.
\end{proof}

\subsection{Action computations}
\label{sec:chtulhu-compl-assoc}

The assumption that the isotopy is positive implies that
\begin{Prop}
\label{prp:action}
The generators of $CF^*(\Sigma_{\{\Lambda^s\}},\R \times \Lambda_1)$ are all of \emph{negative} action. In particular, the term $d_{0-}$ in the Cthulhu differential vanishes.
\end{Prop}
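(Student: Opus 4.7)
The plan is to compute the actions of both generator types directly and then immediately invoke Lemma \ref{lem:action}.

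First, I would pin down the primitive $f_{\Sigma_1}$. Since $\Sigma_1 = \R \times \Lambda_1$ is a trivial cylinder over a Legendrian, the pull-back of $e^t\alpha$ to $\Sigma_1$ vanishes identically. With the normalisation $f_{\Sigma_1}^- = 0$ at the negative end, this gives $f_{\Sigma_1} \equiv 0$, so any intersection point $p \in \Sigma_0 \cap \Sigma_1$ has action $\mathfrak{a}(p) = -f_{\Sigma_0}(p)$.

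Next, I would analyse $f_{\Sigma_0}$ on $\Sigma_{\{\Lambda^s\}}$. By the construction of Section \ref{sec:posit-isot-lagr}, this concordance is a concatenation of pieces $C^{T,\varepsilon}$ from Proposition \ref{prp:pos_cyl}, one for each zig-zag interval. Since the original loop is positive, the zig-zag deformation can be arranged so that $H>0$ on every piece, whence $\OP{sign}(H) = +1$ throughout. The explicit formula in Proposition \ref{prp:pos_cyl} then yields a primitive that is non-negative on each piece and strictly positive as soon as one has passed the initial cylindrical region, and concatenation simply adds the accumulated primitive from previous pieces. Hence $f_{\Sigma_0}$ is strictly positive at every point of the non-cylindrical part of $\Sigma_{\{\Lambda^s\}}$. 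After a generic small Legendrian perturbation ensuring $\Lambda_0 \cap \Lambda_1 = \emptyset$, no intersection $p \in \Sigma_0 \cap \Sigma_1$ lies in the cylindrical ends, and therefore $\mathfrak{a}(p) = -f_{\Sigma_0}(p) < 0$ as claimed.

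For the second assertion I would use that a generator $\gamma^- \in C^{*-1}_{-\infty}$ has action
$$\mathfrak{a}(\gamma^-) = e^{T_-}\ell(\gamma^-) + f_{\Sigma_1}^- - f_{\Sigma_0}^- = e^{T_-}\ell(\gamma^-) > 0$$
by positivity of the chord length together with our normalisations. The strict inequality $\mathfrak{a}(p) < 0 < \mathfrak{a}(\gamma^-)$, combined with the action-increasing property of the Cthulhu differential (Lemma \ref{lem:action}), immediately precludes any contribution of $p$ to $d_{0-}(\gamma^-)$. The main subtlety will be tracking the primitive carefully across the concatenation of pieces in the construction of $\Sigma_{\{\Lambda^s\}}$ and verifying that the perturbation used to achieve $\Lambda_0 \cap \Lambda_1 = \emptyset$ does not affect the argument; everything else is formal given Proposition \ref{prp:pos_cyl} and Lemma \ref{lem:action}.
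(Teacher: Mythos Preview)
Your proposal is correct and follows essentially the same approach as the paper's proof: both compute $f_{\Sigma_1}\equiv 0$ by convention, invoke Proposition~\ref{prp:pos_cyl} to get $f_{\Sigma_0}>0$ at intersection points (hence negative action), observe that Reeb chords at the negative end have positive action, and conclude $d_{0-}=0$ from Lemma~\ref{lem:action}. Your version simply spells out more detail; note also that for a genuinely positive loop no zig-zag decomposition into multiple pieces is needed---a single interval with $H>0$ on $(0,1)$ suffices---so the concatenation bookkeeping you worry about does not actually arise.
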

\begin{proof}
The first statement follows from the computation in Proposition \ref{prp:pos_cyl}. Observe that the potential $f_{\R \times \Lambda_1}\equiv 0$ necessarily vanishes by our conventions. Since the Reeb chord generators at the negative end are of positive action, the fact that $d_{0-}=0$ is now an immediate consequence of Lemma \ref{lem:action}.
\end{proof}

The following simple action computation will also be used repeatedly.
\begin{Lem}
\label{lem:actionreeb}
When computing the action in $\OP{Cth}_*(\Sigma_{\{\Lambda^s\}},\R \times \Lambda_1)$, we may take $T_-=0$ and $T_+=T$ (i.e.~the constant from the construction in Section \ref{sec:posit-isot-lagr}). With these conventions, a Reeb chord generator $\gamma^+ \in C_{+\infty}^*(\Sigma_{\{\Lambda^s\}},\R \times \Lambda_1)$ has action equal to
\[\mathfrak{a}(\gamma^+)=e^{T}\ell(\gamma^+)-f^+_{\Sigma_{\{\Lambda^s\}}},\]
with a positive constant $f^+_{\Sigma_{\{\Lambda^s\}}}>0$.
\end{Lem}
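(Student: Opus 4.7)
The strategy is to directly apply the general action formula for Reeb chord generators recalled in Section~\ref{sec:cthulhu-complex}, and then compute the two potentials for the specific cobordisms $\Sigma_0 = \Sigma_{\{\Lambda^s\}}$ and $\Sigma_1 = \R \times \Lambda_1$.

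First I would fix the numbers $T_\pm$. By Remark~\ref{rem:triv0T}, after choosing $T \gg 0$ sufficiently large in the construction of Section~\ref{sec:posit-isot-lagr}, the concordance $\Sigma_{\{\Lambda^s\}}$ is cylindrical on $(-\infty,0] \times M$ and on $[T,+\infty) \times M$, and the trivial cylinder $\R \times \Lambda_1$ is cylindrical everywhere. Hence we may take $T_- = 0$ and $T_+ = T$ when applying the action formula
\[
\mathfrak{a}(\gamma^+) \;=\; e^{T_+} \ell(\gamma^+) + f_{\Sigma_1}^+ - f_{\Sigma_0}^+
\]
from Section~\ref{sec:cthulhu-complex}.

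Next, I would compute each of the two boundary values of the potentials. For the trivial cylinder, the pull-back of $e^t\alpha$ to $\R \times \Lambda_1$ vanishes identically because $\alpha|_{\Lambda_1} \equiv 0$ on the Legendrian; the primitive is therefore the zero function, and in particular $f_{\R \times \Lambda_1}^+ = 0$. Setting
\[
f^+_{\Sigma_{\{\Lambda^s\}}} \;:=\; f_{\Sigma_0}^+
\]
then yields the asserted identity $\mathfrak{a}(\gamma^+) = e^{T}\ell(\gamma^+) - f^+_{\Sigma_{\{\Lambda^s\}}}$.

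It remains to verify strict positivity of $f^+_{\Sigma_{\{\Lambda^s\}}}$, which is the one place where the hypothesis that $\{\Lambda^s\}$ is a \emph{positive} loop enters. Since the loop is positive, the zig-zag approximation from Section~\ref{sec:posit-isot-lagr} can be chosen with every piece positive (the sign of $H$ is preserved in a $C^0$-small deformation, and the division points $s_i$ only contribute isolated instants where $H$ vanishes). Proposition~\ref{prp:pos_cyl} then gives, for each such piece with contact Hamiltonian of positive sign, a primitive whose value at the positive cylindrical end equals $\int \chi_\varepsilon(t) e^{Tt}\, dt > 0$. The potential of the concatenation $\Sigma_{\{\Lambda^s\}}$ is the sum of these pieces (with appropriate shifts of the $t$-variable inherited from the concatenation), so it is a strictly positive sum of strictly positive contributions. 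This yields $f^+_{\Sigma_{\{\Lambda^s\}}} > 0$, as claimed.

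The only genuinely delicate point is the last paragraph: one must check that approximating a positive loop by a zig-zag isotopy does not force the introduction of negative pieces, and that the concatenation of potentials behaves additively up to overall positivity. Both should follow directly from the parametric construction recalled in the proof of Proposition~\ref{prp:contractible}.
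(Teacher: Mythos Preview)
Your proof is correct and follows essentially the same approach as the paper: apply the action formula from Section~\ref{sec:cthulhu-complex}, observe that the trivial cylinder has identically zero potential, and invoke Proposition~\ref{prp:pos_cyl} for the positivity of $f^+_{\Sigma_{\{\Lambda^s\}}}$. The paper's own proof is two sentences to this effect.

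Your closing worry is unnecessary: when the original loop is positive, the zig-zag construction of Section~\ref{sec:posit-isot-lagr} can be taken with a \emph{single} interval $[s_0,s_1]=[0,1]$ (one only needs to reparametrise so that $H$ vanishes at the endpoints, which does not affect the sign in the interior). Hence $\Sigma_{\{\Lambda^s\}}$ is already the cobordism of Proposition~\ref{prp:pos_cyl} applied once, and no concatenation bookkeeping is required.
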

\begin{proof}
The positivity $f^+_{\Sigma_{\{\Lambda^s\}}}>0$ follows from Proposition \ref{prp:pos_cyl}, while $f^+_{\R \times \Lambda_1} \equiv 0$ is a consequence of our conventions.
\end{proof}

\subsection{The proof of Theorem \ref{thm:hypertight} (the hypertight case)}
\label{sec:proofhypertight}
We argue by contradiction. Let $\Lambda_0:=\Lambda$ be our given hypertight Legendrian submanifold and let $\Sigma_{\{\Lambda^s\}}$ be the cylinder induced by a \emph{contractible} positive loop containing $\Lambda$. We take $\Lambda_1$ to be obtained from $\Lambda$ by, first, applying the time-$(-\epsilon)$ Reeb flow and, second, perturbing the resulting Legendrian by the one-jet $j^1f \subset \mathcal{J}^1\Lambda$ inside a standard Legendrian neighbourhood (in which $\Lambda$ is identified with the zero-section). Here $f \colon \Lambda \to [-\epsilon,0]$ is assumed to be a Morse function.
\begin{Lem}
\label{lem:d+-}
For $\epsilon>0$ sufficiently small, the generators of the subcomplex and quotient complex
$$C_{\pm\infty}^*(\Sigma_{\{\Lambda^s\}},\R \times \Lambda_1) \subset \Cth_*^0(\Sigma_{\{\Lambda^s\}},\R \times \Lambda_1)$$
correspond bijectively to the critical points of the above Morse function $f$. Further, the generators of the subcomplex $C_{+\infty}^*(\Sigma_{\{\Lambda^s\}},\R \times \Lambda_1)$ are of negative action, while the generators of $C_{-\infty}^*(\Sigma_{\{\Lambda^s\}},\R \times \Lambda_1)$ are of positive action.
\end{Lem}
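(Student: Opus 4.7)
The strategy is to reduce the identification of generators to a standard neighbourhood computation, then show that all other Reeb chords between $\Lambda$ and $\Lambda_1$ are excluded by living in the wrong homotopy class. The action statements follow immediately from Lemma \ref{lem:actionreeb}.

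First, by the Legendrian neighbourhood theorem, a tubular neighbourhood of $\Lambda$ in $(M,\alpha)$ is strictly contactomorphic to a neighbourhood of the zero-section in $(\mathcal{J}^1\Lambda,dz-\lambda_\Lambda)$. Under this identification, the Legendrian $\Lambda_1$ is the 1-jet $\{(q,df(q),f(q)-\epsilon) : q \in \Lambda\}$, and the Reeb chords from $\Lambda_1$ to $\Lambda$ contained in this neighbourhood are in canonical bijection with $\operatorname{Crit}(f)$: at a critical point $q_0$ the chord has length $\epsilon-f(q_0)\in[\epsilon,2\epsilon]$. These are the \emph{short} chords.

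Second, I will argue that every other Reeb chord from $\Lambda_1$ to $\Lambda$ lies in a component of $\pi_0(\Pi(\R\times M;\Sigma_{\{\Lambda^s\}},\R\times\Lambda_1))$ different from the class $0$ selected to define $\Cth^0_\ast$. Since $\Lambda$ is hypertight, there is a lower bound $\ell_0>0$ on the length of Reeb chords on $\Lambda$, and a uniform $C^1$-smallness of the perturbation ensures that any chord from $\Lambda_1$ to $\Lambda$ exiting the Weinstein neighbourhood has length at least $\ell_0/2$ for $\epsilon\ll\ell_0$. Homotoping $\Lambda_1$ back to $\Lambda$ inside the neighbourhood gives a map from chords between the two submanifolds to Reeb chords of $\Lambda$; the short chords map to the constant paths at critical points of $f$, while any long chord maps to a nontrivial Reeb chord of $\Lambda$ which, by hypertightness, represents a nonzero class in $\pi_1(M,\Lambda)$. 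Thus long chords cannot lie in the same component of the relevant path space as the short ones. The same argument at the positive end identifies the class $0$ chords there with $\operatorname{Crit}(f)$; the fact that negative-end and positive-end classes correspond correctly (see Remark \ref{rem:htpyclass}) is guaranteed here because the loop is contractible, so by Proposition \ref{prp:contractible} the concordance $\Sigma_{\{\Lambda^s\}}$ is compactly supported Hamiltonian isotopic to $\R\times\Lambda$, under which the two ends are canonically identified.

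Finally, the action statements follow directly from Lemma \ref{lem:actionreeb}. At the positive end $\mathfrak{a}(\gamma^+)=e^T\ell(\gamma^+)-f^+_{\Sigma_{\{\Lambda^s\}}}$ with $f^+_{\Sigma_{\{\Lambda^s\}}}>0$ a fixed positive constant depending only on the positive loop, while $\ell(\gamma^+)\in[\epsilon,2\epsilon]$; hence for $\epsilon<\tfrac{1}{2}e^{-T}f^+_{\Sigma_{\{\Lambda^s\}}}$ all positive-end generators have strictly negative action. At the negative end the potentials vanish and $T_-=0$, so $\mathfrak{a}(\gamma^-)=\ell(\gamma^-)\geq\epsilon>0$, giving strict positivity.

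The main technical point is the homotopy-class separation in the second step. In the contactisation setting this is routine because one can use the $z$-coordinate, but in the closed hypertight case the argument must rely on the bound $\ell_0$ from hypertightness together with a careful verification that the small perturbation does not create spurious contractible chords — this is where the choice $\epsilon$ sufficiently small is indispensable.
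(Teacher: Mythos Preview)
Your proof is correct and follows essentially the same approach as the paper: identify the short chords with $\operatorname{Crit}(f)$ via the standard neighbourhood, use hypertightness to exclude all other chords from the class~$0$, invoke contractibility of the loop (via Proposition~\ref{prp:contractible}) to match the homotopy classes at the two ends, and conclude the action signs from Lemma~\ref{lem:actionreeb}. One small imprecision: the lower bound $\ell_0>0$ on Reeb chord lengths of $\Lambda$ comes from nondegeneracy (finitely many chords below any given action), not from hypertightness itself; hypertightness is what forces the long chords into nontrivial classes of $\pi_1(M,\Lambda)$.
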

\begin{proof}
There is a correspondence between critical points $\{p\} \subset \Lambda$ of $f$ and a subset of the Reeb chords from $\Lambda_1$ to $\Lambda_0$. Moreover, the length of the Reeb chord corresponding to the critical point $p \in \Lambda$ is equal to $-f(p)+\epsilon \le 2\epsilon$.

First, using the assumption that $\{\Lambda^s\}$ is contractible, and hence that $\Sigma_{\{\Lambda^s\}}$ is Hamiltonian isotopic to a trivial cylinder by Proposition \ref{prp:contractible}, it follows that all these Reeb chords also are generators of $C_{+\infty}^*$. Second, using the assumption of hypertightness (i.e.~that there are no contractible Reeb chords on $\Lambda$), it follows that these are \emph{all} of the generators of $C_{+\infty}^*$.  Here we recall that $\Cth_*^0(\Sigma_{\{\Lambda^s\}},\R \times \Lambda_1)$ is generated by only those generators which live in the `contractible' homotopy class; also see Remark \ref{rem:htpyclass}. 

The negativity of the action is then a consequence of the inequality
$$ \mathfrak{a}(\gamma^+) \le e^T(2\epsilon)-f_{\Sigma_{\{\Lambda^s\}}}^+,$$
given that $\epsilon>0$ is chosen sufficiently small,  and where the primitive $f_{\Sigma_{\{\Lambda^s\}}}^+$ at $t = +\infty$ is positive by the positivity of the loop of Legendrians; see Lemma \ref{lem:actionreeb}.
\end{proof}

\begin{Lem}
\label{lem:morse}
For $\epsilon>0$ sufficiently small, the subcomplex and quotient complex
$$C_{\pm\infty}^*(\Sigma_{\{\Lambda^s\}},\R \times \Lambda_1) \subset \Cth_*^0(\Sigma_{\{\Lambda^s\}},\R \times \Lambda_1)$$
both compute the Morse homology of $\Lambda$.
\end{Lem}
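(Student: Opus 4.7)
The plan is to exploit the fact, established in Lemma \ref{lem:d+-}, that the generators of both $C_{+\infty}^*$ and $C_{-\infty}^*$ are in canonical bijection with the critical points of $f$, and then to identify the induced differentials with the Morse codifferential by a localisation argument. The key external input is the classical identification, originating with Floer \cite{FloerHFlag} and adapted to the Legendrian contact cohomology setting by Ekholm and collaborators, according to which a sufficiently $C^1$-small Morse push-off of a Legendrian has a linearised Legendrian contact cohomology complex that agrees with the Morse cochain complex of the perturbing function.

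First I would treat the quotient complex $C_{-\infty}^*(\Sigma_{\{\Lambda^s\}},\R \times \Lambda_1)$, whose differential $d_{--}$ is, in the hypertight setup with trivial augmentation, a count of honest pseudoholomorphic strips in $\R \times M$ with boundary on $\R \times (\Lambda_0 \cup \Lambda_1)$ and punctures asymptotic to Reeb chords of length at most $2\epsilon$ (by Lemma \ref{lem:d+-}). The $d\alpha$-energy of any such strip is therefore bounded by $2\epsilon$. Stretching the neck along the boundary of a standard Weinstein neighbourhood of $\Lambda$, identified with a neighbourhood of the zero-section in $\mathcal{J}^1\Lambda$, one argues that as $\epsilon \to 0$ every rigid strip must be contained in this neighbourhood: any SFT limit building with a non-trivial level in the complement would either carry $d\alpha$-energy bounded below by a positive constant independent of $\epsilon$, or force the presence of a Reeb chord on $\Lambda$ in the contractible homotopy class, which is excluded by hypertightness. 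Once the strips are localised in $\mathcal{J}^1\Lambda$, the standard Floer--Ekholm computation identifies rigid strips between the zero-section and $j^1f$ with negative gradient trajectories of $f$, yielding the Morse cochain complex of $\Lambda$.

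For the subcomplex $C_{+\infty}^*(\Sigma_{\{\Lambda^s\}},\R \times \Lambda_1)$, I would invoke the contractibility assumption: by Proposition \ref{prp:contractible}, $\Sigma_{\{\Lambda^s\}}$ is compactly supported Hamiltonian isotopic to the trivial cylinder $\R \times \Lambda_0$. The refined hypertight invariance of Theorem \ref{thm:invarianceht} then produces a homotopy equivalence whose restriction to the positive-end subcomplex is the \emph{identity} map onto $C_{+\infty}^*(\R \times \Lambda_0,\R \times \Lambda_1)$. Since for a trivial cylinder the positive and negative end complexes coincide (the Legendrian ends agree and the trivial augmentation pulls back to itself under the cobordism DGA morphism), the $d_{++}$-differential is identified with the $d_{--}$-differential computed above, and thus $C_{+\infty}^*$ also computes the Morse homology of $\Lambda$.

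The main obstacle is the localisation argument of the second paragraph: showing that, for $\epsilon$ small, every rigid strip contributing to $d_{--}$ lies inside a prescribed Weinstein neighbourhood of $\Lambda$. This rests on the interplay of three ingredients, namely the a priori action bound $\mathfrak{a}(\gamma^{\pm}) \le 2\epsilon$, SFT compactness applied while stretching the neck along the boundary of the neighbourhood (as in Section \ref{sec:stretching}), and the hypertightness hypothesis which rules out limit buildings with non-trivial outside levels. Once this concentration principle is established, the identification with the Morse complex inside $\mathcal{J}^1\Lambda$ is the classical computation of \cite{FloerHFlag}, whose transfer to the Legendrian contact homology framework has been carried out in several works and presents no new analytic difficulty in the present setting.
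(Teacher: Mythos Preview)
Your proposal is essentially correct, but it takes a route that is more convoluted than the paper's in two respects.

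\emph{Localisation of strips.} The paper confines the rigid strips to a neighbourhood $\R \times U$ of $\R \times \Lambda$ by a direct monotonicity argument for pseudoholomorphic discs: any strip that exits $\R \times U$ must have symplectic area bounded below by a constant depending only on the geometry of $U$, whereas the action considerations give an upper bound of order $2\epsilon$. This is cleaner than your neck-stretching approach, and avoids two soft spots in your sketch. First, the hypersurface you wish to stretch along is $\R \times \partial U$, which is non-compact and not of the form $\{t_0\} \times M$ treated in Section~\ref{sec:stretching}; the SFT compactness you invoke therefore needs a different setup than the one available in the paper. Second, the role you assign to hypertightness (``rules out limit buildings with non-trivial outside levels'') is not quite right: hypertightness of $\Lambda$ concerns Reeb chords on $\Lambda$ itself, not asymptotics on $\partial U$, and it already did its work in Lemma~\ref{lem:d+-} by pinning down the generators. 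The energy argument alone suffices for the localisation, and monotonicity delivers it in one line.

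\emph{The treatment of $C^*_{+\infty}$.} Your detour through Proposition~\ref{prp:contractible} and Theorem~\ref{thm:invarianceht} is correct but unnecessary. The differentials $d_{++}$ and $d_{--}$ are the linearised Legendrian contact cohomology differentials for the pair $(\Lambda_0,\Lambda_1)$ with respect to the pulled-back augmentations; they depend only on the Legendrian ends and the augmentations, not on the cobordisms. In the hypertight setting with the trivial augmentation (which pulls back to itself under any concordance), the two complexes $C^*_{+\infty}$ and $C^*_{-\infty}$ are literally the same complex, so once you have identified one with the Morse complex you are done. The paper treats both ends by the single computation, choosing once and for all a cylindrical lift of a suitable almost complex structure on $T^*\Lambda$ inside $\R \times U$.
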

\begin{proof}
The identification on the level of generators follows from Lemma \ref{lem:d+-}. The differential can be identified with the Morse differential, following a standard computation that carries over from the computation made `locally' in the jet-space of $\Lambda$ using the theory from e.g.~\cite{MorseFlow}. To that end, we must choose the almost complex structure in some neighbourhood $\R \times U$ appropriately, for $U \subset M$ of $\Lambda$ contactomorphic to a neighbourhood of the zero-section of $\mathcal{J}^1\Lambda$.  In particular, in that neighbourhood we want the almost complex structure to be a cylindrical lift of the almost complex structure on $T^*\Lambda$ that is produced by \cite{MorseFlow}.  (The monotonicity property for the symplectic area of pseudoholomorphic discs can then be used to ensure that the strips in the differential do not leave the neighbourhood $\R \times U \subset \R \times M$ of $\R \times \Lambda$ in the symplectisation; see e.g.~the proof of \cite[Lemma 6.4]{LiftingPseudoholomorphic}.)
\end{proof}
We now prove Theorem \ref{thm:hypertight} by invoking the above lemmas in conjunction with Theorem \ref{thm:les}.
\begin{proof}[Proof of Theorem \ref{thm:hypertight}]
  The rightmost term $H(C_{+\infty}^*,d_{++})$ in the long exact sequence produced by Theorem \ref{thm:les} is non-zero by Lemma \ref{lem:morse}, while the map $d_{+-}$ in this long exact sequence vanishes by Lemma \ref{lem:d+-}  combined with Lemma \ref{lem:action}.  This is in contradiction with the fact that $d_{+-}$ is an isomorphism, as established by Theorem \ref{thm:delta}. 
\end{proof}

\subsection{The proof of Theorem \ref{thm:hypertight2}}
We consider the setup of the proof of Theorem \ref{thm:hypertight} given in Section \ref{sec:proofhypertight} above, but where $\Sigma_{\{\Lambda^s\}}$ is not necessarily compactly supported Hamiltonian isotopic to a trivial cylinder. The only difference with the case above is that the consequences of Lemmas \ref{lem:d+-} and \ref{lem:morse} might not hold for the subcomplex $C_{+\infty}^*(\Sigma_{\{\Lambda^s\}},\R \times \Lambda_1)$. However, we note that it always is the case that $C_{-\infty}^*(\Sigma_{\{\Lambda^s\}},\R \times \Lambda_1)$ is the Morse homology complex of $\Lambda$.

The reason for why Lemmas \ref{lem:d+-} and \ref{lem:morse} can fail is that, depending on the homotopy properties of $\Sigma_{\{\Lambda^s\}}$, it is possible that the subcomplex
$$C_{+\infty}^*(\Sigma_{\{\Lambda^s\}},\R \times \Lambda_1) \subset \Cth_*(\Sigma_{\{\Lambda^s\}},\R \times \Lambda_1)$$
is in fact generated by chords corresponding to the non-contractible Reeb chords on $\Lambda$ in some fixed homotopy class $\alpha \neq 0$ (see part (2) of Remark \ref{rem:les}). 

Notwithstanding, when $\alpha \neq 0$ we can use our assumptions on the Conley--Zehnder indices in order to show that $d_{+-}$ is not injective in this case either (thus leading to a contradiction). Namely, the map $d_{+-}$ restricted to the two-dimensional subspace of
$$H(C_{-\infty}^*(\Sigma_{\{\Lambda^s\}},\R \times \Lambda_1),d_{--})$$
that is generated by the maximum and the minimum of the Morse function must have a non-trivial kernel. Indeed, the maximum and the minimum are two non-zero classes of degrees that differ by precisely $\dim \Lambda=n$; however, by the assumptions of the theorem, the target homology group does not contain two classes with such an index difference unless $\alpha = 0$. In either case, the deduced non-injectivity is again in contradiction with $d_{-0}=0$ and the exactness of the sequence in Theorem \ref{thm:les}. \qed

\subsection{Spectral invariants for pairs of Legendrians}
Spectral invariants where introduced by Viterbo \cite{ViterboSpectral} and later developed by Oh \cite{OhSpectral}. They are now a well-established technique for studying quantitative questions in symplectic topology. They have also been defined for Legendrian submanifolds in certain contact manifolds by Zapolsky in \cite{Zapolsky}, and Sabloff--Traynor considered some of their properties under Lagrangian cobordisms in their work \cite{Sabloff_Traynor_2}. Here we study further properties that are satisfied under  Lagrangian cobordisms and positive isotopies which will be used when proving Theorem \ref{thm:liouville}. Since that theorem concerns the contactisation of a Liouville domain, we will for simplicity restrict ourselves to that geometric setting in this subsection.

For any pair of Legendrian submanifolds $\Lambda_i \subset (M,\alpha)$, $i=0,1$, together with a pair $\varepsilon_i$ of augmentations, we consider the canonical inclusion
$$\iota_\ell \colon LCC_{\varepsilon_0,\varepsilon_1}^*(\Lambda_0,\Lambda_1)^{[\ell,+\infty]} \: \subset \: LCC_{\varepsilon_0,\varepsilon_1}^*(\Lambda_0,\Lambda_1) $$
of the subcomplex spanned by the Reeb chords being of length at least $\ell \ge 0$ and use $[\iota_\ell]$ to denote the induced map on the homology level.
\begin{defn}
The \emph{spectral invariant} of the pair $(\varepsilon_0,\varepsilon_1)$ of augmentations is defined to be
\begin{gather*}
 c_{\varepsilon_0,\varepsilon_1}(\Lambda_0,\Lambda_1):=\sup \left\{ \ell \in \R; \: \OP{coker} [\iota_{\ell}] = 0 \right\} \in (0,+\infty],
\end{gather*}
which is a finite positive real number if and only if $LCH_{\varepsilon_0,\varepsilon_1}^*(\Lambda_0,\Lambda_1) \neq 0$.
\end{defn}
Note that the latter property holds since, for a closed Legendrian submanifold of a contactisation, the Legendrian contact homology complex is generated by finitely many Reeb chords (which have positive length) and thus for $\ell \gg 0$ it is the case that $LCC_{\varepsilon_0,\varepsilon_1}^*(\Lambda_0,\Lambda_1)^{[\ell,+\infty]}=0$.

By construction we obtain a non-zero homology class whenever the spectral capacity is finite, namely:
\begin{Lem}

\label{lem:spectral}
\begin{enumerate}
\item All non-zero homology classes in $LCH_{\varepsilon_0,\varepsilon_1}^*(\Lambda_0,\Lambda_1)$ can be represented by a linear combination of generators being of length at least $c_{\varepsilon_0,\varepsilon_1}(\Lambda_0,\Lambda_1).$
\item If $LCH_{\varepsilon_0,\varepsilon_1}^*(\Lambda_0,\Lambda_1) \neq 0,$ then there exists a non-zero class
$$\alpha_{\varepsilon_0,\varepsilon_1}(\Lambda_0,\Lambda_1) \in \im [\iota_{c_{\varepsilon_0,\varepsilon_1}(\Lambda_0,\Lambda_1)}] \in LCH_{\varepsilon_0,\varepsilon_1}^*(\Lambda_0,\Lambda_1)$$
which is not in the image of $[\iota_\ell]$ for any $\ell > c_{\varepsilon_0,\varepsilon_1}(\Lambda_0,\Lambda_1)$.
\end{enumerate}
\end{Lem}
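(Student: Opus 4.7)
The plan is to exploit the discrete nature of the Reeb-chord length spectrum. Since $\Lambda_0,\Lambda_1$ are closed Legendrians in the contactisation of a Liouville domain, $\mathcal{R}(\Lambda_1,\Lambda_0)$ is finite, so I would start by listing the distinct chord lengths $0 < L_1 < \cdots < L_N$ and observing that the subcomplex $LCC_{\varepsilon_0,\varepsilon_1}^*(\Lambda_0,\Lambda_1)^{[\ell,+\infty]}$ is constant in $\ell$ on each interval $(L_{i-1},L_i]$ (being equal to the subcomplex generated by chords of length $\geq L_i$). Consequently both $\iota_\ell$ and its homology-level image $\im[\iota_\ell]$ depend only on which such interval contains $\ell$.

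Next I would establish the key monotonicity property: for $\ell_1 \leq \ell_2$, the inclusion $\iota_{\ell_2}$ factors through $\iota_{\ell_1}$ via the subcomplex inclusion, whence $\im[\iota_{\ell_2}] \subseteq \im[\iota_{\ell_1}]$ and the set of $\ell$ for which $[\iota_\ell]$ is surjective is downward closed. Combined with the step-function behaviour from the previous paragraph, this forces the supremum $c := c_{\varepsilon_0,\varepsilon_1}(\Lambda_0,\Lambda_1)$ to be attained: either $c=+\infty$ (which happens iff $LCH^*_{\varepsilon_0,\varepsilon_1}(\Lambda_0,\Lambda_1) = 0$, since for $\ell > L_N$ the subcomplex is zero and the map is surjective only when the target vanishes), or $c = L_i$ for some index $i$, at which $[\iota_c]$ is surjective. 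Part~(1) is then immediate: surjectivity of $[\iota_c]$ means that every homology class is represented by a cycle in the subcomplex generated by chords of length at least $c$.

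For Part~(2), the hypothesis $LCH^*_{\varepsilon_0,\varepsilon_1}(\Lambda_0,\Lambda_1) \neq 0$ combined with the previous paragraph gives $c = L_i < +\infty$, so there is a smallest chord length $L_{i+1}$ strictly above $c$. By monotonicity, $\im[\iota_\ell] \subseteq \im[\iota_{L_{i+1}}]$ for every $\ell > c$, and the right-hand side is a \emph{proper} subspace of $LCH_{\varepsilon_0,\varepsilon_1}^*(\Lambda_0,\Lambda_1)$ by the very definition of $c$ as the supremum. Choosing any class $\alpha_{\varepsilon_0,\varepsilon_1}(\Lambda_0,\Lambda_1)$ in the complement of this proper subspace then gives the desired element: it lies in $\im[\iota_c]$ by Part~(1), yet in none of the $\im[\iota_\ell]$ for $\ell > c$. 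The only subtle point I anticipate is verifying that discreteness of the length spectrum really does force the supremum defining $c$ to be attained, ruling out pathologies with limits as $\ell \to c^-$; once this is granted via the locally constant behaviour on the intervals $(L_{i-1},L_i]$, the remainder of the argument is purely formal.
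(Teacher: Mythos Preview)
Your argument is correct. The paper does not supply a proof of this lemma at all: it is stated immediately after the sentence ``By construction we obtain a non-zero homology class whenever the spectral capacity is finite, namely:'', so both parts are treated as immediate consequences of the definition together with the finiteness of the Reeb chord set (which the paper notes just before). Your write-up makes explicit exactly the mechanism the paper leaves implicit --- discreteness of the length spectrum forces $\ell \mapsto \im[\iota_\ell]$ to be a step function, so the supremum defining $c$ is attained at some $L_i$, giving Part~(1), and for $\ell>c$ the image drops to the proper subspace $\im[\iota_{L_{i+1}}]$, giving Part~(2). The only cosmetic gap is the boundary case $c=L_N$, where there is no $L_{i+1}$; but then $\im[\iota_\ell]=0$ for all $\ell>c$ and any nonzero class in $LCH^*=\im[\iota_c]$ works, so the conclusion is even easier.
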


The following propositions give the crucial behaviour for our spectral invariant under the relation of Lagrangian cobordisms.

Let $\Sigma_0=\Sigma_{\{\Lambda^s\}}$ be a concordance from $\Lambda^-_0$ to $\Lambda^+_0$ induced by a Legendrian isotopy as constructed in Section \ref{sec:posit-isot-lagr}, while $\Sigma_1:=\R \times \Lambda_1$ is a trivial Lagrangian cylinder over a Legendrian $\Lambda_1 = \Lambda_1^\pm$.
\begin{Prop}
\label{prp:spectral}
For any choice of augmentations $\varepsilon_i$ of $\Lambda_i^+$, $i=0,1$, there exists augmentations $\varepsilon_i^-$ of $\Lambda_i^-$ for which
$$ c_{\varepsilon_0^+,\varepsilon_1^+}(\Lambda_0^+,\Lambda_1^+)=c_{\varepsilon_0,\varepsilon_1}(\Lambda_0^+,\Lambda_1^+)$$
is satisfied with $\varepsilon^+_i:=\varepsilon^-_i \circ \Phi_{\Sigma_i}$. 
\end{Prop}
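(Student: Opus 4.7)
\emph{Plan overview.} I would construct the augmentations $\varepsilon_i^-$ as pullbacks of the given $\varepsilon_i$ through a \emph{reverse} concordance, and then invoke Theorem \ref{thm:invariance1} to identify the two linearised complexes at the positive end via a chain isomorphism that is upper triangular with respect to the length filtration. Such an isomorphism automatically preserves the spectral invariant.

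\emph{Constructing the negative-end augmentations.} Since $\Sigma_0 = \Sigma_{\{\Lambda^s\}}$ arises from a Legendrian isotopy $\{\Lambda^s\}_{s \in [0,1]}$, running the parameter backwards yields, via the same construction of Section \ref{sec:posit-isot-lagr}, an exact Lagrangian concordance $\bar\Sigma_0$ from $\Lambda_0^+$ to $\Lambda_0^-$. I then set
\[\varepsilon_0^- := \varepsilon_0 \circ \Phi_{\bar\Sigma_0}, \qquad \varepsilon_1^- := \varepsilon_1.\]
Since $\Phi_{\R \times \Lambda_1} = \id$, we have $\varepsilon_1^+ = \varepsilon_1$, while $\varepsilon_0^+ = \varepsilon_0 \circ \Phi_{\bar\Sigma_0 \odot \Sigma_0}$.

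\emph{Applying invariance.} The concatenation $\bar\Sigma_0 \odot \Sigma_0$ is an exact Lagrangian concordance from $\Lambda_0^+$ to itself, corresponding to the contractible Legendrian loop at $\Lambda_0^+$ obtained by concatenating $\{\Lambda^{1-s}\}$ with $\{\Lambda^s\}$. Hence by Proposition \ref{prp:contractible} it is compactly supported Hamiltonian isotopic to $\R \times \Lambda_0^+$. Applying Theorem \ref{thm:invariance1} to the pair $(\bar\Sigma_0 \odot \Sigma_0,\, \R \times \Lambda_1)$, with the Hamiltonian isotopy supported only on the first component, produces a chain isomorphism
\[\phi_+ \colon LCC^*_{\varepsilon_0^+,\varepsilon_1}(\Lambda_0^+,\Lambda_1) \xrightarrow{\ \sim\ } LCC^*_{\varepsilon_0,\varepsilon_1}(\Lambda_0^+,\Lambda_1),\]
the augmentations on the right arising because pullback through the trivial cylinder is the identity. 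By Part (1) of that theorem, $\langle \phi_+(a), a \rangle = 1$ and $\langle \phi_+(a), b \rangle = 0$ whenever $\ell(a) > \ell(b)$; inverting the resulting upper-triangular change of basis shows that $\phi_+^{-1}$ has the same property.

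\emph{Conclusion and main difficulty.} Both $\phi_+$ and $\phi_+^{-1}$ therefore restrict to chain isomorphisms on the length-$\ge \ell$ subcomplexes for every $\ell > 0$. The length-filtered inclusions $\iota_\ell$ defining the spectral invariant then fit into a commuting square of chain isomorphisms for the two pairs of augmentations, so their images and cokernels on homology are isomorphic, and the spectral invariants coincide. The real analytic content of the proposition is concentrated in Theorem \ref{thm:invariance1}; once its length-triangular conclusion is in hand the spectral equality is essentially formal. The main subtlety is therefore a geometric one—identifying $\bar\Sigma_0 \odot \Sigma_0$ with the concordance of a contractible loop so that Proposition \ref{prp:contractible} becomes applicable, and checking that the resulting Hamiltonian isotopy can be implemented while keeping the auxiliary cylinder $\R \times \Lambda_1$ fixed.
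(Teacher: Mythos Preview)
Your proposal is correct and follows essentially the same route as the paper: define $\varepsilon_0^-$ by pulling back $\varepsilon_0$ through the reversed concordance $\Sigma_{\{\Lambda^{1-s}\}}$, use Proposition~\ref{prp:contractible} to recognise the concatenation as Hamiltonian isotopic to a trivial cylinder, and then read off the spectral equality from the length-triangularity of $\phi_+$ in Theorem~\ref{thm:invariance1}. The only point the paper makes explicit that you gloss over is the identity $\Phi_{\bar\Sigma_0}\circ\Phi_{\Sigma_0}=\Phi_{\bar\Sigma_0\odot\Sigma_0}$, which it justifies by a neck-stretching and gluing argument; you should at least flag this functoriality rather than treat it as automatic.
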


\begin{proof}[Proof of Proposition \ref{prp:spectral}]
We prove this by using Part (1) of the invariance result Theorem \ref{thm:invariance1} combined with a neck-stretching argument. The augmentation $\varepsilon_0^-$ will be taken to be equal to $\varepsilon_0 \circ \Phi_{\Sigma_{\{\Lambda^{1-s}\}}}$.

To show the statement we consider the concatenation
$$ \widetilde{\Sigma} := \Sigma_{\{\Lambda^{1-s}\}} \odot \Sigma_{\{\Lambda^s\}} \subset (\R \times M,d(e^t\alpha))$$
of Lagrangian concordances, which is compactly supported Hamiltonian isotopic to the trivial cylinder $\R \times \Lambda_0$ by Proposition \ref{prp:contractible}. After a neck-stretching along the hypersurface $\{ t_0 \} \times M$ along which the two cobordisms in the concatenation are joined, together with a gluing argument (see Section \ref{sec:stretching}), we can establish the last equality in
\begin{gather*}
 \varepsilon_0^+:=\varepsilon_0^- \circ \Phi_{\Sigma_{\{\Lambda^s\}}} = \varepsilon_0 \circ \Phi_{\Sigma_{\{\Lambda^{1-s}\}}} \circ \Phi_{\Sigma_{\{\Lambda^s\}}}= \varepsilon_0 \circ \Phi_{\widetilde{\Sigma}}.
\end{gather*}
See e.g.~\cite[Lemma 5.4]{cthulhu} for a similar result.

The claim now follows from the existence of the isomorphism
$$\phi_+ \colon C_{+\infty}^*(\R \times \Lambda_0,\R \times \Lambda_1) \to C_{+\infty}^*(\widetilde{\Sigma},\R \times \Lambda_1),$$
established in Part (1) of Theorem \ref{thm:invariance1}, where
\begin{eqnarray*}
& & C_{+\infty}^*(\R \times \Lambda_0,\R \times \Lambda_1)=LCH_{\varepsilon_0,\varepsilon_1}^*(\Lambda_0,\Lambda_1),\\
& & C_{+\infty}^*(\widetilde{\Sigma},\R \times \Lambda_1)=LCH_{\varepsilon_0^+,\varepsilon_1^+}^*(\Lambda_0,\Lambda_1).
\end{eqnarray*}
More precisely, the latter theorem has here been applied to the complex $(\Cth_*(\R \times \Lambda_0,\R \times \Lambda_1),\mathfrak{d}_{\varepsilon_0,\varepsilon_1})$ while using the existence of the previously established Hamiltonian isotopy from $\R \times \Lambda_0$ to $\widetilde{\Sigma}$. Here it is crucial that $\phi_+$, and hence $\phi_+^{-1}$ as well, are upper triangular with respect to the action filtration, from which one readily deduces that
$$ c_{\varepsilon_0^+,\varepsilon_1^+}(\Lambda_0^+,\Lambda_1^+)=c_{\varepsilon_0,\varepsilon_1}(\Lambda_0^+,\Lambda_1^+)$$
holds as sought.
\end{proof}

\begin{Thm}
\label{thm:spectral}
Further assume that $\Sigma_0 \cap \Sigma_1 = \emptyset$ and that the Legendrian isotopy $\{\Lambda^s\}$ is positive. For any pair of augmentations $\varepsilon_i^-$ of $\Lambda_i^-$ it is then the case that
$$ c_{\varepsilon_0^+,\varepsilon_1^+}(\Lambda_0^+,\Lambda_1^+)>c_{\varepsilon_0^-,\varepsilon_1^-}(\Lambda_0^-,\Lambda_1^-)$$
for the pull-back augmentations $\varepsilon^+_i:=\varepsilon^-_i \circ \Phi_{\Sigma_i}$. Moreover, $c_{\varepsilon_0^+,\varepsilon_1^+}(\Lambda_0^+,\Lambda_1^+)=+\infty$ holds if and only if $c_{\varepsilon_0^-,\varepsilon_1^-}(\Lambda_0^-,\Lambda_1^-)=+\infty$.
\end{Thm}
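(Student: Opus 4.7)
The disjointness hypothesis $\Sigma_0 \cap \Sigma_1 = \emptyset$ forces $CF^*(\Sigma_0,\Sigma_1) = 0$, and by Proposition \ref{prp:action} (which exploits the positivity of the isotopy) we also have $d_{0-} = 0$. Consequently the Cthulhu complex reduces to the mapping cone
\[
\Cth_*(\Sigma_0,\Sigma_1) = \OP{Cone}\bigl(d_{+-}\colon C^*_{-\infty} \longrightarrow C^{*+\mu}_{+\infty}\bigr)
\]
for some grading shift $\mu$, and by Theorem \ref{thm:invariance0} this cone is acyclic. Hence $d_{+-}$ descends to an isomorphism
\[
[d_{+-}] \colon LCH^*_{\varepsilon_0^-,\varepsilon_1^-}(\Lambda_0^-,\Lambda_1) \xrightarrow{\cong} LCH^{*+\mu}_{\varepsilon_0^+,\varepsilon_1^+}(\Lambda_0^+,\Lambda_1),
\]
from which the equivalence $c^+ = +\infty$ if and only if $c^- = +\infty$ follows immediately, since finiteness of $c$ is equivalent to non-vanishing of $LCH^*$.

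For the strict inequality $c^+ > c^-$ I would invoke the action filtration. Normalising $T_- = 0$ and $T_+ = T$, Lemma \ref{lem:actionreeb} gives $\mathfrak{a}(\gamma^-) = \ell(\gamma^-)$ and $\mathfrak{a}(\gamma^+) = e^{T}\ell(\gamma^+) - f_{\Sigma_0}^+$, with $f_{\Sigma_0}^+ > 0$ guaranteed by Proposition \ref{prp:pos_cyl} via the positivity of the isotopy. Lemma \ref{lem:action} then forces the strict action inequality $e^{T}\ell(\gamma^+) > \ell(\gamma^-) + f_{\Sigma_0}^+$ whenever $\gamma^+$ appears with nonzero coefficient in $d_{+-}(\gamma^-)$. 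Combining this with Lemma \ref{lem:spectral}, which furnishes a cycle $\alpha^-$ supported on chords of length $\geq c^-$ representing a non-zero class, the image $d_{+-}(\alpha^-)$ defines a non-zero class in $LCH_+^*$ all of whose generators have length strictly greater than $e^{-T}(c^- + f_{\Sigma_0}^+)$. This already yields the preliminary estimate $c^+ \geq e^{-T}(c^- + f_{\Sigma_0}^+)$.

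To upgrade this estimate to the strict inequality $c^+ > c^-$ I would exploit that $c^\pm$ depends only on the Legendrian pair and the augmentations, not on the chosen concordance. Subdividing the positive isotopy $\{\Lambda^s\}$ into many short positive sub-isotopies, on each sub-interval the ratio $\max H/\min H$ tends to $1$, so the Eliashberg--Gromov concordance from Section \ref{sec:posit-isot-lagr} can be realised with arbitrarily small width $T$ while the associated primitive $f^+$ stays uniformly bounded below (proportionally to the length of the sub-interval). Iterating the preliminary estimate across the concatenation, the accumulated spectral shift is bounded below by a positive constant, so $c^+ > c^-$ strictly.

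\textbf{Main obstacle.} The crux of the argument is the tension between the embeddedness requirement of Proposition \ref{prp:pos_cyl}, which forces $T$ to be large, and the length-spectral estimate, which is only effective when $T$ is small relative to $f_{\Sigma_0}^+$. The subdivision just described is the natural resolution, but it requires compatible propagation of the augmentations at intermediate levels through the pullback by the sub-cobordism DGA morphisms, together with careful tracking of the action filtration across each concatenation so as to accumulate a strictly positive total shift.
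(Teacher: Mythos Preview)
Your opening reduction is correct and matches the paper: disjointness kills $CF^*$, positivity kills $d_{0-}$, so $\Cth_*$ is the acyclic cone of $d_{+-}$, and the ``if and only if'' clause follows. But your route to the \emph{strict} inequality diverges from the paper's and has a genuine gap.

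Your direct estimate reads $c^+ \geq e^{-T}(c^- + f_{\Sigma_0}^+)$. Because of the $e^{-T}$ multiplying $c^-$, this says nothing about $c^+$ versus $c^-$ unless $f_{\Sigma_0}^+ > (e^{T}-1)c^-$; since $f_{\Sigma_0}^+ \leq (e^T-1)/T$, this only works when $c^- < 1/T$. You try to repair this by subdividing so that each $T_k$ is small, but this runs into the very obstacle you flag: the Eliashberg--Gromov construction requires the contact Hamiltonian to vanish at the endpoints of each sub-interval (to make the concordance cylindrical), so after the zig-zag modification the ratio $\max H/\min H$ is not controlled by the length of the sub-interval. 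You have not shown that the iterative estimate actually accumulates, and a naive back-of-the-envelope with $T_k$ small and $f^+_k$ computed from the formula in Proposition~\ref{prp:pos_cyl} gives absurdities (e.g.\ $c_N$ unbounded), so something more is needed.

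The paper avoids subdivision entirely by a different geometric trick. It replaces $\Lambda_1$ by its Reeb-flow push-off $\Lambda_1^\ell$ and uses the identification $LCC^*(\Lambda_0,\Lambda_1^\ell)\cong LCC^*(\Lambda_0,\Lambda_1)^{[\ell,+\infty]}$ (Lemma~\ref{lem:alpha0}) together with the computation $d^{(+)}_{+-}=\iota_\ell$ for a wrapped cylinder (Lemma~\ref{lem:iota}). A neck-stretching argument (Lemmas~\ref{lem:prestretch} and \ref{lem:stretch}) then shows that for any $\ell<c^-$ the spectral class $\alpha^+$ is in the image of $d_{+-}$ for the complex built from $(\Sigma_{\{\Lambda^s\}},\R\times\Lambda_1^\ell)$. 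But any representative of $\alpha^+$ in this shifted complex involves a chord of length $c^+-\ell$, hence of action $e^T(c^+-\ell)-f_{\Sigma_0}^+$. Choosing $\ell$ just below $c^+$ makes this action negative, contradicting the fact that it lies in the image of $d_{+-}$ (whose inputs have positive action). This forces $c^-\leq c^+-e^{-T}f_{\Sigma_0}^+<c^+$. The point is that by pre-shifting $\Lambda_1$, the $e^T$ factor is applied only to the small residual length $c^+-\ell$ rather than to $c^-$ itself; this is what your direct estimate misses.
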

The following subsection will be devoted to the proof of this theorem.

\subsection{Proof of Theorem \ref{thm:spectral}}
There are two possible strategies for proving this theorem. One is by studying the invariance properties of the Legendrian contact homology complex under a Legendrian isotopy, and one involves studying the Floer homology of the Lagrangian cobordism corresponding to the trace of the isotopy as constructed in Section \ref{sec:posit-isot-lagr}. Here we take the latter approach.

For technical reasons we will in the following need to make use of the so-called \emph{cylindrical lift} $J_P$ of a tame almost complex structure $J_P$ on $(P,d\theta)$; these are cylindrical almost complex structures for which the canonical projection $\Pi \colon \R \times P \times \R \to P$ is $(\widetilde{J}_P,J_P)$-holomorphic. In particular, such an almost complex structure $\widetilde{J}_P$ is invariant under translations of both the $t$ and $z$ coordinates.

First we perform some general computations for a pair $\Lambda_i$, $i=0,1$, of Legendrians together with augmentations $\varepsilon_i$. By $\Lambda^s_1$ we denote the image of $\Lambda_1$ under the time-$s$ Reeb flow, i.e.~translation of the $z$ coordinate by $s \in \R$. 
\begin{Lem}
\label{lem:alpha0}
For a suitable cylindrical lift of an almost complex structure on $P$ and generic $\ell \ge 0$, the canonical isomorphism
$$LCC_{\varepsilon_0,\varepsilon_1}^*(\Lambda_0,\Lambda_1)^{[\ell,+\infty]} = LCC_{\varepsilon_0,\varepsilon_1}^*(\Lambda_0,\Lambda_1^\ell)$$
is an isomorphism of complexes.
\end{Lem}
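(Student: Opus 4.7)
The plan is to exploit the $z$-translation equivariance of cylindrical lifts. I would first fix a tame almost complex structure $J_P$ on $(P,d\theta)$, and consider its cylindrical lift $\widetilde J_P$ on the symplectisation $\R\times(P\times\R)$. The crucial property of $\widetilde J_P$ is that it is invariant under translations in both the symplectisation coordinate $t$ and the Reeb coordinate $z$, and moreover the projection $\Pi\co \R\times P\times\R\to P$ is $(\widetilde J_P,J_P)$-holomorphic. For generic $\ell\ge 0$ we may further assume that no mixed Reeb chord between $\Lambda_0$ and $\Lambda_1$ has length exactly equal to $\ell$.

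Next I would check the bijection of generators. The Reeb chords of $\Lambda_0\cup\Lambda_1^\ell$ split into pure chords on each component and mixed chords. Pure chords on $\Lambda_0$ are unchanged; pure chords on $\Lambda_1^\ell$ are in length-preserving bijection with those on $\Lambda_1$ via the $z$-translation by $-\ell$ applied to both endpoints. The mixed chords from $\Lambda_1^\ell$ to $\Lambda_0$, which must have strictly positive length, correspond bijectively (via $z$-translation of the starting point by $-\ell$) to the mixed chords from $\Lambda_1$ to $\Lambda_0$ of length $>\ell$; by the genericity assumption on $\ell$, these coincide with the chords of length $\ge\ell$, giving the canonical identification on the level of underlying graded vector spaces.

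The third step is to match the differentials using translation equivariance. Since $\widetilde J_P$ commutes with $z$-translation, applying the $z$-translation by $-\ell$ to the portion of the boundary lying on $\R\times\Lambda_1^\ell$ is a bijection between rigid pseudoholomorphic discs with boundary on $\R\times(\Lambda_0\cup\Lambda_1^\ell)$ and rigid pseudoholomorphic discs with boundary on $\R\times(\Lambda_0\cup\Lambda_1)$ whose positive and negative mixed asymptotics are of length $>\ell$. The pure chord asymptotics are matched by the same bijection, and since the augmentations $\varepsilon_0,\varepsilon_1$ are preserved under the translation (the second by the obvious naturality of the Chekanov--Eliashberg algebra under the Reeb flow), the weighted counts in formula~\eqref{eq:boundingcochain} agree. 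Because the differential strictly increases the length filtration, $LCC_{\varepsilon_0,\varepsilon_1}^*(\Lambda_0,\Lambda_1)^{[\ell,+\infty]}$ is a genuine subcomplex, and the bijection just constructed identifies its differential with that of $LCC_{\varepsilon_0,\varepsilon_1}^*(\Lambda_0,\Lambda_1^\ell)$.

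The main technical point is that we must be able to use a cylindrical lift $\widetilde J_P$ and still achieve the transversality needed for the Legendrian contact cohomology moduli spaces to be cut out regularly. This is precisely the setup used in the standard treatment of Legendrian contact homology in contactisations, and achieving transversality within this class is classical; see e.g.~\cite{LCHgeneral} and the discussion in \cite{LiftingPseudoholomorphic}. Given this, the statement reduces to the translation-equivariant bookkeeping outlined above.
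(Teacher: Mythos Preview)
Your overall strategy is sound and close to the paper's, but the third step contains a genuine gap. You write that ``applying the $z$-translation by $-\ell$ to the portion of the boundary lying on $\R\times\Lambda_1^\ell$ is a bijection between rigid pseudoholomorphic discs\ldots'' --- but this is not a well-defined operation on pseudoholomorphic maps. A disc is a single connected object; you cannot translate only the arcs mapping to $\R\times\Lambda_1^\ell$ while leaving the arcs on $\R\times\Lambda_0$ fixed and expect the result to remain $\widetilde J_P$-holomorphic (or even continuous). The $z$-translation invariance of $\widetilde J_P$ lets you translate an entire disc, but that moves the $\Lambda_0$-boundary as well, so it does not produce the bijection you claim.

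The paper's argument fixes this by passing through the Lagrangian projection $\Pi\co\R\times P\times\R\to P$ rather than attempting a direct bijection in the symplectisation. Since $\Pi$ is $(\widetilde J_P,J_P)$-holomorphic and $\Pi_{\OP{Lag}}(\Lambda_1^\ell)=\Pi_{\OP{Lag}}(\Lambda_1)$, both differentials are computed by the \emph{same} moduli spaces of $J_P$-holomorphic discs in $P$ with boundary on the exact Lagrangian immersion $\Pi_{\OP{Lag}}(\Lambda_0)\cup\Pi_{\OP{Lag}}(\Lambda_1)$; this is \cite[Theorem 2.1]{LiftingPseudoholomorphic}, which you correctly cite at the end. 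Your generator bijection (step two) and the remarks on augmentations are fine; replacing your ``partial translation'' in step three by this projection argument yields a complete proof that is essentially identical to the paper's.
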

\begin{proof}
The fact that the differentials agree follows from \cite[Theorem 2.1]{LiftingPseudoholomorphic}. Namely, the differentials of both complexes can be computed in terms of $J_P$-holomorphic discs inside $P$ that have boundary on the Lagrangian projection
$$\Pi((\R \times \Lambda_0) \cup (\R \times \Lambda_1))=\Pi((\R \times \Lambda_0) \cup (\R\times \Lambda^\ell_1)) \subset (P,d\theta),$$
which is an exact Lagrangian immersion. In particular, the translation in the $z$-coordinate does not play a role for the counts of the relevant pseudoholomorphic discs.
\end{proof}
Now consider the exact Lagrangian cobordism $\phi^\ell_{\beta^+\partial_z}(\R\times\Lambda_1^+)$ from $(\Lambda_1^+)^\ell$ to $\Lambda_1^+$ induced by wrapping the trivial cylinder, where $\beta^+(t)$ is the function described in Section \ref{sec:wrapping}.
\begin{Lem}
\label{lem:iota}
Consider the complex $\Cth_*(\R \times \Lambda_0^+,\phi^\ell_{\beta^+\partial_z}(\R\times\Lambda_1^+))$ with differential $\mathfrak{d}_{\varepsilon_0^+,\varepsilon_1^+}^{(+)}$. The cylindrical lift $\widetilde{J}_P$ may be assumed to be regular for the strips in the differential of this complex, for which we compute
\begin{eqnarray*}
& & C^*_{-\infty}(\R \times \Lambda_0^+,\phi^\ell_{\beta^+\partial_z}(\R\times\Lambda_1^+))=LCC_{\varepsilon_0^+,\varepsilon_1^+}^*(\Lambda_0^+,(\Lambda_1^+)^\ell),\\
& & C^*_{+\infty}(\R \times \Lambda_0^+,\phi^\ell_{\beta^+\partial_z}(\R\times\Lambda_1^+))=LCC_{\varepsilon_0^+,\varepsilon_1^+}^*(\Lambda_0^+,\Lambda_1^+),
\end{eqnarray*}
and there is an equality $\varepsilon^+_1 \circ \Phi_{\phi^\ell_{\beta_+\partial_z}(\R \times \Lambda_1^+)} = \varepsilon^+_1$ of augmentations. Furthermore, the component $d_{+-}^{(+)}$ of $\mathfrak{d}_{\varepsilon_0^+,\varepsilon_1^+}^{(+)}$ is equal to the canonical inclusion $\iota_\ell$, i.e.
$$d^{(+)}_{+-}=\iota_\ell \colon LCC_{\varepsilon_0^+,\varepsilon_1^+}^*(\Lambda_0^+,(\Lambda_1^+)^{\ell}) \to LCC_{\varepsilon_0^+,\varepsilon_1^+}^*(\Lambda_0^+,\Lambda_1^+).$$
(See Lemma \ref{lem:alpha0} for the identifications used here.) 
\end{Lem}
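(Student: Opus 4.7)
The plan is to exploit the cylindrical lift $\widetilde{J}_P$ of a generic tame almost complex structure $J_P$ on $(P,d\theta)$. By construction, $\widetilde{J}_P$ makes the projection $\Pi \colon \R \times P \times \R \to P$ holomorphic and is invariant under both $t$- and $z$-translation. The identifications of the ends of the Cthulhu complex are immediate from the construction of $\phi^\ell_{\beta^+\partial_z}$: the action on $(t,p,z)$ is by $z \mapsto z + \ell\beta^+(t)$, so the cobordism coincides with $\R \times (\Lambda_1^+)^\ell$ for $t \le T$ and with $\R \times \Lambda_1^+$ for $t \ge T + \delta$, giving the claimed descriptions of $C^*_{\pm\infty}$. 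Regularity of the strips in $\mathfrak{d}_{\varepsilon_0^+,\varepsilon_1^+}^{(+)}$ for an appropriate choice of $\widetilde{J}_P$ then follows from the transversality package for cylindrical lifts developed in \cite{LiftingPseudoholomorphic}.

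To prove the augmentation identity $\varepsilon_1^+ \circ \Phi_{\phi^\ell_{\beta^+\partial_z}(\R\times\Lambda_1^+)} = \varepsilon_1^+$, I would use the lifting theorem from \cite{LiftingPseudoholomorphic} to put the discs computing this DGA morphism in bijection via $\Pi$ with $J_P$-holomorphic discs in $(P,d\theta)$ having boundary on the single immersed Lagrangian $\Pi(\Lambda_1^+) = \Pi((\Lambda_1^+)^\ell)$. These are precisely the same discs that compute the identity DGA morphism of $\mathcal{A}(\Lambda_1^+)$ under the tautological chord identification induced by $z$-translation, so $\Phi$ agrees with that canonical identification of DGAs and the augmentation equality follows.

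The core of the lemma is the computation $d_{+-}^{(+)} = \iota_\ell$. A strip contributing to $d_{+-}^{(+)}$ carries a positive puncture at $\gamma^+ \in \mathcal{R}(\Lambda_1^+,\Lambda_0^+)$, a negative puncture at $\gamma^- \in \mathcal{R}((\Lambda_1^+)^\ell,\Lambda_0^+)$, and further negative punctures at self-chords $\delta_j$, $\zeta_j$ weighted by the augmentations. Projecting through $\Pi$ gives a $J_P$-holomorphic disc with boundary on $\Pi(\Lambda_0^+) \cup \Pi(\Lambda_1^+)$, and Stokes applied to $d\theta$ yields
\[
0 \le \int u^* d\theta = \ell(\gamma^+) - \ell(\gamma^-) - \sum_j \ell(\delta_j) - \sum_j \ell(\zeta_j),
\]
forcing $\ell(\gamma^+) \ge \ell(\gamma^-)$. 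Via Lemma \ref{lem:alpha0}, the correspondence between chords in $\mathcal{R}((\Lambda_1^+)^\ell,\Lambda_0^+)$ and those in $\mathcal{R}(\Lambda_1^+,\Lambda_0^+)$ of length at least $\ell$ is realised by $z$-translation, and equality $\ell(\gamma^+) = \ell(\gamma^-) + \ell$ holds exactly when $\gamma^+$ and $\gamma^-$ project to a common double point.

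The hard part will be ruling out non-constant downstairs contributions and showing that the constant ones give exactly $\iota_\ell$. For this I would analyse the lifting problem for the $(t,z)$ coordinates: the complex combination $w = t + iz$ satisfies an honest Cauchy--Riemann equation, with boundary condition $z = z_{\Lambda_0^+}(p)$ on one side and $z = z_{\Lambda_1^+}(p) + \ell\beta^+(t)$ on the other. For a constant downstairs disc at a mixed double point $q$ with $\ell(\gamma^+) \ge \ell$, the $w$-equation has a unique rigid solution whose asymptotics match $\gamma^-$ at $t = -\infty$ and the paired chord $\gamma^+$ at $t = +\infty$; this contribution equals $1$ and recovers the inclusion $\iota_\ell$. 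Non-constant contributions are excluded because, among rigid strips, the strict positivity of the downstairs area combined with the rigid matching of the asymptotic heights imposed by the $w$-boundary value problem leaves no room for the extra self-chord punctures that a non-constant projection would require; equivalently, the area inequality can only be saturated on the trivial-strip locus.
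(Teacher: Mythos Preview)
Your overall strategy matches the paper's: exploit the cylindrical lift $\widetilde{J}_P$, project the relevant discs through $\Pi$ to $(P,d\theta)$, and reduce to statements about $J_P$-holomorphic discs with boundary on the Lagrangian projection. The identification of the ends $C^*_{\pm\infty}$ and the regularity claim are handled as in the paper.

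The genuine gap is in your final paragraph, where you attempt to exclude non-constant downstairs contributions to $d_{+-}^{(+)}$. Your area inequality $\ell(\gamma^+)\ge\ell(\gamma^-)+\sum\ell(\delta_j)+\sum\ell(\zeta_j)$ only shows that the projected $d\theta$-area is non-negative; it does \emph{not} force the projection to be constant. A non-constant bigon in $P$ from one mixed double point to another with strictly positive area is in no way ruled out by Stokes, and such a bigon need not carry any extra pure punctures, so your claim that ``a non-constant projection would require extra self-chord punctures'' is incorrect. Likewise ``the area inequality can only be saturated on the trivial-strip locus'' is true but irrelevant: nothing forces saturation.

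What actually excludes these contributions is an \emph{index} argument, and this is what the paper invokes (via \cite[Lemma 8.3]{LiftingPseudoholomorphic}). A strip contributing to $d_{+-}^{(+)}$ is rigid in the symplectisation; under projection by $\Pi$ its expected dimension drops, and one computes that the projected disc in $P$ has \emph{negative} expected dimension. For a regular $J_P$ such discs are therefore constant. The remaining lifting problem for $(t,z)$ over each double point then has a unique rigid solution, giving exactly the coefficient $1$ in $\iota_\ell$. Your heuristic about the $w=t+iz$ boundary value problem is the right intuition for why the index drops, but it needs to be packaged as a Fredholm index computation rather than an energy estimate; the same index argument is what makes your augmentation identity go through (the projected pure discs again have negative expected dimension, hence are constant, so $\Phi$ is the tautological identification).
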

\begin{proof}
The computations are analogous to those carried out in \cite[Lemma 2.10]{Floer_Conc}. We here give the details only for how to establish the identification
$$ \varepsilon^+_1 \circ \Phi_{\phi^\ell_{\beta_+\partial_z}(\R \times \Lambda_1^+)} = \varepsilon^+_1,$$
where the left-hand side is the pull-back of the augmentation $\varepsilon^+_1$ under the DGA morphism induced by the cobordism $\phi^\ell_{\beta_+\partial_z}(\R \times \Lambda_1^+)$. The remaining claims follow by similar arguments.

It suffices to show that the DGA morphism induced by the cobordism $\phi^\ell_{\beta^+\partial_z}(\R \times \Lambda_1^+)$ is the canonical identification of complexes. This is indeed the case, given the assumption that the DGA morphism is defined using an almost complex structure that is a cylindrical lift. Namely, the image under the canonical projection
$$ \Pi \colon \R \times P \times \R \to P$$
of the discs in the definition of the DGA morphism are $J_P$-holomorphic discs having boundary on $\Pi(\R \times \Lambda_1^+)$. By a simple index computation (see e.g.~\cite[Lemma 8.3]{LiftingPseudoholomorphic}) these discs are, moreover, of \emph{negative} expected dimension and must hence be constant (under the assumption that $J_P$ is regular).

Finally, the discs in the definition of the DGA morphism can even be seen to bijectively correspond to the double points of the Lagrangian projection $\Pi((\R \times \Lambda_0^+) \cup (\R \times \Lambda_1^+)) \subset (P,d\theta)$; namely, there is an explicit and uniquely defined rigid $\widetilde{J}_P$-holomorphic disc in $\R \times P \times \R$ contributing to the DGA morphism living above each such double point. The regularity of the latter explicitly defined discs was shown in \cite[Lemma 8.3]{LiftingPseudoholomorphic}.
\end{proof}
Similar computations can be made concerning the exact Lagrangian cobordism $\phi^\ell_{\beta^-\partial_z}(\R\times\Lambda_1^-)$ from $\Lambda_1^-$ to $(\Lambda_1^-)^\ell$ (again, see Section \ref{sec:wrapping}). Namely, we have:
\begin{Lem}
\label{lem:augs}
Consider the complex $\Cth_*(\R \times \Lambda_0^-,\phi^\ell_{\beta^-\partial_z}(\R\times\Lambda_1^-))$ with differential $\mathfrak{d}^{(-)}_{\varepsilon_0^-,\varepsilon_1^-}$. The cylindrical lift $\widetilde{J}_P$ may be assumed to be regular for the strips in the differential of this complex, for which we compute
\begin{eqnarray*}
& & C^*_{-\infty}(\R \times \Lambda_0^-,\phi^\ell_{\beta^-\partial_z}(\R\times\Lambda_1^-))=LCC_{\varepsilon_0^-,\varepsilon_1^-}^*(\Lambda_0^-,\Lambda_1^-),\\
& & C^*_{+\infty}(\R \times \Lambda_0^-,\phi^\ell_{\beta^-\partial_z}(\R\times\Lambda_1^-))=LCC_{\varepsilon_0^-,\varepsilon_1^-}^*(\Lambda_0^-,(\Lambda_1^-)^\ell),
\end{eqnarray*}
and there is an equality $\varepsilon_1^- \circ \Phi_{\phi^\ell_{\beta_-\partial_z}(\R \times \Lambda_1)} = \varepsilon_1^-$ of augmentations. (See Lemma \ref{lem:alpha0} for the identifications made here.)
\end{Lem}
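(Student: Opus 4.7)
The plan is to mirror the proof of Lemma \ref{lem:iota} almost verbatim, adjusting only the bookkeeping that arises from wrapping the negative cylindrical end instead of the positive one. Since $\beta^-(t)$ vanishes for $t<-\delta$ and equals $1$ for $t\ge 0$, the Hamiltonian isotopy $\phi^\ell_{\beta^-\partial_z}$ leaves $\R\times\Lambda_1^-$ cylindrical over $\Lambda_1^-$ at the negative end and over $(\Lambda_1^-)^\ell$ at the positive end. From the definition of the Cthulhu complex together with Lemma \ref{lem:alpha0}, this yields the claimed identifications
\[ C^*_{-\infty} = LCC_{\varepsilon_0^-,\varepsilon_1^-}^*(\Lambda_0^-,\Lambda_1^-),\qquad C^*_{+\infty} = LCC_{\varepsilon_0^-,\varepsilon_1^-}^*(\Lambda_0^-,(\Lambda_1^-)^\ell). \]

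Regularity of a generic cylindrical lift $\widetilde{J}_P$ is then a direct appeal to \cite[Theorem 2.1]{LiftingPseudoholomorphic}. Any $\widetilde{J}_P$-holomorphic strip contributing to $\mathfrak{d}^{(-)}_{\varepsilon_0^-,\varepsilon_1^-}$ projects under $\Pi\colon\R\times P\times\R\to P$ to a $J_P$-holomorphic disc with boundary on the exact Lagrangian immersion $\Pi((\R\times\Lambda_0^-)\cup(\R\times\Lambda_1^-))$, and transversality for the latter projected problem is achieved by a generic perturbation of $J_P$ on $P$.

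The substantive point is the augmentation identity $\varepsilon_1^-\circ\Phi_{\phi^\ell_{\beta^-\partial_z}(\R\times\Lambda_1^-)}=\varepsilon_1^-$. As in Lemma \ref{lem:iota}, I would establish this by proving that the DGA morphism induced by the cobordism is the tautological identification on generators. Projecting the defining moduli spaces by $\Pi$ and using that $\Pi(\R\times\Lambda_1^-)=\Pi(\R\times(\Lambda_1^-)^\ell)$, each contributing disc maps to a $J_P$-holomorphic disc with boundary on $\Pi(\R\times\Lambda_1^-)$; the index computation \cite[Lemma 8.3]{LiftingPseudoholomorphic} then forces any non-constant such projection to have strictly negative expected dimension, so for a regular $J_P$ only constant projections survive. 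The remaining rigid discs are precisely the explicit strips sitting above the Reeb chords of $\Lambda_1^-$ that are constructed and shown to be regular in \cite[Lemma 8.3]{LiftingPseudoholomorphic}. They induce the identity map on generators, and pulling back $\varepsilon_1^-$ along the identity gives $\varepsilon_1^-$.

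The step I would be most careful with is checking that the sign in the expected-dimension estimate of \cite[Lemma 8.3]{LiftingPseudoholomorphic} still has the right sense with the wrapping performed at the negative end. Because the involved Lagrangian projections in $P$ are unchanged under the translation $\phi^\ell_{\partial_z}$, the moduli spaces of projected discs are literally identical to those appearing in the proof of Lemma \ref{lem:iota}, so the negative-dimension conclusion transfers directly. With this verified, every remaining ingredient is a verbatim transcription, and the statement of the lemma follows.
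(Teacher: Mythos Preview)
Your proposal is correct and follows the same approach as the paper, which does not give a separate proof for this lemma but introduces it with ``Similar computations can be made\ldots'' immediately after Lemma~\ref{lem:iota}. The only adjustments are exactly the ones you identify: the roles of the positive and negative cylindrical ends are swapped, and the projected moduli problem in $P$ is literally unchanged because the Lagrangian projection is insensitive to translation in the $z$-coordinate.
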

Now we consider the Lagrangian cobordisms $\Sigma_0=\Sigma_{\{\Lambda^s\}}$ and $\Sigma_1=\phi^\ell_{\beta\partial_z}(\R \times \Lambda_1)$, where the function $\beta(t)$ is as described in Section \ref{sec:wrapping}; i.e.~$\Sigma_i$ is a cobordism from $\Lambda_i^-=\Lambda_i$ to $\Lambda_i^+=\Lambda_i$. Consider the non-zero element $\alpha_{\varepsilon_0^+,\varepsilon_1^+}(\Lambda_0^+,\Lambda_1^+) \in LCH_{\varepsilon_0^+,\varepsilon_1^+}^*(\Lambda_0^+,\Lambda_1^+)$ as in Part (2) of Lemma \ref{lem:spectral} for $\varepsilon_i^+=\varepsilon_i^- \circ \Phi_{\Sigma_i}$. Here  Recall that each of its representatives is a linear combination of Reeb chord generators of which at least one is of length equal to $c_{\varepsilon_0^+,\varepsilon_1^+}(\Lambda_0^+,\Lambda_1^+)>0$.
\begin{Lem}
\label{lem:prestretch}
Assume that $\{\Lambda^s\}$ is a positive isotopy from $\Lambda_0^-$ to $\Lambda_0^+$, that $\Sigma_{\{\Lambda^s\}} \cap (\R \times \Lambda_1) = \emptyset$, and that
$$0<\ell<c_{\varepsilon_0^-,\varepsilon_1^-}(\Lambda_0^-,\Lambda_1^-).$$
It follows that $\alpha_{\varepsilon_0^+,\varepsilon_1^+}(\Lambda_0^+,\Lambda_1^+) \neq 0 \in LCH_{\varepsilon_0^+,\varepsilon_1^+}^*(\Lambda_0^+,\Lambda_1^+)$  has a representative  which is in the image of the component $d_{+-}'$ of the differential $\mathfrak{d}_{\varepsilon_0^-,\varepsilon_1^-}'$ of the complex $\Cth(\Sigma_{\{\Lambda^s\}},\phi^\ell_{\beta\partial_z}(\R \times \Lambda_1))$.
\end{Lem}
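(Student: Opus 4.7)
The strategy is to combine the acyclicity of the Cthulhu complex with a chain-level deformation argument, interpolating from the un-wrapped configuration at $s=0$ to the wrapped one at $s=\ell$ along the compactly supported Hamiltonian isotopy $\phi^s_{\beta\partial_z}$, $s\in[0,\ell]$.

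First I would analyse the Cthulhu complex at $s=0$. The disjointness hypothesis $\Sigma_{\{\Lambda^s\}}\cap(\R\times\Lambda_1)=\emptyset$ forces $CF=0$ in $\Cth(\Sigma_{\{\Lambda^s\}},\R\times\Lambda_1)$, so this complex reduces to the mapping cone of $d^{(0)}_{+-}\colon(C_{-\infty},d_{--})\to(C_{+\infty},d_{++})$. Acyclicity (Theorem \ref{thm:invariance0}) makes $d^{(0)}_{+-}$ a quasi-isomorphism, so there is a $d_{--}$-cycle $x_-\in C_{-\infty}$ with $d^{(0)}_{+-}(x_-)$ a cycle representative of $\alpha_{\varepsilon_0^+,\varepsilon_1^+}$.

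Next, apply the invariance Theorem \ref{thm:invariance1} to the isotopy $\phi^s_{\beta\partial_z}$ to obtain a quasi-isomorphism $\Phi$ between the two Cthulhu complexes that is the identity on $C_{-\infty}$ and an action-filtered chain isomorphism $\Phi_+$ on $C_{+\infty}$; by the characterisation of $\alpha_{\varepsilon_0^+,\varepsilon_1^+}$ as a minimum-length nonzero class (Lemma \ref{lem:spectral}), the class $[\Phi_+(d^{(0)}_{+-}(x_-))]$ represents $\alpha_{\varepsilon_0^+,\varepsilon_1^+}$. Writing $\Phi(0,x_-)=(u,v,x_-)$ and applying the chain-map relation $\Phi\circ\mathfrak{d}^{(0)}=\mathfrak{d}^{(\ell)}\circ\Phi$ to $(0,x_-)$, the first coordinate of the resulting identity reads
\[
\Phi_+\bigl(d^{(0)}_{+-}(x_-)\bigr)-d'_{+-}(x_-)-d'_{+0}(v)=d'_{++}(u),
\]
so that $[\alpha_{\varepsilon_0^+,\varepsilon_1^+}]=[d'_{+-}(x_-)+d'_{+0}(v)]$ in $H(C_{+\infty},d'_{++})$.

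The main obstacle is the elimination of the extra contribution $d'_{+0}(v)$. The plan is to combine the positivity of $\{\Lambda^s\}$---which forces $f_{\Sigma_{\{\Lambda^s\}}}^+>0$ and, by positioning the support of $\beta$ inside the positive cylindrical region of $\Sigma_{\{\Lambda^s\}}$, imposes an upper action bound on the $CF$-generators together with the vanishing of $d'_{0-}$ in the relevant range---with the hypothesis $\ell<c_{\varepsilon_0^-,\varepsilon_1^-}(\Lambda_0^-,\Lambda_1^-)$---which ensures that the inclusion $\iota_\ell$ at the negative end is cohomologically surjective---to show that $[d'_{+0}(v)]$ lies in the image of $[d'_{+-}]$ on a suitable length-truncated subcomplex of $C_{-\infty}$. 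Producing a $d_{--}$-cycle $\xi_-\in C_{-\infty}$ with $[d'_{+-}(\xi_-)]=-[d'_{+0}(v)]$ and with $d'_{+-}(x_-+\xi_-)$ still a $d'_{++}$-cycle, one obtains $x_-':=x_-+\xi_-$ for which $d'_{+-}(x_-')$ is the sought cycle representative of $[\alpha_{\varepsilon_0^+,\varepsilon_1^+}]$. This final cancellation step, where the action bookkeeping and the compatibility of the cycle condition must be verified under the modification, is the technical heart of the lemma and is where the positivity of the Legendrian isotopy plays its essential role.
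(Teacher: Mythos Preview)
Your setup coincides with the paper's: exploit the emptiness of $CF$ in $\Cth(\Sigma_{\{\Lambda^s\}},\R\times\Lambda_1)$ so that acyclicity makes $d^{(0)}_{+-}$ a quasi-isomorphism, then transport through the invariance map $\Phi$ coming from the isotopy $\phi^s_{\beta\partial_z}$. The divergence, and the gap, is in how you handle the $CF$-component $v$ of $\Phi(0,x_-)$.

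The paper does not attempt to eliminate $d'_{+0}(v)$ after the fact. Instead it uses the hypothesis $\ell<c_{\varepsilon_0^-,\varepsilon_1^-}(\Lambda_0^-,\Lambda_1^-)$ \emph{before} applying $\Phi$: by Part~(1) of Lemma~\ref{lem:spectral} the cycle $x_-\in C_{-\infty}$ may be replaced by a homologous one supported on Reeb chords of length at least $c_{\varepsilon_0^-,\varepsilon_1^-}>\ell$, hence of action strictly greater than $\ell$. One then computes that every intersection-point generator created by the wrapping has action at most $\ell$, while the generators of action $\ge\ell$ (all lying in $C^*_{\pm\infty}$) together with a neighbourhood are fixed throughout the isotopy. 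Part~(2) of Theorem~\ref{thm:invariance1} therefore applies with $\mathfrak{a}_0=\ell$ and forces $\Phi(x_-)\in C^*_{-\infty}\oplus C^*_{+\infty}$. Thus $v=0$ and the troublesome term never appears; the chain-map identity then reads $\phi_+ d^{(0)}_{+-}(x_-)=d'_{+-}(A_-)+d'_{--}(A_-)+d'_{++}(A_+)$ and the conclusion is immediate.

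Your proposed post-hoc elimination of $[d'_{+0}(v)]$ is where the argument is incomplete. From the long exact sequence of Theorem~\ref{thm:les} one sees that $[d'_{+0}(v)]\in\im[d'_{+-}]$ amounts to $[v]=0$ in $HF$, and nothing in your outline forces this: the cohomological surjectivity of $\iota_\ell$ at the negative end controls $C_{-\infty}$, not $CF$. Worse, without first choosing $x_-$ to be supported in length $>\ell$ you cannot conclude $d'_{0-}(x_-)=0$ (intersection-point actions in the wrapped complex now range up to $\ell$, so the Proposition~\ref{prp:action}-style vanishing no longer applies on all of $C_{-\infty}$); hence $v$ need not be a $d'_{00}$-cycle and $[d'_{+0}(v)]$ is not a priori a well-defined homology class. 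The fix is exactly the paper's move: pick the high-action representative first, invoke the filtration clause of the invariance theorem, and the obstruction disappears.
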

\begin{proof}

Denote by $\mathfrak{d}''_{\varepsilon_0^-,\varepsilon_1^-}$ the differential of $\Cth(\Sigma_{\{\Lambda^s\}},\R \times \Lambda_1),$ whose components will be denoted by $d_{+-}''(a),\ldots,$ etc.

Take $a \in C_{-\infty}^*(\Sigma_{\{\Lambda^s\}},\R \times \Lambda_1)$ satisfying $\mathfrak{d}_{\varepsilon_0^-,\varepsilon_1^-}''(a)=d_{+-}''(a)$ as well as 
\begin{equation}
\label{eq:def}
[d_{+-}''(a)]=[(\phi_+)^{-1}](\alpha_{\varepsilon_0^+,\varepsilon_1^+}(\Lambda_0^+,\Lambda_1^+)),
\end{equation}
where $\phi_+$ is the chain isomorphism induced by Theorem \ref{thm:invariance0} applied to the compactly supported Hamiltonian isotopy from $\R \times \Lambda_1$ to $\phi^\ell_{\beta\partial_z}(\R \times \Lambda_1)$. Here we need to use the fact that the complex $\Cth(\Sigma_{\{\Lambda^s\}},\R \times \Lambda_1)$ is acyclic, which also is a consequence of the invariance result Theorem \ref{thm:invariance0}. The non-zero element $[a]\in LCH_{\varepsilon_0^-,\varepsilon_1^-}^*(\Lambda_0^-,\Lambda_1^-)$ can be represented by Reeb chords all being of length at least $c_{\varepsilon_0^-,\varepsilon_1^-}(\Lambda_0^-,\Lambda_1^-)$ by Part (1) of Lemma \ref{lem:spectral}, and we replace $a$ by such a representative.

An intersection point generator of $CF^*(\Sigma_{\{\Lambda^s\}},\phi^r_{\beta\partial_z}(\R \times \Lambda_1))$ can be computed to be of action equal to at most $\ell>0$ whenever $0 < r \le \ell$. Part (2) of Theorem \ref{thm:invariance1} with $\mathfrak{a}_0=\ell$ thus shows that the chain homotopy
$$\phi \colon (\Cth(\Sigma_{\{\Lambda^s\}},\R \times \Lambda_1),\mathfrak{d}_{\varepsilon_0^-,\varepsilon_1^-}'') \to (\Cth(\Sigma_{\{\Lambda^s\}},\phi^\ell_{\beta\partial_z}(\R \times \Lambda_1)),\mathfrak{d}_{\varepsilon_0^-,\varepsilon_1^-}')$$
satisfies
$$\phi(a)= (A_- ,A_+) \in C_{-\infty}^*(\Sigma_{\{\Lambda^s\}},\phi^\ell_{\beta\partial_z}(\R \times \Lambda_1)) \oplus C_{+\infty}^*(\Sigma_{\{\Lambda^s\}},\phi^\ell_{\beta\partial_z}(\R \times \Lambda_1)).$$
In addition,
$$\phi_+\circ d_{+-}''(a)=\phi \circ \mathfrak{d}_{\varepsilon_0^-,\varepsilon_1^-}''(a)=\mathfrak{d}_{\varepsilon_0^-,\varepsilon_1^-}'\circ \phi(a)=(d_{+-}'+d_{--}')A_-+d_{++}'A_+,$$
where the last equality follows from the above action considerations of $\phi(a)$. From this together with Equality \eqref{eq:def} we then conclude that
$$[d_{+-}'(A_-)]=\alpha_{\varepsilon_0^+,\varepsilon_1^+}(\Lambda_0^+,\Lambda_1^+)$$
holds as sought.
\end{proof}

\begin{Lem}
\label{lem:stretch}
Under the assumptions of Lemma \ref{lem:prestretch}, we consider the element $\alpha_{\varepsilon_0^+,\varepsilon_1^+}(\Lambda_0^+,(\Lambda_1^+)^\ell) \neq 0 \in LCH_{\varepsilon_0^+,\varepsilon_1^+}^*(\Lambda_0^+,(\Lambda_1^+)^\ell)$ induced by the identification in Lemma \ref{lem:alpha0}. It follows that $\alpha_{\varepsilon_0^+,\varepsilon_1^+}(\Lambda_0^+,(\Lambda_1^+)^\ell)$ satisfies the properties that
\begin{enumerate}
\item any representative is a linear combination of the basis of Reeb chords of which at least one chord of length $c_{\varepsilon_0^+,\varepsilon_1^+}(\Lambda_0^+,\Lambda_1^+)-\ell>0$ appears with a non-zero coefficient, and
\item  some representative  is contained in the image of the component $d_{+-}$ of the differential $\mathfrak{d}_{\varepsilon_0^-,\varepsilon_1^-}$ of the complex $\Cth(\Sigma_{\{\Lambda^s\}},\R \times (\Lambda_1)^\ell)$.
\end{enumerate}
\end{Lem}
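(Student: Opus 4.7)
The plan is to apply a two-fold neck-stretching to the complex $\Cth(\Sigma_{\{\Lambda^s\}}, \phi^\ell_{\beta\partial_z}(\R \times \Lambda_1))$ furnished by Lemma \ref{lem:prestretch} in order to factor the component $d_{+-}'$ of its differential through the corresponding component $d_{+-}$ of the desired complex $\Cth(\Sigma_{\{\Lambda^s\}}, \R \times \Lambda_1^\ell)$. Recall from Lemma \ref{lem:prestretch} that we already have an element $a \in C_{-\infty}^*(\Sigma_{\{\Lambda^s\}}, \phi^\ell_{\beta\partial_z}(\R \times \Lambda_1))$ with $[d_{+-}'(a)] = \alpha_{\varepsilon_0^+, \varepsilon_1^+}(\Lambda_0^+, \Lambda_1^+)$. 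First, I would enlarge the support of $\beta$ so that $\beta \equiv 1$ on an interval containing both a level $t_0 < 0$ and a level $t_1 > T$ at which $\Sigma_{\{\Lambda^s\}}$ is cylindrical (over $\Lambda_0^-$ and $\Lambda_0^+$ respectively). This modification amounts to a compactly supported Hamiltonian isotopy of $\phi^\ell_{\beta\partial_z}(\R \times \Lambda_1)$, and by Theorem \ref{thm:invariance1}(2) the representative $d_{+-}'(a)$ and all spectral data are preserved.

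Next, I would perform simultaneous neck-stretching at the two contact-type hypersurfaces $\{t_i\} \times M$, $i = 0, 1$. In the SFT compactness limit the pair of cobordisms splits into a three-floor building given by the pairs: bottom $(\R \times \Lambda_0^-, \phi^\ell_{\beta^-\partial_z}(\R \times \Lambda_1))$, middle $(\Sigma_{\{\Lambda^s\}}, \R \times \Lambda_1^\ell)$ (precisely the desired complex), and top $(\R \times \Lambda_0^+, \phi^\ell_{\beta^+\partial_z}(\R \times \Lambda_1))$. A rigid strip contributing to $d_{+-}'$ generically converges to a three-floor building containing one rigid pseudoholomorphic strip per floor, joined at intermediate mixed Reeb chords (and possibly decorated with punctures at pure chords weighted by the augmentations). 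Combined with the identification $d_{+-}^{\OP{top}} = \iota_\ell$ from Lemma \ref{lem:iota} and an analogous cylindrical-lift computation for the bottom floor, SFT gluing then yields the factorisation
\[ d_{+-}' \;=\; \iota_\ell \circ d_{+-}^{\OP{mid}} \circ d_{+-}^{\OP{bot}}. \]

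Define $b := d_{+-}^{\OP{mid}}(d_{+-}^{\OP{bot}}(a)) \in C_{+\infty}^*(\Sigma_{\{\Lambda^s\}}, \R \times \Lambda_1^\ell)$; the factorisation gives $\iota_\ell(b) = d_{+-}'(a)$ at the chain level. Since $\iota_\ell$ is an injective chain map and $d_{+-}'(a)$ is a cocycle representing $\alpha_{\varepsilon_0^+, \varepsilon_1^+}(\Lambda_0^+, \Lambda_1^+)$, the element $b$ is itself a cocycle whose class $[b] \in LCH_{\varepsilon_0^+, \varepsilon_1^+}^*(\Lambda_0^+, (\Lambda_1^+)^\ell)$ satisfies $[\iota_\ell]([b]) = \alpha_{\varepsilon_0^+, \varepsilon_1^+}(\Lambda_0^+, \Lambda_1^+)$. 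Part (2) of the lemma follows immediately since $b$ is by construction in the image of $d_{+-}^{\OP{mid}}$. For Part (1), the fact that $\alpha_{\varepsilon_0^+,\varepsilon_1^+}(\Lambda_0^+,\Lambda_1^+) \in \OP{im}[\iota_\ell]$ combined with Lemma \ref{lem:spectral}(2) gives $c_{\varepsilon_0^+,\varepsilon_1^+}(\Lambda_0^+,\Lambda_1^+) \geq \ell$; the strict inequality $c^+ > \ell$ (hence $c^+ - \ell > 0$) then follows from the strict positivity $f^+_{\Sigma_{\{\Lambda^s\}}} > 0$ of Proposition \ref{prp:pos_cyl} combined with the action estimates of Lemmas \ref{lem:action} and \ref{lem:actionreeb}. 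Under the shift identification of Lemma \ref{lem:alpha0} the class $[b]$ is then identified with $\alpha_{\varepsilon_0^+,\varepsilon_1^+}(\Lambda_0^+,(\Lambda_1^+)^\ell)$, whose spectral invariant is $c^+ - \ell$.

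The hard part will be rigorously establishing the factorisation via SFT compactness and gluing. One must verify that the only rigid configurations arising in the stretching limit are of the expected three-floor form --- a single strip per floor possibly decorated by punctures at pure chords weighted by the augmentations --- and that no rigid configurations contribute with extra levels, broken strip components, or bubbled-off discs of pure asymptotics in the symplectisations of the intermediate hypersurfaces. This analysis, while delicate, is standard and entirely analogous to the gluing arguments developed in \cite{cthulhu}, enabled here by the transversality of the chosen almost complex structure and by the fact that we work in the symplectisation of a contactisation.
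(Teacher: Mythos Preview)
Your approach to Part~(2) via the two-level neck-stretching is essentially the same as the paper's: both arguments analyse the SFT limit of strips contributing to $d_{+-}'$ in the complex $\Cth(\Sigma_{\{\Lambda^s\}}, \phi^\ell_{\beta\partial_z}(\R \times \Lambda_1))$ and extract from the middle level a contribution to $d_{+-}$ for the pair $(\Sigma_{\{\Lambda^s\}}, \R \times (\Lambda_1)^\ell)$. You package this a bit more formally as a chain-level factorisation $d_{+-}' = \iota_\ell \circ d_{+-}^{\OP{mid}} \circ d_{+-}^{\OP{bot}}$, whereas the paper simply tracks an individual generator $A_-$ through the limit building; the underlying geometry is identical.

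Your treatment of Part~(1), however, goes off track. The paper's proof of (1) is a one-liner: it is a direct consequence of Lemma~\ref{lem:spectral}(2). The mechanism is that under the shift identification of Lemma~\ref{lem:alpha0}, a chord of length $\ell'$ for the pair $(\Lambda_0^+,\Lambda_1^+)$ becomes a chord of length $\ell'-\ell$ for $(\Lambda_0^+,(\Lambda_1^+)^\ell)$; since Lemma~\ref{lem:spectral}(2) says $\alpha_{\varepsilon_0^+,\varepsilon_1^+}(\Lambda_0^+,\Lambda_1^+)$ is not in $\im[\iota_{\ell'}]$ for any $\ell' > c^+$, no representative of the shifted class can be supported entirely on chords of length strictly greater than $c^+-\ell$. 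That is the whole content of (1).

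You instead try to deduce the strict inequality $c^+>\ell$ from the positivity $f^+_{\Sigma_{\{\Lambda^s\}}}>0$ together with the action estimates of Lemmas~\ref{lem:action} and~\ref{lem:actionreeb}. This does not work: that action argument only tells you that the chords appearing in $b=d_{+-}^{\OP{mid}}(d_{+-}^{\OP{bot}}(a))$ have length bounded below by $e^{-T}f^+>0$, which is automatic and says nothing about $c^+-\ell$. More seriously, the inference you are gesturing at (using $f^+>0$ to constrain the relationship between $c^+$ and $\ell$) is exactly the \emph{final} step of the proof of Theorem~\ref{thm:spectral}, which takes Part~(1) of the present lemma as input; importing it here is circular. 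Finally, your concluding sentence (``whose spectral invariant is $c^+-\ell$'') addresses the wrong quantity: Part~(1) is a statement about the support of \emph{every} representative of the class, not about the spectral invariant of the pair of Legendrians.
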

\begin{proof}
(1): This is a straight-forward consequence of  Part (2) of Lemma \ref{lem:spectral}.

(2): This follows from analysing the possible breakings when stretching the neck along the disconnected hypersurface
$$\{0,T\} \times P \times \R \subset (\R \times P \times \R,d(e^t\alpha_{\OP{std}}))$$
of contact type (see Section \ref{sec:wrapping}).

Namely, let us consider a generator $A_- \in C^*_{-\infty}(\Sigma_{\{\Lambda^s\}},\phi^\ell_{\beta\partial_z}(\R \times \Lambda_1))$ which gives a non-zero contribution $\langle d_{+-}'(A_-),\iota_\ell(\alpha_0) \rangle \neq 0$, where $\iota_\ell(\alpha_0)$ is a generator appearing with a non-zero coefficient in a representative $\alpha_{\varepsilon_0^+,\varepsilon_1^+}(\Lambda_0^+,(\Lambda_1^+))$; this is possible by Lemma \ref{lem:prestretch} combined with  Part (1) of Lemma \ref{lem:spectral}.  The limit of a pseudoholomorphic disc of this type when stretching the neck is shown in Figure \ref{fig:stretched}. The pseudoholomorphic curve in the middle level corresponds to a non-zero contribution to $\langle d_{+-}'(b),\alpha_0 \rangle \neq 0$, and the claim now follows.
\end{proof}

\begin{figure}[ht!]
\vspace{5mm}
\labellist
\pinlabel{$\mathbb{R}\times \Lambda_0^+$} [tr] at 0 465
\pinlabel{$\color{red}\phi^\ell_{\beta^+\partial_z}(\mathbb{R}\times\Lambda_1)$} [tl] at 85 465
\pinlabel{$d_{+-}^{(+)}$} [tl] at 10 530
\pinlabel{$\color{blue}\iota_\ell(\alpha_0)$} [tl] at 30 590
\pinlabel{$\color{blue}\alpha_0$} [tl] at 15 410
\pinlabel{$d_{+-}$} [tl] at 17 300
\pinlabel{$\Sigma_{\{\Lambda^s\}}$} [tr] at -5 300
\pinlabel{$\color{red}\R \times \Lambda^\ell_1$} [tl] at 105 300
\pinlabel{$\color{blue}b$} [tl] at 25 230
\pinlabel{$d_{+-}^{(-)}$} [tl] at 7 150
\pinlabel{$\R \times \Lambda_0^-$} [tr] at 0 145
\pinlabel{$\color{red}\phi^\ell_{\beta^-\partial_z}(\R \times \Lambda_1)$} [tr] at 300 145
\pinlabel{$\color{blue}A_-$} [tl] at 42 56
\endlabellist
\centering
\includegraphics[height=6cm]{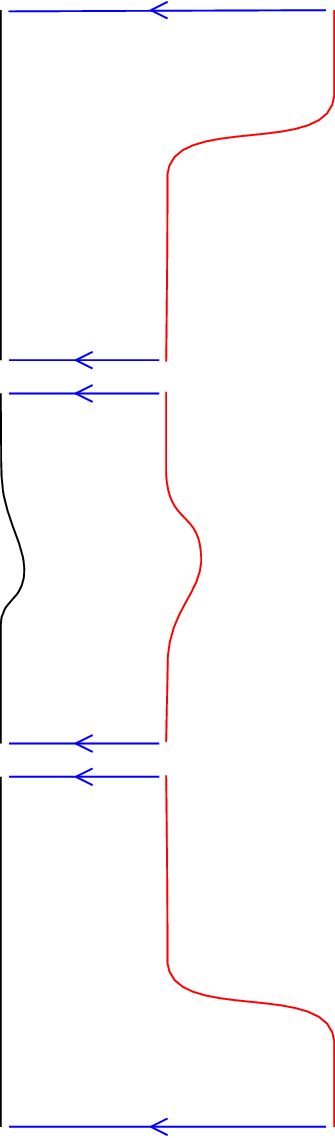}
\caption{Possible limits of pseudoholomorphic discs contributing to $\langle \mathfrak{d}'_{\varepsilon_0^-,\varepsilon_1^-}(A_-),\iota_\ell(\alpha_0)\rangle$ after stretching the neck along $\{0,T\} \times P \times \R$. The middle level is a pseudoholomorphic disc contributing to $\langle \mathfrak{d}_{\varepsilon_0^-,\varepsilon_1^-}(b),\alpha_0\rangle$.}
\label{fig:stretched}
\end{figure}

The proof of Theorem \ref{thm:spectral} can now be finished without much effort by considering the complex $\Cth(\Sigma_{\{\Lambda^s\}},\R \times (\Lambda_1)^\ell))$ with differential $\mathfrak{d}_{\varepsilon_0^-,\varepsilon_1^-}$. The computation in the proof of Lemma \ref{lem:actionreeb} shows that the action of a Reeb chord generator $\gamma_\pm$ from $\Lambda_1^\pm$ to $(\Lambda_0^\pm)^\ell$ is given by
\begin{eqnarray*} && \mathfrak{a}(\gamma_-) = \ell(\gamma_-),\\
&& \mathfrak{a}(\gamma_+)=e^T\ell(\gamma_+)-f^+_{\Sigma_{\{\Lambda^s\}}},
\end{eqnarray*}
for a constant $f^+_{\Sigma_{\{\Lambda^s\}}}>0$. By Part (1) of Lemma \ref{lem:stretch} we see that any representative of
$\alpha_{\varepsilon_0^+,\varepsilon_1^+}(\Lambda_0^+,(\Lambda_1^+)^\ell) \neq 0 \in LCH_{\varepsilon_0^+,\varepsilon_1^+}^*(\Lambda_0^+,\Lambda_1^+)$
must contain a generator of action equal to
$$e^T(c_{\varepsilon_0^+,\varepsilon_1^+}(\Lambda_0^+,\Lambda_1^+)-\ell)-f^+_{\Sigma_{\{\Lambda^s\}}}.$$

We now argue by contradiction, assuming that
$$ c_{\varepsilon_0^-,\varepsilon_1^-}(\Lambda_0^-,\Lambda_1^-)> c_{\varepsilon_0^+,\varepsilon_1^+}(\Lambda_0^+,\Lambda_1^+)$$
holds.  Hence, in view of Lemmas \ref{lem:prestretch} and \ref{lem:stretch}, we can choose a number $\ell>0$ satisfying
$$0<c_{\varepsilon_0^+,\varepsilon_1^+}(\Lambda_0^+,\Lambda_1^+)<\ell<c_{\varepsilon_0^-,\varepsilon_1^-}(\Lambda_0^-,\Lambda_1^-).$$
In other words, every representative of $\alpha_{\varepsilon_0^+,\varepsilon_1^+}(\Lambda_0^+,(\Lambda_1^+)^\ell)$ contains a non-zero multiple of a generator of \emph{negative} action. Since the differential increases action (see Lemma \ref{lem:action}), and since the Reeb chords at the negative end are of positive action, it finally follows that
$$\alpha_{\varepsilon_0^+,\varepsilon_1^+}(\Lambda_0^+,(\Lambda_1^+)^\ell) \notin \im [d_{+-}].$$
This clearly contradicts Part (2) of Lemma \ref{lem:stretch}. \qed

\subsection{The proof of Theorem \ref{thm:liouville} (the case of a contactisation)}
\label{sec:proofliouville}
Write $\Lambda_0:=\Lambda$ and $\Lambda_1:=\Lambda'$.  Let $\Sigma_{\{\Lambda^s\}}$ be the cylinder induced by a positive loop of Legendrians inside $M \setminus \Lambda_1$ starting at $\Lambda^0=\Lambda_0$. By the construction in Section \ref{sec:posit-isot-lagr}, this cylinder may be assumed to be disjoint from the cylinder $\Sigma_1:=\R \times \Lambda_1$.

We argue by contradiction and assume that there are augmentations $\varepsilon_i$, $i=0,1$, of the Chekanov--Eliashberg algebra of $\Lambda_i$ for which $LCC^*_{\varepsilon_0,\varepsilon_1}(\Lambda_0,\Lambda_1)$ is not acyclic.
\begin{Rem}
The non-existence of an arbitrary positive loop of Legendrians containing $\Lambda$ under the stronger assumption that the Legendrian submanifolds have separated $z$-coordinates is an immediate consequence. Namely, in this case we can always arrange so that $\Sigma_{\{\Lambda^s\}}$ and $\R \times \Lambda_1$ become disjoint by translating the latter component sufficiently far in the negative $z$-direction.
\end{Rem}
First observe that the assumption of having a non-vanishing Legendrian contact homology can be translated into the fact that $0<c_{\varepsilon_0,\varepsilon_1}(\Lambda_0,\Lambda_1)<+\infty$ is a finite positive number. Since the length of the Reeb chords from $\Lambda_1$ to $\Lambda_0$ form a discrete subset of $(0,+\infty)$ by the genericity assumptions, after possibly replacing the above augmentations we may even assume that the pair of augmentations is minimal in the sense that
\begin{equation}
\label{eq:ineq}
0<c_{\varepsilon_0,\varepsilon_1}(\Lambda_0,\Lambda_1)\le c_{\varepsilon_0',\varepsilon_1'}(\Lambda_0,\Lambda_1)
\end{equation}
is satisfied for any other pair $(\varepsilon_0',\varepsilon_1')$ of augmentations. 

By Proposition \ref{prp:spectral} we can find augmentations $\varepsilon_i^+$ for which
$$c_{\varepsilon_0^+,\varepsilon_1^+}(\Lambda_0,\Lambda_1)=c_{\varepsilon_0,\varepsilon_1}(\Lambda_0,\Lambda_1)$$
and where $\varepsilon_i^+=\varepsilon_i^- \circ \Phi_{\Sigma_i}$. This, however, is in contradiction with the Inequality \eqref{eq:ineq} combined with the Inequality
$$c_{\varepsilon_0^+,\varepsilon_1^+}(\Lambda_0,\Lambda_1)>c_{\varepsilon_0^-,\varepsilon_1^-}(\Lambda_0,\Lambda_1)$$
established by Theorem \ref{thm:spectral} (here the assumption $\Sigma_0 \cap \Sigma_1 = \emptyset$ is used).
\qed

\section{Wrapped Floer cohomology and non-existence of contractible positive loops}
\label{sec:wrapp-homol-compl}
Here we prove Theorem \ref{thm:filling}, which gives an obstruction to the existence of a contractible positive loop of a Legendrian in terms of the wrapped Floer cohomology of an exact Lagrangian filling. This theory was originally defined in \cite{AbbonFloer} by Abbondandolo--Schwarz and later developed by Abouzaid--Seidel \cite{WrappedFuk} and Ritter \cite{Ritter_TQFT}. 

\subsection{Setup of wrapped Floer cohomology}
Here we give a very brief outline of the definition of wrapped Floer cohomology. We refer to \cite{Ritter_TQFT} as well as \cite{Cie_Oan_ESaxiom} for more details.

Consider a Legendrian $\Lambda \subset (Y,\xi=\ker \alpha)$ living in the contact boundary of a compact Liouville domain $(\overline{X},d\theta)$, and assume that $\Lambda$ admits an exact Lagrangian filling $L \subset (X,d\theta)$ inside the completion of the latter. More precisely, we will assume that
\begin{eqnarray*}
& & (X \setminus \overline{X},d\theta)=((-1,+\infty) \times Y,d(e^t\alpha)),\\
& & L \cap (X \setminus \overline{X})=((-1,+\infty) \times \Lambda,
\end{eqnarray*}
are convex cylindrical ends, while $\overline{L}:=L \cap \overline{X}$ is compact with Legendrian boundary $\partial\overline{L}=\Lambda \subset (Y=\partial \overline{X},\alpha)$.

Now, for each generic $\lambda >0$, consider the autonomous Hamiltonian $H_\lambda \colon X \to \R$ which vanishes in the compact part $\overline{X}$, while it is of the form $\lambda \rho(t)e^t-e^{2T}\lambda$ in the cylindrical end $[-1,+\infty) \times Y$. Here the function $\rho \colon \R \to \R_{\ge 0}$ satisfies $\rho(t)=1$ for $t \ge 2T+\delta$, $\rho(t)=0$ for $t \le 2T-\delta$, and $\frac{d^2}{dt^2}(\rho(t)e^t) \ge 0$ for all $t \in \R$, where $0<\delta<1$ is small. Such a function along with its induced Hamiltonian vector field is schematically depicted in Figure \ref{fig:lambda}.

Given two generic Lagrangian fillings $L_i$, $i=0,1,$ which are cylindrical inside the subset $[T,+\infty) \times Y$, the associated Floer cohomology complex
$$(CF^*(L_0,L_1;H_\lambda),\partial)$$
is now defined as follows.
\begin{itemize}
\item \emph{The generators:} These are the Hamiltonian time-one chords $x(t)$ of $H_\lambda$ from $x(0) \in L_0$ to $x(1) \in L_1$. Equivalently, such chords are intersection points $\phi^1_{H_\lambda}(L_0) \cap L_1$, which moreover can be seen to be of the following two kinds:
\begin{itemize}
\item Intersection points inside $\{ 2T-\delta \le t \le 2T+\delta\}$: these are in bijective correspondence with the Reeb chords from $\Lambda_0$ to $\Lambda_1$ of length at most $\lambda$, and
\item Intersection point inside $\{ t < 2T-\delta \}$: these are simply the intersection points $L_0 \cap L_1$.
\end{itemize}
\item \emph{The differential:} For two generators $x_\pm$, the coefficient $\langle \partial(x_+),x_-\rangle$ of the differential $\partial$ counts the number of rigid finite-energy solutions of the $\overline{\partial}$-equation with Hamiltonian perturbation term
$$
\begin{cases}
u \colon \R \times [0,1] \to X,\\
\partial_s u(s,t)+J_t (\partial_tu(s,t)-X_{H_\lambda}(u(s,t)))=0,\\
u(s,i) \in L_i, \: i=0,1,\\
\lim_{s \to \pm \infty} u(s,t)=x_\pm (t),
\end{cases}
$$
i.e.~so called \emph{Floer strips with boundary on $L_0 \cup L_1$}. The chords $x_+$ and $x_-$ are also called the \emph{input} and \emph{output}, respectively, for obvious reasons.
\end{itemize}
Recall that there are primitives $f_i \colon L_i \to \R$ of the pull-back of the Liouville form $\theta$ to $L_i$, $i=0,1$, by the exactness assumption. For a choice of such primitives, the \emph{action} of a Hamiltonian chord $x(t)=\phi^t_{H_\lambda}$ from $x(0) \in L_0$ to $x(1) \in L_1$ is defined to be
\begin{equation}
\mathcal{A}(x):=f_0(x(0))-f_1(x(1))+\int_0^1 (x^*\theta- H_\lambda (x(t))dt).\label{eq:12}
\end{equation}
It follows that the differential \emph{decreases} the action in the sense that
$$\langle \partial(x_+),x_-\rangle \neq 0$$
implies that $\mathcal{A}(x_-) < \mathcal{A}(x_+)$.

Furthermore, whenever $\lambda \gg 0$ is sufficiently large and $\delta>0$ is sufficiently small (both depending on $L_i$, $i=0,1$), Part (1.b) of Lemma \ref{lem:noescape} below shows that there is a subcomplex
$$(CF^*_0(L_0,L_1;H_\lambda),\partial_0) \subset (CF^*(L_0,L_1;H_\lambda),\partial)$$
consisting of the generators $L_0 \cap L_1 \subset X \setminus ([2T-\delta,+\infty) \times Y).$ Moreover, the quotient complex
$$(CF^*_{+\infty}(L_0,L_1;H_\lambda),\partial_\infty) := (CF^*(L_0,L_1;H_\lambda),\partial)/CF^*_0(L_0,L_1;H_\lambda)$$
has a canonical generating set which is in a canonical bijective correspondence with the set of Reeb chords from $\Lambda_0$ to $\Lambda_1$ of length less than $\lambda$.

The wrapped Floer cohomology is finally defined as the direct limit
$$ HW^*(L_0,L_1) := \lim_{\lambda \to +\infty} HF^* (L_0,L_1;H_\lambda)$$
for a directed system defined by suitable continuation maps. Note that $HW^*(L_0,L_1)$, $L_0=L_1$ is a unital algebra; we refer to \cite{Ritter_TQFT} for the details.

\begin{Rem}
In the cases when it is possible to define the Floer complex $\Cth_*(L_1,L_0)$ of Section \ref{sec:cthulhu-complex} (which with the current technology imposes some constraints), there is an isomorphism
$$HW^*(L_0,L_1) \simeq H(\Cth^*(L_1,L_0))$$
(note the order!) where the right-hand side is the homology of the associated \emph{dual} complex.
\end{Rem}

\begin{figure}[htp]
\vspace{1em}
\centering
\labellist
\pinlabel $H_\lambda(\log\tau)=H_\lambda(t)$ at 230 140
\pinlabel $\frac{d}{d\tau}H_\lambda(\log\tau)=e^{-t}\frac{d}{dt}H_\lambda(t)$ at 190 43
\pinlabel $\tau=e^t$ at 320 13
\pinlabel $\tau=e^t$ at 320 106
\pinlabel $\lambda$ at 58 68
\pinlabel $e^{2T}$ at 268 -5
\pinlabel $e^{2T}$ at 268 88
\endlabellist
\includegraphics[scale=0.8]{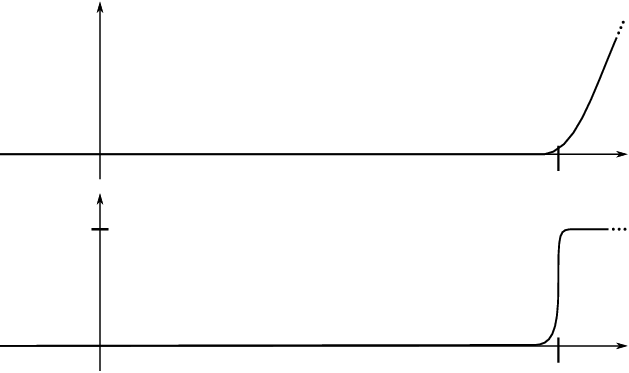}
\vspace{1em}
\caption{Above: The autonomous Hamiltonian $H_\lambda(t)=H_\lambda(\log \tau)$ that is used for wrapping the cylindrical end of the Lagrangian filling $L_0$. Below: the corresponding Hamiltonian vector field is given by $e^{-t}\frac{d}{dt}H_\lambda(t)R_\alpha$, which is parallel to the Reeb vector field.}
\label{fig:lambda}
\end{figure}

\subsection{Proof of Theorem \ref{thm:filling}}

In the following we will make heavy use of the fact that, if on an exact Lagrangian $i:L\rightarrow X$ the Liouville form $\theta$ satisfies $i^*\theta=df$ and if $\phi_G^1$ is the time one flow of a Hamiltonian $G$, then $(\phi^1_G\circ i)^*\theta=d(f+K)$ for the function
 \begin{equation}
  \label{eq:changepotential}
K(q)=\int_0^1 (\theta(X_G)_{\phi^t_G(q)}- H(\phi^t_G(q)))dt
  \end{equation}
(compare with the definition of action in Equation \eqref{eq:12}).

A central technique that also will be used over and over is to use neck stretching in order prevent Floer strips from crossing a given barrier (in the form of a dividing contact hypersurface). There are several different conditions that will be used for this purpose, all which are more or less standard; see e.g.~\cite[Section 2.3]{Cie_Oan_ESaxiom} as well as \cite[Section 6.1]{Dimitroglou:Energy}. For the sake of completeness we here recollect the needed results.

For now we assume that the exact Lagrangian cobordisms $L_i \subset (X,\omega)$, $i=0,1$, both are cylindrical in a neighbourhood of the slice $\{ t_0 \} \times Y$ in the cylindrical end; this is a dividing hypersurface of contact type intersecting $L_i$ in a Legendrian submanifold. We moreover assume that $G_s \colon X \to \R$ is a time-dependent Hamiltonian that vanishes near this slice. The subsets that we consider are $X_L,X_R \subset X$ in the decomposition $X=X_L \cup X_R$ into connected closed subsets such that $X_L \cap X_R = \{ t_0\} \times Y$, i.e.~$X_R=[t_0,+\infty) \times Y$ while $X_L = \overline{X \setminus X_R}$. When here speaking about Floer strips or continuation strips we mean either a Floer strip defined for the time-dependent Hamiltonian $G$, or a continuation strip involving the Hamiltonian that vanishes equivalently and the Hamiltonian $G$. (The latter continuation strips are those appearing in the definition of the continuation maps that turn on or off the Hamiltonian perturbation-term $G$, as well as for the chain homotopies between their compositions.) Also, recall the definition
$$\|G_s\|_{\OP{osc}}:=\int_0^1(\max_X G_s-\min_XG_s)dt \ge 0$$
of the oscillatory norm.
\begin{Lem}
\label{lem:noescape}
Under the above assumptions, and while using primitives $f_i \colon L_i \to \R$ that vanish in the slice $L_i \cap \{t=t_0\}$ in order to define the action, the following is satisfied:
\begin{enumerate}
\item A Floer strip or continuation strip with either
\begin{enumerate}
\item \emph{input} being a generator of \emph{negative} action contained in $X_R$, or
\item \emph{output} being a generator of \emph{positive} action contained in $X_R$,
\end{enumerate}
has both of its asymptotics contained inside $X_R$;
\item 
\begin{enumerate}
\item A Floer strip whose input and output chords are both contained in $X_L$ is contained entirely inside $X_L$, 
\item The same is true for a continuation strip, under the additional assumptions that $G_s$ vanishes inside $X_L$, while its input and output chords $x_{\OP{in}}$ and $x_{\OP{out}}$ satisfies $\mathcal{A}(x_{\OP{out}}) - \mathcal{A}(x_{\OP{in}}) \ge \| G_s \|_{\OP{osc}}$; and
\end{enumerate}
\item Under the additional assumption that $G_s$ vanishes inside $X_R$, it follows that a continuation strip having both input and output contained inside $X_R$ satisfies the property that
\begin{enumerate}
\item the action of the output is not greater than the action of the input, and
\item if its symplectic area moreover vanishes (i.e.~if the actions of the input and output agree), then the strip is contained entirely inside $X_R$ (and is thus constant).
\end{enumerate}
\end{enumerate}
\end{Lem}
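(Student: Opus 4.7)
The plan rests on two staples of Floer-theoretic technique in the convex end: (a) the action--energy identity, in the form $\mathcal{A}(x_+)-\mathcal{A}(x_-)=E(u)\ge 0$ for a Floer strip, and with an additional Hamiltonian defect term bounded by $\|G_s\|_{\OP{osc}}$ for a continuation strip; and (b) the integrated maximum principle, valid wherever $J$ is chosen of convex cylindrical type and the Hamiltonian perturbation vanishes. Both are developed in, e.g., \cite{Cie_Oan_ESaxiom}. Throughout, one arranges $J$ to be cylindrical near $\{t_0\}\times Y$ so that, thanks to $G_s\equiv 0$ there, every strip is genuinely $J$-holomorphic in a neighbourhood of the dividing slice, $t\circ u$ is subharmonic with Neumann-type boundary behaviour on $L_i$, and the primitives $f_i$ can be normalised to vanish on $L_i\cap\{t=t_0\}$.

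For Part (1), I would apply Stokes' theorem to $\int_{U}u^*\omega$ where $U:=u^{-1}(X_R)$. The boundary decomposes into (i) arcs on $L_i\cap X_R$, where the primitive normalisation together with cylindricality of $L_i$ forces $\int u^*\theta=0$; (ii) arcs on $u^{-1}(\{t_0\}\times Y)$, along which cylindricality of $J$ renders $u^*\theta$ pointwise non-positive --- the content of the integrated maximum principle at the convex boundary; and (iii) asymptotic ends of $u$ whose chord lies in $X_R$, contributing $\pm\mathcal{A}$. Combined with non-negativity of $E(u)$ and the Hamiltonian correction from the action--energy identity, this yields an inequality of the form $\mathcal{A}(\text{input in }X_R)-\mathcal{A}(\text{output in }X_R)\le (\text{non-negative error})$. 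In case (1a), a negative-action $X_R$-input with no $X_R$-output would force the left-hand side strictly negative; case (1b) is symmetric. Hence the other asymptote must also lie in $X_R$.

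For Part (2), the argument is the classical maximum principle. If a Floer strip (or a continuation strip with $G_s|_{X_L}=0$) with both asymptotics in $X_L$ penetrated into $X_R^{\OP{int}}$, then the subharmonic function $t\circ u$ on the non-empty open set $u^{-1}(X_R^{\OP{int}})$ would attain a strict interior maximum greater than $t_0$, contradicting the strong maximum principle, together with its boundary version along arcs of $L_i$ (tangent to $\partial_t$ in the cylindrical end). For (2b), the Hamiltonian defect of the continuation strip is compensated by the hypothesis $\mathcal{A}(x_{\OP{out}})\ge\mathcal{A}(x_{\OP{in}})\ge\|G_s\|_{\OP{osc}}$: substituting this into the continuation action--energy identity forbids the positive symplectic-area contribution required for an excursion into $X_R$. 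Part (3) follows in the same spirit: (3a) is the action--energy identity for continuation strips with the defect $\int\!\!\int\partial_s G_s$ vanishing by $G_s|_{X_R}=0$; (3b) observes that vanishing symplectic area forces $\partial_s u\equiv 0$, so $u$ is an $s$-invariant chord which coincides with its asymptotics and thus lies in $X_R$.

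The main obstacle is Part (2b): the interaction between the continuation Hamiltonian defect and the maximum principle is delicate, and the stated action inequality is calibrated precisely so that the energy bookkeeping closes to rule out excursions into $X_R$; one must be careful that the defect $\|G_s\|_{\OP{osc}}$ is not secretly spent twice --- once to flip the action inequality and once to power an excursion --- which is what the joint hypothesis on $\mathcal{A}(x_{\OP{in}})$ and $\mathcal{A}(x_{\OP{out}})$ prevents.
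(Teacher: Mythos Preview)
Your approach is genuinely different from the paper's: you attempt a direct integrated maximum principle and Stokes argument on $u^{-1}(X_R)$, whereas the paper's central device is \emph{neck-stretching} along $\{t_0\}\times Y$, interpreted as rescaling the action of every generator in $X_R$ by an arbitrarily large factor $e^\lambda$ while leaving actions in $X_L$ fixed. Parts (1) and (3a) in the paper are then immediate: once actions in $X_R$ are amplified, any putative strip violating the conclusion would violate the coarse action inequality $\mathcal{A}(x_{\OP{in}})-\mathcal{A}(x_{\OP{out}})\ge -\|G_s\|_{\OP{osc}}$. Part (2a) cites Ritter's no-escape lemma (essentially your argument), while (2b) and (3b) combine neck-stretching with SFT compactness and the monotonicity lemma respectively.

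There are two genuine gaps in your proposal. First, in Part (1) for \emph{continuation} strips: your Stokes computation on $U=u^{-1}(X_R)$ yields at best $\mathcal{A}(x_+)\ge -\|G_s\|_{\OP{osc}}$ when only the input lies in $X_R$ (and your displayed inequality has the sign reversed --- a negative LHS bounded above by something non-negative is no contradiction). Since the statement carries no hypothesis on $\|G_s\|_{\OP{osc}}$, a negative-action input with $|\mathcal{A}(x_+)|<\|G_s\|_{\OP{osc}}$ is not excluded by your argument. The paper's neck-stretching rescales $\mathcal{A}(x_+)$ by $e^\lambda$ precisely to absorb this defect; without it your method does not close.

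Second, your argument for (3b) is incorrect as written: for a continuation strip, vanishing symplectic area does \emph{not} force $\partial_s u\equiv 0$, because the Floer energy $\int|\partial_s u|^2$ differs from the symplectic area by the term $\int\!\!\int(\partial_s H_s)(u)$, which is supported where $u$ passes through $X_L$ and need not vanish. The paper instead argues that if the strip crossed into $X_L$, then after stretching the neck the $J$-holomorphic portion near the slice would (by monotonicity) carry arbitrarily large area, forcing the piece in $X_L$ to have area below $-\|G_s\|_{\OP{osc}}$, contradicting property (ii). Your (3a) reasoning also needs repair: ``$\partial_s G_s$ vanishes by $G_s|_{X_R}=0$'' is only true on $u^{-1}(X_R)$, so the global action--energy identity does not directly give the conclusion; one must restrict to $u^{-1}(X_R)$, where the curve is genuinely $J$-holomorphic, and then run the integrated maximum principle there.
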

\begin{proof}
It was shown in \cite[Lemma 6.2]{Dimitroglou:Energy} that stretching the neck along $\{t_0\} \times Y$ can be interpreted as having the following effect on the action of the generators:
\begin{itemize}
\item the action of a generator in $X_R$ is rescaled by an arbitrarily large positive constant $e^\lambda$, $\lambda \gg 0$, while
\item the action of a generator in $X_L$ is kept fixed.
\end{itemize}
Also, recall that the following basic facts about the symplectic area of Floer strip and continuation strip (see e.g.~\cite[Section 3.2]{Dimitroglou:Energy}). First, an elementary application of Stokes' theorem implies that the symplectic area of either a Floer strip or a continuation strip is given by the action difference $\mathcal{A}(x_{\OP{in}}) -\mathcal{A}(x_{\OP{out}})$ when the input and output asymptotics are the intersection points $x_{\OP{in}}$ and $x_{\OP{out}}$, respectively. The symplectic area moreover satisfies the following:
\begin{enumerate}[label=(\roman*)]
\item The symplectic area of a Floer strip is non-negative, and vanishes if and only if the strip is constant; and
\item For a continuation strip $u\colon \R \times [0,1] \to X$ between the vanishing Hamiltonian and the Hamiltonian $G_s$, the symplectic area of the restriction to any open domain $U \subset \R \times [0,1]$ is bounded from below by the oscillatory norm $-\|G_s\|_{\OP{osc}} \le 0$, under the assumption that $G \circ u$ vanishes on some neighbourhood of $\overline{U} \setminus U$ (we can always take e.g.~$U=\R \times [0,1]$).
\end{enumerate}
For the second property, recall the estimate
$$0 \le \int_U \omega(\partial_tu(s,t),J\partial_tu(s,t))dtds\le \int_Uu^*\,\omega+\|G_s\|_{\OP{osc}}$$
of the Floer energy under the assumptions of the support of $G \circ u$.

(1.a), (1.b), and (3.a): The statements all follow from the above computations and considerations of the action.

(2.a): This is the `no escape lemma' from \cite[Lemma D.6]{Ritter_TQFT}.

(2.b): This is similar to the no escape lemma, but where we first must use the SFT compactness theorem \cite{Bourgeois_&_Compactness} applied to the neck-stretching sequence in order to extract a piece of the strip inside $X_L$ which is of symplectic area greater than $\| G_s \|_{\OP{osc}}$. The existence of such a strip implies the existence of a piece inside $X_R$ with symplectic area smaller than $-\| G_s \|_{\OP{osc}}$, thus contradicting Property (ii) above. (N.B.~here we do not rescale the symplectic form on $X_R$ whilst stretching the neck: we only deform the conformal structure.)

(3.b): This follows from the monotonicity property of the symplectic area of pseudoholomorphic curves with boundary \cite[Propositions 4.3.1 and 4.7.2]{sikorav_monot}, together with the effect on the action by a neck stretching. Namely, the total symplectic area still vanishes after the neck has been stretched, while the symplectic area concentrated near $\{ t_0\} \times Y$ becomes arbitrarily large by the monotonicity property. In other words, the piece of the strip contained inside $X_L$ necessarily has arbitrarily negative symplectic area, which is in contradiction with the bound from Property (ii) above.
\end{proof}

\emph{Step 1:} Write $L_1:=L$. Consider a small push-off $L_0$ of $L$ obtained by, first, applying the small negative Reeb flow
$$\phi^\epsilon_{-e^t} \colon ([-1,+\infty) \times Y,d(e^t\alpha)) \to ([-1,+\infty) \times Y,d(e^t\alpha)), \:\: \epsilon>0,$$
in the cylindrical end and, second, performing a generic Hamiltonian perturbation in the compact part.

We proceed to perturb the Legendrian ends $\Lambda_i = \partial \overline{L}_i \subset Y,$ $i=0,1$, of the fillings. After a small Hamiltonian isotopy of the cylindrical end induced by a contact isotopy, we may assume that $\Lambda_0$ is identified with a section $-j^1f \subset (\mathcal{J}^1\Lambda_1,dz-pdq)$ inside a standard Legendrian neighbourhood in which $\Lambda_1$ is identified with the zero-section. Here the function $f \colon \Lambda_1 \to (0,\epsilon)$ is taken to be a $C^2$-small positive Morse function. We write $\gamma_M^+$ for the Reeb chord from $\Lambda_0$ to $\Lambda_1$ corresponding to the maximum of $f$, which from now on is assumed to be unique.

Consequently, $L_0$ is a section $-dF \subset (T^*L_1,d(pdq))$ in a standard Weinstein neighbourhood of $L_1 \subset (X,d\theta)$ which satisfies $\partial_tF > 0$ outside of a compact subset. We further assume that $F \colon L_1 \to \R$ is a Morse function with no local maximum.

Note that the two potential functions $f_0$ and $f_1$ on the two Lagrangian fillings $L_0$ and $L_1$ can be taken to be $C^2$-close at this step (under the obvious identifications). 

\begin{figure}[htp]
\vspace{1em}
\centering
\labellist
\pinlabel $\color{blue}\widecheck{\gamma}_M$ at 116 41
\pinlabel $\color{blue}\widehat{\gamma}_M$ at 60 41
\pinlabel $\color{blue}\gamma_M^+$ at 256 41
\pinlabel $\tau=e^t$ at 320 9
\pinlabel $L_1=L$ at 315 31
\pinlabel $\color{red}L_0$ at 303 77
\pinlabel $e^1$ at 68 -9
\pinlabel $e^2$ at 88 -9
\pinlabel $e^3$ at 108 -9
\pinlabel $e^{2T}$ at 268 -9
\endlabellist
\includegraphics[scale=0.8]{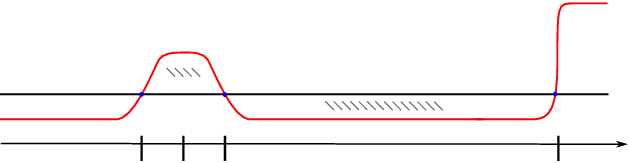}
\vspace{1em}
\caption{A schematic picture of the perturbation of $L_0$ after wrapping by $\phi^1_{H_\lambda}$ together with $L_1$. In this picture the Floer strips contributing to the identities $\partial(\gamma_M^+)=\widecheck{\gamma}_M=\partial(\widehat{\gamma}_M)$ are both visible here.}
\label{fig:smallwrap}
\end{figure}

\begin{Lem}
\label{claim:unit}
For each $\lambda  > 1$ the chain $\gamma_M^+$ given as the Reeb chord from $\Lambda_0$ to $\Lambda_1$ corresponding to the maximum of $f$ is a cycle whose limit as $\lambda \to +\infty$ represents the unit in $HW^*(L,L)$.
\end{Lem}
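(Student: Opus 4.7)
The plan breaks into two steps: first, establishing that $\gamma_M^+$ is closed under the Floer differential; second, identifying its homology class in the limit with the unit of wrapped Floer cohomology.

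For the cycle property, I plan to exploit the Morse-theoretic structure built into the construction of $L_0$ as the section $dF$ of a Weinstein neighbourhood of $L_1$, where $F|_{\partial L} = f$ and $F$ has no local maximum. Choosing the perturbations $f$ and $F$ sufficiently $C^2$-small, and taking the almost complex structure to be a cylindrical lift in a neighbourhood of $\R \times \Lambda$ (cf.\ the proof of Lemma \ref{lem:morse}), I would use monotonicity of symplectic area to confine the Floer strips contributing to $\partial \gamma_M^+$ to a small tubular neighbourhood of $L_1$. In this neighbourhood the Floer-to-Morse identification of \cite{FloerHFlag, MorseFlow} identifies Floer strips with negative-gradient flow lines of a combined Morse function interpolating between $F$ in the interior and $f$ on the boundary. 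The chord $\gamma_M^+$ corresponds to the maximum of $f$, which is a cycle in the Morse cohomology complex of $f$ by standard reasons (no higher-index critical points of $f$ exist on $\Lambda$); moreover, the hypothesis that $F$ has no local maximum rules out any interior intersection point generator that could serve as a target for $\partial \gamma_M^+$ in the next degree. Thus $\partial \gamma_M^+=0$. To handle multi-wrapped Reeb chord generators that appear for large $\lambda$, I would rule these out as targets via an action comparison: each additional wrap contributes a full quantum of Reeb-period action, placing such chords in an action window strictly greater than $\mathcal{A}(\gamma_M^+)$, and hence out of reach of the action-decreasing Floer differential emanating from $\gamma_M^+$.

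For the unit claim, I plan to argue by naturality of the unit under the continuation maps $HF^*(L_0, L_1; H_\lambda) \to HF^*(L_0, L_1; H_{\lambda'})$, $\lambda < \lambda'$, whose direct limit is $HW^*(L, L)$. Taking $\lambda_0$ just above $\max f$, the complex $CF^*(L_0, L_1; H_{\lambda_0})$ consists only of intersection points and first-wrap chord generators, and is quasi-isomorphic by the Morse-theoretic identification above to a Morse-cohomology complex computing $HF^*(L_0,L_1)\cong H^*(L)$ in the sense of \cite{FloerHFlag}. Under this identification, the class $[\gamma_M^+]$ coincides with the distinguished top-degree cycle associated to the maximum of $f$, which in the paper's grading conventions represents the unit of the wrapped Floer cohomology algebra. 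Since the continuation maps are unital algebra homomorphisms, passing to the direct limit sends $[\gamma_M^+]$ to the unit of $HW^*(L, L)$, as claimed.

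The main technical obstacle will be the rigorous establishment of the Morse-theoretic identification of the Floer complex throughout both the wrap region and the compact part, together with the control of Floer strips crossing the dividing hypersurface between them. This is addressed by a neck-stretching argument (Section \ref{sec:stretching}) combined with Lemma \ref{lem:noescape}, which together confine the strips contributing to $\partial \gamma_M^+$ to the regions where the Morse model applies and where the action/degree considerations above become legitimate.
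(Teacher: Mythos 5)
Your proposal reaches the right conclusion but takes a noticeably different and longer route than the paper. The paper's proof is two sentences: one \emph{partially wraps} the end of $L$ by a compactly supported Hamiltonian isotopy so that the chord generators corresponding to $\operatorname{Crit}(f)$ slide into the compact region and become intersection-point generators of the subcomplex $CF_0(L,L;H_\lambda)$; after this move $L_0$ is the graph of the differential of a Morse function on $L_1$ having a genuine global maximum, located precisely at the wrapped image of $\gamma_M^+$, and Ritter's explicit description of the unit \cite[Sections 6.3, 6.13]{Ritter_TQFT} in terms of such a maximum then gives the result directly. This makes the cycle property automatic (the unit is a cycle by definition), so your entire first paragraph — establishing closedness via a local Morse model, Morse-index and degree bookkeeping, and an action argument against multi-wrapped chords — is unnecessary work, and it is also the shakiest part of your sketch: you would need to be quite precise about the grading shift between the Morse index of the maximum of $f$ on the $(n-1)$-dimensional manifold $\Lambda$ and the Floer degree of the chord $\gamma_M^+$, and careful about which generators of $\operatorname{Crit}(F)$ could in principle receive a differential, since before wrapping $F$ is a Morse function on the $n$-manifold $L$ with no local maximum. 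Your second paragraph is closer to the paper's reasoning — both ultimately invoke the identification of the unit with the top generator — but it glosses over exactly the point the partial wrap makes concrete: before wrapping there is no global maximum of $F$ on $L$, and it is only after wrapping that $\gamma_M^+$ is realised as that maximum, rather than as a boundary Reeb chord that one must then relate to the PSS image of the unit by an extra, unproved, identification. Also, be wary of the assertion $HF^*(L_0,L_1)\cong H^*(L)$ for the non-compact Lagrangian $L$ at a fixed small slope $\lambda_0$; what is true and used is the more precise statement that the relevant generators assemble into a Morse complex of a function on $L_1$ whose graph is the wrapped $L_0$, which is what Ritter's discussion of the unit is tailored to.
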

\begin{proof}
Partially wrap the end of $L$ so that the chords corresponding to critical points of $f$ becomes intersection points contributing to $CF_0(L,L,H_\lambda)$, the proposition follows now from the description of the unit in \cite[Section 6.3 and 6.13]{Ritter_TQFT}. Note that this generator is the global maximum of a Morse function on $L_1$, the differential of which has graph identified with $L_0$.
\end{proof}

\emph{Step 2:}  Inside the cylindrical part $X \setminus \overline{X}$ there exists a Weinstein neighbourhood identifying $L_1$ with the zero-section in $(T^*L_1,d(pdq)),$ while $L_0$ is given by the graph of $-d(e^tf)$. Here the coordinate $t \colon L_1 \to \R$ is induced by the coordinate on the $\R$-factor of the symplectisation $\R \times Y$. We refer to \cite[Section 8.4]{cthulhu} for the construction of such a Weinstein neighbourhood.

Now we perform the following modification inside the subset $[0,4] \times Y$. Consider a Morse function $g \colon [-1,+\infty) \to \R$ satisfying $g(t)=t$ for $t \ge 4$ as well as for $t \le 0$, while $g'(1)=g'(3)=0$, $g''(1)<0$, $g''(3)>0$, are its unique critical points. I.e.~$g(t)$ has a non-degenerate local maximum at $t=1$ and a non-degenerate local minimum at $t=3$. We moreover require that $g'(t) \equiv -C < 0$ is constant in the subset $\{ 2-2\delta \le t \le 2+2\delta\}$.

We replace $L_0$ by the graph $-d(e^{g(t)}f)$ and again denote the resulting Lagrangian cobordism by $L_0$. There are now additional intersection points of $L_0 \cap L_1$ contained in the slices $\{t =1\}$ and $\{ t =3 \}$. In particular, we have the unique local maximum $\widehat{\gamma}_M$ of $e^{g(t)}f$ contained in the slice $\{ t = 1\}$ and the unique critical point $\widecheck{\gamma}_M$ of index $\dim L-1$ contained in the slice $\{ t = 3\}$. If $g(t)-t$ is chosen to be sufficiently $C^0$-small, it follows that the new potential function $f_0$ is $C^2$-close to the original one. C.f.~Figure \ref{fig:smallwrap}.

\begin{Lem}
\label{claim:unit2}
For each $\lambda > 1$ the chain $\widehat{\gamma}_M+\gamma_M^+$ is a cycle whose limit as $\lambda \to +\infty$ represents the unit in $HW^*(L,L)$. Moreover, $\partial(\gamma_M^+)=\widecheck{\gamma}_M=\partial(\widehat{\gamma}_M)$ is satisfied. (See Figure \ref{fig:smallwrap}.)
\end{Lem}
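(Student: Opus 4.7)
The plan is to establish the two identities $\partial(\widehat{\gamma}_M) = \widecheck{\gamma}_M$ and $\partial(\gamma_M^+) = \widecheck{\gamma}_M$ separately, from which the cocycle property of $\widehat{\gamma}_M + \gamma_M^+$ follows immediately over $\mathbb{Z}_2$, and then to identify this cocycle with the unit by invoking continuation invariance together with Lemma~\ref{claim:unit}, which supplies $\gamma_M^+$ as a representative of the unit in the pre-modification complex.

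For the first identity, I would argue as follows. Both $\widehat{\gamma}_M$ and $\widecheck{\gamma}_M$ lie inside $[0,4] \times Y$, along whose bounding hypersurfaces $\{0\} \times Y$ and $\{4\} \times Y$ the Lagrangians $L_0$ and $L_1$ are cylindrical, so Part (2.a) of Lemma~\ref{lem:noescape} confines the Floer strips between these generators to $[0,4] \times Y$. In that region the wrapping Hamiltonian $H_\lambda$ vanishes identically (given $2T > 4+\delta$), so these strips are honest pseudoholomorphic. Using a cylindrical almost complex structure adapted to the Weinstein neighborhood of $L_1$ in which $L_0$ is the graph of $d(e^{g(t)}f)$, the Floer--Fukaya--Oh correspondence equates rigid such strips with negative gradient trajectories of $e^{g(t)}f$ on $[0,4] \times \Lambda$. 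The unique rigid trajectory from $\widehat{\gamma}_M = (1,q_{\max})$ to $\widecheck{\gamma}_M = (3,q_{\max})$ travels along the $t$-direction at the fibre $q = q_{\max}$, contributing $\widecheck{\gamma}_M$ with coefficient $1$. To eliminate potential contributions to other index-$(\dim L-1)$ critical points $(1,q_c)$ in the slice $\{t=1\}$, I would fix the Step~1 perturbation $f$ (together with a Riemannian metric on $\Lambda$) so that $q_{\max}$ is a Morse cocycle; this is possible by the freedom allowed in Step~1 and is consistent with $F$ having no local maxima on $L$.

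For the second identity I would use bifurcation analysis of the Step~2 modification. The modification is given by a compactly supported Hamiltonian isotopy $(L_0^s)_{s \in [0,1]}$ inside $[0,4] \times Y$ during which the pair $(\widehat{\gamma}_M,\widecheck{\gamma}_M)$ appears via a Morse birth; let $c$ denote the induced continuation chain map, which is a quasi-isomorphism by Theorem~\ref{thm:invariance1}. Since $\gamma_M^+$ represents the unit in the pre-modification complex, $c(\gamma_M^+)$ is a cocycle representing the unit after the modification. Just after the birth, the pair forms an acyclic two-term subcomplex with $\partial \widehat{\gamma}_M = \widecheck{\gamma}_M$ (as in the first step), while $\gamma_M^+$ is still a cocycle. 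As the isotopy continues to its final configuration, a handle-slide of $\gamma_M^+$ across the newborn $\widehat{\gamma}_M$ (of the same cohomological degree) occurs, contributing the term $\widehat{\gamma}_M$ to $c(\gamma_M^+)$ and correspondingly the term $\widecheck{\gamma}_M = \partial \widehat{\gamma}_M$ to $\partial(\gamma_M^+)$; the action filtration, together with the fact that $\widecheck{\gamma}_M$ is the unique output of comparable action accessible via a strip whose input is $\gamma_M^+$, rules out further terms.

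Combining the two identities, $\widehat{\gamma}_M + \gamma_M^+ = c(\gamma_M^+)$ modulo a coboundary is a cocycle representing the unit in $HF^*(L_0,L_1;H_\lambda)$; passing to the direct limit as $\lambda \to +\infty$ yields the unit in $HW^*(L,L)$. The hardest part will be the bifurcation analysis underlying the handle-slide in the second identity: one must verify carefully that the slide occurs precisely across the newborn $\widehat{\gamma}_M$ (and no other generator of the same degree, e.g.\ Reeb chord generators from Step~1 with comparable action) using the action filtration and the geometric isolation of the birth region $[0,4] \times Y$ from the wrapping region near $\{t \approx 2T\}$, alongside the elimination of spurious Morse trajectories in the first identity via the choice of $f$.
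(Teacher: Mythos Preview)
Your treatment of the first identity $\partial(\widehat{\gamma}_M)=\widecheck{\gamma}_M$ is essentially the paper's: both appeal to Floer's identification of the Floer differential with the Morse differential inside the Weinstein neighbourhood of $L_1$, where $L_0$ is the graph of $d(e^{g(t)}f)$.

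Where you diverge is in the second identity $\partial(\gamma_M^+)=\widecheck{\gamma}_M$ and in the identification of the unit. The paper handles both much more directly. For the differential, the same Morse--Floer computation applies: after the partial wrapping that converts $\gamma_M^+$ into an intersection point near $t=2T$, the wrapped $L_0$ is again a graph of an exact one-form in the Weinstein neighbourhood, and the rigid strip from $\gamma_M^+$ to $\widecheck{\gamma}_M$ is the thin strip over the $t$-directional Morse trajectory from the local maximum near $t=2T$ down to the saddle at $t=3$ (this is exactly what Figure~\ref{fig:smallwrap} depicts). For the unit, the paper simply repeats the argument of Lemma~\ref{claim:unit}: after the Step~2 modification the global maximum of the Morse function on $L_1$ (whose graph of differential is the wrapped $L_0$) now corresponds to the \emph{pair} of intersection points coming from $\widehat{\gamma}_M$ and $\gamma_M^+$, so Ritter's description of the unit gives their sum.

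Your bifurcation route for $\partial(\gamma_M^+)=\widecheck{\gamma}_M$ is not wrong in spirit, but it carries a genuine gap: you assert that exactly one handle-slide of $\gamma_M^+$ over $\widehat{\gamma}_M$ occurs during the Step~2 isotopy, and you justify this only by loose action and degree considerations. To actually verify that handle-slide you would have to exhibit the index~$-1$ strip in the one-parameter family, and the most natural way to do that is precisely the explicit Weinstein-neighbourhood computation the paper uses to get $\partial(\gamma_M^+)=\widecheck{\gamma}_M$ directly. In other words, your detour ends up needing the very strip it was meant to avoid computing. The direct approach is both shorter and avoids the auxiliary concern you raise about eliminating spurious Morse trajectories via special choices of $f$.
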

\begin{proof}
The first statement is the same as Lemma \ref{claim:unit} as now the unit in $HF_0(L_0,L_0')$ where $L'_0$ is the partially wrapped Lagrangian is given by the sum of intersection points coming from $\widehat{\gamma}_M$ and $\gamma_M^+$. The second statement follows from the explicit description of holomorphic curves in the Weinstein neighbourhood similarly to Floer's computation in \cite{FloerHFlag} of the differential in the case of a small Hamiltonian push-off.
\end{proof}

\begin{figure}[htp]
\vspace{1em}
\centering
\labellist
\pinlabel $h(\log\tau)=h(t)$ at 155 148
\pinlabel $\frac{d}{d\tau}h(\log\tau)=e^{-t}\frac{d}{dt}h(t)$ at 163 55
\pinlabel $\tau=e^t$ at 320 13
\pinlabel $\tau=e^t$ at 320 106
\pinlabel $e^{2-\delta}$ at 68 -5
\pinlabel $e^2$ at 88 -5
\pinlabel $e^{2+\delta}$ at 108 -5
\pinlabel $e^{2-\delta}$ at 68 88
\pinlabel $e^2$ at 88 88
\pinlabel $e^{2+\delta}$ at 108 88
\pinlabel $1$ at 58 68
\endlabellist
\includegraphics[scale=0.8]{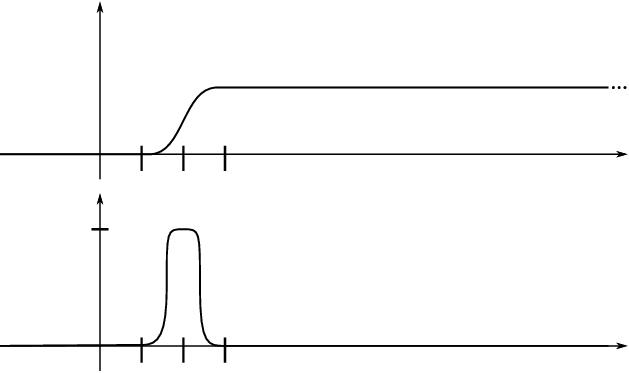}
\vspace{1em}
\caption{Above: The autonomous Hamiltonian $h(t)=h(\log \tau)$ that is used to perform the `finger move' in Step 3. Below: the corresponding Hamiltonian vector field is given by $e^{-t}\frac{d}{dt}h(t)R_\alpha$, which is parallel to the Reeb vector field.}
\label{fig:kappa}
\end{figure}

\emph{Step 3:} We use a $C^0$-small Hamiltonian to introduce a very long `finger move' at $t=2$.  More precisely, we apply a Hamiltonian isotopy of the form $\phi^\kappa_h$ for $\kappa \gg 0$, where the Hamiltonian $h(t)$ satisfies the property that $e^{-t}\frac{d}{dt}h(t) \ge 0$ is a bump-function supported inside $(2-\delta,2+\delta) \subset [-1,+\infty)$ and which is constantly equal to $e^{-t}\frac{d}{dt}h(t)\equiv 1$ near $t=2$. Moreover, we assume that $\frac{d}{dt}(e^{-t}\frac{d}{dt}h(t)) \le 0$ and $\frac{d}{dt}(e^{-t}\frac{d}{dt}h(t)) \ge 0$ holds on $t \ge 2$ and $t \le 2$, respectively. Such a bump function is shown in Figure \ref{fig:kappa}. We denote the resulting filling by $L_0^\kappa$. Note that it follows from Equation \eqref{eq:changepotential} that for $\kappa \gg 0$, the newly created intersection points all correspond to Reeb chords from $\Lambda_0$ to $\Lambda_1$ of length longer than the chords corresponding $\OP{Crit}(f)$; these chords themselves correspond to Reeb chords on $\Lambda$ being of length at most $\kappa$. More precisely, for each such Reeb chord $\gamma$ on $\Lambda$ of length $\ell(\gamma) \le \kappa$ there is a corresponding pair of intersection points $\gamma_a \in \{ 2-\delta < t < 2 \}$ and $\gamma_b \in \{ 2<t<2+\delta\}$, both being of action roughly equal to $e^2\ell(\gamma)$.

Part (1.a) of Lemma \ref{lem:noescape} applied to $\{2 \} \times Y$ then shows that the Floer differential $\partial_0$ satisfies the property that the generators contained in $[2,+\infty) \times Y$ form a subcomplex
$$(CF_{\OP{rel}}(L_0^\kappa,L_1;H_\lambda),\partial_{\OP{rel}}) \subset (CF(L_0^\kappa,L_1;H_\lambda),\partial)$$
whenever $\kappa \gg 0$ is sufficiently large. This is related to the definition of relative wrapped Floer cohomology in \cite{Cie_Oan_ESaxiom} as well as the construction of the Viterbo transfer map. By elementary action reasons, we also have a subcomplex
\begin{eqnarray*}
& & CF_{\OP{rel},\le 0}(L_0^\kappa,L_1;H_\lambda),\partial_{\OP{rel},\le 0}) \subset (CF_{\OP{rel}}(L_0^\kappa,L_1;H_\lambda),\partial_{\OP{rel}}),\\
& & CF_{\OP{rel},\le 0}(L_0^\kappa,L_1;H_\lambda):=CF_{\OP{rel}}(L_0^\kappa,L_1;H_\lambda) \cap CF_{0}(L_0^\kappa,L_1;H_\lambda).
\end{eqnarray*}
generated by intersection points $L_0 \cap L_1$ as well as those intersection points of $L^\kappa_0 \cap L_1$ contained inside $\{ 2 < t < 3 \}$, i.e.~corresponding to Reeb chords from $\Lambda_0$ to $\Lambda_1$ being of length at most $\kappa$.
\begin{Lem}
\label{claim:boundary}
For any $\kappa \ge 0$ we still have
$$\widecheck{\gamma}_M =\partial(\gamma_M^+)=\partial_{\OP{rel}}(\gamma_M^+)=\partial(\widehat{\gamma}_M)=\partial_0(\widehat{\gamma}_M)\in (CF_{\OP{rel}}(L_0^\kappa,L_1;H_\lambda),\partial_{\OP{rel}}).$$
Moreover, if the cycle $\widecheck{\gamma}_M$ is a $\partial_{\OP{rel},\le 0}$-boundary for some $\kappa \ge 0$, then $HW^*(L,L)=0$.
\end{Lem}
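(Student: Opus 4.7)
The equalities were established for the pair $(L_0,L_1)$ (before the finger move) in Lemma \ref{claim:unit2}, so it suffices to show that no new contributions arise to $\partial(\gamma_M^+)$ or $\partial(\widehat{\gamma}_M)$ from Floer strips terminating at the finger--move intersections $\gamma_a,\gamma_b\subset \{2-\delta<t<2+\delta\}$. For $\widehat{\gamma}_M$ (at $t=1$), I would neck--stretch along $\{t=2-\delta\}\times Y$ and invoke Part (2.a) of Lemma \ref{lem:noescape} to confine the relevant strips to $\{t<2-\delta\}$, which is disjoint from the finger region. For $\gamma_M^+$ (at $t=2T$), an analogous neck--stretching along $\{t=2+\delta\}\times Y$ combined with the action filtration separates $\gamma_M^+$ from the finger generators and excludes any new outputs. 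The equalities with $\partial_{\OP{rel}}$ and $\partial_0$ are then tautological from the containment $\gamma_M^+,\widecheck{\gamma}_M\in CF_{\OP{rel}}$ and $\widehat{\gamma}_M,\widecheck{\gamma}_M\in CF_0$, both subcomplexes of the full complex.

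\textbf{Part 2.} Assume $\widecheck{\gamma}_M=\partial_{\OP{rel},\le 0}(a)$ with $a\in CF^*_{\OP{rel},\le 0}$. Since this is a subcomplex of $CF_0\subset CF$, the same equality holds for the ambient differential. Combined with Part 1, both chains
$$c_1:=\widehat{\gamma}_M+a\in CF_0, \qquad c_2:=\gamma_M^{+}+a\in CF_{\OP{rel}}$$
are cycles in $CF^*(L_0^\kappa,L_1;H_\lambda)$, and their sum $c_1+c_2=\gamma_M^{+}+\widehat{\gamma}_M$ represents the unit of $HW^*(L,L)$ in the limit $\lambda\to\infty$, by Lemma \ref{claim:unit2}.

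The key step is to interpret the subcomplex $CF^*_{\OP{rel},\le 0}$ as a wrapped--Floer model: by construction, its generators other than $\widecheck{\gamma}_M$ correspond bijectively to Reeb chords on $\Lambda$ of length at most $\kappa$ (via the finger intersections $\gamma_b$), and the differential $\partial_{\OP{rel},\le 0}$ counts pseudoholomorphic strips confined to $\{2\le t<2T-\delta\}$ by the no--escape analysis used to define it. Under the Morse--Bott model near $L_1$ \cite{FloerHFlag}, the generator $\widecheck{\gamma}_M$ is identified with the minimum of the Morse function $e^{g(t)}f$, which in the direct limit $\kappa\to\infty$ represents the unit $1\in HW^*(L,L)$ (playing the same role here that $\gamma_M^+$ plays in Lemma \ref{claim:unit} for the wrapping at $t=2T$). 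Consequently the hypothesis $[\widecheck{\gamma}_M]=0$ in $H(CF^*_{\OP{rel},\le 0})$ forces $1=0$ in $HW^*(L,L)$, and unitality yields $HW^*(L,L)=0$.

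\textbf{Main obstacle.} The technical heart of the argument is rigorously establishing the identification of $\varinjlim_\kappa H(CF^*_{\OP{rel},\le 0},\partial_{\OP{rel},\le 0})$ with $HW^*(L,L)$ and the assertion that $\widecheck{\gamma}_M$ represents the unit under this identification. This will require a careful neck--stretching separating the finger region at $t=2$ from the cylindrical wrap at $t=2T$ in order to compare the two wrapping schemes, together with the explicit Morse--Bott computation of the strips in a Weinstein neighborhood of $L_1$ that pins down the chain--level representative of the unit.
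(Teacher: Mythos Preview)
Your use of Part~(2.a) of Lemma~\ref{lem:noescape} for $\widehat{\gamma}_M$ does not do what you claim. That part of the lemma only confines a strip to $X_L=\{t\le 2-\delta\}$ when \emph{both} input and output already lie in $X_L$; but the output $\widecheck{\gamma}_M$ sits at $t=3$, so any strip from $\widehat{\gamma}_M$ to $\widecheck{\gamma}_M$ necessarily crosses the finger region. You have given no reason why the count of such strips is unchanged after the finger move, nor why $\partial(\gamma_M^+)$ acquires no new terms among the $\gamma_a,\gamma_b$. The paper avoids a direct strip analysis entirely: it uses the continuation map $\phi_\kappa$ for the $C^0$-small Hamiltonian $h$, shows (via Parts~(2.b) and~(3.b) of Lemma~\ref{lem:noescape}) that $\phi_\kappa$ fixes each of $\widehat{\gamma}_M,\widecheck{\gamma}_M,\gamma_M^+$, and then simply invokes the chain-map property together with Lemma~\ref{claim:unit2}.

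\textbf{Part 2.} Your route---identifying $\varinjlim_\kappa H(CF^*_{\OP{rel},\le 0})$ with $HW^*(L,L)$ and $\widecheck{\gamma}_M$ with the unit---is different from the paper's, and the identification you flag as the ``main obstacle'' is indeed unproven (and not obviously correct: $\widecheck{\gamma}_M$ has Morse index $\dim L-1$, and $CF^*_{\OP{rel},\le 0}$ also contains the other $t=3$ critical points of $f$, not just $\widecheck{\gamma}_M$ and the $\gamma_b$'s). The paper instead works with the \emph{quotient} $CF_0/CF_{\OP{rel}}$, which computes $HW^*(L,L)$ via the Viterbo transfer of the trivial cobordism, and in which $\widehat{\gamma}_M$ is already known to represent the unit. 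Concretely: from $\partial_{\OP{rel},\le 0}(a)=\widecheck{\gamma}_M$ one gets that $\widehat{\gamma}_M-a$ is a $\partial_0$-cycle; since $a$ has action much larger than $\widehat{\gamma}_M$, no representative of $[\widehat{\gamma}_M-a]$ is in the image of the action-decreasing homotopy equivalence $\phi_\kappa|_{CF_0}$, forcing $\widehat{\gamma}_M-a$ to be a $\partial_0$-boundary. Hence $\widehat{\gamma}_M$ is a boundary in $CF_0/CF_{\OP{rel}}$, i.e.\ the unit vanishes in $HW^*(L,L)$. This continuation-map argument is both shorter and sidesteps the identification you would still need to supply.
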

\begin{proof}
Since the Hamiltonian $h \colon X \to \R$ used to perform the finger move is $C^0$-small, the continuation map $\phi_\kappa$ induced by $h$ may be assumed to be action decreasing. Recall that
\begin{gather*}
\phi_\kappa \colon CF(L_0,L_1;H_\lambda) \xrightarrow{\sim} CF(L_0^\kappa,L_1;H_\lambda),\\
\phi_\kappa|_{CF_0(L_0,L_1;H_\lambda)} \colon CF_0(L_0,L_1;H_\lambda) \xrightarrow{\sim} CF_0(L_0^\kappa,L_1;H_\lambda)
\end{gather*}
is a chain homotopy equivalence defined by counting `continuation' strips. Here, Part (1.b) of Lemma \ref{lem:noescape} has been applied to the slice $\{2T-\delta\} \times Y$ in order to infer that also $\phi_\kappa|_{CF_0(L_0,L_1;H_\lambda)}$ is a homotopy equivalence.

Moreover, we have the identities
\begin{align*}
& \phi_\kappa(\widehat{\gamma}_M)=\widehat{\gamma}_M+b, \; b\in CF_{\text{rel},\leq 0}(L_0^\kappa,L_1;H_\lambda)\\
& \phi_\kappa(\widecheck{\gamma}_M)=\widecheck{\gamma}_M,\\
& \phi_\kappa(\gamma_M^+)=\gamma_M^+.
\end{align*}
To see this, recall that $h$ vanishes near the generators $\widehat{\gamma}_M$, $\widecheck{\gamma}_M$ and $\gamma_M^+$ by construction. These three equalities now follow from Lemma \ref{lem:noescape}, by inferring that only the constant strips give contributions. More precisely, the first equality is shown by applying Part (2.b) to $\{ 2-\delta\} \times Y$, while the second and third equalities are shown by applying Part (3.b) to $\{ 2+\delta\} \times Y$. Combining these three equalities with the explicit computation made in Lemma \ref{claim:unit2}, we can now conclude the first claim.

Now assume that $\partial_{\OP{rel},\le 0}(a)=\widecheck{\gamma}_M$, where the action of $a$ thus can be assumed to be significantly larger than that of both $\widecheck{\gamma}_M$ and $\widehat{\gamma}_M$. By the above, we now compute $\partial_0(\widehat{\gamma}_M-a)=0$, i.e.~$\widehat{\gamma}_M-a$ is a nontrivial cycle. However, since no chain of the form $\widehat{\gamma}_M-a+\partial_0(b)$ is in the image of $\phi_\kappa$ by its action-decreasing properties, and since $\phi_\kappa|_{CF_0(L_0,L_1;H_\lambda)}$ is a chain homotopy equivalence, the cycle $\widehat{\gamma}_M-a$ must be a $\partial_0$-boundary. In particular, there exists a chain $c$ for which $\partial_0(c)=\widehat{\gamma}_M$ holds modulo an element in $CF_{\OP{rel}}(L_0^\kappa,L_1;H_\lambda)$.

Note that the quotient complex $(CF_0(L_0^\kappa,L_1;H_\lambda),\partial_0)/CF_{\OP{rel}}(L_0^\kappa,L_1;H_\lambda)$ again can be used to compute $HW^*(L,L)$, when taking the appropriate direct limit $\kappa \to +\infty$. C.f.~the definition of the Viterbo transfer map whose construction goes via such a quotient. (In this case we are computing the Viterbo transfer of a trivial cobordism, which hence gives an isomorphism). Since the limit of $\widehat{\gamma}_M$ becomes the unit of the algebra $HW^*(L,L)$, the statement now follows.
\end{proof}

\emph{Step 4:} Replace $L_0^\kappa \cap ([5,+\infty) \times Y)$ with the concordance $\Sigma_{\{\Lambda^s\}}$ induced by a positive loop, thereby producing the exact Lagrangian filling $\widetilde{L}_0^\kappa$ which is Hamiltonian isotopic to $L_0$ for a Hamiltonian having compact support contained inside $(5,T) \times Y$ (here we used the assumption that the isotopy is contractible together with Proposition \ref{prp:contractible}). Here it is necessary that the constant $T \ge 0$ chosen in the initial setup for the computation of the wrapped Floer cohomology is sufficiently large. Recall that the constant $T>0$ was taken so that all our data is cylindrical inside the subset $[T,+\infty)\times Y$.

The invariance proof for the Floer complex under compactly supported Hamiltonian isotopies produces a chain homotopy equivalence
\[\phi \colon (CF(L_0^\kappa,L_1;H_\lambda),\partial) \to (CF(\widetilde{L}_0^\kappa,L_1;H_\lambda),\partial').\]
For $\kappa \gg 0$ sufficiently large, Lemma \ref{lem:noescape} again shows that
\begin{eqnarray*}
& & \phi(CF_0(L_0^\kappa,L_1;H_\lambda)) \subset CF_0(\widetilde{L}_0^\kappa,L_1;H_\lambda),\\
& & \phi(CF_{\OP{rel}}(L_0^\kappa,L_1;H_\lambda)) \subset CF_{\OP{rel}}(\widetilde{L}_0^\kappa,L_1;H_\lambda)
\end{eqnarray*}
are satisfied; for the first statement we apply Part (1.b) of this lemma to $\{2T-\delta\} \times Y$ while for the second statement we apply Part (1.a) to the slice $\{2\} \times Y$.

\begin{Lem}
\label{claim:action}
The Reeb chord $\gamma_M^+ \in (CF^*(\widetilde{L}_0^\kappa,L_1;H_\lambda),\partial')$ is a $\partial'$-cycle which can be assumed to be of negative action. Moreover, $\widecheck{\gamma}_M \in CF^*(\widetilde{L}_0^\kappa,L_1;H_\lambda)$ is of positive action, while all the generators corresponding to the intersection points $\widetilde{L}_0^\kappa \cap L_1 \cap \{t \ge 5\}$ are of negative action.
\end{Lem}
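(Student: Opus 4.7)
The plan is to prove the three action statements by direct computations via Equation \eqref{eq:12}, exploiting the strict positivity of the primitive shift on the trace of the positive loop (Proposition \ref{prp:pos_cyl}), and then to deduce the cycle property of $\gamma_M^+$ by a no-escape argument combined with the Morse reduction in the cylindrical end.

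For the action estimates I fix primitives of $\theta$ on $\widetilde{L}_0^\kappa$ and $L_1$ that agree with those chosen for $L_0^\kappa$ and $L$ on $\{t \le 5\}$. By Proposition \ref{prp:pos_cyl}, the primitive $f_0$ of $\widetilde{L}_0^\kappa$ on the positive cylindrical end $\{t \ge T-1\}$ then differs from the corresponding one for $L_0^\kappa$ by the constant $f^+_{\Sigma_{\{\Lambda^s\}}}>0$, which can be made arbitrarily large by rescaling the positive loop. Since $\widecheck{\gamma}_M$ sits at $t=3$ in the region left unchanged by Step~4, its action coincides with the one it had in $CF^*(L_0^\kappa,L_1;H_\lambda)$, and it is positive for $\lambda \gg 0$: for a locally constant chord $\int x^*\theta = 0$, while $-H_\lambda(3) \approx \lambda e^{2T}$ dominates. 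The intersection points of $\widetilde{L}_0^\kappa \cap L_1$ with $t \ge 5$ all lie along the trace, in a region where $H_\lambda$ vanishes for a suitable profile, so their actions reduce to potential differences $f_0 - f_1$; by monotonicity of $f_{\Sigma_{\{\Lambda^s\}}}$ along the concordance these lie in a range controlled by $f^+_{\Sigma_{\{\Lambda^s\}}}$ and are pushed into the negative range by rescaling. Finally, $\gamma_M^+$ lies at $t\approx 2T \ge T-1$, so after the Step~4 insertion its action acquires the shift $-f^+_{\Sigma_{\{\Lambda^s\}}}$ relative to the $e^{2T}\ell(\gamma_M^+)$ contribution, in direct analogy with Lemma \ref{lem:actionreeb}; this renders $\mathcal{A}(\gamma_M^+)$ strictly negative for $f^+_{\Sigma_{\{\Lambda^s\}}}$ sufficiently large.

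The cycle property of $\gamma_M^+$ then follows by confinement: since $\gamma_M^+ \in X_R := [2T-\delta,+\infty) \times Y$ has negative action, Part (1.a) of Lemma \ref{lem:noescape} applied to the slice $\{2T-\delta\} \times Y$ shows that any Floer strip with input $\gamma_M^+$ is entirely contained in $X_R$. But in $X_R$ both $\widetilde{L}_0^\kappa$ and $L_1$ are cylindrical over $\Lambda_0$ and $\Lambda_1$, so after stretching the neck along $\{2T-\delta\} \times Y$ and invoking the monotonicity argument (as used in the proof of Lemma \ref{lem:morse}), this portion of the differential is identified with the Morse cohomology differential of $f$ on $\Lambda$. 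Under this identification $\gamma_M^+$ corresponds to the extremum of $f$ representing the unit class from Lemma \ref{claim:unit}, hence a Morse cycle, which yields $\partial'_\infty(\gamma_M^+)=0$.

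The main obstacle I anticipate is the sign bookkeeping in Equation \eqref{eq:12}: one must verify that the positive increment $f^+_{\Sigma_{\{\Lambda^s\}}}$ furnished by Proposition \ref{prp:pos_cyl} truly translates into a \emph{decrease} of $\mathcal{A}(\gamma_M^+)$, consistent with the wrapped Floer differential being action-decreasing. This amounts to aligning the normalisation of $f_0$ and $f_1$ with the conventions used in Lemma \ref{lem:actionreeb}; once this sign convention is pinned down, all three action inequalities fall out by direct computation and the cycle claim reduces to the no-escape lemma combined with the Morse identification sketched above.
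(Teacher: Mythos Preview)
Your overall strategy matches the paper's: the proof really is an action computation in the spirit of Lemma~\ref{lem:actionreeb}, and the paper's own proof says nothing more. However, two points in your execution are problematic.

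First, your justification for $\mathcal{A}(\widecheck{\gamma}_M)>0$ via ``$-H_\lambda(3)\approx \lambda e^{2T}$'' is internally inconsistent with your later claim that ``$H_\lambda$ vanishes for a suitable profile'' on the trace region $\{5\le t\le T\}$. The Hamiltonian $H_\lambda$ is constant on all of $\{t\le 2T-\delta\}$ (zero, if one reads the figure; $-e^{2T}\lambda$, if one reads the displayed formula literally). Either way it contributes the \emph{same} additive constant to the action of $\widecheck{\gamma}_M$ (at $t=3$) and to the action of every trace intersection point (at $5\le t\le T$). Hence the Hamiltonian term cannot be what separates the positive action of $\widecheck{\gamma}_M$ from the negative action of the trace intersection points. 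The separation comes entirely from the behaviour of the primitive $f_0$ across the trace, exactly as in Proposition~\ref{prp:action} and Lemma~\ref{lem:actionreeb}; this is the computation you should carry out carefully, and it is where the sign issue you flag actually lives.

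Second, your argument for the $\partial'_\infty$-cycle property is more elaborate than needed and has a small gap: Part~(1.a) of Lemma~\ref{lem:noescape} only confines the \emph{asymptotics} of the strip to $X_R$, not the strip itself, so the reduction to the Morse differential needs an additional confinement step. A much shorter route is purely action-theoretic: among the Reeb chord generators of $CF^*_{+\infty}$, the action is an increasing function of chord length (the contribution $r^*\ell-H_\lambda(r^*)$ is the Legendre transform in $\ell$, with derivative $r^*>0$), and $\gamma_M^+$ corresponds to the minimum of $f$, hence is the chord of smallest length. Since the differential decreases action, $\partial'_\infty(\gamma_M^+)=0$ follows immediately.
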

\begin{proof}
This is a straight-forward action computation; c.f.~the computation made in Lemma \ref{lem:actionreeb}.
\end{proof}

\begin{Lem}
\label{claim:main}
If some chain of the form $\widecheck{\gamma}_M+c \in CF^*_{\OP{rel},\le 0}(\widetilde{L}_0^\kappa,L_1;H_\lambda)$ is a $\partial_{\OP{rel},\le 0}'$-boundary, where $c$ is a linear combination of generators of negative action, then $HW(L,L)=0$.
\end{Lem}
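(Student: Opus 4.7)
The plan is to construct an isomorphism between $(CF_{\OP{rel},\le 0}(L_0^\kappa,L_1;H_\lambda),\partial_{\OP{rel},\le 0})$ and a quotient of $(CF_{\OP{rel},\le 0}(\widetilde{L}_0^\kappa,L_1;H_\lambda),\partial'_{\OP{rel},\le 0})$ by a suitable subcomplex, under which the hypothesis descends to the statement that $\widecheck{\gamma}_M$ is a $\partial_{\OP{rel},\le 0}$-boundary. Lemma \ref{claim:boundary} will then immediately yield the conclusion $HW^*(L,L)=0$.

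First, I would show that the generators of $CF_{\OP{rel},\le 0}(\widetilde{L}_0^\kappa,L_1;H_\lambda)$ lying inside $[5,+\infty) \times Y$ span a subcomplex. By Lemma \ref{claim:action} these generators are exactly the ones of negative action, so Part (1.a) of Lemma \ref{lem:noescape} applied to the contact-type slice $\{5\} \times Y$ ensures that every Floer strip with input among them has its output in the same region.

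Next I would identify the resulting quotient with $(CF_{\OP{rel},\le 0}(L_0^\kappa,L_1;H_\lambda),\partial_{\OP{rel},\le 0})$. On generators this is immediate: the modification producing $\widetilde{L}_0^\kappa$ is supported in $(5,T) \times Y$, and $L_0^\kappa$ is cylindrical over the Legendrian $\Lambda_0$ throughout $[5,2T-\delta) \times Y$ where it is disjoint from $L_1$, so the two Lagrangians have identical intersection points with $L_1$ in $\{t<5\}$ and no further contributions in $[5,2T-\delta) \times Y$. For the differentials one invokes Part (2.a) of Lemma \ref{lem:noescape} applied to $\{5\} \times Y$, which confines any Floer strip between generators in $\{t<5\}$ to that region; since the Floer data for the two complexes coincide there, the strip counts match. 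This comparison of differentials is the step I expect to require the most care, as one must keep track simultaneously of the action sign of the asymptotics and of the side of the dividing hypersurface they lie on.

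To conclude, note that the only generators of negative action in $CF_{\OP{rel},\le 0}(\widetilde{L}_0^\kappa,L_1;H_\lambda)$ are those inside $[5,+\infty) \times Y$: by Lemma \ref{claim:action} both $\widecheck{\gamma}_M$ and the finger-move intersection points have positive action. Hence $c$ lies entirely in the chosen subcomplex and $\widecheck{\gamma}_M+c$ descends to $\widecheck{\gamma}_M$ in the quotient, exhibiting $\widecheck{\gamma}_M$ as a $\partial_{\OP{rel},\le 0}$-boundary in $CF_{\OP{rel},\le 0}(L_0^\kappa,L_1;H_\lambda)$. An application of Lemma \ref{claim:boundary} then completes the argument.
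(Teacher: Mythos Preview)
Your argument is correct and close in spirit to the paper's, but organized somewhat differently. The paper does not build a quotient complex; instead it observes directly that any contribution to $\langle\partial'_{\OP{rel},\le 0}(a),\widecheck{\gamma}_M\rangle$ must come from the non-negative-action part of $a$ (by action), notes that these generators sit in $\{t\le 4\}$, and then applies Part~(2.a) of Lemma~\ref{lem:noescape} at $\{4\}\times Y$ to confine the relevant strips to the region where $\widetilde{L}_0^\kappa=L_0^\kappa$; this matches the count with $\langle\partial_{\OP{rel},\le 0}(a),\widecheck{\gamma}_M\rangle$ and Lemma~\ref{claim:boundary} finishes. Your quotient construction packages the same confinement idea more structurally: it transports the full equation $\widecheck{\gamma}_M+c=\partial'_{\OP{rel},\le 0}(a)$ rather than a single coefficient, so the passage to Lemma~\ref{claim:boundary} is more transparent.

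One small gap: your claim that the only generators of negative action are those in $[5,+\infty)\times Y$ is not literally contained in Lemma~\ref{claim:action} as you cite it---that lemma only records the sign of the action for $\widecheck{\gamma}_M$, $\gamma_M^+$, and the $\{t\ge 5\}$ points. The positivity of the finger-move points $\gamma_b$ in $\{2<t<2+\delta\}$ comes from Step~3 (their action is approximately $e^2\ell(\gamma)>0$), and you should also account for the remaining intersection points in $\{t=3\}$, namely the critical points of $f$ other than the minimum. These likewise have positive action since the potentials $f_0,f_1$ were arranged to be $C^2$-close in Steps~1--2, so the fix is immediate.
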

\begin{proof}
By the action properties established in Lemma \ref{claim:action} together with the fact that the differential decreases action, we clearly have
$$\langle \partial_{\OP{rel}}'(c'),\widecheck{\gamma}_M \rangle=0$$
whenever $c'$ has negative action. The generators of $CF^*_{\OP{rel},\le 0}(\widetilde{L}_0^\kappa,L_1;H_\lambda)$ being of non-negative action are contained inside $\{ t \le 4 \}$. An application of Part (2.a) of Lemma \ref{lem:noescape} to $\{4 \} \times Y$ now shows that any Floer trajectory contributing to $\langle \partial_{\OP{rel}}'(a),\widecheck{\gamma}_M \rangle=1$ in fact must live entirely inside the same subset. These Floer trajectories are thus in bijective correspondence with the Floer trajectories corresponding to $\langle \partial_{\OP{rel},\le 0}(a),\widecheck{\gamma}_M\rangle$. The result is then deduced from Lemma \ref{claim:boundary}.
\end{proof}

\begin{Lem}
\label{claim:invariance}
We have
$$\phi(\gamma_M^+)=\gamma_M^++b, \:\: b \in CF^*_{\OP{rel},\le 0}(\widetilde{L}_0^\kappa,L_1;H_\lambda)$$
while, for $\kappa \gg 0$ sufficiently large, we also have
$$\phi(\widecheck{\gamma}_M)=\widecheck{\gamma}_M+\partial'_{\OP{rel},\le 0}(c)+d, \: \:c,d \in CF^*_{\OP{rel},\le 0}(\widetilde{L}_0^\kappa,L_1;H_\lambda),$$
where $d$ moreover is a sum of generators of \emph{negative} action.
\end{Lem}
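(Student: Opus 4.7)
The plan is to extract $\phi$ from bifurcation analysis applied to a generic compactly supported Hamiltonian isotopy from $L_0^\kappa$ to $\widetilde{L}_0^\kappa$, whose generating Hamiltonian has support contained in $(5,T)\times Y$. Following the template of the proof of Theorem \ref{thm:invariance1}, $\phi$ factors as a composition of elementary chain isomorphisms of the form $x\mapsto x+K_i(x)$, each $K_i(x)$ counting rigid pseudoholomorphic strips that appear at a handle-slide instant of the family. These strips have strictly positive symplectic area, so each $K_i$, and hence $\phi-\id$, is strictly action-decreasing on every generator.

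For the first claim, two further localization arguments constrain $K(\gamma_M^+)$. Since both $H_\lambda$ and the Hamiltonian generating the isotopy vanish in a neighborhood of the slice $\{2\}\times Y$, Part (1.a) of Lemma \ref{lem:noescape} applies there: because $\gamma_M^+$ has negative action and lies in $X_R=\{t\ge 2\}$, any continuation strip with $\gamma_M^+$ as input has its output also in $X_R$. To exclude outputs in the wrapping region $\{|t-2T|<\delta\}$ other than $\gamma_M^+$ itself, I take the Morse function $f$ to have a strict unique global minimum at $p_M$, so that the other wrapping-region critical-point generators have action strictly less than $\mathcal{A}(\gamma_M^+)$; a monotonicity estimate for pseudoholomorphic strips combined with an SFT neck-stretching along $\{T\}\times Y$ then rules out bifurcation strips that join $\gamma_M^+$ to these other generators through the long cylindrical interval $(T,2T-\delta)$ where both Lagrangians are trivial cylinders. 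The remaining candidates for $K(\gamma_M^+)$ are thus supported in $\{2\le t<2T-\delta\}$ and in $L_0^\kappa\cap L_1$, which is precisely $CF^*_{\OP{rel},\le 0}(\widetilde{L}_0^\kappa,L_1;H_\lambda)$, yielding $\phi(\gamma_M^+)=\gamma_M^++b$ as claimed.

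For the second claim, a similar bifurcation analysis gives $\phi(\widecheck{\gamma}_M)=\widecheck{\gamma}_M+K(\widecheck{\gamma}_M)$, and the same two ingredients (Part (1.a) of Lemma \ref{lem:noescape} at $\{2\}\times Y$ together with monotonicity in the cylindrical interval) place $K(\widecheck{\gamma}_M)$ inside $CF^*_{\OP{rel},\le 0}$. To produce the finer decomposition, I use that $\widecheck{\gamma}_M=\partial\gamma_M^+$ is a boundary, so $\phi(\widecheck{\gamma}_M)=\partial'\phi(\gamma_M^+)=\partial'(\gamma_M^++b)$ is automatically a cycle. Splitting $K(\widecheck{\gamma}_M)$ by sign of action, the negative-action part $d$ is supported on concordance-region intersections in $\{5<t<T\}$, which are of negative action by Lemma \ref{claim:action}; the positive-action part consists of finger-move generators in $\{2<t<3\}$ and, combined with the cycle condition on $\phi(\widecheck{\gamma}_M)$, is a $\partial'_{\OP{rel},\le 0}$-boundary. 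Here the exactness of the positive-action part is deduced from the explicit Morse-type description of the Floer differential near the push-off Legendrian (as in the proof of Lemma \ref{lem:morse}), which provides the primitive $c$.

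The main obstacle will be the monotonicity / neck-stretching argument used to exclude bifurcation strips from $\gamma_M^+$ to other wrapping-region generators: one must quantify the minimal symplectic area of a pseudoholomorphic strip whose boundary traverses the long cylindrical interval $(T,2T-\delta)\times Y$ (where both Lagrangians are trivial cylinders), and show that this exceeds the available action difference, which is bounded by $\max f-\min f$ and can be made arbitrarily small. A secondary technical point is the Morse-theoretic exactness step used to produce the primitive $c$ in the refined decomposition for $\phi(\widecheck{\gamma}_M)$.
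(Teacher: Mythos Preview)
Your approach diverges from the paper's in two places, and the second involves a genuine gap.

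For the first claim, you have the action ordering backwards: since $\gamma_M^+$ corresponds to the \emph{shortest} Reeb chord from $\Lambda_0$ to $\Lambda_1$, it is of \emph{least} action among the generators of $CF^*_\infty$, not greatest. With the correct ordering, your monotonicity/neck-stretching argument across $(T,2T-\delta)\times Y$ is unnecessary: any action-decreasing map (continuation or bifurcation) automatically sends $\gamma_M^+$ to a multiple of itself modulo $CF_0$. The paper works with continuation strips and applies Parts~(3.a) and~(3.b) of Lemma~\ref{lem:noescape} at $\{2T-\delta\}\times Y$; Part~(3.a) gives the action-decreasing property on generators in $\{t\ge 2T-\delta\}$, and Part~(3.b) confines the relevant strip to this region, where the isotopy Hamiltonian vanishes, forcing constancy and yielding coefficient~$1$. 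Your self-identified ``main obstacle'' disappears once the sign is corrected.

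For the second claim, your production of the primitive $c$ is a genuine gap. Knowing that $\phi(\widecheck{\gamma}_M)$ is a cycle does not make its positive-action part a boundary, and the appeal to a ``Morse-type description'' does not supply one. Using $\widecheck{\gamma}_M=\partial(\gamma_M^+)$ instead would give $\phi(\widecheck{\gamma}_M)=\partial'(\gamma_M^++b)$, but then you need to control $\partial'(\gamma_M^+)$ in the deformed complex, which is precisely the content of Step~5 and uses the present lemma --- so that route is circular. The paper's key move is to pass through the \emph{other} primitive $\widehat{\gamma}_M$, located at $t=1$ deep inside the region where $\widetilde{L}_0^\kappa=L_0^\kappa$. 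One first shows $\phi(\widehat{\gamma}_M)=\widehat{\gamma}_M+c$ with $c\in CF_{\OP{rel},\le 0}$ via Part~(2.b) of Lemma~\ref{lem:noescape} at $\{2\}\times Y$ (continuation strips with both ends in $\{t<2\}$ are confined there, hence constant). The chain-map property and $\widecheck{\gamma}_M=\partial_0(\widehat{\gamma}_M)$ then give $\phi(\widecheck{\gamma}_M)=\partial_0'(\widehat{\gamma}_M)+\partial'_{\OP{rel},\le 0}(c)$; finally Part~(2.a) at $\{4\}\times Y$ shows $\partial_0'(\widehat{\gamma}_M)=\widecheck{\gamma}_M+d$ with $d$ supported in $\{t\ge 5\}$ and hence of negative action by Lemma~\ref{claim:action}. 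The primitive $c$ thus arises for free as the $CF_{\OP{rel},\le 0}$-part of $\phi(\widehat{\gamma}_M)$, not from any exactness argument.
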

\begin{proof}
For the first claim, we argue as follows. Applying Part (3.a) of Lemma \ref{lem:noescape} to $\{2T-\delta \} \times Y$, it follows that the continuation map must decrease the action when restricted to the Reeb chord generators in $\{ t \ge 2T-\delta\}$. Since $\gamma_M^+$ is the shortest Reeb chord from $\Lambda_0$ to $\Lambda_1$, it is of least action amongst the generators of the quotient
$$CF(L_0^\kappa,L_1;H_\lambda) / CF_0(L_0^\kappa,L_1;H_\lambda).$$
What remains is thus now to show that $\langle \phi(\gamma_M^+),\gamma_M^+ \rangle =1$. This count is established by inferring that a continuation strip which contributes to this count must be confined to the subset $\{ t \ge 2T-\delta \}$ where the Hamiltonian vanishes. We can then use the standard fact that a rigid continuation strip must be constant for a vanishing Hamiltonian. That the strip is contained in the region is the case by Part (3.b) of Lemma \ref{lem:noescape} applied to $\{ 2T-\delta\} \times Y$.

We now continue with the second claim. Part (2.a) of Lemma \ref{lem:noescape} applied to $\{4\} \times Y$ shows that
\begin{equation}
\label{eq:noescape}
\langle \partial_0'(a_1),a_2 \rangle =\langle \partial_0(a_1),a_2 \rangle
\end{equation}
whenever $a_i$, $i=1,2$, are generators contained inside $\{ t < 4 \}$. Namely, the Floer strips contributing to these counts must be confined to the subset $\{ t \le 4 \}$ in which $\widetilde{L}_0^\kappa \cap \{ t \le 4 \} = L_0^\kappa \cap \{ t \le 4 \}$ is satisfied.

If, in addition, $\kappa \gg 0$ is taken sufficiently large, a further application of Part (2.b) of Lemma \ref{lem:noescape} to $\{2\} \times Y$ shows that
$$\phi(\widehat{\gamma}_M)=\widehat{\gamma}_M+c, \:\: c \in CF_{\OP{rel},\le 0}(\widetilde{L}_0^\kappa,L_1;H_\lambda)$$
holds as well. Indeed, in this case the strips contributing to $\langle \phi(a_1),a_2 \rangle$, for generators $a_i$, $i=1,2$, contained inside $\{ t < 2 \}$, must be contained entirely inside the subset $\{ t < 2\}$ where the Hamiltonian vanishes. Again, such strips are hence constant.

The chain map property now gives
$$ \phi(\widecheck{\gamma}_M)=\phi(\partial_0(\widehat{\gamma}_M))=\partial'_0(\phi(\widehat{\gamma}_M))=\partial'_0(\widehat{\gamma}_M+c)=\partial_0(\widehat{\gamma}_M) + d + \partial'_{\OP{rel},\le 0}(c)$$
where we rely on Lemma \ref{claim:boundary} for the first equality and Equality \eqref{eq:noescape} for the last equality.
\end{proof}

{\em Step 5:} We are now ready to finish the proof of Theorem \ref{thm:filling}. From the chain map property together with Lemma \ref{claim:invariance} we see that
$$\partial'(\gamma_M^++b)=\partial'(\phi(\gamma_M^+))=\phi(\partial(\gamma_M^+))=\phi(\widecheck{\gamma}_M)=\widecheck{\gamma}_M+\partial'_{\OP{rel},\le 0}(c)+d,$$
where $b,c,d \in CF_{\OP{rel},\le 0}(\widetilde{L}_0^\kappa,L_1;H_\lambda)$, and $d$ is a sum of generators of negative action. Since
$$\langle \partial'(a),\widecheck{\gamma}_M\rangle=\langle \partial'_{\OP{rel},\le 0}(a),\widecheck{\gamma}_M\rangle=0, \:\: \forall a \in CF_{\OP{rel},\le 0}(\widetilde{L}_0^\kappa,L_1;H_\lambda),$$
holds by Lemma \ref{claim:main} together with the assumption that $HW^*(L,L) \neq 0$, we now conclude that necessarily $\langle \partial'(\gamma_M^+),\widecheck{\gamma}_M \rangle \neq 0$. This, however, is in contradiction with the action computation in Lemma \ref{claim:action}. In other words, the hypothetical contractible positive Legendrian isotopy containing $\Lambda_0$ cannot exist. \qed

\section{Applications to strong orderability}

In this paragraph, we apply our techniques to the study of strong orderability in the sense of Liu \cite{Liu_paper}: we prove Theorem~\ref{thm: strong} by using Theorem \ref{thm:filling}.

The proof of Theorem~\ref{thm: strong} relies on the following equivalence, known to experts:

\begin{Thm}\label{thm: equivalence} Let $(W,\omega=d\alpha )$ be a Liouville domain and denote by  $(\widehat{W} ,\widehat{\omega})$ its completion by addition of the positive half symplectisation $([0,+\infty) \times \partial W,d(e^s\alpha))$ of $(\partial W,\alpha)$ along $\partial W$. Let  $\Delta_{\widehat{W}\times \widehat{W}}$ be the Lagrangian diagonal in the symplectic product $(\widehat{W}\times \widehat{W}, \widehat{\omega}\oplus -\widehat{\omega})$. Then the wrapped Floer cohomology of $\Delta_{\widehat{W}\times \widehat{W}}$ is isomorphic to the symplectic homology of $(W,d\alpha)$.
\end{Thm}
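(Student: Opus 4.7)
The plan is to establish a chain-level isomorphism between the wrapped Floer complex of $\Delta_{\widehat{W}\times\widehat{W}}$ and the Hamiltonian Floer complex defining $SH^*(W,d\alpha)$ by exploiting the classical folding correspondence, in the spirit of Piunikhin--Salamon--Schwarz: for any symplectic manifold $M$, Hamiltonian orbits on $M$ are in bijective correspondence with Hamiltonian chords in $M \times \overline{M}$ with both endpoints on the diagonal. Passing to the direct limits on both sides will conclude the proof.

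First, I would fix an admissible Hamiltonian $H\colon S^1 \times \widehat{W} \to \R$ for the symplectic cohomology complex of $(W,d\alpha)$, namely one that is linear of generic slope $\lambda>0$ in the cylindrical end, with $\lambda$ avoiding the periods of closed Reeb orbits of $(\partial W, \alpha)$. On $(\widehat{W}\times\widehat{W}, \widehat{\omega}\oplus(-\widehat{\omega}))$ I would consider the Hamiltonian $\widetilde{H}_t(x,y):=H_t(x)$, supported only in the first factor, together with an almost complex structure of the split form $J \oplus (-J)$ where $J$ is admissible for $(H,\widehat{W})$. A short computation shows that $X_{\widetilde{H}} = (X_H, 0)$ and the time-one flow acts as $(x,y) \mapsto (\phi^1_H(x), y)$; consequently, the Hamiltonian chords of $\widetilde{H}$ with both endpoints on $\Delta$ are precisely the curves $\gamma(t)=(\phi^t_H(p), p)$ where $p$ is a $1$-periodic orbit of $H$. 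This yields an action- and grading-preserving bijection between the generators of the two complexes.

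For the differentials, I would identify Floer strips for $(\widetilde{H}, J\oplus(-J))$ with both boundaries on $\Delta$ with Floer cylinders for $H$ on $\widehat{W}$. Indeed, writing $u=(u_1, u_2)$, the first component satisfies the Floer equation for $(H, J)$ while $u_2$ is anti-$J$-holomorphic (the Hamiltonian vanishing on the second factor); the boundary conditions $u_1(\cdot, i) = u_2(\cdot, i)$ for $i=0, 1$ allow one to glue $u_1$ to the reflected strip $\tilde{u}_2(s, t) := u_2(s, 1-t)$, which is genuinely $J$-holomorphic, into a Floer cylinder
\begin{equation*}
v(s,t):=\begin{cases} u_1(s,t), & t \in [0,1], \\ u_2(s, 2-t), & t \in [1,2], \end{cases}
\end{equation*}
defined on $\R \times (\R/2\Z)$ for the Hamiltonian equal to $H(t)$ on $[0,1]$ and vanishing on $[1,2]$. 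Since this doubled Hamiltonian has the same time-one flow as $H$, the standard identification of its Floer theory with that of $H$ on the length-$1$ circle then completes the chain-level isomorphism.

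The main obstacle is verifying admissibility and the no-escape property on the non-compact product. One must check that $\widetilde{H}$ is admissible for wrapped Floer cohomology of $\Delta$ inside $(\widehat{W}\times\widehat{W}, \widehat{\omega}\oplus(-\widehat{\omega}))$ endowed with its natural product Liouville structure, and that Floer trajectories on the product remain in a compact region. The split form of both $\widetilde{H}$ and $J\oplus(-J)$ decouples the maximum principle: $u_2$ obeys the ordinary maximum principle (as an anti-holomorphic curve in the second factor) while $u_1$ is controlled by the integrated maximum principle standard in the definition of symplectic cohomology. Once compatibility with the continuation maps for the direct systems $(\widetilde{H}_\lambda)$ and $(H_\lambda)$ is verified, passing to the limit $\lambda \to +\infty$ yields the desired isomorphism $HW^*(\Delta_{\widehat{W}\times\widehat{W}})\cong SH^*(W,d\alpha)$.
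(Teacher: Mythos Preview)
Your approach is essentially the same folding trick the paper uses: identify Hamiltonian chords on the diagonal with closed orbits on $\widehat{W}$, and fold/unfold Floer strips into Floer cylinders. The paper differs in one substantive choice: it takes the \emph{symmetric} split Hamiltonian $H_\oplus(t,x,y)=H(t,x)+H(t,y)$ rather than your asymmetric $\widetilde{H}_t(x,y)=H_t(x)$. With their choice both projected components $u_1=\pi_1\circ u$ and $u_2=\pi_2\circ u\circ\tau$ satisfy the genuine Floer equation for $(H,J)$, and the glued cylinder is a Floer cylinder for $H$ on $\R\times\R/2\Z$ (counted by the time-$2$ symplectic Floer complex). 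They then deform $(J_\oplus,H_\oplus)$ to data admissible for wrapped Floer cohomology via continuation maps amongst Hamiltonians with exponential growth in the Liouville direction, citing Oancea's thesis.

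This last step is exactly where your version is weaker. You correctly flag admissibility as ``the main obstacle,'' but your Hamiltonian $\widetilde{H}_\lambda=H_\lambda\oplus 0$ has \emph{no growth at all} in the second factor; it is not cofinal among admissible Hamiltonians on the product Liouville manifold, and there is no direct reason why $\varinjlim_\lambda HF^*(\Delta,\Delta;\widetilde{H}_\lambda)$ should compute $HW^*(\Delta)$. The paper's symmetric choice grows exponentially in every direction of the cylindrical end of the product, which is what makes the deformation argument via Oancea go through. Your asymmetric choice can be made to work, but it requires an additional argument (e.g.\ a further continuation to a symmetric or product-linear Hamiltonian) that you have not supplied. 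Apart from this point, the two arguments are parallel.
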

\begin{proof} We sketch a proof following closely the lines proposed by Zena\"\i di.
We start from a time-dependant Hamiltonian function $H:\R \times \widehat{W}\to \R$ which equals to (a perturbation of) $e^{2s}$ in $[0,+\infty) \times \partial W$, $s\in [0,+\infty)$.
On $\widehat{W}\times \widehat{W}$, we consider the split Hamiltonian $H_\oplus :\R \times \widehat{W}\times \widehat{W} \to \R$ defined by $H_\oplus (t,x,y) = H(t,x)+H(t,y)$, as well as a split almost complex structure $J_\oplus =J\oplus (-J)$ compatible with $\widehat{\omega}\oplus -\widehat{\omega}$.
With these data, we define a relative symplectic homology, by counting Floer strips in  $(\widehat{W}\times \widehat{W}, J_\oplus, H_\oplus)$ with boundary on $\Delta_{\widehat{W}\times \widehat{W}}$ between time-1 chords of the Hamiltonian $\Phi_{H_\oplus}$. Notice here that a Hamiltonian chord from $(x,x)$ to $(y,y)$ consists of H-chords from $x$ to $y$ on the first factor and from $y$ to $x$ on the second factor.

We first show that this homology is isomorphic to $SH(W)$.
For that, we let $\tau : (\R \times [0,1],i)\to (\R\times [0,1],i)$ be the anti-holomorphic involution of the strip  given by $\tau (s,\theta)=(s,1-\theta)$. It switches the two boundary components.
If $u:(\R\times [0,1],i)\to (\widehat{W}\times \widehat{W}, J_\oplus, H_\oplus)$ is a Floer strip, then its projection/twisted projection to the first and second factors $u_1 =\pi_1 \circ u$ and $u_2 =\pi_2 \circ u\circ \tau$ satisfy the Floer equation in $(\widehat{W} ,J,H)$. Moreover, since $u(s,1)=(u_1(s,1),u_2(s,0))$ and $u(s,0)=(u_1(s,0),u_2(s,1))$ belong to $\Delta_{\widehat{W}\times \widehat{W}}$, one has that $u_1(s,1)=u_2(s,0)$ and $u_1(s,0)=u_2(s,1)$. This means that we can glue the two strips $u_1$ and $u_2$ together along their boundary components to obtain a Floer cylinder $u_1 \sharp u_2 : \R \times S^1 \to (\widehat{W},J,H)$ which is exactly of the type counted by the differential in symplectic Floer homology of $(\widehat{W},J,H)$ (defined by the time-2 periodic orbits of the Hamiltonian $H$).
Conversely, if we parametrise the circle $S^1$ by $\R /(2\Z)$, any cylinder $u :\R \times S^1 \to (\widehat{W},J,H)$ counted in the differential of the Floer complex of $(\widehat{W},J,H)$ can be decomposed in two strips
$u_1 :\R \times [0,1] \to (\widehat{W},J,H)$ and $u_2 :\R \times [1,2] \to (\widehat{W},J,H)$ with matching boundary conditions. We reparametrise $u_2$ to $u_2' (s,\theta )=u_2 (s,\theta -1)$.
The map $u=(u_1,u_2'\circ \tau) : (\R\times [0,1],i)\to (\widehat{W}\times \widehat{W}, J_\oplus, H_\oplus)$ is then a Floer strip with boundary on  $\Delta_{\widehat{W}\times \widehat{W}}$.

To conclude, it remains to deform the Hamiltonian data $(J_\oplus ,H_\oplus)$ to one $(\mathcal{J} ,\mathcal{H})$ needed to define the wrapped complex. This can be done amongst Hamiltonians with exponential growth in the direction of the Liouville vector field in $(\widehat{W}\times \widehat{W}, \widehat{\omega}\oplus -\widehat{\omega})$. Such a path of data induces an isomorphism at the homology level given by continuation maps defined by counts of strips satisfying parametrised Floer equations, as carried out by Oancea in his thesis \cite{Oan}. We also recall the standard fact that the homology counting Hamiltonian self chords on $\Delta_{\widehat{W}\times \widehat{W}}$ and Floer strips with boundary on $\Delta_{\widehat{W}\times \widehat{W}}$ is isomorphic to the Lagrangian Floer homology counting intersection points between $\Delta_{\widehat{W}\times \widehat{W}}$ and $\Phi_{H_\oplus} (\Delta_{\widehat{W}\times \widehat{W}})$ as well as holomorphic strips.
\end{proof}

We now complete the proof of Theorem \ref{thm: strong}.
\begin{proof}
We consider the completion $(\widehat{W} ,\widehat{\omega})$ of $(W,\omega)$ obtained by the addition of the positive half symplectisation $([0,+\infty) \times M,d(e^s\alpha))$ of $(M,\alpha)$ to $(W,\omega)$ along $M=\partial W$. Let $\phi_1$ be a Hamiltonian diffeomorphism of $(\widehat{W} ,\widehat{\omega})$ whose Hamiltonian $H$ equals $e^{2s}$ in $[0,+\infty) \times M$. The symplectic homology of $(W,\omega )$ is the Hamiltonian Floer homology of $\phi_1$.
By Theorem~\ref{thm: equivalence}, it is the wrapped Floer cohomology of the diagonal $\Delta_{\widehat{W}\times \widehat{W}}$ in $(\widehat{W} \times \widehat{W} , \omega \oplus -\omega)$. Note that 
$(\widehat{W} \times \widehat{W} , \omega \oplus -\omega)$ has an ideal contact boundary $(V,\zeta)$ in which the Lagrangian $\Delta_{\widehat{W}\times \widehat{W}}$ has an ideal Legendrian boundary $L$.
The diagonal $\Delta_{\widehat{W}\times \widehat{W}}$
is a Lagrangian filling of the Legendrian $L$.
From Theorem \ref{thm:filling}, we get that $L$ is not the basepoint of a contractible positive loop in $(V,\zeta)$. To conclude it remains to observe that the contact product $(M\times M\times \R ,\alpha_1 -e^t \alpha_2 )$ is a contact submanifold of $(V,\zeta)$ which contains $L$: it is a standard neighbourhood of $\partial W\times \partial W \subset V$ in $(V,\zeta )$.
\end{proof}

\bibliographystyle{plain}
 \bibliography{Bibliographie_en}
\end{document}